\tikzset{ 
table/.style={
  matrix of nodes,
  row sep=-\pgflinewidth,
  column sep=-\pgflinewidth,
  nodes={rectangle,text width=3em,align=center},
  text depth=4ex,
  text height=5ex,
  nodes in empty cells
},
row 1/.style={nodes={fill=green!10,text depth=0.4ex,text height=2ex}},
column 1/.style={nodes={fill=green!10, text width=10ex}},
column 2/.style={nodes={text width=20ex}},
column 3/.style={nodes={text width=20ex}},
column 4/.style={nodes={text width=20ex}},
}
\definecolor{darkgreen}{rgb}{.2,.6,.2}
\definecolor{MyDarkBlue}{rgb}{0.1,0,0.55}
\lstdefinelanguage{GAP}{%
  morekeywords={%
    Assert,Info,IsBound,QUIT,%
    TryNextMethod,Unbind,and,break,%
    continue,do,elif,%
    else,end,false,fi,for,%
    function,if,in,local,%
    mod,not,od,or,%
    quit,rec,repeat,return,%
    then,true,until,while%
  },%
  sensitive,%
  morecomment=[l]\#,%
  morestring=[b]",%
  morestring=[b]',%
}[keywords,comments,strings]
\newcolumntype{C}{Sc}
\newsavebox{\myimage}
\newcommand{\sk}{\smallskip}
\newcommand{\mk}{\medskip}
\newcommand{\bk}{\bigskip}
\newcommand{\xdasharrow}[2][->]{
\tikz[baseline=-\the\dimexpr\fontdimen22\textfont2\relax]{
\node[anchor=south,font=\scriptsize, inner ysep=1.5pt,outer xsep=2.2pt](x){#2};
\draw[shorten <=3.4pt,shorten >=3.4pt,dashed,#1](x.south west)--(x.south east);
}
}
\newcommand\reallywidehat[1]{%
\savestack{\tmpbox}{\stretchto{%
  \scaleto{%
    \scalerel*[\widthof{\ensuremath{#1}}]{\kern-.6pt\bigwedge\kern-.6pt}%
    {\rule[-\textheight/2]{1ex}{\textheight}}
  }{\textheight}%
}{0.5ex}}%
\stackon[1pt]{#1}{\tmpbox}%
}
\newtheorem*{thm*}{Theorem}
\newtheorem{thm}{Theorem}[section]
\newtheorem{cor}[thm]{Corollary}
\newtheorem{lem}[thm]{Lemma}
\newtheorem{prop}[thm]{Proposition}
\newtheorem{rem}[thm]{Remark}
\newtheorem{defn-prop}[thm]{Definition-Proposition}
\newtheorem{question}[thm]{Question}
 \newcommand{\eq}[1][r]
   {\ar@<-3pt>@{-}[#1]
    \ar@<-1pt>@{}[#1]|<{}="gauche"
    \ar@<+0pt>@{}[#1]|-{}="milieu"
    \ar@<+1pt>@{}[#1]|>{}="droite"
    \ar@/^2pt/@{-}"gauche";"milieu"
    \ar@/_2pt/@{-}"milieu";"droite"}
  \newcommand{\incl}[1][r]
  {\ar@<-0.2pc>@{^(-}[#1] \ar@<+0.2pc>@{-}[#1]}
\author[L. Pirio]{\href{mailto:luc.pirio@uvsq.fr}{Luc Pirio}\textsuperscript{$\dagger$}}
\thanks{${}^{}$\hspace{-0.4cm}\textsuperscript{$\dagger$}
\href{https://lmv.math.cnrs.fr}{Laboratoire de Math\'ematiques de Versailles}, Univ.\,Paris-Saclay \& CNRS (UMR 8100), 78000 Versailles, France.}
\title[A story of webs]
 {A story of webs: \\ the webs by conics on del Pezzo quartic surfaces \\
 and  Gelfand-MacPherson's web of the spinor tenfold}
\begin{document}

\maketitle

\begin{abstract}
 In a \href{https://arxiv.org/abs/2401.06711}{previous paper},  we studied the web by conics  
$\boldsymbol{\mathcal W}_{{\rm dP}_4}$ on a del Pezzo quartic surface ${\rm dP}_4$ and proved that it enjoys suitable versions of most of the remarkable properties
satisfied by  Bol's web $\boldsymbol{\mathcal B}$.  
 In particular, Bol's web can be seen as the toric quotient of the Gelfand-MacPherson web naturally defined on the $A_4$-grassmannian variety $G_2(\mathbf C^5)$ and  we have
shown  that $\boldsymbol{\mathcal W}_{{\rm dP}_4}$ can be obtained in a  similar way from the 
web $\boldsymbol{\mathcal W}^{GM}_{ \hspace{-0.05cm} \boldsymbol{\mathcal Y}_5}$ which is the quotient  by the Cartan torus of ${\rm Spin}_{10}(\mathbf C)$, of the Gelfand-MacPherson 10-web naturally defined on the tenfold spinor variety $\mathbb S_5$,   a peculiar projective homogenous variety of type $D_5$.
%
%
 %
 In the present paper, by means of direct and explicit computations,  we show that many of the 
remarkable similarities between 
$\boldsymbol{\mathcal B}$ and $\boldsymbol{\mathcal W}_{{\rm dP}_4}$ actually can be extended to, or from an opposite perspective, can be seen as coming from some 
 similarities between Bol's web and 
$\boldsymbol{\mathcal W}^{GM}_{ \hspace{-0.05cm} \boldsymbol{\mathcal Y}_5}$.  The latter web can be seen as a natural uniquely defined rank 5 generalization of Bol's web. In particular, it carries a  peculiar 2-abelian relation, denoted by 
${\bf HLOG}_{ \boldsymbol{\mathcal Y}_5}$, 
which appears as a natural generalization of Abel's five terms relation of the dilogarithm 
and from which one can recover  the weight 3 hyperlogarithmic functional identity of any   quartic del Pezzo surface.
\end{abstract}

\setcounter{secnumdepth}{3}
\setcounter{tocdepth}{1}
\tableofcontents

\vspace{-0.5cm}

\newpage

This paper may be viewed as a continuation of \cite{CP} and \cite{PirioAFST},
to which the reader is referred for a more detailed exposition of the background and motivations underlying the questions addressed here. Nevertheless, the Introduction below should suffice to provide most readers with a general overview of the main themes investigated in this work.  \sk

 Here are a few words about the setting(s) and some general notations we will work with:  
  except in some cases which will be explicitly indicated and for which the setting is real algebraic or real analytic, we work over the field $\mathbf C$ of complex numbers, in 
  an algebraic or analytic framework. 
 We denote indifferently `$\ln$' or `${\rm Log}$' the usual complex logarithm (defined as the primitive vanishing at $1$ of the logarithmic differential $du/u$ on 
 the punctured complex plane $\mathbf C^*$). Given any positive integer $n$, we set  
 $[\hspace{-0.05cm}[ n ]\hspace{-0.05cm}]$ for the set of positive integers less than or equal to $n$: 
 one has  $[\hspace{-0.05cm}[ n ]\hspace{-0.05cm}]=\{1,2,\ldots,n\}$. 
 We will denote by $x,y$  the affine coordinates associated to the affine embedding 
$\mathbf C^2\hookrightarrow \mathbf P^2,\, (x,y)\mapsto [x:y:1]$.

\section{\bf Introduction}
\label{S:Intro}
`{\it Cauchy's identity}' of the logarithm 
$$
\boldsymbol{\big(\mathcal C\big)}
\hspace{5cm}
{\rm Log}(x) \, -\, {\rm Log}(y)\, -\, {\rm Log}\bigg(\frac{x}{y}\bigg) = 0
\hspace{6cm} {}^{}
$$
 admits a `weight 2 ' generalization, the so-called `{\it Abel's identity}' 
$$
\boldsymbol{\big(\mathcal Ab\big)}
\hspace{3cm}
{R}(x)-{R}(y)-{R}\left(\frac{x}{y}\right)-{R}\left(\frac{1-y}{1-x}\right)
+{R}\left(\frac{x(1-y)}{y(1-x)}\right)=0\,, 
\hspace{3cm} {}^{}
$$
satisfied for  any $x,y\in \mathbf R$ such that $0<x<y<1$, by \href{https://mathworld.wolfram.com/RogersL-Function.html}{\it Rogers' dilogarithm} $R$, which is the function defined by $R(x)= {\bf L}{\rm i}_2(x) + {\rm Log}(x){\rm Log}(1 - x)/2 - 
{\pi^2}/{6}$ for $x\in ]0,1[$.\footnote{Here ${\bf L}{\rm i}_2$
 stands for the classical bilogarithm: one has ${\bf L}{\rm i}_2(x)=\sum_{n=1}^{+\infty} \frac{x^n}{n^2}=- \int_0^x \frac{{\rm Log}(1-u)}{u} du$ for any $x$ such that $\lvert x\lvert<1$. 
 Soustracting $\pi^2/6$ in the given definition of Rogers dilogarithm is in order that the RHS of Abel's identity be zero.} 
\mk 

 In \cite{HM} (see also the last paragraph of \cite[\S4.1]{Griffiths}),  it is written (p.\,393) that it has been widely believed that the logarithm and the dilogarithm, together with the two functional identities above that they satisfy, are the first two elements of an infinite sequence of higher logarithms which share analogous properties: in particular each satisfies a peculiar functional identity, which even may be fundamental in a certain sense.  
In the as yet unpublished preprint \cite{GoncharovRudenko}, the authors 
wrote that {\it `writing explicitly functional equations for the classical
$n$-th polylogarithm might not be the ``right" problem}', because {\it `it seems that when $n$ is growing, the functional equations become so complicated that one can not write them down on a piece of paper'}.

 In \cite{CP}, it has been  shown that to work with hyperlogarithms instead of just polylogarithms, a simple and natural geometric construction allows to get a uniform series of functional identities, up to weight 6, which are very concise and whose first two elements precisely are the classical identities  $\boldsymbol{\big(\mathcal C\big)}$ and $\boldsymbol{\big(\mathcal Ab\big)}$ of the logarithm and the dilogarithm. 
Given a del Pezzo surface ${\rm dP}_d$ of degree $d\in \{1,\ldots,6\}$ with canonical class 
$K$, its set of conic classes $\boldsymbol{\mathcal K}=\{ \, \boldsymbol{\mathfrak c} \in {\bf Pic}_{\mathbf Z}\big( {\rm dP}_d\big)\, \lvert 
\, (-K,\boldsymbol{\mathfrak c})=2 \mbox{ and } \boldsymbol{\mathfrak c}^2=0\,\big\}$ is known to be finite and for each 
$\boldsymbol{\mathfrak c}$ in it, if $\phi_{\boldsymbol{\mathfrak c}}: {\rm dP}_d\rightarrow 
\lvert \,\boldsymbol{\mathfrak c}\, \lvert \simeq \mathbf P^1$ stands for the associated fibration in conics, then the set $\Sigma_{\boldsymbol{\mathfrak c}}\subset \mathbf P^1$ of its singular values has exactly  $8-d$ elements.  To $\Sigma_{\boldsymbol{\mathfrak c}}$, 
one can associate the {\it `complete weight $w=7-d$ antisymmetric hyperlogarithm $\boldsymbol{A\hspace{-0.05cm}H}^w_{\boldsymbol{\mathfrak c}}$'}, which is a multivalued function on $\mathbf P^1$, ramified at the points of $\Sigma_{\boldsymbol{\mathfrak c}}$ and which is canonically defined up to sign. 
The main result of \cite{CP} is the following: 
\begin{thm*} Given a point $x\in {\rm dP}_d$ not lying on a line and given 
a determination of the hyperlogarithms $
\boldsymbol{A\hspace{-0.05cm}H}^w_{\boldsymbol{\mathfrak c}}$
 at $\phi_{\boldsymbol{\mathfrak c}}(x)$ for any  ${\boldsymbol{\mathfrak c}} \in \boldsymbol{\mathcal K}$,  
 there exists a $\boldsymbol{\mathcal K}$-tuple   $(\epsilon_{\boldsymbol{\mathfrak c}})
_{ \boldsymbol{\mathfrak c} \in  \boldsymbol{\mathcal K} }
\in \big\{ \pm1 \big\}^ {\boldsymbol{\mathcal K}}$, unique up to a global sign,\footnote{Actually, one can chose the determinations of the $
\boldsymbol{A\hspace{-0.05cm}H}^w_{\boldsymbol{\mathfrak c}}$'s in such a way that one has 
$\epsilon_{\boldsymbol{\mathfrak c}}=1$ for every ${\boldsymbol{\mathfrak c}} \in \boldsymbol{\mathcal K}$, see \cite{CP} (in particular formula (9) in Theorem 3.1 therein).} such that the following identity is satisfied in the (complex analytic) vicinity of $x$: 
$$
\Big({\bf Hlog}^{w}_{ {\rm dP}_d}\Big)
\hspace{4cm}
\sum_{ 
\boldsymbol{\mathfrak c} \in 
\boldsymbol{\mathcal K}
}
\epsilon_{\boldsymbol{\mathfrak c}}\, 
\boldsymbol{A\hspace{-0.05cm}H}^w_{\boldsymbol{\mathfrak c}}
\big( \phi_{\boldsymbol{\mathfrak c}} \big)
=0\,. 
\hspace{6cm} {}^{}
$$
\end{thm*}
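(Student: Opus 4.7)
The plan is to verify the identity $\big({\bf Hlog}^{w}_{{\rm dP}_d}\big)$ directly in each of the six cases $d \in \{1,\ldots,6\}$, by reducing it, through iterated differentiation, to an abelian relation of the web by conics $\boldsymbol{\mathcal W}_{{\rm dP}_d}$. First I would fix a convenient birational model --- the blow-up of $\mathbf{P}^2$ at $9-d$ points in sufficiently general position --- and for each conic class $\boldsymbol{\mathfrak c} \in \boldsymbol{\mathcal K}$ write $\phi_{\boldsymbol{\mathfrak c}}$ as an explicit rational function, identifying its $8-d$ singular values $\Sigma_{\boldsymbol{\mathfrak c}} \subset \mathbf{P}^1$ from the discriminant of the pencil. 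The antisymmetric weight-$w$ hyperlogarithm $\boldsymbol{A\hspace{-0.05cm}H}^w_{\boldsymbol{\mathfrak c}}$ is then the complete antisymmetrization, over the symmetric group on $8-d$ letters, of an iterated integral whose integrating forms are the $d{\rm Log}(t-s)$ with $s\in \Sigma_{\boldsymbol{\mathfrak c}}$, so its exterior derivative is a sum indexed by $\Sigma_{\boldsymbol{\mathfrak c}}$ of weight $(w-1)$ antisymmetric hyperlogarithms multiplied by the closed logarithmic 1-forms $d{\rm Log}(\phi_{\boldsymbol{\mathfrak c}}-s)$.

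Differentiating $\big({\bf Hlog}^{w}_{{\rm dP}_d}\big)$ thus reduces the statement at weight $w$ to a collection of identities at weight $w-1$, indexed by the union of the $\Sigma_{\boldsymbol{\mathfrak c}}$'s. Iterating this reduction $w-1$ times lands in weight $1$, where the assertion becomes the vanishing of a signed sum of closed logarithmic 1-forms on ${\rm dP}_d$, i.e.\ an abelian relation of the web $\boldsymbol{\mathcal W}_{{\rm dP}_d}$. The base cases $d=6$ and $d=5$ are Cauchy's identity $\boldsymbol{\big(\mathcal C\big)}$ and Abel's identity $\boldsymbol{\big(\mathcal Ab\big)}$ respectively, which anchor the induction. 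The signs $\epsilon_{\boldsymbol{\mathfrak c}}$ are naturally governed by the action of the Weyl group $W(E_{9-d})$ on ${\rm Pic}({\rm dP}_d)$, which permutes $\boldsymbol{\mathcal K}$ transitively; one uses this symmetry to reduce each weight-$(w-k)$ verification to a single orbit representative, and the final integration constant is fixed by a choice of base-point $x_0$ together with a joint choice of determinations vanishing there.

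The main obstacle will be the combinatorial bookkeeping for small $d$: the cardinality of $\boldsymbol{\mathcal K}$ grows from $1$ at $d=6$ to $5, 10, 27, 56, 126$ at $d=5,4,3,2,1$ (matching the numbers of pencils, conic classes, lines, bitangents, tritangent classes classically attached to these surfaces), while each differentiation step multiplies the number of terms by $8-d$. Checking that the resulting $\pm 1$-weighted sums of logarithmic forms cancel amounts, for each singular value $s$ appearing in the various $\Sigma_{\boldsymbol{\mathfrak c}}$'s, to matching the contributions coming from every conic class whose pencil has $s$ as a critical value; this is where the fine incidence geometry between the $(-1)$-curves enters. The existence and uniqueness (up to a global sign) of the tuple $(\epsilon_{\boldsymbol{\mathfrak c}})$ then becomes equivalent to the statement that, at every stage of the induction, the relevant space of weight-$(w-k)$ relations is one-dimensional, which I expect to follow from the $W(E_{9-d})$-equivariance together with a dimension count on the spaces of abelian relations of $\boldsymbol{\mathcal W}_{{\rm dP}_d}$.
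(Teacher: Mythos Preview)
First, note that the present paper does \emph{not} prove this theorem: it is quoted from \cite{CP} as background. The only information the paper gives about the proof in \cite{CP} is (see the remark in \S\ref{SS:Arguing-symbolically}) that the identity is established by showing that its \emph{symbol} vanishes in $\wedge^{w}{\bf H}$, where ${\bf H}$ is the space of global logarithmic $1$-forms on ${\rm dP}_d$ with poles along the lines. This is a single finite-dimensional linear-algebra check, not an induction on the weight.

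Your scheme is in the right spirit but contains a genuine gap at the reduction step. When you differentiate $\sum_{\boldsymbol{\mathfrak c}}\epsilon_{\boldsymbol{\mathfrak c}}\,\boldsymbol{A\hspace{-0.05cm}H}^{w}_{\boldsymbol{\mathfrak c}}(\phi_{\boldsymbol{\mathfrak c}})$, you obtain a $1$-form identity whose terms involve the forms $d{\rm Log}(\phi_{\boldsymbol{\mathfrak c}}-s)$ for $s\in\Sigma_{\boldsymbol{\mathfrak c}}$. You then want to ``split'' this into separate weight-$(w-1)$ identities indexed by the singular values. But this splitting is not well-defined: the forms $d{\rm Log}(\phi_{\boldsymbol{\mathfrak c}}-s)$, as $(\boldsymbol{\mathfrak c},s)$ varies, are far from linearly independent --- each singular fiber of $\phi_{\boldsymbol{\mathfrak c}}$ is a pair of lines, so every such form is a $\mathbf Z$-linear combination of the $d{\rm Log}(\ell)$'s for lines $\ell\subset{\rm dP}_d$, and there are many more pairs $(\boldsymbol{\mathfrak c},s)$ than lines. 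Consequently there is no canonical ``collection of identities at weight $w-1$'' produced by one differentiation, and the induction as you describe it does not get off the ground. The symbol approach of \cite{CP} bypasses this by working directly in $\wedge^{w}{\bf H}$: one writes each term $\boldsymbol{A\hspace{-0.05cm}H}^{w}_{\boldsymbol{\mathfrak c}}(\phi_{\boldsymbol{\mathfrak c}})$ as a tensor in ${\bf H}^{\otimes w}$, antisymmetrizes, and checks that the signed sum is zero --- a computation that is insensitive to the linear relations among the $d{\rm Log}(\phi_{\boldsymbol{\mathfrak c}}-s)$'s.

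Two smaller points. Your cardinalities for $\boldsymbol{\mathcal K}$ are off: for $d=6$ one has $\lvert\boldsymbol{\mathcal K}\rvert=3$ (Cauchy's identity has three terms), not $1$; and for $d=2$ the number of conic classes is $126$, not $56$ (you seem to be mixing lines and conic classes). Also, the uniqueness of $(\epsilon_{\boldsymbol{\mathfrak c}})$ up to global sign does not follow from a one-dimensionality statement about intermediate spaces of relations; in \cite{CP} it is a consequence of the fact that the relevant symbol space, viewed as a $W$-representation, contains the signature with multiplicity one.
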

For $d = 6$ and $d = 5$, the ramification sets $\Sigma_{\boldsymbol{\mathfrak c}}$ have cardinalities 2 and 3, respectively, and can therefore be assumed to coincide with 
$\{0,\infty\}$ and 
$\{0,1,\infty\}$, respectively.  With this normalization, 
$\boldsymbol{A\hspace{-0.05cm}H}^1_{\boldsymbol{\mathfrak c}}
$ and $\boldsymbol{A\hspace{-0.05cm}H}^2_{\boldsymbol{\mathfrak c}}$ are just the usual logarithm and Rogers dilogarithm $\boldsymbol{R}$ respectively. 
 The del Pezzo surfaces ${\rm dP}_6$ and ${\rm dP}_5$ are unique (there is no modulus), the corresponding hyperlogarithmic functional identities as well and it is easy to check that they respectively coincide with Cauchy's and Abel's identity: one has 
$$
\big({\bf Hlog}^{1}_{ {\rm dP}_6}\big) \simeq  \big( \boldsymbol{\mathcal C} \big)
\qquad 
\mbox{ and } 
\qquad 
\big({\bf Hlog}^{2}_{ {\rm dP}_5}\big)
\simeq 
\big( \boldsymbol{\mathcal Ab}\big)
$$
(where the symbol $\simeq$ here means a coincidence between two identities up to a  (possibly local) change of coordinates).

Motivated by the question of whether the  `{\it del Pezzo identities  ${\bf Hlog}^{w}_{ {\rm dP}_d}$}' for $w=1,\ldots,6$,  genuinely can be considered as the most natural higher weights generalizations of Abel's relation or not, we carried out a thorough comparison of ${\bf Hlog}^{2}_{ {\rm dP}_5}$ and ${\bf Hlog}^{3}_{ {\rm dP}_4}$ in \cite{PirioAFST}. We did that by 
taking a web geometer perspective. For  $X={\rm dP}_d$ with $d=4,5$, let $\boldsymbol{W}$
 be the corresponding Weyl group acting on ${\bf Pic}(X)$ and let us denote by 
 $\boldsymbol{\mathcal W}_{ {\rm dP}_d }$ the web formed by the pencils of conics on $X$.

The main outcome of \cite{PirioAFST}  is that virtually all the remarkable properties of various kinds  satisfied by the pair $\big( \boldsymbol{\mathcal W}_{ {\rm dP}_5 }, {\bf Hlog}^{2}_{ {\rm dP}_5}\simeq  \boldsymbol{\mathcal Ab}\big)$  
admit natural analogues for $\big( \boldsymbol{\mathcal W}_{ {\rm dP}_4 }, {\bf Hlog}^{3}_{ {\rm dP}_4}\big)$, 
which also hold true. Below is a list of some of the remarkable properties shared by both webs 
$\boldsymbol{\mathcal W}_{ {\rm dP}_5}$
and $\boldsymbol{\mathcal W}_{ {\rm dP}_4}$ (see  \S1.1 and \S1.2 of  \cite{PirioAFST} for further details): 
\begin{itemize}
\item geometric definition as the webs formed by the pencils of conics on a del Pezzo surface; 
\mk
\item all their abelian relations are hyperlogarithmic; the Weyl group 
$\boldsymbol{W}$ acts on this space and when viewed as an abelian relation, the identity $\big({\bf Hlog}^{w}_{ {\rm dP}_d}\big)$ transforms according to the  signature under the action of $\boldsymbol{W}$ and spans all the subspaces of antisymetric hyperlogarithmic ARS by residues/monodromy;
dues and monodromy;
\mk 
\item maximality of the rank together with non linearizability (they are exceptional webs);
\mk 
\item combinatorial characterization by means of the hexagonal subwebs;
\mk 
\item modularity (ie.\,definition by means of a web naturally defined on a moduli space of configurations of points in a projective space); 
\mk 
\item cluster character (ie.\,definition by means of some cluster variables of a cluster algebra); 
\mk 
\item geometric construction \`a la Gelfand-MacPherson.
\mk 
\end{itemize} 

The fact that the two pairs $\big( 
\boldsymbol{\mathcal W}_{ {\rm dP}_5}, 
 \boldsymbol{\mathcal Ab}
\big)$ and $\big( 
\boldsymbol{\mathcal W}_{ {\rm dP}_4}, 
{\bf Hlog}^{3}_{ {\rm dP}_4}
\big)$ share so many similarities is quite striking, and may suggest that {\it `the hyperlogarithmic identity ${\bf Hlog}^{3}_{ {\rm dP}_4}$ is, in some sense, the most natural weight-3 generalization of Abel's identity $ \boldsymbol{\mathcal Ab}\simeq {\bf Hlog}^{2}_{ {\rm dP}_5}$'}. 

However, such a strong claim should be approached with caution. First--and quite evidently--the rigorous meaning of `most natural generalization' is far from being clear or universally accepted. Second, what makes Abel's identity for the dilogarithm particularly compelling is its relevance across many seemingly unrelated areas of mathematics. By contrast, to date, we are not aware of significant applications of ${\bf Hlog}^{3}_{ {\rm dP}_4}$  beyond the theory of functional identities itself and the realm of web geometry.
\begin{center}
\vspace{-0.3cm}
$\star$
\end{center}

But there are more substantial considerations that temper the naive assumption of viewing ${\bf Hlog}^{3}_{ {\rm dP}_4}$ as `the' most natural weight-3 analogue of Abel's identity:
\begin{itemize}
\item[$-$]  the (smooth) del Pezzo quintic surface has no moduli 
hence the dilogarithmic identity ${\bf Hlog}^{2}_{ {\rm dP}_5}$ is genuinely unique. This is not the case for del Pezzo surfaces of degree $d=4$ since the moduli space of these surfaces is of dimension 2. Hence 
there is 
a 2-dimensional family of functional identities ${\bf Hlog}^{3}_{ {\rm dP}_4}$, and no unique/well-defined weight 3 hyperlogarithmic identity. 
The same phenomenon occurs for the hyperlogarithmic identities of higher weights: 
 for each 
 $d=1,2,\ldots,5$, 
 the  ${\bf Hlog}^{7-d}_{ {\rm dP}_{d}}$'s form an irreducible complex analytic family of dimension $2(5-d)$ of functional relations and in each family,  no identity appears as being more canonical or particular than the others;
 \mk
\item[$-$] if ${\bf Hlog}^{2}_{ {\rm dP}_5}$ and ${\bf Hlog}^{3}_{ {\rm dP}_4}$ share an important number of nice features, there is a formal difference between these identities which may make  the latter identity appear as less 
fundamental than the former. Indeed,  if Abel's identity 
$\boldsymbol{\mathcal A}{\bf b}\simeq 
{\bf Hlog}^{2}_{ {\rm dP}_5}$ involves only  one function, namely Rogers dilogarithm $R$, this is not the case for ${\bf Hlog}^{3}_{ {\rm dP}_4}$ since for a generic quartic surface $ {\rm dP}_4$, the ten weight 3 hyperlogarithms $\boldsymbol{A\hspace{-0.05cm}H}^3_{\boldsymbol{\mathfrak c}}$ for 
$\boldsymbol{\mathfrak c}\in \boldsymbol{\mathcal K}$ do not coincide, even up to sign and to precomposition by a projective automorphism;\footnote{As it follows easily from the explicit form for 
${\bf Hlog}^{3}_{ {\rm dP}_4}$ given in \cite[\S4]{CP}, the quotient 
$\big\{ \boldsymbol{A\hspace{-0.05cm}H}^3_{\boldsymbol{\mathfrak c}}
 \, \lvert 
 \,  \boldsymbol{\mathfrak c}\in \boldsymbol{\mathcal K} \, 
 \big\}   \big/ {\sim}$ 
is of cardinal 5 if $\sim$ stands for the following equivalence relation: given  two (germs of) functions $F,G$, one has $F\sim G$ if and only if 
 $\pm F= G\circ \gamma$ for some  projective automorphism $\gamma \in {\rm Aut}(\mathbf P^1)={\bf PGL}_2(\mathbf C)$.}
 \mk 
\item[$-$] the Weyl group ${W}_{ {\rm dP}_d } $ of a del Pezzo surface ${\rm dP}_d$ acts on the space of ARs of  $\boldsymbol{\mathcal W}_{ {\rm dP}_d }$ but there is a major difference between the case $d=5$ and 
$d\in \{1,\ldots,4\}$. 
 The natural group embedding ${\rm Aut}\big({\rm dP}_d\big)\hookrightarrow {W}_{ {\rm dP}_d }$ is an  isomorphism only for $d=5$ hence the Weyl group action on $\boldsymbol{AR}\big( \boldsymbol{\mathcal W}_{ {\rm dP}_d }
\big)$ is not geometric (that is is not induced by automorphisms of the considered del Pezzo surface)  
for any $d\in \{1,\ldots,4\}$;
 \mk 
\item[$-$]  if the identities ${\bf Hlog}^{3}_{ {\rm dP}_4}$ were truly the `right' weight 3 generalizations of Abel's identity, it would be natural to expect the same for  the  identities ${\bf Hlog}^{w}_{ {\rm dP}_d}$'s in higher weights 
 $w=8-d$. It turns out that it does not seem to be the case since some of the most striking  remarkable properties shared by the webs $\boldsymbol{\mathcal W}_{ {\rm dP}_5}$ and 
$\boldsymbol{\mathcal W}_{ {\rm dP}_4 }$ 
are no longer satisfied by the webs $\boldsymbol{\mathcal W}_{ {\rm dP}_d}$ for $d\leq 3$. For instance, by a direct computation, we have verified that 
the curvature\footnote{By definition, this is the sum of the Blaschke-Dubourdieu's curvatures of all of 
the   
$3$-subwebs 
of $\boldsymbol{\mathcal W}_{ {\rm dP}_3}$ (see \cite{CL}).} 
of a 27-web $\boldsymbol{\mathcal W}_{ {\rm dP}_3}$ is non zero, which  implies that this web is not of maximal rank, contrarily to the webs $\boldsymbol{\mathcal W}_{ {\rm dP}_d}$ for $d=4,5$.  
\end{itemize}

\newpage
The above considerations naturally led us to seek a generalization of the relation 
${\mathcal A}{\bf b}\simeq 
{\bf Hlog}^{2}_{ {\rm dP}_5}$ 
 that satisfies the following desirable properties:
\begin{itemize}
  \item {\it It is defined on a space equipped with a natural action of the Weyl group of type $D_5$, and this action operates via automorphisms. Moreover, the looked for generalization behaves coherently and compatibly with respect to this group action.}
  \mk 
  \item {\it It consists of a single, well-defined identity, without any  module entering into the picture.}
  \mk 
  \vspace{-0.4cm}
  \item {\it This identity involves a unique element whose sum of pullbacks under a specific family of maps vanishes identically.}
  \mk 
  \item {\it Moreover, every hyperlogarithmic functional identity ${\bf Hlog}^{3}_{ {\rm dP}_4}$ can be recovered from this single identity, and in a natural, canonical way.}
\end{itemize}

In this paper, we provide a fully explicit formulation of such an identity and investigate several of its noteworthy properties. In particular, we show that it is unique (up to multiplication by a nonzero scalar) and that it satisfies all four of the criteria listed above.

An additional remarkable feature of $
\boldsymbol{\mathcal W}_{ {\rm dP}_5}$ is that it is a `cluster web'.  We further demonstrate that a certain lift of the generalization of $ \boldsymbol{\mathcal W}_{ {\rm dP}_5} $
discussed here also possesses a cluster structur-- albeit with respect to a generalized notion of cluster variety.

In the following sections, we present our main results in greater detail.
\begin{center}
\vspace{-0.3cm}
$\star$
\end{center}

In their interesting paper \cite{GM}, Gelfand and MacPherson describe  a geometric setting which allows them to give a cohomological-analytic construction of Abel's identity. On the purely geometric side,  they show that the del Pezzo quintic surface 
${\rm dP}_5\simeq \overline{\mathcal M}_{0,5}$ together with the five 
fibrations in conics, which correspond to the 
five forgetful morphisms $\overline{\mathcal M}_{0,5}\rightarrow \overline{\mathcal M}_{0,4} \simeq \mathbf P^1$,  are equivariant quotients, under the  action 
of the Cartan torus $H_{A_4}$ of ${\rm GL}_5(\mathbf C)$, 
of the grassmannian $G_2(\mathbf C^5)$ and of the five natural rational maps 
$G_2(\mathbf C^5)\dashrightarrow G_2\big(\mathbf C^5/\langle e_i\rangle\big)\simeq G_2(\mathbf C^4)$,  induced for each $i\in \{1,\ldots,5\}$ by the linear projection $\mathbf C^5\rightarrow 
\mathbf C^5/\langle e_i\rangle$ onto the quotient of  $\mathbf C^5$ by the line
spanned by the $i$-th element of the  canonical basis $(e_k)_{k=1}^5$ of $\mathbf C^5$.

In view of generalizing Gelfand and MacPherson approach 
to the webs $\boldsymbol{\mathcal W}_{{\rm dP}_d}$ for any $d\in \{1,2,\ldots,4\}$, we remarked in \cite{PirioAFST} that there is an intrinsic way to recover $G_2(\mathbf C^5)$ from  $X_4={\rm dP}_5\simeq \overline{\mathcal M}_{0,5}$: the former variety is the (projective) Cox variety of the latter.  

Let ${\rm dP}_d$ be a fixed smooth del Pezzo surface  of degree $d\in \{2,\ldots,5\}$. We will also denote it by $X_r$ to emphasize that it can be obtained as the blow-up of $\mathbf P^2$ at $r=9-d$ points in general position.  
We denote by $\boldsymbol{\mathcal L}$ the set of lines contained in $X_r$
 and by $E_r$ the Dynkin type of the considered del Pezzo surface. The associated Weyl group acting transitively by permutations on $\boldsymbol{\mathcal L}$ will be denoted by $\boldsymbol{W}_r$. 
 
 Then we have the following  (see \cite[\S4.5.2]{PirioAFST} for more details and references): \sk
 \begin{itemize}
  \item there is a projective Cox variety $\mathbf P(X_r)$ which is acted upon by a rank $r$ torus $\mathcal T_{\rm NS}= {\rm T}_{\rm NS}/\mathbf C^*\simeq \big(  \mathbf C^*
\big)^r  
  $ which is the quotient of the Neron-Severi torus 
  ${\rm T}_{\rm NS}={\rm Hom}_{\mathbf Z}\big( {\bf Pic}(X_r),\mathbf C^* \big)$  by a one-parameter subgroup associated to the (anti)canonical class 
 of $X_r$;
 \mk 
    \item the Cox ring of $X_r$ is generated by the lines contained in it.\footnote{More rigorously, the Cox ring of $X_r$ 
is generated by any set  $\{ \sigma_\ell\}_{ \ell\in \boldsymbol{\mathcal L} } $ where for any line $\ell \subset X_r$,  $\sigma_\ell$ is a non zero element of
${\bf H}^0\big(X_r, \mathcal O_{ X_r } (\ell) 
\big)\simeq \mathbf C$.} Consequently,  the projective Cox variety ${\mathbf P}(X_r)$ embeds into the projective space $\mathbf P\big(\mathbf C^{\boldsymbol{\mathcal L}} \big)$; 
\mk   
 \item from some results by 
several authors (Popov, Batyrev-Popov, Derenthal, Serganova-Skoro\- bogatov)  it follows that  $\mathbf C^{\boldsymbol{\mathcal L}}$ naturally is a minuscule representation of a simple complex Lie group $\boldsymbol{G}_r$ of type $E_r$. 
The weights of this representation identify with the lines included in $X_r$ and the 
Weyl group $\boldsymbol{W}_r$ acts transitively on their set;
\mk   
 \item viewed as contained in $\mathbf P\big(\mathbf C^{\boldsymbol{\mathcal L}} \big)$, the Cox variety ${\mathbf P}(X_r)$ actually is a subvariety of 
the associated minuscule homogeneous space $\boldsymbol{\mathcal G}_r=\boldsymbol{G}_r/\boldsymbol{P}_r \subset \mathbf P\big(\mathbf C^{\boldsymbol{\mathcal L}} 
\big) $, where $\boldsymbol{P}_r$ is a suitable maximal parabolic subgroup of $\boldsymbol{P}_r$; 
\mk 
 \item moreover, up to a natural isomorphism $\mathcal T_{\rm NS}\simeq \boldsymbol{H}_r$ between the torus acting on  ${\mathbf P}(X_r)$ and the Cartan torus $\boldsymbol{H}_r$ of $\boldsymbol{G}_r$, the 
map $\mathbf P(X_r)\hookrightarrow \boldsymbol{\mathcal G}_r \subset \mathbf P\big(\mathbf C^{\boldsymbol{\mathcal L}} \big)$ turns out to be a torus-equivariant embedding; 
\mk 
 \item  as shown by Skorobogatov in \cite{Skorobogatov}, there exists a Zariski open subset $\boldsymbol{\mathcal G}_r^{sf}\subset \boldsymbol{\mathcal G}_r$ whose complement has codimension at least $2$, on which the action of $H_r$ is sufficiently well-behaved to define a geometric quotient  $\boldsymbol{\mathcal Y}_r=\boldsymbol{\mathcal G}_r^{sf}/\boldsymbol{H}_r$. This quotient is a quasi-projective variety. 
Moreover: \sk 
 \begin{enumerate}
 \item[$-$] the linear action of the Weyl group $\boldsymbol{W}_r$ on $\mathbf C^{\boldsymbol{\mathcal L}}$ gives rise to an isomorphism $\boldsymbol{W}_r \simeq 
 {\rm Aut}\big( \boldsymbol{\mathcal Y}_r\big)$ (see  \cite[Theorem 2.2]{Skorobogatov});
 \sk 
 \item[$-$] there is a natural embedding $F_{\hspace{-0.03cm}S\hspace{-0.04cm}S} : X_r \hookrightarrow 
 \boldsymbol{\mathcal Y}_r$  inducing an isomorphism of the Picard lattices $F_{\hspace{-0.03cm}S\hspace{-0.04cm}S}^* : {\bf Pic}_{\mathbf Z}( \boldsymbol{\mathcal Y}_r)\simeq {\bf Pic}_{\mathbf Z}( X_r)$
 and making commutative the following diagram: 
\begin{equation}
\label{Eq:Embedding-SS}
  \xymatrix@R=1cm@C=0.4cm{ 
\mathbf P(X_r)  \,  
\ar@{^{(}->}[rrrrr]
\ar@{->}[d]
   &  &&&    &
\ar@{->}[d]   \boldsymbol{\mathcal G}_r  &
\hspace{-0.7cm}
 \subset \mathbf P \big( \mathbf C^{\boldsymbol{\mathcal L} } \big)
 \\
 X_r   \ar@{->}[rrrrr]^{F_{\hspace{-0.03cm}S\hspace{-0.03cm}S} \hspace{0.4cm} }
  &  &&& & \boldsymbol{\mathcal Y}_r \, .
  }
\end{equation}
 \end{enumerate}
 \end{itemize}
 
 The interest of the material above is that it allows to construct the web $\boldsymbol{\mathcal W}_{ {\rm dP}_d}$, for an arbitrary del Pezzo surface,  from a unique web on  $\boldsymbol{\mathcal Y}_r $  obtained as the quotient of a $\boldsymbol{H}_r$-equivariant web naturally defined  on 
 $ \boldsymbol{\mathcal G}_r $ and induced by linear projection on $\mathbf C^{\boldsymbol{\mathcal L}}$.  Indeed, denoting by $\mathfrak h_{\mathbf R}$ the Lie algebra of the real part of $\boldsymbol{H}_r$ and by $\boldsymbol{H}_r^{+}\simeq ( \mathbf R_{>0})^r$ the `positive part' of the latter, we have the following : 
\begin{itemize}
 \item there is a (real analytic) moment map $\mu :  \boldsymbol{\mathcal G}_r  \rightarrow 
 \mathfrak h_{\mathbf R}^*$ whose image is the associated `minuscule weight polytope' $\Delta_{r}=\Delta_{\boldsymbol{G}_r, \boldsymbol{P}_r}$, that is the convex envelope of the weights of the minuscule representation $\mathbf C^{\boldsymbol{\mathcal L}}$; 
 \sk 
 \item  for $\zeta$  generic, that is in a certain dense Zariski open subset $\boldsymbol{\mathcal G}_r^\circ $, the moment map induces an isomorphism between the positive orbit $\boldsymbol{H}_r^{+}\cdot \zeta$ and the interior 
 $\mathring{\Delta}_{r}$ of the weight polytope, which extends to an isomorphism of real analytic manifolds with corners $\mu : \overline{\boldsymbol{H}_r^{+}\cdot \zeta}\simeq \Delta_{r}$; 
 \sk 
 \item
 \vspace{-0.35cm}
   for each facet (that is a face of codimension 1) $F$ of $\Delta_{r}$, let 
 $\boldsymbol{\mathcal L}_F$ be the set of lines belonging to $F$ (in other terms 
$ \boldsymbol{\mathcal L}_F$ is the set of vertices of $F$) and let $\Pi_F: \mathbf C^
{\boldsymbol{\mathcal L}} \rightarrow \mathbf C^{\boldsymbol{\mathcal L}_F}$ be the linear projection associated to the inclusion $\boldsymbol{\mathcal L}_F\subset \boldsymbol{\mathcal L}$ (i.e. ${\rm Ker}\big(\Pi_F \big)=  
\mathbf C^
{\boldsymbol{\mathcal L} \setminus {\boldsymbol{\mathcal L}_F}}$);
 \mk 
 \item  there exists a quotient $\boldsymbol{G}_F$ of the subgroup of $G$ letting invariant 
 the decomposition in direct sum $\mathbf C^
{\boldsymbol{\mathcal L}} =   \mathbf C^{\boldsymbol{\mathcal L}_F}
\oplus
\mathbf C^
{\boldsymbol{\mathcal L} \setminus {\boldsymbol{\mathcal L}_F}}
$ and a homogeneous projective space $\boldsymbol{\mathcal G}_F= \boldsymbol{G}_F/\boldsymbol{P}_F\subset \mathbf P\big( 
\mathbf C^{\boldsymbol{\mathcal L}_F}
\big)$ (for a certain parabolic subgroup $\boldsymbol{P}_F$ constructed from $P$) 
such that $(i).$ one has $ \boldsymbol{\mathcal G}_F=\boldsymbol{\mathcal G}_r \cap 
\mathbf P\big(\mathbf C^
{\boldsymbol{\mathcal L}}\big)$; and 
 $(ii).$  the restriction to $\boldsymbol{\mathcal G}_r$ of the (projectivization of the) linear projection $\Pi_F$ gives rise to a dominant rational map $
\boldsymbol{\mathcal G}_r\dashrightarrow \boldsymbol{\mathcal G}_F$, again denoted by $\Pi_F$; 
 \sk 
 \item  let $\boldsymbol{H}_F$ be the Cartan torus of  $\boldsymbol{G}_F$. 
 There is a natural epimorphism of tori $\boldsymbol{H}\rightarrow \boldsymbol{H}_F$ 
 with respect to which $\Pi_F: \boldsymbol{\mathcal G}_r\dashrightarrow \boldsymbol{\mathcal G}_F$ is equivariant. It follows that there exists a dominant rational map $\pi_F$  from  
$ \boldsymbol{\mathcal Y}_r $ onto the quotient $
  \boldsymbol{\mathcal Y}_F=\boldsymbol{\mathcal G}_F^{sf} /\boldsymbol{H}_F$ such that the following diagram commutes: 
 \begin{equation}
\label{Eq:pi-F}
\xymatrix@R=0.5cm@C=1.3cm{
\boldsymbol{\mathcal G}_r  \ar@{->}[d]
 \ar@{-->}[r]^{\Pi_F }   
& \ar@{->}[d]
 \boldsymbol{\mathcal G}_F
&
\hspace{-1.5cm}
 \subset \mathbf P \Big( \mathbf C^{\boldsymbol{\mathcal L}_F } \Big) 
  \\
\boldsymbol{\mathcal Y}  \ar@{-->}[r]^{\pi_F }   
& \boldsymbol{\mathcal Y}_F\, ; }
\end{equation}
 \sk 
 \item  for any conic class $\boldsymbol{\mathfrak c}\in \boldsymbol{\mathcal K}$ on $X_r$, there exists a uniquely determined facet $F_{\boldsymbol{\mathfrak c}}$ of 
 $\Delta_r$ such that gluing the two  diagrams \eqref{Eq:Embedding-SS} and \eqref{Eq:pi-F} gives the following commutative one
\begin{equation*}
  \xymatrix@R=1cm@C=0.2cm{ 
\mathbf P(X_r)  \,  
\ar@{^{(}->}[rrrrr]
\ar@{->}[d]
   &  &&&    &
\ar@{->}[d]   \boldsymbol{\mathcal G}_r 
\ar@{-->}[rrr]^{\Pi_{F_{\boldsymbol{\mathfrak c}}}}
 & & & \boldsymbol{\mathcal G}_{F_{\boldsymbol{\mathfrak c}}}\ar@{->}[d] 
 \\
 X_r   \ar@{->}[rrrrr]^{F_{S\hspace{-0.02cm}S}  \hspace{0.4cm} }
  &  &&& & \boldsymbol{\mathcal Y}_r 
 \ar@{-->}[rrr]^{\pi_{F_{\boldsymbol{\mathfrak c}}}}
 & & & \boldsymbol{\mathcal Y}_{F_{\boldsymbol{\mathfrak c}}} 
  \, .
  }
\end{equation*}
which is such that the composition of the rational maps of the bottom line  coincides with the conic fibration $\phi_{ \boldsymbol{\mathfrak c} }$ associated to $\boldsymbol{\mathfrak c}$: as rational maps on $X_r$, one has $\phi_{ \boldsymbol{\mathfrak c} } = \pi_{ F_{\boldsymbol{\mathfrak c}} }\circ 
 f_{S\hspace{-0.04cm}S}$.
 \sk 
\end{itemize}

At this point, one can define the {\it `Gelfand-MacPherson's webs} 
$\boldsymbol{\mathcal W}^{GM}_{{}^{} \hspace{-0.05cm} \boldsymbol{\mathcal G}_r  }$ and $\boldsymbol{\mathcal W}^{GM}_{{}^{} \hspace{-0.05cm} \boldsymbol{\mathcal Y}_r  }$, which are respectively the (generalized) web on 
$ \boldsymbol{\mathcal G}_r $ and $ \boldsymbol{\mathcal Y}_r $,  induced by the rational maps ${\Pi_{F_{\boldsymbol{\mathfrak c}}}}$ and ${\pi_{F_{\boldsymbol{\mathfrak c}}}}$  for $\boldsymbol{\mathfrak c}$ ranging in the set 
$\boldsymbol{\mathcal K}$ of all conic classes of $X_r$: one has 
$$
\boldsymbol{\mathcal W}^{GM}_{{}^{} \hspace{-0.05cm} \boldsymbol{\mathcal G}_r  }
= \boldsymbol{\mathcal W}\Big( \, {\Pi_{F_{\boldsymbol{\mathfrak c}}}} \, \big\lvert \, 
\boldsymbol{\mathfrak c} \in \boldsymbol{\mathcal K}\, 
\Big)
\qquad \mbox{ and } 
\qquad 
\boldsymbol{\mathcal W}^{GM}_{{}^{} \hspace{-0.05cm} \boldsymbol{\mathcal Y}_r  }
 = \boldsymbol{\mathcal W}\Big( \, {\pi_{F_{\boldsymbol{\mathfrak c}}}} \, \big\lvert \, 
\boldsymbol{\mathfrak c} \in \boldsymbol{\mathcal K}\, 
\Big)\; . 
$$
From the last two statements in the list above, we deduce that $\boldsymbol{\mathcal W}^{GM}_{{}^{} \hspace{-0.05cm} \boldsymbol{\mathcal Y}_r  }$ can be seen as the quotient of the web $\boldsymbol{\mathcal W}^{GM}_{{}^{} \hspace{-0.05cm} \boldsymbol{\mathcal G}_r  }$ which is $\boldsymbol{H}_r$-equivariant, and also that 
del Pezzo's web $\boldsymbol{\mathcal W}_{ {\rm dP}_d }$ is the pull-back 
 of the web $\boldsymbol{\mathcal W}^{GM}_{{}^{} \hspace{-0.05cm} \boldsymbol{\mathcal Y}_r  }$ under Skorobogatov-Serganova's embedding $ F_{S\hspace{-0.04cm}S}
 : {\rm dP}_d=X_r\rightarrow \boldsymbol{\mathcal Y}_r$: 
one has 
\begin{equation}
\boldsymbol{\mathcal W}_{ {\rm dP}_d }= 
F_{\hspace{-0.02cm}S\hspace{-0.04cm}S}^*
\Big( 
\boldsymbol{\mathcal W}^{GM}_{{}^{} \hspace{-0.05cm} \boldsymbol{\mathcal Y}_r  }
\Big)\, . \footnotemark
\end{equation}
\footnotetext{This is proved by direct computations for $d\in \{2,\ldots,4\}$ (Maple worksheets of these computations are available under request). For more details in the case when $d=4$ (which is equivalent to $r=5$), see \cite[Prop.\,4.16]{PirioAFST}.}

In this paper, we focus on the webs by conics of del Pezzo quartic surfaces and their relations to the Gelfand-MacPherson web $\boldsymbol{\mathcal W}^{GM}_{{}^{} \hspace{-0.05cm} \boldsymbol{\mathcal Y}_5  }$. More precisely, 
since the  $\boldsymbol{\mathcal W}_{  \hspace{-0.05cm} {\rm dP}_4}$'s can all be obtained from 
Gelfand-MacPherson web 
 $ \boldsymbol{\mathcal W}^{GM}_{
\boldsymbol{\mathcal Y}_5
}$, 
  it is not unreasonable to ask whether this latter web cannot be seen as a  more natural generalization of $\boldsymbol{\mathcal W}_{  \hspace{-0.05cm}{\rm dP}_5}\simeq 
 \boldsymbol{\mathcal W}^{GM}_{ \hspace{-0.05cm}
\boldsymbol{\mathcal Y}_4}$ than the $\boldsymbol{\mathcal W}_{  \hspace{-0.05cm}{\rm dP}_4}$'s. 
\begin{center}
\vspace{-0.15cm}
$\star$
\end{center}

  The web $ \boldsymbol{\mathcal W}^{GM}_{ \hspace{-0.05cm}
\boldsymbol{\mathcal Y}_5}$ is defined on the 
variety $\boldsymbol{\mathcal Y}_5$, which is rational and of dimension 5, by means of 10 rational first integrals 
$\psi_i^{\epsilon}: \boldsymbol{\mathcal Y}_5\dashrightarrow  \mathbf P^2$ with $i=1,\ldots,5$ and $\epsilon=\pm $ (here we use notations similar to those of \cite{PirioAFST}).  The main theme of the present paper is the study of the $k$-abelian relations of $ \boldsymbol{\mathcal W}^{GM}_{ \hspace{-0.05cm}
\boldsymbol{\mathcal Y}_5}$ for $k=0,1,2$ and how these are related to the abelian relations of a given $\boldsymbol{\mathcal W}_{  \hspace{-0.05cm}{\rm dP}_4}$.  
 Recall that a $k$-abelian relation (ab.\,$k$-AR) for $ \boldsymbol{\mathcal W}^{GM}_{ \hspace{-0.05cm} \boldsymbol{\mathcal Y}_5}$, is a 10-tuple of $k$-forms $\big( \eta_{i}^\epsilon\big)_{i,\epsilon}$ such that 
$\sum_{i,\epsilon} \big(\psi_i^{\epsilon}\big)^* \big( 
\eta_{i}^\epsilon \big)=0$  (possibly just locally) on $\boldsymbol{\mathcal Y}_5$.
They form a vector space which we denote by $\boldsymbol{AR}^{k}\big( 
 \boldsymbol{\mathcal W}^{GM}_{ \hspace{-0.05cm}
\boldsymbol{\mathcal Y}_5}\big)$, and whose dimension ${\rm rk}^k\big( 
 \boldsymbol{\mathcal W}^{GM}_{ \hspace{-0.05cm}
\boldsymbol{\mathcal Y}_5}\big)$ is the {\it `$k$-rank'} of 
$\boldsymbol{\mathcal W}^{GM}_{ \hspace{-0.05cm}
\boldsymbol{\mathcal Y}_5}$.  
Although $ \boldsymbol{\mathcal W}^{GM}_{ \hspace{-0.05cm}
\boldsymbol{\mathcal Y}_5}$ is a `web' only in a generalized sense, 
a similar approach to the one described in \cite[\S1.3.4]{ClusterWebs} can be applied to it and  one can define the {\it `virtual $k$-rank'} $\rho^{k}(\boldsymbol{\mathcal W})$ of any subweb $\boldsymbol{\mathcal W}$ of $ \boldsymbol{\mathcal W}^{GM}_{ \hspace{-0.05cm}
\boldsymbol{\mathcal Y}_5}$, this for $k\in \{0,1,2\}$. An interesting fact is that all the virtual ranks of $ \boldsymbol{\mathcal W}^{GM}_{ \hspace{-0.05cm}
\boldsymbol{\mathcal Y}_5}$ are finite (see Proposition \ref{Prop:Virtual-Ranks} further) and in this paper we will describe the spaces of $k$-ARs  of the web $ \boldsymbol{\mathcal W}^{GM}_{ \hspace{-0.05cm} \boldsymbol{\mathcal Y}_5}$ for $k=0,1, 2$, see from \S3.2 to \S3.4 (the two tables in \S\ref{SS:pictural} provide a concise summary of the results obtained in these three subsections). 
\mk

The most interesting case is that of 2-ARs of $\boldsymbol{\mathcal W}^{GM}_{ \hspace{-0.05cm}
\boldsymbol{\mathcal Y}_5}$ and how they give rise to 1-ARs for any del Pezzo web 
$\boldsymbol{\mathcal W}_{  \hspace{-0.05cm}{\rm dP}_4}$. 
In order to state our main result about the 2-ARs, introducing some terminology will be useful. For  $\psi$ standing for one of the first integrals  
$\psi_i^\epsilon$ of $ \boldsymbol{\mathcal W}^{GM}_{ \hspace{-0.05cm}
\boldsymbol{\mathcal Y}_5}$, we denote by 
\begin{itemize}
\item 
$\mathbf C(\psi)$ the subalgebra of 
$\mathbf C(\boldsymbol{\mathcal Y}_5)$ formed by compositions $f\circ \psi$ with $f \in\mathbf C( \mathbf P^2)$;\sk
\item ${\rm Log}\mathbf C(\psi)$ the family of multivalued functions on $\boldsymbol{\mathcal Y}_5$ of the form  ${\rm Log}( \phi)$ with $\phi \in \mathbf C( \psi)$;
\sk
\item $d{\rm Log}\mathbf C(\psi)$ the space of $\psi$-logarithmic differential 1-forms, that is of rational 1-forms on 
$\boldsymbol{\mathcal Y}_5$ of the form  $d{\rm Log}( \phi)=
d\phi/ \phi$ with $\phi  \in \mathbf C( \psi)$.
\end{itemize}

\label{Page:tables}

The following theorem gathers some of the most interesting results obtained in this paper: 
\begin{thm} 
\label{THM:WdP5-WGMS5-similarities}
%
%
\begin{enumerate}
\item[]  \hspace{-1.2cm} {\rm 1.} One has  $\rho^{2}\Big(
\boldsymbol{\mathcal W}^{GM}_{ \hspace{-0.05cm}
\boldsymbol{\mathcal Y}_5}
\Big)= 11$ 
and $\rho^{2}\big(
\boldsymbol{\mathcal W}
\big)\leq 1$  for every 5-subweb  $\boldsymbol{\mathcal W}$ of $\boldsymbol{\mathcal W}^{GM}_{ \hspace{-0.05cm}
\boldsymbol{\mathcal Y}_5}$. 
\mk 
\item[2.] Among all the 
 5-subwebs of $\boldsymbol{\mathcal W}^{GM}_{ \hspace{-0.05cm}
\boldsymbol{\mathcal Y}_5}$, exactly 16 have virtual 2-rank equal to 1. These are the subwebs $\boldsymbol{\mathcal W}^{\underline{\epsilon}}=\boldsymbol{\mathcal W}\big(\, \psi_1^{\epsilon_1}\, , \, \ldots,  
\psi_5^{\epsilon_5}\,\big)$ for the sixteen 5-tuples $\underline{\epsilon}=(\epsilon_i)_{i=1}^5\in \{\pm 1\}$
such that $p(\underline{\epsilon})=\#\,\{ i
 \, \lvert \, \epsilon_i=1\,\}$ is odd.\footnote{This has to be compared with the description of Bol's subwebs of 
$\boldsymbol{\mathcal W}_{  \hspace{-0.05cm} {\rm dP}_4}$ given in 
\cite[\S4.3]{PirioAFST}.} Each such subweb $\boldsymbol{\mathcal W}^{\underline{\epsilon}}$ actually  has maximal rank 1, with 
$\boldsymbol{AR}^{2}\big( \boldsymbol{\mathcal W}^{\underline{\epsilon}}\big)$
spanned by 
a
 2-AR 
${\bf LogAR}^{\underline{\epsilon}}$ which is complete, irreducible and logarithmic, in the sense  that the $\psi_i^{\epsilon_i}$-th component of 
 ${\bf LogAR}^{\underline{\epsilon}}$
belongs to $d{\rm Log}\mathbf C(\psi_i^{\epsilon_i})\wedge d{\rm Log}\mathbf C(\psi_i^{\epsilon_i})$  for every $i=1,\ldots,5$. 
\mk
\item[3.]  The ${\bf LogAR}^{\underline{\epsilon}}$'s for all odd 5-tuples 
$\underline{\epsilon}$'s span 
a vector space denoted by 
$\boldsymbol{AR}^{2}_C\Big( 
 \boldsymbol{\mathcal W}^{GM}_{ \hspace{-0.05cm}
\boldsymbol{\mathcal Y}_5}\Big)$ and called the space of `combinatorial ARs' of 
$\boldsymbol{\mathcal W}^{GM}_{ \hspace{-0.05cm}
\boldsymbol{\mathcal Y}_5}$.  Moreover, one has 
$$
{\rm rk}^{2}_C
\Big( 
 \boldsymbol{\mathcal W}^{GM}_{ \hspace{-0.05cm}
\boldsymbol{\mathcal Y}_5}\Big)=\dim\, 
\boldsymbol{AR}^{2}_C\Big( 
 \boldsymbol{\mathcal W}^{GM}_{ \hspace{-0.05cm}
\boldsymbol{\mathcal Y}_5}\Big)
=\rho^{2}\Big(
\boldsymbol{\mathcal W}^{GM}_{ \hspace{-0.05cm}
\boldsymbol{\mathcal Y}_5}
\Big)-1=10
\,.
$$
\item[4.] There exists a 2-AR of $\boldsymbol{\mathcal W}^{GM}_{ \hspace{-0.05cm}
\boldsymbol{\mathcal Y}_5}$, denoted by ${\bf HLOG}_{ 
\boldsymbol{\mathcal Y}_5}$, which is 
complete, irreducible and 
 whose components are `dilogarithmic' in the sense that  
  for any 
 first integral $\psi_i^\epsilon$ of $\boldsymbol{\mathcal W}^{GM}_{ \hspace{-0.05cm}
\boldsymbol{\mathcal Y}_5}$, 
 the $\psi_i^\epsilon$-th component of 
 ${\bf HLOG}_{ \boldsymbol{\mathcal Y}_5 }$
  belongs to 
 ${\rm Log}\mathbf C(\psi_i^\epsilon)\,d{\rm Log}\mathbf C(\psi_i^\epsilon)\wedge  d{\rm Log}\mathbf C(\psi_i^\epsilon)$.\sk 
 
  Moreover, ${\bf HLOG}_{ \boldsymbol{\mathcal Y}_5 }$ is unique up to multiplication 
by a non-zero scalar and   
  one has 
\begin{equation}
\label{Eq:Decomp}
\boldsymbol{AR}^{2}\Big( 
 \boldsymbol{\mathcal W}^{GM}_{ \hspace{-0.05cm}
\boldsymbol{\mathcal Y}_5}\Big)=
\boldsymbol{AR}^{2}_C\Big( 
 \boldsymbol{\mathcal W}^{GM}_{ \hspace{-0.05cm}
\boldsymbol{\mathcal Y}_5}\Big)\oplus 
\Big\langle\,
{\bf HLOG}_{ \boldsymbol{\mathcal Y}_5 }
\, 
\Big\rangle\, , 
\end{equation}
which implies $
{\rm rk}^{2}
\Big( 
 \boldsymbol{\mathcal W}^{GM}_{ \hspace{-0.05cm}
\boldsymbol{\mathcal Y}_5}\Big)
=\rho^{2}\Big(
\boldsymbol{\mathcal W}^{GM}_{ \hspace{-0.05cm}
\boldsymbol{\mathcal Y}_5}
\Big)=11
$: the web $\boldsymbol{\mathcal W}^{GM}_{ \hspace{-0.05cm}
\boldsymbol{\mathcal Y}_5}$  has maximal 2-rank.\footnote{It would be more rigorous to  state this as {\it `the 2-rank of $\boldsymbol{\mathcal W}^{GM}_{ \hspace{-0.05cm}
\boldsymbol{\mathcal Y}_5}$ is AMP'}, using the terminology introduced in \cite[\S1.3.5]{ClusterWebs}.}
\mk 
\item[5.] One can associate a divisor $\boldsymbol{\mathcal D}_w$ of 
$\boldsymbol{\mathcal Y}_5$ to each weight $w$ of the minuscule half-spin representation $S_5^+$. Then  the 2-ARs of $\boldsymbol{\mathcal W}^{GM}_{ \hspace{-0.05cm}
\boldsymbol{\mathcal Y}_5}$ are regular on 
 the complement in $\boldsymbol{\mathcal Y}_5$ of the  union of all the $\boldsymbol{\mathcal D}_w$'s which coincides with $\boldsymbol{\mathcal Y}_5^\times $. Moreover, the sixteen ARs ${\bf LogAR}^{\underline{\epsilon}}$'s are exactly the logarithmic ARs obtained by considering the residues 
of ${\bf HLOG}_{ \boldsymbol{\mathcal Y}_5 }$ along the $\boldsymbol{\mathcal D}_w$'s: for any 
odd 5-tuple $\underline{\epsilon}$, there exists a uniquely defined weight $w(\underline{\epsilon})$ such that,  up to multiplication by a non-zero constant, one has 
$${\rm Res}_{ \boldsymbol{\mathcal D}_{w(\underline{\epsilon})} }\Big( 
{\bf HLOG}_{ \boldsymbol{\mathcal Y}_5 }
\Big) = {\bf LogAR}^{\underline{\epsilon}}\, .$$
\item[6.] The action of the Weyl group $W_{D_5}$ 
by automorphisms on $\boldsymbol{\mathcal Y}_5$ ({\it cf.}\,\cite[Theorem 2.2]{Skorobogatov}{\rm )} gives rise to a linear 
$W_{D_5}$-action  on $
\boldsymbol{AR}^{2}\big( 
 \boldsymbol{\mathcal W}^{GM}_{ \hspace{-0.05cm}
\boldsymbol{\mathcal Y}_5}\big)$ which lets invariant the decomposition in direct sum \eqref{Eq:Decomp}. This decomposition actually is the one into $W_{D_5}$-irreducibles: the irrep associated to the 1-dimensional component 
$\big\langle\,
{\bf HLOG}_{ \boldsymbol{\mathcal Y}_5 }
\, 
\big\rangle$ is the signature representation whereas $\boldsymbol{AR}^{2}_C\Big( 
 \boldsymbol{\mathcal W}^{GM}_{ \hspace{-0.05cm}
\boldsymbol{\mathcal Y}_5}\Big)$ is the $W_{D_5}$-irreducible module $V^{10}_{[11,111]}$.\footnote{Seeing it as  a decomposition in $W_{D_5}$-irreducibles,  
\eqref{Eq:Decomp},
must be compared with some results given in \S\ref{SS:WGM-+-Bol's-Web}: in some sense which could be made precise (cf. Proposition \ref{Prop:W+-B}), 
$\boldsymbol{AR}^{2}_C\big( 
 \boldsymbol{\mathcal W}^{GM}_{ \hspace{-0.05cm}
\boldsymbol{\mathcal Y}_5}\big)$ corresponds to ${\bf HLogAR}^2_{\rm asym}$ 
and ${\bf HLOG}^2$ to ${\bf HLog}^3$.}
\mk
\item[7.] For any del Pezzo surface ${\rm dP}_4$, using Serganova-Skorobogatov embedding $f_{S\hspace{-0.05cm} S}: {\rm dP}_4 \hookrightarrow \boldsymbol{\mathcal Y}_5$ (cf.\,\eqref{Eq:Embedding-SS} above), 
the weight 3 hyperlogarithmic abelian relation ${\bf HLog}^3_{  \hspace{0.05cm} {\rm dP}_4}$ 
of $\boldsymbol{\mathcal W}_{  \hspace{-0.05cm} {\rm dP}_4}$ 
(resp.\,the sixteen elements of ${\bf HLogAR}^2_{\rm asym}$
 equivalent to ${\bf HLog}^2\simeq  \boldsymbol{\mathcal Ab}$)  
can be obtained in a natural way from 
${\bf HLOG}_{ \boldsymbol{\mathcal Y}_5 } $ (resp.\,from the sixteen logarithmic 2-abelian relations ${\bf LogAR}^{\underline{\epsilon}}\in \boldsymbol{AR}^{2}_C\big( 
 \boldsymbol{\mathcal W}^{GM}_{ \hspace{-0.05cm}
\boldsymbol{\mathcal Y}_5}\big)${\rm )} by means of residues. 
\end{enumerate}
\end{thm}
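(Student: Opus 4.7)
The plan is to reduce everything to explicit calculations in one affine chart on $\boldsymbol{\mathcal Y}_5$, and then package the results through the $W_{D_5}$-equivariance and the residue maps along the boundary divisors. First I would fix a birational parametrization $\mathbf C^5\dashrightarrow \boldsymbol{\mathcal Y}_5$ coming from the torus quotient of an affine chart of the spinor tenfold $\mathbb S_5$, and use the explicit Plücker-type coordinates on $\mathbf C^{\boldsymbol{\mathcal L}}$ (with $\boldsymbol{\mathcal L}$ the sixteen weights of $S_5^+$) to write down the ten rational first integrals $\psi_i^\epsilon:\boldsymbol{\mathcal Y}_5\dashrightarrow \mathbf P^2$ as quotients of four linear forms in these coordinates. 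Once this is in place, every statement of the theorem becomes either a finite linear-algebra computation or a finite character-theoretic check, naturally carried out in a computer algebra system.

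For parts 1-3, the plan is to use the general formula for the virtual $k$-rank (\cite[\S1.3.4]{ClusterWebs}) applied to 5-subwebs $\boldsymbol{\mathcal W}\subset \boldsymbol{\mathcal W}^{GM}_{\hspace{-0.05cm}\boldsymbol{\mathcal Y}_5}$: expand a generic element of $\bigoplus_{i,\epsilon}(\psi_i^\epsilon)^*\Omega^2_{\mathbf P^2}$ as a rational 2-form on $\boldsymbol{\mathcal Y}_5$ and compute the kernel of the evaluation map, which for each subweb is a finite matrix over a polynomial ring. By running this over the $\binom{10}{5}=252$ five-subwebs I expect to find exactly the sixteen ``odd'' families $\boldsymbol{\mathcal W}^{\underline\epsilon}$, after which the unique (up to scalar) virtual 2-AR of each such $\boldsymbol{\mathcal W}^{\underline\epsilon}$ is constructed by taking products of pairs $d\mathrm{Log}\,u_i^{\underline\epsilon}\wedge d\mathrm{Log}\,v_i^{\underline\epsilon}$, where $u_i^{\underline\epsilon},v_i^{\underline\epsilon}\in \mathbf C(\psi_i^{\epsilon_i})$ are cross-ratio-like coordinates suggested by the weight combinatorics. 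Verifying $\dim \boldsymbol{AR}^{2}_C=10$ then amounts to showing that the $16\times (\text{column-dimension})$ matrix whose rows are the ${\bf LogAR}^{\underline\epsilon}$ has rank $10$, which will fall out naturally since $\rho^2(\boldsymbol{\mathcal W}^{GM}_{\hspace{-0.05cm}\boldsymbol{\mathcal Y}_5})=11$ (computed by the same linear-algebra procedure on the full web) and one linear relation among the ${\bf LogAR}^{\underline\epsilon}$'s persists.

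The heart of the argument lies in part 4. The plan is to write down an explicit dilogarithmic candidate $\boldsymbol{\mathcal H}$, built by taking on each $\mathbf P^2$ a natural weight-2 hyperlogarithmic 2-form of the shape $\mathrm{Log}(p_i^\epsilon)\,d\mathrm{Log}(q_i^\epsilon)\wedge d\mathrm{Log}(r_i^\epsilon)$, with $p_i^\epsilon,q_i^\epsilon,r_i^\epsilon$ cross-ratios dictated by the $W_{D_5}$-orbit structure, and then adjusting signs and coefficients so that the total pullback is closed and cohomologous to zero. Closedness and vanishing will be verified by direct symbolic computation on $\boldsymbol{\mathcal Y}_5$. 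Uniqueness up to scalar is then forced by the combination of $\rho^2=11$ (part 1) and $\dim \boldsymbol{AR}^{2}_C=10$ (part 3), together with the observation that any genuinely dilogarithmic 2-AR must have a non-trivial residue along some boundary divisor (otherwise it would be logarithmic and hence lie in $\boldsymbol{AR}^{2}_C$). The main obstacle will be to make the ansatz for $\boldsymbol{\mathcal H}$ precise enough to check that the pullback vanishes identically; this is where most of the symbolic computation will be concentrated, and where the explicit parametrization of step 1 is essential.

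Finally, parts 5-7 will be handled together through the residue formalism. For part 5, I would identify the sixteen divisors $\boldsymbol{\mathcal D}_w\subset \boldsymbol{\mathcal Y}_5$ with the weight-divisors of $S_5^+$ pulled back from $\mathbb S_5$, pick local defining equations, and directly compute $\mathrm{Res}_{\boldsymbol{\mathcal D}_w}({\bf HLOG}_{\boldsymbol{\mathcal Y}_5})$. Comparing the output with the explicit ${\bf LogAR}^{\underline\epsilon}$ from part 2 determines the bijection $w\mapsto \underline\epsilon(w)$ and gives the asserted proportionalities. For part 6, the $W_{D_5}$-action permutes the $\psi_i^\epsilon$ and hence permutes the first integrals, and the decomposition in \eqref{Eq:Decomp} is $W_{D_5}$-stable because residues are equivariant; identification with the sign representation (resp.\,$V^{10}_{[11,111]}$) is then a character check, where the $W_{D_5}$-orbit of $\underline\epsilon$ with $p(\underline\epsilon)$ odd has size $16$ and the induced representation decomposes as $V^{10}_{[11,111]}\oplus\mathrm{sgn}$ (precisely the right-hand side of \eqref{Eq:Decomp}). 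Part 7 follows by pulling back under $f_{S\hspace{-0.03cm}S}:\mathrm{dP}_4\hookrightarrow \boldsymbol{\mathcal Y}_5$: since the conic fibrations $\phi_{\boldsymbol{\mathfrak c}}$ factor as $\pi_{F_{\boldsymbol{\mathfrak c}}}\circ f_{S\hspace{-0.03cm}S}$, the pullback $f_{S\hspace{-0.03cm}S}^*{\bf HLOG}_{\boldsymbol{\mathcal Y}_5}$ is automatically an abelian relation of $\boldsymbol{\mathcal W}_{\hspace{-0.05cm}\mathrm{dP}_4}$ whose components are weight-3 hyperlogarithmic 2-forms, and comparison with the unique up-to-scalar such relation from \cite[\S4]{CP} yields the asserted identification with ${\bf Hlog}^3_{\hspace{0.05cm}\mathrm{dP}_4}$; the sixteen ${\bf LogAR}^{\underline\epsilon}$ likewise pull back to the sixteen asymmetric Bol-like 1-ARs of $\boldsymbol{\mathcal W}_{\hspace{-0.05cm}\mathrm{dP}_4}$.
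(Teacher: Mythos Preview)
Your overall strategy for parts 1--6 is essentially the paper's: fix the birational chart $\mathbf C^5\simeq\boldsymbol{\mathcal Y}_5$, write the ten first integrals explicitly, compute virtual ranks by linear algebra, exhibit ${\bf HLOG}_{\boldsymbol{\mathcal Y}_5}$ as $\sum_i\epsilon_i\,U_i^*(\Omega)$ for a fixed logarithmic $2$-form $\Omega$ on $\mathbf P^2$, take residues along the weight divisors to produce the combinatorial ARs, and determine the $W_{D_5}$-module structure by computing the matrices of the generators $\sigma_k^*$ on an explicit basis and reading off the character. One minor slip: in part~3 you say ``one linear relation among the ${\bf LogAR}^{\underline\epsilon}$'s persists'', but $16-10=6$, so there are six independent relations (the paper identifies them as $\boldsymbol{\mathcal R}el_\ell$ for $\ell$ running over an exceptional collection plus the unique line meeting all its members). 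Your induced-representation remark in part~6 is also off for the same reason: the permutation module on the sixteen $\underline\epsilon$'s is $16$-dimensional, not $11$-dimensional, so it cannot decompose as $V^{10}_{[11,111]}\oplus\mathrm{sgn}$; you need the explicit character computation the paper carries out.

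The genuine gap is in part~7. Your plan is to pull back ${\bf HLOG}_{\boldsymbol{\mathcal Y}_5}$ directly under $f_{S\hspace{-0.03cm}S}$ and identify the result with ${\bf HLog}^3_{\mathrm{dP}_4}$. This fails: each face map $\pi_F\circ f_{S\hspace{-0.03cm}S}:\mathrm{dP}_4\to\mathbf P^2$ has generic rank~$1$ (its image is a curve, since the conic fibration $\phi_{\boldsymbol{\mathfrak c}}$ has one-dimensional fibres), so the pullback of every $2$-form component of ${\bf HLOG}_{\boldsymbol{\mathcal Y}_5}$ vanishes identically. The paper's route is substantially more involved: either work symbolically, observing that the symbol of ${\bf HLOG}_{\boldsymbol{\mathcal Y}_5}$ lives in $\wedge^3{\bf H}_{\boldsymbol{Y}_5}$ and pulls back to the symbol of ${\bf HLog}^3_{\mathrm{dP}_4}$ in $\wedge^3{\bf H}_{a,b}$ (which is an algebraic, not analytic, statement), or else unwind ${\bf HLOG}_{\boldsymbol{\mathcal Y}_5}$ through the sequence residue $\to$ integrate $\to$ pull back $\to$ integrate $\to$ recombine with logarithmic weights, as summarized in the paper's formula
\[
{\bf HLog}(\mathrm{dP}_4)=\sum_{k=1}^{10}\int\Bigl[F_{S\hspace{-0.03cm}S}^*(\eta_k)\int F_{S\hspace{-0.03cm}S}^*\Bigl(\int{\bf Res}_k\bigl({\bf HLOG}_{\boldsymbol{Y}_5}\bigr)\Bigr)\Bigr].
\]
Neither of these is a simple pullback, and the second in particular requires tracking how the rank-drop of $U_i\circ F_{S\hspace{-0.03cm}S}$ converts a $2$-form identity into a weight-$3$ iterated-integral identity.
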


An interesting feature of $\boldsymbol{\mathcal W}_{  \hspace{-0.05cm} {\rm dP}_5}$ is that it is a cluster web. More precisely, it admits a birational model which admits as first integrals the $\mathcal X$-cluster variables of type $A_2$. In \cite{PirioAFST}, we established that each del Pezzo web  $\boldsymbol{\mathcal W}_{  \hspace{-0.05cm} {\rm dP}_4}$ can also be defined by 
$\mathcal X$-cluster variables (of type $D_4$). It is natural to wonder whether Gelfand-MacPherson's web  $\boldsymbol{\mathcal W}^{GM}_{ \hspace{-0.05cm}
\boldsymbol{\mathcal Y}_5}$ is cluster as well.  Unfortunately we do not have a complete answer for this web yet, but we have one for its lift $\boldsymbol{\mathcal W}^{GM}_{ \hspace{-0.05cm}
{\mathbb S}_5}$.  
In \cite{Ducat},  Ducat gave the construction of a generalized cluster structure on $\mathbb S_5$ which is not a classical cluster one, but enjoys the nice property of being of finite type. 
We prove the following result: 
\begin{prop}
Gelfand-MacPherson's web 
$\boldsymbol{\mathcal W}^{GM}_{ \hspace{-0.05cm}
{\mathbb S}_5}$ is cluster with respect to Ducat's generalized cluster structure on $\mathbb S_5$.
\end{prop}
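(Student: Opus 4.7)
The plan is to verify the proposition by direct comparison of two concrete datasets: on the one hand, the ten first integrals $\Pi_{F_{\boldsymbol{\mathfrak c}}} : \mathbb S_5 \dashrightarrow \boldsymbol{\mathcal G}_{F_{\boldsymbol{\mathfrak c}}}$ cutting out $\boldsymbol{\mathcal W}^{GM}_{\hspace{-0.05cm}\mathbb S_5}$, indexed by the ten conic classes $\boldsymbol{\mathfrak c}\in \boldsymbol{\mathcal K}$ of a ${\rm dP}_4$ (equivalently, by the ten facets of the half-spin weight polytope $\Delta_{5}$); on the other, the explicit list of (generalized) cluster variables in Ducat's structure on $\mathbb S_5$. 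The idea is that each $\Pi_{F_{\boldsymbol{\mathfrak c}}}$, being given by linear projection onto $\mathbf C^{\boldsymbol{\mathcal L}_{F_{\boldsymbol{\mathfrak c}}}}$ followed by restriction to $\boldsymbol{\mathcal G}_{F_{\boldsymbol{\mathfrak c}}}$, is intrinsically described by a finite collection of Plücker-type coordinates (the spinor coordinates indexed by the weights lying on the facet); it therefore suffices to show that, after passing to inhomogeneous coordinates on the target $\boldsymbol{\mathcal G}_{F_{\boldsymbol{\mathfrak c}}}$, each of these coordinates is a monomial in Ducat's (generalized) cluster variables.

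First, I would make completely explicit the embedding $\mathbb S_5\hookrightarrow \mathbf P(S_5^+)=\mathbf P^{15}$ given by the 16 spinor coordinates, identifying these with the weights of $S_5^+$ and hence with the lines of a ${\rm dP}_4$; in parallel, I would list, facet by facet, the five (or six) weights of $\Delta_5$ supporting $F_{\boldsymbol{\mathfrak c}}$ and write $\Pi_{F_{\boldsymbol{\mathfrak c}}}$ as the corresponding linear projection, checking that its image is a smaller homogeneous space $\boldsymbol{\mathcal G}_{F_{\boldsymbol{\mathfrak c}}}$ of type readable from the face structure of $\Delta_5$. Second, I would import Ducat's initial seed on $\mathbb S_5$ and describe, in that seed and in the seeds reachable from it, the expressions of the 16 spinor coordinates. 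Since Ducat's structure is of finite type, the mutation exploration is finite and can be carried out once and for all.

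The core step is then to match: given the facet $F_{\boldsymbol{\mathfrak c}}$ and its associated spinor coordinates $\{z_w\}_{w\in \boldsymbol{\mathcal L}_{F_{\boldsymbol{\mathfrak c}}}}$, I would exhibit a seed of Ducat's structure in which every $z_w$ appearing is a cluster variable (or a frozen variable). The $W_{D_5}$-action on $\mathbb S_5$ acts transitively on the ten facets of $\Delta_5$ and, by \cite{Ducat}, is compatible with Ducat's generalized cluster structure; hence it is enough to carry out the verification for a single well-chosen $F_{\boldsymbol{\mathfrak c}}$ and then propagate by symmetry. I would thus fix one facet, display the projection $\Pi_{F_{\boldsymbol{\mathfrak c}}}$ in affine charts, and match its components against an explicit seed; all other first integrals follow by applying a simple transitive reflection subgroup of $W_{D_5}$.

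The step I expect to be the main obstacle is the bookkeeping matching between the spinor-coordinate description of the $\Pi_{F_{\boldsymbol{\mathfrak c}}}$'s and Ducat's initial seed: the generalized cluster variables in his construction are not straightforwardly spinor coordinates, and some will differ by non-trivial frozen monomials. Controlling these discrepancies — in particular checking that the resulting first integrals coincide with $\Pi_{F_{\boldsymbol{\mathfrak c}}}$ as \emph{rational maps} (not merely up to scaling on $\mathbb S_5$) — will require keeping careful track of the frozen variables and of the quotient identification $\boldsymbol{\mathcal Y}_5=\boldsymbol{\mathcal G}_5^{sf}/\boldsymbol{H}_5$, so as to descend the cluster-variable structure from $\mathbb S_5$ to the corresponding statement for $\boldsymbol{\mathcal Y}_5$ once one is available. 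This is mostly a finite combinatorial verification, which I would organise in a table displaying, for each facet of $\Delta_5$, the cluster seed realising $\Pi_{F_{\boldsymbol{\mathfrak c}}}$ and the monomial relating its entries to the spinor coordinates.
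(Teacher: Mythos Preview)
Your overall strategy---match the spinor/Wick coordinates of $\mathbb S_5$ with Ducat's (generalized) cluster variables, then check that each face map is expressed in cluster variables---is correct and is essentially what the paper does. But two aspects of your plan diverge from the paper's actual argument, and one of them is a genuine gap.

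\textbf{The symmetry shortcut is not justified.} You propose to verify a single facet and then ``propagate by symmetry'', claiming that the $W_{D_5}$-action is compatible with Ducat's generalized cluster structure. The paper does not use this, and no such compatibility is established (nor is it in \cite{Ducat} in the form you need). For your reduction to work, elements of $W_{D_5}$ would have to send cluster seeds to cluster seeds, which is not something one gets for free from the normalizer-of-torus description of the Weyl action. Instead, the paper treats the ten face maps in two batches: the five ``positive'' face maps $\Pi_i^+$ (delete row and column $i$ in the Wick chart) are immediate once the Wick--cluster dictionary is set up, while the five ``negative'' face maps $\Pi_i^-$ require the explicit rational models $\Psi_{i,j}$ from \cite{PirioAFST} and a direct check that their matrix entries are, up to sign and frozen monomials, cluster variables.

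\textbf{The matching step is more concrete than you suggest.} You describe the dictionary between spinor coordinates and cluster variables as a bookkeeping exercise to be organised in a table. The paper obtains it by an actual computation: one chooses a Coxeter element, computes the Coxeter plane and the projection $\Pi_C:\mathbf R^5\to P_C$, plots the images of the 16 weights, and matches them vertex-by-vertex with Ducat's labelled diagram (Figure~3.(a) of \cite{DaiseyDucat}). This yields an explicit identity of $5\times 5$ antisymmetric matrices relating the Wick entries $x_{ij}$ to Ducat's $x_1,x_2,x_3,X_1,X_{12},\ldots,X_{123}$ and $a_1,\ldots,a_8$, from which both parts of the proposition follow by inspection and short computation. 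Your plan would eventually arrive at the same place, but the Coxeter-projection step is the concrete mechanism the paper uses, and without it (or the symmetry shortcut, which fails) you have not explained how the match is actually found.
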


%
%
%

The similarities between the statements above to the corresponding ones  for Bol's web $\boldsymbol{\mathcal B}\simeq \boldsymbol{\mathcal W}_{  \hspace{-0.05cm} {\rm dP}_5}$ in \cite[\S1.1]{PirioAFST}
are even more striking than those  in \cite[\S1.2]{PirioAFST} about $\boldsymbol{\mathcal W}_{  \hspace{-0.05cm} {\rm dP}_4}$.  For this reason, and also because the theorem above can be generalized to all the Gelfand-MacPherson webs 
$ \boldsymbol{\mathcal W}^{GM}_{ \hspace{-0.05cm}
\boldsymbol{\mathcal Y}_r}$ for $r=4,\ldots,8$ (see 
 \S\ref{SS:r=6,7} further), 
we believe that these latter webs are those which must really be considered as the most direct/fundamental generalizations of $\boldsymbol{\mathcal B}\simeq
\boldsymbol{\mathcal W}_{  \hspace{-0.05cm} {\rm dP}_5}\simeq 
 \boldsymbol{\mathcal W}^{GM}_{ \hspace{-0.05cm}
\boldsymbol{\mathcal Y}_4}$, and not really the del Pezzo's webs $\boldsymbol{\mathcal W}_{  \hspace{-0.05cm} {\rm dP}_{9-r}}$ which actually are 2-dimensional slices of the corresponding Gelfand-MacPherson webs.  All these considerations 
make it natural to ask the following 
\begin{question}
\label{Question:DilogAR-a-la-GM}
Can the 2-abelian relation ${\bf HLOG}_{ \boldsymbol{\mathcal Y}_5 }$ 
of 
 $ \boldsymbol{\mathcal W}^{GM}_{ \hspace{-0.05cm}
\boldsymbol{\mathcal Y}_5}$
be obtained following the geometric approach of  Gelfand-MacPherson,  that is 
by integrating an invariant differential form $\Omega_P$ on a real form ${\bf S}_5$ of the spinor tenfold $\mathbb S_5$ which represents a certain characteristic class $P\in 
{\bf H}^*\big( {\bf S}_5,\mathbf R\big)$ along the orbits of the action on ${\bf S}_5$ of the Cartan torus 
of a split real form of 
 the 
  group 
 ${\rm Spin}(\mathbf C^{10})$?  
\end{question}

This question is the subject of ongoing research by the author at the time of writing.
\sk 

The rest of the article is organised as follows:  Section 2 consists in some preliminaries. After recalling some elements of web geometry in \S\ref{SS:Elements-web-geometry}, we succinctly review  the notion 
of Gelfand-Mapherson web in \S\ref{SS:Gelfand-MacPherson webs}. Then we  discuss the geometry around the spinor tenfold in \S\ref{SS:around-S5}. The third section is devoted to the study of  $ \boldsymbol{\mathcal W}^{GM}_{ \hspace{-0.05cm}
\boldsymbol{\mathcal Y}_5}$ especially from the point of view of its ranks and abelian relations.  First, in \S\ref{SS:WGMS5-in-coordinates}, 
working with some adapted rational coordinates previously introduced, 
we give a list of explicit rational first integrals $U_i$ (see \eqref{Eq:Formules-Ui}) for a birational model of $ \boldsymbol{\mathcal W}^{GM}_{ \hspace{-0.05cm}\boldsymbol{\mathcal Y}_5}$, denoted by $ \boldsymbol{\mathcal W}^{GM}_{ \boldsymbol{Y}_5}$. 
After having computed the virtual and ordinary ranks of $ \boldsymbol{\mathcal W}^{GM}_{ \boldsymbol{Y}_5}$, we start to 
study the abelian relations of this web in the following subsections. 
The most important of the ARs is the 2-AR with logarithmic coefficient ${\bf HLOG}_{ \boldsymbol{Y}_5}$ which corresponds to the differential identity denoted the same which is given in Proposition \ref{Eq:AR-HLOG-Y5}.  Then the space of abelian relations $\boldsymbol{AR}^k\big( \boldsymbol{\mathcal W}^{GM}_{ \boldsymbol{Y}_5} \big)$ for $k=2,1,0$ 
are successively studied in the subsections 
\S\ref{SS:2-AR-of-WGMY5}, 
\S\ref{SS:1-AR-of-WGMY5} and 
\S\ref{SS:0-AR-of-WGMY5} respectively. In particular, we describe the structures of these spaces relatively to the action of the Weyl group $W_{D_5}$ and indicate how these spaces (or rather some subspaces of them) are related with respect to the total derivatives or to taking residues of ARs. Many of the results concerning the ARs of $ \boldsymbol{\mathcal W}^{GM}_{ \boldsymbol{Y}_5} $ are brought together succinctly in the tables page \ref{Page:tables}. 

 In Section 4, we investigate the cluster nature of Gelfand--MacPherson's web $ \boldsymbol{\mathcal W}^{GM}_{ {\mathbb S}_5} $ on the spinor tenfold ${\mathbb S}_5$. We show that, in suitable coordinates, the components of the face maps defining this web are, up to minor modifications, given by the cluster variables of the finite-type LPA structure on ${\mathbb S}_5$, constructed by Ducat in \cite{Ducat}.

Section~5 is devoted to the study of a particular $5$-subweb $ \boldsymbol{W}^{+}_{ \boldsymbol{Y}_5} $ of the Gelfand--MacPherson web $ \boldsymbol{\mathcal W}^{GM}_{ \boldsymbol{Y}_5} $, defined by first integrals with simple monomial components. We investigate its $k$-abelian relations for $k = 0,1,2$. In particular, we show that 
$ \boldsymbol{W}^{+}_{ \boldsymbol{Y}_5} $ carries a distinguished logarithmic $1$-abelian relation ${\bf AR}^1_\delta$, from which Abel's classical five-term identity for the dilogarithm can be recovered in a direct and natural way.

One of the main results of this paper, established in Section~6, is that for any smooth del Pezzo quartic surface ${\rm dP}_4$, the hyperlogarithmic weight-$3$ identity  ${\bf HLog}_{{\rm dP}_4}$  can be deduced from the $2$-abelian relation ${\bf HLOG}_{\boldsymbol{\mathcal Y}_5}$  of $ \boldsymbol{\mathcal W}^{GM}_{ \boldsymbol{\mathcal Y}_5} $. This is first shown at the symbolic level in \S\ref{SS:Arguing-symbolically}, and then concretely by manipulating explicit abelian relations in \S\ref{SS:manipulating-ARs}.

Section~7 explains how most of the explicit and computational methods developed throughout this paper for the Gelfand--MacPherson web 
$ \boldsymbol{\mathcal W}^{GM}_{ \boldsymbol{\mathcal Y}_5} $
 can be extended to the entire family of webs $ \boldsymbol{\mathcal W}^{GM}_{ \boldsymbol{\mathcal Y}_r} $ for $r = 4, 5, 6, 7$. We first describe how to make these webs explicit in the cases $r = 6,7$ 
  (see \S\ref{S:7-1}), 
 and then show that the main results regarding the top-degree abelian relations of 
$ \boldsymbol{\mathcal W}^{GM}_{ \boldsymbol{\mathcal Y}_5} $ 
 extend naturally to the webs $ \boldsymbol{\mathcal W}^{GM}_{ \boldsymbol{\mathcal Y}_r} $ for $r = 6,7$ (see Theorem \ref{THM:Main-r=4,5,6,7}). In particular, we prove that each web $ \boldsymbol{\mathcal W}^{GM}_{ \boldsymbol{\mathcal Y}_r} $ carries an essentially unique `master' $(r{-}3)$-abelian relation ${\bf HLOG}_{\boldsymbol{\mathcal Y}_r }$, from which all other $(r-3)$-abelian relations of the web under consideration can be recovered via residue or monodromy.

The final section, Section~8, outlines several perspectives and open questions inspired by our results. In \S\ref{SS:Comp}, we reflect on the many striking properties shared by the webs $ \boldsymbol{\mathcal W}_{ {\rm  dP}_4} 
\simeq 
\boldsymbol{\mathcal W}^{GM}_{ \boldsymbol{\mathcal Y}_4} $  and 
$ \boldsymbol{\mathcal W}^{GM}_{ \boldsymbol{\mathcal Y}_5} $. 
We conclude in \S\ref{SS:HLOG-Yr-in-terms-scattering-diagram}  with a speculative discussion on a possible interpretation of the differential identity ${\bf HLOG}_{\boldsymbol{\mathcal Y}_5}$ as the manifestation of an as-yet-undetermined property of a scattering diagram conjecturally associated with $\boldsymbol{\mathcal Y}_5$.


\section{\bf Preliminaries: elements of web geometry and birational geometry around 
$\boldsymbol{{\mathbb S}_5}$}
\label{S:Prelim}
We start be recalling/introducing general notions of web geometry in a generalized setup (namely, for `generalized webs'). Then we quickly review the notion of `Gelfand-MacPherson web' introduced in \cite{PirioAFST} in \S\ref{SS:Gelfand-MacPherson webs} before discussing  in \S\ref{SS:around-S5} several properties of the most important space considered in this paper, namely 
  the spinor tenfold $\mathbb S_5$.

\subsection{\bf Elements of web geometry}
\label{SS:Elements-web-geometry}
We introduce here basic notions of web geometry. The webs we are considering here are quite general, and in particular more general than the webs encountered in the classical literature on the subject.  For a more detailed overview of web geometry as we need it here, see \cite[\S1]{ClusterWebs}.
\sk 

Let $M$ be an irreducible analytic (hence possibly singular) variety. In this text, a {\it `$d$-web'} on $M$ (for a positive integer $d$) is a finite collection $\boldsymbol{\mathcal W}=\big( \mathcal F_1,\ldots, \mathcal F_d\big)$ of $d$ pairwise distinct foliations on $M$, all of the same codimension.\footnote{Classically, one requires that the tangent spaces of the leaves of a web satisfy a certain general position assumption (everywhere or only at the generic point of $M$), but it is not the case here.} We will also assume that for $m\in M$ generic, then the tangent spaces of the foliations of $\boldsymbol{\mathcal W}$ at $m$ span the whole tangent space of $M$ at this point, ie. $T_m M= 
\big\langle \, T_m \mathcal F_i \, \lvert \, {i \in [\hspace{-0.05cm}[ d
]\hspace{-0.05cm}]}
\, \big\rangle
$, and that for $i,j$ distinct, $T_m \mathcal F_i$ and $T_m \mathcal F_j$ intersect transversally in $T_m M$. When these two conditions are fulfilled, $m$ is said to be a {\it `smooth or a regular point'} of $\boldsymbol{\mathcal W}$. 

 Another $d$-web $\boldsymbol{\mathcal W}'=\big( \mathcal F_i'\big)_ {i \in [\hspace{-0.05cm}[ d
]\hspace{-0.05cm}]}$ defined on another manifold  $M'$ is  {\it `equivalent'} to $\boldsymbol{\mathcal W}$ if there exist $m\in M$ and $m'\in M'$, regular points for 
$\boldsymbol{\mathcal W}$ and $\boldsymbol{\mathcal W}'$ respectively, as well as a local biholomorphism $\varphi: (M,m)\rightarrow (M',m')$ such that $\varphi^*\big(
 \boldsymbol{\mathcal W}'\big)=\big(\varphi^* (\mathcal F_i')\big)_ {i \in [\hspace{-0.05cm}[ d
]\hspace{-0.05cm}]}$ coincides with the germ of $\boldsymbol{\mathcal W}$ at $m$, possibly up to reindexing the foliations $\mathcal F_i$ of this web.  
One writes $\boldsymbol{\mathcal W}\simeq \boldsymbol{\mathcal W}'$ when this is occuring.

For simplicity, assume that $M$ is a connected open subset in $\mathbf C^n$ and that each $\mathcal F_i$ is defined by a global holomorphic submersion $U_i\rightarrow \mathbf C^c$, where $c\in \{1,\ldots,n-1\}$ stands for the codimension of the web. For any $
i \in [\hspace{-0.05cm}[ d
]\hspace{-0.05cm}]$, any 
$k\leq c$ and any subset $I=\{i_1,\ldots,i_k\}$
with $1\leq i_1<i_2<\ldots < i_k\leq c$, one sets 
$\Omega_{i,I}^k=dU_{i,i_1}\wedge \ldots \wedge dU_{i,i_k}$. Then one defines 
$\Omega^k_{\mathcal F_i}$ as the locally-free sheaf of $\mathcal O_M$-modules  freely spanned   by the $\Omega_{i,I}^k$'s,  for all  $I\subset \{1,\ldots,c\}$ of cardinality $k$. Given an open subset $O\subset M$ and $k
\in \{0,\ldots,c\}$, one defines a {\it $k$-abelian relation (ab.\,$k$-ARs) of $\boldsymbol{\mathcal W}$
 on $O$} as a $d$-tuple $(\eta_i)_{i=1}^d \in \prod_{i=1}^d \Omega^k_{\mathcal F_i}$ such that $\sum_{i=1}^d \eta_i=0$ in $\Omega^k(O)$.  The abelian relations form a vector subspace of $\Omega^k(O)^{\oplus d}$ and letting $O$ vary among the open subsets of $M$, one defines a local system which we will denote by 
 $\boldsymbol{AR}^k\big(\boldsymbol{\mathcal W} \big)$ or just 
$\boldsymbol{AR}^k$ whenever there is no ambiguity about the web under consideration.  When working on a fixed domain $D$, we will allow ourselves to use the same notation $\boldsymbol{AR}^k\big(\boldsymbol{\mathcal W} \big)$ for the
vector space of $k$-ARs on $D$, which, rigorously, should be denoted by 
$\boldsymbol{AR}^k\big(\boldsymbol{\mathcal W} \big)(D)$. 

By definition, the $k$-rank $r^k\big( \boldsymbol{\mathcal W} \big)$ is the rank of 
$\boldsymbol{AR}^k\big(\boldsymbol{\mathcal W} \big)$. It is an element of $\overline{\mathbf N}=\mathbf N\cup \{ \infty\}$ which is invariantly attached to $\boldsymbol{\mathcal W}$: two equivalent webs have the same $k$-rank, for any $k$ less than to equal to the codimension of the webs. Given a point $m\in M$ smooth for $\boldsymbol{\mathcal W}$, let $T_m \boldsymbol{\mathcal W}$ be the constant web on $T_m M\simeq \mathbf C^n$ with first integrals the linear maps 
$dU_i(m): T_m M\rightarrow T_{U_i(m)} \mathbf C^c\simeq \mathbf C^c$. 
For $k\in \{0,\ldots,c\}$ and any $\sigma\geq 0$ (with $\sigma>0$ if $k=0$), let $\boldsymbol{AR}^k_\sigma(T_m \boldsymbol{\mathcal W})$ be the space of $k$-ARs $(\eta_i)_{i=1}^d$ 
of $T_m \boldsymbol{\mathcal W}$ with each $\eta_i$ being a $k$-differential form 
on $T_m M$ with coefficients in ${\rm Sym}^\sigma\big( T_m^* M\big)$ (that is, polynomials of degree $\sigma$ on $T_m M$).  Then one defines the {\it `$\sigma$-th virtual $k$-rank of $\boldsymbol{\mathcal W}$ at $m$'}, denoted by 
$\rho^k_{m,\sigma}\big(\boldsymbol{\mathcal W}\big)$, as the dimension of 
$\boldsymbol{AR}^k_\sigma(T_m \boldsymbol{\mathcal W})$.  
One verifies easily that the dimension of the space of germs of $k$-ARs at $m$ is less than or equal to the {\it `total virtual $k$-rank at $m$'} $\rho^k_m\big( 
\boldsymbol{\mathcal W}\big):= \sum_{\sigma} \rho^k_{m,\sigma}\big(\boldsymbol{\mathcal W}\big)$.  We thus define the {\it `$\sigma$-th (resp. the total) virtual $k$-rank'} 
 $\rho^k_{\sigma}\big(\boldsymbol{\mathcal W}\big)$ resp. 
$\rho^k\big(  \boldsymbol{\mathcal W}\big)$) of the considered web as the value 
$\rho^k_{m,\sigma}\big(\boldsymbol{\mathcal W}\big)$ (resp.  $\rho^k_m\big( 
\boldsymbol{\mathcal W}\big)$) for $m$ generic. 

The web $\boldsymbol{\mathcal W}$ is said to have {\it `as maximal as possible'} (ab.\,{\it AMP}) $k$-rank when $r^k\big( \boldsymbol{\mathcal W} \big)=\rho^k\big( \boldsymbol{\mathcal W} \big)$, with this integer being positive and finite.  In this case, we will also say that $\boldsymbol{\mathcal W}$ is {\it `$k$-AMP'}. Being ($k$-)AMP has to be view as a strong feature of a web, a wide generalization of the classical notion in web geometry of being of `maximal rank' (for more perspective on this notion, see \cite[\S1.3]{ClusterWebs}). 
\mk 

For any nonnegative $k$ such that $k\leq c$, the total derivative gives rise to a linear map 
\begin{align}
\label{Eq:derivative-d1}
d^k:  
\boldsymbol{AR}^k\big(\boldsymbol{\mathcal W}\big) & \longrightarrow 
\boldsymbol{AR}^{k+1}\big(\boldsymbol{\mathcal W}\big)\, . \\
 \big( \eta_i \big)_{i=1}^{d}
& \longmapsto  \big( d\eta_i \big)_{i=1}^{d}
\nonumber 
\end{align}
Accordingly to the usual terminology, abelian relations in the kernel of $d^k$ will be said to be {\it `closed'}, and those in its image, {\it `exact ARs'}.

Of course, one always have an inclusion $d^k\big( 
\boldsymbol{AR}^k\big(\boldsymbol{\mathcal W}\big)
\big) \subset {\rm Ker}\big(d^{k+1}\big)$ in 
$\boldsymbol{AR}^{k+1}\big(\boldsymbol{\mathcal W}\big)$
 but, even if one is considering the ARs locally, on a simply connected domain or even at the level of germs,  in general it is not true that this inclusion is an equality. Indeed, let 
 $\boldsymbol{\chi}=(\chi_i)_{i=1}^d$ be a germ of $(k+1)$-AR for $\boldsymbol{\mathcal W}$, at a smooth point $m\in M$ of this web. 
  This means that $\chi_i\in \Omega^{k+1}(M,m)$ is $\mathcal F_i$ basic for each ${i \in [\hspace{-0.05cm}[ d
]\hspace{-0.05cm}]}$ and that $\sum_{i=1}^d \chi_i=0$. Let us assume moreover that $\boldsymbol{\chi}$ is closed, i.e. one has $d\chi_i=0$ for any $i$.  Hence any $\chi_i$ is locally exact thus one may first think that it is possible to integrate $\boldsymbol{\chi}$ and construct a (germ of) $k$-AR $\boldsymbol{\eta}=(\eta_i)_{i=1}^d$ such that $d^k\big( \boldsymbol{\eta}\big)=\boldsymbol{\chi}$.  

But this is not always possible. Indeed, the $\eta_i$'s, in addition to being primitives of the corresponding $\chi_i$'s, must satisfy two additional conditions: (1) each $\eta_i$ must be $\mathcal F_i$-basic, and (2) $\sum_{i=1}^d \eta_i=0$ in $\Omega^k(M,m)$. These two conditions cannot always be simultaneously fulfilled. In particular, the 2-abelian relation ${\bf HLOG}_{ \boldsymbol{\mathcal Y}_5}$ considered in this paper, although closed, is not exact as an abelian relation (see the decompositions in direct sums 
 \eqref{Eq:Decomp-AR2-WGM}
 and 
 \eqref{Eq:Decomp-AR1-WGM} 
 in 
Coro/Theorem(?)  \ref{Cor:2-RA-WGM} and Theorem \ref{Thm:1-AR-} respectively).\footnote{Another place to look is the last line of Table 1.} 
%
%

\subsection{\bf Gelfand-MacPherson webs}
\label{SS:Gelfand-MacPherson webs}
We review very quickly some  material introduced in \cite[\S4.5.1]{PirioAFST} to which we refer for further details. \sk

Let $G$ be a simple complex Lie group of Dynkin type $D$, and let $H\subset G$ be a Cartan torus, with associated set of simple roots $\Phi\subset \boldsymbol{\mathfrak h}^*_{\mathbf R}$, where $ \boldsymbol{\mathfrak h}_{\mathbf R}$ stands for the real part of the Lie algebra of $H$. Let $P$ be a standard parabolic subgroup, assumed to be maximal (to simplify), and let $\omega_P$ be the vertex of $D$ corresponding to it.
If $V=V_P$ stands for the $G$-representation of highest weight $\omega_P$, there exists a `highest-weight vector' $v_P\in V$ wich is of weight $\omega_P$ and such that $P$ coincides with the stabilizer of the line $[v_P]\in \mathbf P(V)$ and the orbit $X=G\cdot 
[v_P]$ is closed in $ \mathbf P(V)$. It follows that $X$ is a homogeneous algebraic submanifold of $ \mathbf P(V)$ which naturally identifies with $G/P$. Let $\mathfrak W_P$ be the set of weights of the considered representation, namely the set of 
weights  $\mathfrak w \in \boldsymbol{\mathfrak h}^*_{\mathbf R}$ 
for which the corresponding weight subspace $V_{\mathfrak w}\subset V$ is non-trivial. 
The `weight polytope' $\Delta=\Delta_{D,\omega_P}$ of the representation $V$ is the convex envelope of $\mathfrak W_P$ in $\boldsymbol{\mathfrak h}^*_{\mathbf R}$. The Weyl group $W_D=N_G(H)/H$ acts on  $\boldsymbol{\mathfrak h}_{\mathbf R}^*$ and the set of vertices of $\Delta$ can be proved to coincide with the Weyl orbit $W\!\cdot \!\omega_P$ of the highest weight.   \sk 

To a  facet $F$  of $\Delta$ (that is, a face of codimension 1), one can associate a type $(D_F,\omega_F)$ which is a marked Dynkin diagram obtained from $(D,\omega_P)$ by removing an extremal vertex.  Then $V$ can be seen as a representation
 for the  simple complex Lie group of type $D_F$, noted by $G_F$, and there exists a decomposition of
 $G_F$-subrepresentations $V=V^F\oplus V_F$ where $V_F$  is the sub-representation with highest weight $\omega_F$. To the facet $ F$ is associated a one-parameter subgroup $\mathcal H_F$ of $H$ and denoting by $H_F$ the Cartan torus of $G_F$,  there is a surjective map of Cartan tori $H\rightarrow H_F$  with kernel $\mathcal H_F$ which induces an identification $
 H/_{\mathcal H_F}\stackrel{\sim}{\rightarrow} H_F$
 Then if $P_F$ stands for the maximal standard parabolic subgroup 
of $G_F$ associated to $\omega_F$, then the two following facts occur: 
\begin{itemize}
\item[$(i).$]
setting $X_F=G_F/P_F\subset \mathbf PV_F$, one has $X_F=X\cap \mathbf PV_F$; \sk
\item[$(ii).$]
the linear  projection  from $\mathbf PV^F$ onto $\mathbf PV_F$ gives rise to a surjective rational map $\Pi_F : X\dashrightarrow X_F$. Moreover, this map is equivariant with respect to the epimorphism of tori $H\rightarrow H_F$.
\end{itemize}

Now we restrict the discussion to some particularly nice cases\footnote{See \cite{Skorobogatov}, and in particular the list of cases to be excluded given in Proposition 2.1 therein.} which contain those of the minuscule homogeneous spaces hence the specific case we will be interested in.  In \cite{Skorobogatov} (see also \cite{SerganovaSkorobogatov}), the author(s) consider a Zariski-open subset $X^{sf}\subset X$ on which $H$ acts nicely, and such that the quotient $\boldsymbol{\mathcal Y}=X^{sf}/H$ is a rational quasi-projective variety.  From the point $(ii).$ above, it follows that $\Pi_F$ descends to a rational map $\pi_F: \boldsymbol{\mathcal Y} \dashrightarrow X_F/H_F$ where the target space has to be understood as a quotient 
with respect to the action of $H_F$ on $X_F$ viewed as a rational action.  A facet is said to be {\it `${\mathcal W}$-relevant'} whenever $X_F/H_F$ has positive dimension. We define the {\it `Gelfand-MacPherson webs'} $\boldsymbol{\mathcal{W}}^{GM}_{\hspace{-0.05cm}X}$ and $\boldsymbol{\mathcal{W}}^{GM}_{\hspace{-0.05cm}\boldsymbol{\mathcal Y}}$ as the webs  respectively defined by the face maps $\Pi_F$ and $\pi_F$, for all ${\mathcal W}$-relevant facets $F$ of the weight polytope $\Delta_{G,P}$. From the equivariance property stated in $(ii).$ above, it follows that $\boldsymbol{\mathcal{W}}^{GM}_{\hspace{-0.05cm}X}$ is $H$-equivariant and that its quotien by $H$ is precisely $\boldsymbol{\mathcal{W}}^{GM}_{\hspace{-0.05cm}\boldsymbol{\mathcal Y}}$. 
\sk

The most basic example to have in mind is the one when $X=G_2(\mathbf C^5)$, which corresponds to the case of type $(A_4,\omega_2)$.\footnote{The best reference on this case is the great paper \cite{GM} by Gelfand and MacPherson.} The moment/weight polytope is the hypersimplex $\Delta_{2,5}=\{ (t_i)_{i=1}^5\in [0,1]^5\, \lvert \, \sum_{i=1}^5 t_i=2\,\}$. This polytope has 10 facets which are obtained by intersecting it with the 
 the affine hyperplanes cut out by $t_i=\tau$ for $i=1,\ldots,5$ and $\tau\in \{0,1\}$. All the facets when $\tau=1$ are 3-simplices, and the associated type is $(A_3,\omega_3)$, hence these facets are not ${\mathcal W}$-relevant. Any facet $F_i=\Delta_{2,5} \cap \{ t_i=0\}$ is a hypersimplex of type  $(A_3,\omega_2)$, with $X_{F_i}\simeq G_2(\mathbf C^4)$. 
Denoting by $(e_i)_{i=1}^5 $ the canonical basis of $\mathbf C^5$, 
 the face map $\Pi_{F_i}$ associated to $F_i$ identifies with the rational map
 $G_2(\mathbf C^5)\dashrightarrow G_2\big(\mathbf C^5/{\langle e_i\rangle}\big)
 \simeq G_2(\mathbf C^4) $ induced by the 
 linear quotient map $\mathbf C^5 
 \rightarrow 
 \mathbf C^5/{\langle e_i\rangle}$. 
 If one denotes by $H$ and $H_{F_i}$ the Cartan tori of the linear groups acting on 
 $X$ and $X_{F_i}$, then the corresponding GIT quotients respectively are 
 $X/\hspace{-0.06cm} / H= G_2(\mathbf C^5)/\hspace{-0.06cm} / H\simeq \overline{\boldsymbol{\mathcal M}}_{0,5}$ and $X_{F_i}/\hspace{-0.05cm} / H_{F_i} \simeq 
 \overline{\boldsymbol{\mathcal M}}_{0,4}\simeq 
 \mathbf P^1$. And the $H$-equivariant quotient of $\Pi_{F_i}$ is the $i$-th forgetful map  
 $
\pi_{F_i} :  \overline{\boldsymbol{\mathcal M}}_{0,5}\dashrightarrow \overline{\boldsymbol{\mathcal M}}_{0,4} = \mathbf P^1$ (which actually is a morphism). 
 It follows that in this case, Gelfand-MacPherson web on $\boldsymbol{\mathcal Y}=
 \overline{\boldsymbol{\mathcal M}}_{0,5}$ is the 5-web with the five forgetful maps as first integrals, hence is a model of the dilogarithmic Bol's web $\boldsymbol{\mathcal B}$: one has 
 $$\boldsymbol{\mathcal{W}}^{GM}_{\hspace{-0.05cm}\overline{\boldsymbol{\mathcal M}}_{0,5}}
 =\boldsymbol{\mathcal{W}}\Big( \, \pi_{F_i}: 
 \overline{\boldsymbol{\mathcal M}}_{0,5}\longrightarrow \overline{\boldsymbol{\mathcal M}}_{0,4} \simeq \mathbf P^1
 \,\Big)_{i=1}^5 \simeq \boldsymbol{\mathcal B}\, . $$

The main object of study in this paper is Gelfand-MacPherson's web of the quotient, by the rank 5 Cartan torus, of the spinor 10-fold $\mathbb S_5$ which is a homogenous projective variety of type $D_5$.

\subsection{\bf Birational geometry around the spinor 10-fold $\boldsymbol{\mathbb S_5}$}
\label{SS:around-S5}
We recall some facts we will need further in the paper. 
For details, see our previous article \cite{PirioAFST} and the references therein. 

\subsubsection{\bf Groups of Lie type $\boldsymbol{D_5}$}
Let $q\in {\rm Sym}^2(V^\vee)$ be the non-degenerate complex quadratic form on $V=\mathbf C^{10}$ given by 
$$q(x)=\sum_{i=1}^5 x_ix_{i+5}$$
 in the standard coordinate system $(x_i)_{i=1}^{10}$ on $V$. The associated orthogonal group $O(V,q)={O}_{10}(\mathbf C)$ is a simple complex Lie group of type $D_5$. Its universal covering is the so-called {\it `spin group'} ${\rm Spin}_{10}(\mathbf C)$ and the canonical covering 
${\rm Spin}_{10}(\mathbf C)\rightarrow 
{O}_{10}(\mathbf C)$ is known to be  2-to-1.  The corresponding Dynking diagram of type $D_5$ we are going to work with is labeled as follows: 
 \begin{figure}[h!]
\begin{center}
\scalebox{1}{
 \includegraphics{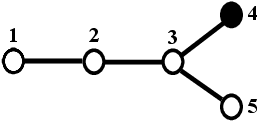}}
 \vspace{0.17cm}
\caption{The marked Dynkin diagram of type $(D_5,\omega_4)$. (The marking of the 
fourth vertice corresponds to the choice of the standard maximal parabolic subgroup 
used to construct the spinor tenfold, see below).}
\label{Fig:Dynkin-diagram-D5}
\end{center}
\end{figure}

Coxeter matrix associated to the Dynkin $D$ diagram of type $D_5$: it is the symmetric $5\times 5$ matrix $(m_{ij})_{i,j=1}^5$  where for all $i,j$ such that $1\leq i\leq j \leq  5$, one has $m_{ii}=1$ and when $i<j$, one has $m_{ij}=2$ if there is no arrow between the $i$-th and  $j$-th vertices of the diagram, and $m_{ij}=3$ otherwise.  
The associated {\it `Weyl group of type $D_5$'} is the group with generators $s_i$ for $i=1,\ldots,5$, with relations $(s_is_j)^{m_{ij}}$ for any $i,j\in [\hspace{-0.05cm}[5 
]\hspace{-0.05cm}]$. In the case under scrutiny, the generators $s_i$'s of $W_{D_5}$ are  involutions, which pairwise commute, i.e. $(s_is_j)^2=1$ except if the $i$-th and $j$-th vertices of $D_5$ are linked by one of its arrows, in which case one has 
$(s_is_j)^3=1$.  To summarise, setting $J=\{ (j,j+1)\, \lvert \, j=1,\ldots,4\,\} \cup \{ (3,5)  \}$, then for all  distinct $i,j\in  [\hspace{-0.05cm}[5 
]\hspace{-0.05cm}]$, one has 
\begin{equation}
\label{Eq:relations-W(D5)}
\begin{tabular}{l}
$-$  $s_i^2=1$; 
\sk\\
$-$ $(s_is_j)^2=1$ (that is $s_i$ and $s_j$ commute) if $(i,j) \not \in J$; \sk 
\\
$-$ $(s_is_j)^3=1$ if the pair $(i,j)$ belongs to $ J$. 
\end{tabular}
\end{equation}
The explicit form of these relations will be used further in \S\ref{SS:Birational-action-w(D5)}. 

\subsubsection{\bf The orthogonal grassmannian $\boldsymbol{OG_5(\mathbf C^{10})}$, the spinor 10-fold $\boldsymbol{\mathbb S}_{5}$
and the  half-spin representations.}
By definition, the {\it `orthogonal grassmannian $OG_5(V)$'} is the subvariety of the standard grassmannian $G_5(V)$ whose points are the 5-dimensional subspaces $\Pi\subset V$ which are `totally isotropic', that is such that $q\lvert_\Pi\equiv 0$. It is known that $OG_5(V)$ is the disjoint union of two indistinguishable isomorphic irreducible components which will be denoted by $OG_5^\epsilon(V)$ with $\epsilon=\pm$. 

The component $OG^+_5(V)$, which will be the privileged one for us, is homogeneous under the natural action of the `{\it spin group}' ${\rm Spin}_{10}(\mathbf C)$. In fact, if $P_4$ stands for the standard maximal parabolic subgroup of the spin group associated to the fourth vertex of the Dynkin diagram of Figure \ref{Fig:Dynkin-diagram-D5}, then 
one defines the  {\it `spinor 10-fold'} as the homogeneous variety 
$\mathbb S_5={\rm Spin}_{10}(\mathbf C)/P_4$ which can be proved to be isomorphic to 
$OG^+_5(V)$. Indeed, the parabolic subgroup $P_4$ being maximal, $\mathbb S_5$ has Picard number 1 with its Picard group spanned by an ample class denoted by $H$. Moreover, 
$\mathbb S_5$ is Fano with $-K_{\mathbb S_5}=h_{D_5} H$ where $h_{D_5}$ stands for the Coxeter number in type $D_5$ (namely $h_{D_5}=8$). Actually, $-K_{\mathbb S_5}$ is very ample and the anticanonical linear system 
gives rise to an equivariant embedding  
$\varphi : \mathbb S_5 \hookrightarrow \mathbf P(S^+)$ where $S^+\simeq {\bf H}^0\big( 
\mathbb S_5, -K_{\mathbb S_5}\big)^\vee$ is a 16-dimensional ${\rm Spin}_{10}(\mathbf C)$-representation, one of the so-called {\it half-spin representations $S^{\pm}$}.\footnote{The two half-spin representations $S^+$ and $S^-$ are isomorphic, but not in a canonical way.} Then it can be proved that the second Veronese embedding of $
{\mathbb S_5}\subset \mathbf P(S^+)\simeq \mathbf P^{15}$ coincides with the image of $OG_5^+(\mathbf C^5)$ in $\mathbf P(\wedge^5 V)$ obtained by taking the image of its inclusion into the ordinary grassmannian of 5-planes in $V$ 
 post-composed with the standard Pl\"ucker embedding $G_5(V)\hookrightarrow \mathbf P(\wedge^5 V)$. 
\sk 

The half-spin representations are know to be minuscule representations:  for $\epsilon =\pm$, 
the Weyl group $W_{D_5}$ acts transitively on the set of weights $\mathfrak W^\epsilon$ which is of cardinality 16. Each weight space is a complex line and $S^\epsilon$ admits a basis $(v_\varpi)_{\varpi \in \mathfrak W^\epsilon}$ with $v_\varpi$ being a weight vector of weight $\varpi$ for any $\varpi\in \mathfrak W^\epsilon$.  In other terms, there is a decomposition as a direct sum in which each weight subspace is of dimension 1:
\begin{equation}
\label{Eq:S+-direct-sum}
S^\epsilon=\oplus_{\varpi \in \mathfrak W^\epsilon}  \mathbf C v_\varpi
\, .
\end{equation}

The set of weights in $\mathfrak W^\epsilon$ can be described quite explicitly as the set 
of 5-tuples $\underline{\varepsilon}/2$ for all $\underline{\varepsilon}=(\varepsilon_i)_{i=1}^{5}\in \{ \pm 1\}^5$ whose parity $p(\underline{\varepsilon})=\varepsilon_1\cdots
\varepsilon_5\in \{\pm1\}$ coincides with $\epsilon$.  More explicitly, denoting by $(e_1,\ldots,e_5)$ the standard basis of $\mathbf R^5\simeq \mathfrak h_{D_5}^{\mathbf R}$
 and setting $e_K=\sum_{k \in K} e_k$ for any $K\subset  [\hspace{-0,05cm}[5]\hspace{-0,05cm}]$, the elements of $\mathfrak W^+$ (resp.\,of $\mathfrak W^-$) are the vectors 
\begin{equation}
\label{Eq:weight-vectors}
\mathfrak w_L=\frac12 \big(  e_{[\hspace{-0,05cm}[5]\hspace{-0,05cm}]\setminus L}-e_L\big) 
\end{equation}
for all subsets $L\subset  [\hspace{-0,05cm}[5]\hspace{-0,05cm}]$ of even (resp.\,of odd) cardinal.

\subsubsection{\bf Wick's embedding} 
As it is well known, a generic 5-plane in $V$ can be represented by a  matrix $M_A=\big[ {\bf Id}_5,A\big]\in {\rm Mat}_{5\times 10}(\mathbf C)$ where ${\bf Id}_5$ stands for the identity $5\times 5$ matrix and $A$ a square matrix of the same size but arbitrary otherwise, the associated 5-plane to $M_A$, denoted by $\zeta_A=\big\langle [ {\bf Id}_5,A]\big\rangle$,  being the one spanned by the vectors 
whose coordinates (in the standard coordinate system) are the given by the five lines of $M$. Then one verifies easily that the 5-plane $\zeta_M$ is $q$-isotropic if and only if the matrix $A$ is antisymmetric.  Moreover, it can be proved that for $A,B\in {\rm Asym}_5(\mathbf C)$ the two following facts are satisfied : (1) $\zeta_A$ and $\zeta_B$ belong to the same components of $OG_5(V)$, that we choose to be $OG_5^+(V)$; (2) $\zeta_A=\zeta_B$ if and only if $A=B$. We then have that the affine map
\begin{align*}
\widetilde W : {\rm Asym}_5(\mathbf C) & \longrightarrow OG_5^+(V)
\end{align*}
is an embedding which induces a birational equivalence 
\begin{equation}
\label{Eq:Asym5-bir-S5}
 {\rm Asym}_5(\mathbf C) \simeq OG_5^+(V)\simeq \mathbb S_5\,.
 \end{equation}
  One can deduce from it a birational parametrization of the image of $\mathbb S_5$ in $\mathbf P(S^+)\simeq P^{15}$, known as the Wick's parametrization, which is easily described using the following notations: 
 \begin{itemize}
 \item[$-$]  for $X \in {\rm Asym}_5(\mathbf C)$ and $i\in  [\hspace{-0,05cm}[5]\hspace{-0,05cm}]$, we denote by $X_{\hat \imath}$ the symmetric $4\times 4$ matrix obtained by deleting the $i$-th line and the $i$-th column from $Y$;
 \sk
  \item[$-$] ${\rm Pf}$ stands for the pfaffian of an antisymmetric matrix;
  \sk
   \item[$-$] for any $i,j\in  [\hspace{-0,05cm}[5]\hspace{-0,05cm}]$, $x_{ij}$ denotes the rational function on ${\rm Asym}_5(\mathbf C)$ associating the $(i,j)$-th coefficient of an  antisymmetric $5\times 5$ matrix; 
   \sk
     \item[$-$] 
       let 
     $ [\hspace{-0,05cm}[5]\hspace{-0,05cm}]_<^2$  be the ordered set of pairs $(i,j)$ such that 
     $1\leq i<j\leq 5$, with the lexicographic order. 
      Then for $k=1,\ldots,10$, one denotes by $X_k$ the coordinate $x_{ij}$ 
     if $(i,j)$ is the $k$-th element of $ [\hspace{-0,05cm}[5]\hspace{-0,05cm}]_<^2$: one has 
    $X_1=x_{12}$, $X_2=x_{13}$, $\ldots$, 
    $X_9=x_{35}$ and    $X_{10}=x_{45}$. 
     \end{itemize}  
  The projectivization 
\begin{align}
\label{Eq:map-W-proj} W =\Big[ \widehat{W} \Big] : {\rm Asym}_5(\mathbf C)  \longrightarrow \mathbf P^{15}
\end{align}
 of the affine map 
\begin{align}
\label{Eq:map-W}
\widehat{W} \, : \, {\rm Asym}_5(\mathbf C) & \longrightarrow \; {\mathbf C}^{16}\\
X=
\big(x_{ij}\big)_{i,j=1}^5 & \longmapsto \Big( \, 1 \, , \, X_1\,  , \, \ldots \, , \,  X_{10}
\, ,\, 
{\rm Pf}\big( X_{\hat 1}\big) 
\,  , \, \ldots \, , \, {\rm Pf}\big( X_{\hat 5}\big)\,  \Big)
\nonumber
\end{align}
is an affine parametrization of $\mathbb S_5\subset \mathbf P^{15}$, known as {\it Wick's parametrization} of the spinor variety.  It enjoys the nice property of having its components (as given in \eqref{Eq:map-W}) corresponding to the direct sum decomposition 
of $S^+$ into weight subspaces:  for each component $C$ of $\widehat{W}$, there is a 
well-defined weight $\mathfrak w(C)\in \mathfrak W^+$ such that $C$ is 
the composition of $\widehat{W}$ with the linear projection onto the 
weight subspace $\mathbf C v_{\mathfrak w(C)}$ 
(with respect to the decomposition in direct sum \eqref{Eq:S+-direct-sum}). 
The weights $\mathfrak w(C)$ are given by the following formulas where we use the notation \eqref{Eq:weight-vectors}: 
 one has 
$$
\mathfrak w(1)=\mathfrak w_{\emptyset}\, , \qquad 
\mathfrak w(x_{ij})=\mathfrak w_{ \{i,j\}}
\qquad \mbox{ and } \qquad 
\mathfrak w\Big( {\rm Pf}\big( X_{\hat k}\big)\Big)=\mathfrak w_{  [\hspace{-0,05cm}[5]\hspace{-0,05cm}] \setminus \{k\}}
$$
for all $(i,j)\in  [\hspace{-0,05cm}[5]\hspace{-0,05cm}]_<^2$ and all $k=1,\ldots,5$.
%
%
%
%

\subsubsection{\bf The action of the Cartan torus on $\boldsymbol{\mathbb S_5}$ and a birational model of the associated torus quotient}
We now recall some results of Serganova and Skorobogatov about the action of the Cartan torus $H_{D_5}$ of ${\rm Spin}_{10}(\mathbf C)$ on $\boldsymbol{\mathbb S_5}$.  
\sk 

To simplify the notation, we write $H$ instead of $H_5$ below. 
Following Serganova and Skorobogatove, we denote by $\boldsymbol{\mathbb S}^{sf}_5$ the subset of points $x\in \mathbb S_5$ which are stable under the action of $H$ with stabilizer ${\rm Stab}_{H}(x)=Z\big( \big)=\{ \pm 1\}$. It is open in $\mathbb S_{5}$, and of codimension $\geq 2$. One can prove that it contains the points of $\mathbb S_5$ whose at most one of the 16 Wick coordinates vanish. In particular, if $H_{\mathfrak w}$ stands for the coordinate hyperplane in $\mathbf P(S^+)$ given by the vanishing of the $\mathfrak w$-coordinate, 
$\boldsymbol{\mathbb S}^{sf}_5$ contains the generic point of the coordinate hyperplane section $\mathbb S_5\cap H_{\mathfrak w}$. \sk 

From a direct application of GIT, one obtains that the following points hold:
\begin{itemize}
\item the quotient $\boldsymbol{\mathcal Y}_5=\boldsymbol{\mathbb S}^{sf}_5/H$ is a 5-dimensional quasi-projective variety which is a Zariski-open subset of the GIT quotient $\overline{\boldsymbol{\mathcal Y}}_5=\boldsymbol{\mathbb S}_5/\hspace{-0.07cm}/H$.
Moreover, the  canonical map $\nu : \boldsymbol{\mathbb S}^{sf}_5\rightarrow \boldsymbol{\mathcal Y}_5$ is a 
geometric quotient: the preimages of $\nu$ are the $H$-orbits in $\boldsymbol{\mathbb S}^{sf}_5$;
\mk 
\item  for any weight $\mathfrak w\in \mathfrak W^+$,  
\begin{equation}
\label{Eq:Dw}
D_\mathfrak w=\nu\Big( \mathbb S_5^{sf} \cap H_{\mathfrak w}
\Big)
\end{equation}
is an irreducible divisor in $\boldsymbol{\mathcal Y}_5$, that we will call {\it `the weight divisor of weight $\mathfrak w$}; 
\mk 
\item one has $\nu(  \mathbb S_5^*)=\boldsymbol{\mathcal Y}_5^*$, where we have set
$$ \mathbb S_5^*= \mathbb S_5\setminus \Big( 
\bigcup_{ \mathfrak w \in \mathfrak W^+} H_{\mathfrak w}
\Big)
\qquad 
\mbox{ and } 
\qquad  
\boldsymbol{\mathcal Y}_5^*=
\boldsymbol{\mathcal Y}_5
\setminus \Big( 
\bigcup_{ \mathfrak w \in \mathfrak W^+} D_{\mathfrak w}
\Big)\, .$$ 
\end{itemize}
Moreover, Serganova and Skorobogatov proved very nice results about the quasi-projective variety $\boldsymbol{\mathcal Y}_5$: 
\begin{itemize}
\item let $\widehat T$ be the character lattice of the diagonal  subtorus $T$ of $ {\rm GL}(S^+)$ generated by  the 1-parameter 
 subgroup of scalar matrices  
 $ \mathbf C^* {\bf I}{\rm d}_{S^+}$
 and 
the  image of $H$ by the natural embedding ${\rm Spin}_{10}(\mathbf C)\subset  {\rm GL}(S^+)$. 
Then there is a natural 
 isomorphism  of lattices $\widehat T\simeq 
{\bf Pic}_{\mathbf Z}\big( \boldsymbol{\mathcal Y}_5\big)$ (see the very end of \cite{SerganovaSkorobogatov}); 
\sk
\item 
the set $ \big\{ \,D_{\mathfrak w}\,\big\}_{ \mathfrak w \in \mathfrak W^+ }$ is the unique minimal set of $\mathbf Z_{>0}$-generators of the 
the semi-group of effective divisor classes in ${\bf Pic}_{\mathbf Z}\big( 
\boldsymbol{\mathcal Y}_5
\big)$ (see \cite[Theorem 1.6]{Skorobogatov}); 
\sk
\item  it follows from  \cite[Theorem 2.2.2]{Skorobogatov} that 
there is a canonical isomorphism of group 
\begin{equation}
\label{Eq:WD5=Aut(Y5)}
W_{D_5}\simeq {\rm Aut}
\big( 
\boldsymbol{\mathcal Y}_5
\big)\, . 
\end{equation}
\end{itemize}

The variety $\boldsymbol{\mathcal Y}_5$ is rational.  We recall below the construction of 
 the birational equivalence $\mathbf C^5\simeq \boldsymbol{\mathcal Y}_5$  considered in \cite{PirioAFST}. We will work with it further to get  in explicit form the 
 birational realization of 
the Weyl group $W_{D_5}={\rm Aut}
\big( 
\boldsymbol{\mathcal Y}_5
\big)$ induced by this birational identification. \sk 

The quotient of $\mathbb S_5$ by $H_{D_5}$ can be birationally identified with that of $OG_5^+(\mathbf C^{10})$ by the Cartan torus $H'_{D_5}$ of ${\rm SO}_5(\mathbf C^{10})$, hence to the one of $\mathcal H=(\mathbf C^*)^5$ on ${\rm Asym}_5(\mathbf C)$, where the action of the latter torus is given by 
$$
A\cdot h = \Big(h_ih_j A_{ij}\Big)_{i,j=1}^5
\footnotemark
$$
\footnotetext{From a matricial point of view, it is more natural to see the 
action of 
$(\mathbf C^*)^5$ on ${\rm Asym}_5(\mathbf C)$ as a right-action,  but this is just a matter of notation.}for any $h=(h_i)_{i=1}^5\in \mathcal H=(\mathbf C^*)^5$ and any $A=\big(A_{ij}\big)_{i,j=1}^5\in 
{\rm Asym}_5(\mathbf C)$.  Identifying $\mathbf C\big( 
{\rm Asym}_5(\mathbf C) \big) $ with $\mathbf C( x_{i,j}\,\lvert \,1\leq i<j\leq 10)$ (where $x_{ij}$ stands for the map associating the $(i,j)$-th coefficient), one can prove the
\begin{prop}
\label{Prop:maps-P5-Y}
{\bf 1.} The algebra 
 of rational functions on ${\rm Asym}_5(\mathbf C)$ invariant by the action of $\mathcal H$ is free and generated by the components of the rational map 
 $\mathcal P_5 : 
 {\rm Asym}_5 (\mathbf C) \dashrightarrow  \mathbf C^5$ 
given by 
\begin{align*}
 \scalebox{0.79}{$\begin{bmatrix}
0 & x_{1,2} & x_{1,3}& x_{1,4}  &  x_{1,5}\\
 -x_{1,2}&  0& x_{2,3} & x_{2,4} & x_{2,5}\\
 -x_{1,3}&  -x_{2,3} & 0& x_{3,4}  & x_{3,5} \\
-x_{1,4} & -x_{2,4}  & -x_{3,4}& 0 &  x_{4,5}\\
-x_{1,5} & -x_{2,5}  & -x_{3,5}&  -x_{4,5}&  0
 \end{bmatrix} $} \longmapsto 
\left( \, \frac{x_{4,5}\, x_{1,3}}{x_{1,5} \,x_{3,4}}
\, , \, 
\frac{x_{1,5} \,x_{2,4}}{x_{1,2} \, x_{4,5}}
\, , \, 
\frac{x_{1,2} \,x_{3,5}}{x_{1,5}\, x_{2,3}}
\, , \, 
\frac{x_{2,3} \, x_{1,4}}{x_{3,4} \,x_{1,2}}
\, , \, 
\frac{x_{3,4} \,x_{2,5}}{x_{2,3} 
\, x_{4,5}} \right)\, .
\end{align*}
Consequently,  $\mathcal P_5$ is a  birational model of the quotient map $\nu : {\mathbb S}_5\dashrightarrow \boldsymbol{\mathcal Y}_5$. \sk 

\noindent{\bf 2.} Moreover, the map $\mathcal P_5$ admits the following map as a rational section: $$
Y: y=\big(\,y_{13},y_{14},y_{24},y_{25},y_{35}
\big) \longmapsto 
Y(y)= \left[\begin{array}{ccccc}
0 & 1& {y_{13}}& {y_{14}} & 1
\\
-1 & 0 & 1 & {y_{24}} &  {y_{25}} 
\\
-y_{13} & -1 & 0 & 1& {y_{35}}  
\\
- {y_{14}}  &- {y_{24}}  & -1& 0 &  1
\\
- 1  & -{y_{25}} & -{y_{35}} & -1& 0 
\end{array}\right]\, , 
$$
i.e. setting $\mathbf C(y)= \mathbf C( y_{13},y_{14},y_{24},y_{25}, y_{35})$, one has $\mathcal P_5\circ Y={\bf I}{\rm d}_{\mathbf C(y)}$ (equality as rational maps). 
\end{prop}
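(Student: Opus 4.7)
I would prove the two parts in reverse order: statement 2 is a direct substitution, and once it is available it delivers statement 1 almost for free, via a geometric reformulation using the geometric quotient ${\rm Asym}_5(\mathbf C)/\mathcal H$.

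\textbf{Statement 2.} The matrix $Y(y)$ has five entries equal to $1$, namely in positions $(1,2),(1,5),(2,3),(3,4),(4,5)$, and five entries equal to $y_{13},y_{14},y_{24},y_{25},y_{35}$ in the remaining positions $(1,3),(1,4),(2,4),(2,5),(3,5)$. Substituting these values into the five ratios defining $\mathcal{P}_5$, all the $1$-factors cancel and exactly one $y_{ij}$ survives in each slot: explicitly one finds $U_1\circ Y=y_{13}$, $U_2\circ Y=y_{24}$, $U_3\circ Y=y_{35}$, $U_4\circ Y=y_{14}$, $U_5\circ Y=y_{25}$. Relabeling the five coordinates of the target $\mathbf C^5$ by the pairs indexing the components of $\mathcal{P}_5$, this says exactly that $\mathcal{P}_5\circ Y$ is the identity on $\mathbf C(y)$, which is statement 2.

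\textbf{Statement 1, the easy half.} The $\mathcal H$-invariance of each $U_i$ is a one-line weight check: under $h\cdot x_{ij}=h_ih_j\,x_{ij}$, the two numerator factors and the two denominator factors contribute weights $e_{i}+e_{j}$ that sum to the same vector in $\mathbf Z^5$ on both sides. Hence $\mathbf C(U_1,\ldots,U_5)\subseteq \mathbf C({\rm Asym}_5)^{\mathcal H}$. Algebraic independence of the $U_i$'s over $\mathbf C$ is now immediate from statement 2: any polynomial relation $F(U_1,\ldots,U_5)\equiv 0$ on ${\rm Asym}_5$ pulls back by $Y$ to $F(y_{13},y_{24},y_{35},y_{14},y_{25})\equiv 0$ on $\mathbf C^5$ in genuinely free variables, forcing $F=0$.

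\textbf{Upgrading to equality, and the main obstacle.} The content is to promote the inclusion of fields above into an equality. A short computation shows that the stabilizer of $Y(y)$ in $\mathcal H$ for $y$ generic is $\{\pm\mathrm{Id}\}$ (the conditions $h_ih_j=1$ for the five unit positions of $Y(y)$ already force $h_1=\cdots=h_5\in\{\pm 1\}$), so the generic $\mathcal H$-orbit has dimension $5$ and the field of invariants has transcendence degree $10-5=5$, matching that of $\mathbf C(U_1,\ldots,U_5)$. Transcendence degree alone is not enough, so I would conclude geometrically: the map $\mathcal{P}_5$ factors through a dominant rational map $\overline{\mathcal{P}}_5\colon \mathcal H\backslash{\rm Asym}_5\dashrightarrow \mathbf C^5$ between irreducible varieties of the same dimension, and statement 2 provides a rational section of $\overline{\mathcal{P}}_5$; a dominant map between irreducible varieties of the same dimension that admits a rational section is birational, and the Galois correspondence then yields $\mathbf C(U_1,\ldots,U_5)=\mathbf C({\rm Asym}_5)^{\mathcal H}$, which is exactly statement 1. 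The only subtle step I anticipate is this passage from $\mathcal{P}_5$ to $\overline{\mathcal{P}}_5$: because the generic orbit carries the nontrivial stabilizer $\{\pm\mathrm{Id}\}$, a naive dimension count would not suffice, and the explicit section provided by $Y$ is what makes the birationality argument work cleanly.
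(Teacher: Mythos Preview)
Your proof is correct and self-contained. The paper itself does not supply a proof of this proposition: it is stated as a recollection from the earlier article \cite{PirioAFST} (``We recall below the construction of the birational equivalence $\mathbf C^5\simeq \boldsymbol{\mathcal Y}_5$ considered in \cite{PirioAFST}'') and then used without further justification. So there is no ``paper's proof'' to compare against here.

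A few remarks on your argument. Your verification of statement~2 is accurate, including the observation that a relabeling of the target coordinates is needed (the paper silently identifies the $i$-th output slot with the pair $(i,j)$ indexing it). For statement~1, your strategy---invariance by weight count, algebraic independence via the section, and then the field equality via birationality of the induced map on the rational quotient---is the standard clean route. The point you flag as subtle can in fact be made even more direct: the five ``unit'' edges $(1,2),(1,5),(2,3),(3,4),(4,5)$ form an odd cycle in the complete graph $K_5$, and the associated $5\times 5$ incidence matrix has determinant $\pm 2$. Hence the torus map $h\mapsto (h_ih_j)$ onto those five slots is surjective with kernel $\{\pm{\rm Id}\}$, so every generic $x\in{\rm Asym}_5(\mathbf C)$ lies in the $\mathcal H$-orbit of some $Y(y)$. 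This shows immediately that any $\mathcal H$-invariant $f$ satisfies $f=(f\circ Y)\circ\mathcal P_5$, giving $\mathbf C({\rm Asym}_5)^{\mathcal H}=\mathbf C(U_1,\ldots,U_5)$ without invoking the abstract ``section $\Rightarrow$ birational'' lemma. Either way, your argument is complete.
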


Considering this proposition, let $\boldsymbol{Y}_5$ be the affine space $\mathbf C^5$ 
 with the rational coordinates $y_1,\ldots,y_5$ related to the $y_{ij}$'s appearing in the definition of $Y$ above, via the relations
$y_1=y_{13}$, $y_2=y_{14}$, $y_3=y_{24}$, $y=y_{25}$, $y=y_{35}$. 
From the relation $\mathcal P_5\circ Y={\bf I}{\rm d}_{\mathbf C(y)}$, one deduces that 
the map 
$\Theta$ 
defined by requiring that the following diagram of rational maps commutes 
\begin{equation}
\label{Eq:birat-yy}
  \xymatrix@R=1cm@C=0.4cm{ 
  {\rm Asym}_5(\mathbf C) 
\ar@{->}[d]^{ \mathcal P_5 }  
\ar@{^{(}->}[rrr]^{W}
   &  &   &
 \ar@{->}[d]^\nu   \mathbb S_5  &
\hspace{-0.7cm}
 \subset \mathbf P \big(S^+ \big)
 \\
 \boldsymbol{Y}_5
  \ar@/^1pc/[u]^{  Y} 
    \ar@{->}[rrr]^{\Theta} 
  &  & &\,  \boldsymbol{\mathcal Y}_5 \,,
  }
\end{equation}
is birational.  We thus have a birational identification 
\begin{equation}
\label{Eq:Theta} 
\Theta : \mathbf C^5=
\boldsymbol{Y}_5\stackrel{\sim}{\dashrightarrow}  \boldsymbol{\mathcal Y}_5
\end{equation}
 and it is the one we will always work with in what follows. 

The weight divisors $D_{\mathfrak w}$ in $\boldsymbol{\mathcal Y}_5$ will be important for our purpose hence it is interesting to investigate how they appear (or do not appear) on the birational model  $\boldsymbol{\mathcal Y}_5$. Remark first that $Y$ is polynomial hence so is the composition $W\circ Y: \boldsymbol{Y}_5\rightarrow \mathbb S_5\subset \mathbf P(S^+)\simeq \mathbf P^{15}$. Viewed the definition of the weight divisors in 
$\boldsymbol{\mathcal Y}_5$, we get that their pull-backs under $\Theta$ are cut out by the components of $\widehat W\circ Y$. Since some of the entries of $Y(y)$ are equal to 1, we see that some of the weight divisors in $\boldsymbol{\mathcal Y}_5$ have  no pull-back as divisors in $\boldsymbol{Y}_5$ under $\Theta$.  Clearly, there are 10 divisors in $\boldsymbol{\mathcal Y}_5$ corresponding to some $D_{\mathfrak w}$'s, and these are 
the five coordinate hyperplanes $\{y_i=0\}$ with $i=1,\ldots,5$, and the other five  are given by the vanishing of the equations obtained by taking the pfaffians of the $4\times 4$ principal antisymmetric submatrices $Y_{\hat 1},\ldots,Y_{\hat 5} $ of $Y(y)$. These subpfaffians, denoted by  $P_i={\rm Pf}\big(Y_{\hat{\imath}}\big)$ for $i=1,\ldots,5$, are given by:  
\begin{align}
\label{Eq:Pi}
P_1
= &
 1+\mathit{y_{25}}-\mathit{y_{24}} \mathit{y_{35}} 
\,, \quad 
P_2
=  
 1+\mathit{y_{13}} -\mathit{y_{35}} \mathit{y_{14}}
\,, \quad 
P_3
 =  
 1+\mathit{y_{24}} -\mathit{y_{14}} \mathit{y_{25}} 
\\ 
&\qquad P_4
 =  
 1+\mathit{y_{35}}-\mathit{y_{13}} \mathit{y_{25}}
\,, \quad  
P_5
 =  
 1+\mathit{y_{14}} -\mathit{y_{13}} \mathit{y_{24}} \, . 
 \nonumber
\end{align}

Then setting  
\begin{equation}
\label{Eq:zeta-i}
\zeta_1=y_{13}
\,, \quad 
\zeta_2=y_{14}
\,, \quad
\zeta_3=y_{24}
\,, \quad
\zeta_4=y_{25}\,, \quad
\zeta_5=y_{35} 
\quad \mbox{ and } \quad 
\zeta_{i+5}=P_{i} \quad \mbox{for }\, 
i \in [\hspace{-0.05cm}[5 
]\hspace{-0.05cm}]\, , 
\end{equation}
we define 
$ Z_k=\big\{ \,\zeta_k=0\,\big\}\subset \boldsymbol{Y}_5$ for $k=1,\ldots,10$ and one sets
\begin{equation}
\label{Eq:Def-Z}
Z=\cup_{k=1}^{10} Z_k= \Big\{ \,\zeta_1\cdot \cdots \cdot \zeta_{10}=0\,\Big\}
\qquad \mbox{ and } \qquad \boldsymbol{Y}_5^*=\mathbf C^5\setminus Z\, .
\end{equation}

One verifies easily the 
\begin{prop}
\label{Prop:Map-Theta}
The map $\Theta$ is defined at every point of $\boldsymbol{Y}_5^*$ and 
sends it into $\boldsymbol{\mathcal Y}_5^*$. Actually, one has 
$\Theta(\boldsymbol{Y}_5^*)= \boldsymbol{\mathcal Y}_5^*$ and 
 the restriction $\Theta\lvert_{\boldsymbol{Y}_5^*}$ induces an isomorphism from $\boldsymbol{Y}_5^*$ onto 
$\boldsymbol{\mathcal Y}_5^*$.
\end{prop}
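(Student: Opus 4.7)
The plan is to check the three claims separately, then exhibit a regular inverse.

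\emph{Step 1 (well-defined morphism).} Since $Y$ is polynomial and $W$ is regular on all of $\mathrm{Asym}_5(\mathbf C)$, the composition $W\circ Y$ is a morphism from $\boldsymbol{Y}_5$ to $\mathbb{S}_5\subset \mathbf{P}(S^+)$. By inspection of $Y(y)$ and of Wick's embedding \eqref{Eq:map-W}, the sixteen components of $\widehat W\circ Y$ consist of the constant $1$, the five entries of $Y(y)$ which are identically equal to $1$ (corresponding to the index pairs $(1,2),(2,3),(3,4),(4,5),(1,5)$), the five free coordinates $\zeta_1,\ldots,\zeta_5$ of $y$, and the five subpfaffians $\zeta_6,\ldots,\zeta_{10}=P_1,\ldots,P_5$. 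Hence, for $y\in \boldsymbol{Y}_5^*$, all sixteen Wick coordinates of $(W\circ Y)(y)$ are nonzero, placing this image in $\mathbb{S}_5^*\subset \mathbb{S}_5^{sf}$. Since $\nu$ is regular on $\mathbb{S}_5^{sf}$ and $\nu(\mathbb{S}_5^*)=\boldsymbol{\mathcal Y}_5^*$, the diagram \eqref{Eq:birat-yy} shows that $\Theta=\nu\circ W\circ Y$ is a morphism from $\boldsymbol{Y}_5^*$ into $\boldsymbol{\mathcal Y}_5^*$.

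\emph{Step 2 (regular inverse).} Each of the five components of $\mathcal P_5$ in Proposition \ref{Prop:maps-P5-Y} is a Laurent monomial in the $x_{ij}$'s whose denominator involves only the five distinguished coordinates $x_{12},x_{15},x_{23},x_{34},x_{45}$; in particular, $\mathcal P_5$ is regular on $\mathrm{Asym}_5(\mathbf C)\cap\mathbb{S}_5^*$. Being $\mathcal H$-invariant by Proposition \ref{Prop:maps-P5-Y}, it descends through Wick's birational identification \eqref{Eq:Asym5-bir-S5} and through $\nu$ to a morphism $\Psi:\boldsymbol{\mathcal Y}_5^*\to\boldsymbol{Y}_5$. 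To check that $\Psi$ actually takes values in $\boldsymbol{Y}_5^*$, observe that for any antisymmetric representative $A\in \mathbb{S}_5^*$ of $p\in \boldsymbol{\mathcal Y}_5^*$ and any $h\in \mathcal H$ with $h\cdot A=Y(\Psi(p))$, the scaling rule for pfaffians gives
\[
P_i(\Psi(p))=\mathrm{Pf}\bigl(Y(\Psi(p))_{\hat\imath}\bigr)=\Bigl(\prod_{k\ne i}h_k\Bigr)\mathrm{Pf}(A_{\hat\imath})\ne 0,
\]
and similarly each coordinate $y_i$ of $\Psi(p)$ is a nonzero scalar multiple of some nonzero $A_{jk}$. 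The identity $\mathcal P_5\circ Y=\mathrm{Id}$ of Proposition \ref{Prop:maps-P5-Y} then yields $\Psi\circ\Theta=\mathrm{Id}_{\boldsymbol{Y}_5^*}$.

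\emph{Step 3 (surjectivity and main obstacle).} The remaining identity $\Theta\circ\Psi=\mathrm{Id}_{\boldsymbol{\mathcal Y}_5^*}$ is equivalent to surjectivity of $\Theta|_{\boldsymbol{Y}_5^*}$: given $p\in \boldsymbol{\mathcal Y}_5^*$ with antisymmetric representative $A\in\mathbb{S}_5^*$, one must produce $h\in \mathcal H$ with $h_i h_j A_{ij}=1$ for the five index pairs $(1,2),(2,3),(3,4),(4,5),(1,5)$; then $h\cdot A=Y(y)$ with $y=\Psi(p)\in \boldsymbol{Y}_5^*$, whence $\Theta(y)=p$. The only (mild) obstacle is this combinatorial step: the five chosen pairs form an \emph{odd} cycle on $[\hspace{-0.05cm}[5]\hspace{-0.05cm}]$, so the associated $5\times 5$ incidence matrix is invertible over $\mathbf Z/2\mathbf Z$; combined with the divisibility of $\mathbf C^*$, this guarantees a solution $h\in(\mathbf C^*)^5$, unique up to the order-two kernel $Z(H)=\{\pm1\}$, which acts trivially on $\mathrm{Asym}_5(\mathbf C)$. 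Had the cycle been of even length, the solvability would have been obstructed by a monodromy condition on the product of the $A_{ij}$'s around the cycle; the odd parity of the chosen $5$-cycle is thus the arithmetic heart of the proposition.
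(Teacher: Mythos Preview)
The paper itself offers no proof beyond the phrase ``One verifies easily the\ldots'', so your detailed verification is a genuine addition and is essentially correct in outline: trace the sixteen Wick coordinates of $W\circ Y$, descend $\mathcal P_5$ through $\nu$ to produce a regular inverse, and solve the normalization equations to get surjectivity. Two points deserve correction.

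First, the incidence matrix of the $5$-cycle $\{(1,2),(2,3),(3,4),(4,5),(1,5)\}$ has determinant $2$, not an odd number, so it is \emph{not} invertible over $\mathbf Z/2\mathbf Z$ as you claim. What you actually need (and what is true) is that it is invertible over $\mathbf Q$; since $\mathbf C^*$ is $2$-divisible, the system $h_ih_j=A_{ij}^{-1}$ admits a solution. Your contrast with even cycles is correct in spirit---for an even cycle the determinant vanishes and a genuine compatibility condition appears---but the reason the odd case works is ``determinant nonzero over $\mathbf Q$'', not ``invertible mod $2$''.

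Second, in Step~2 your argument that $\Psi$ lands in $\boldsymbol Y_5^*$ invokes the existence of an $h\in\mathcal H$ with $h\cdot A=Y(\Psi(p))$, which is precisely the content of Step~3; as written, the two steps are logically entangled. The cleanest fix is a direct computation: for instance
\[
P_1\bigl(\mathcal P_5(A)\bigr)=1+\frac{x_{34}x_{25}}{x_{23}x_{45}}-\frac{x_{24}x_{35}}{x_{23}x_{45}}=\frac{\mathrm{Pf}(A_{\hat 1})}{x_{23}x_{45}},
\]
which is nonzero whenever all Wick coordinates of $A$ are nonzero, and similarly for $P_2,\ldots,P_5$. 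This avoids any forward reference to Step~3.
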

%
%
%
%
%
%
%

\subsubsection{\bf Del Pezzo quartic surfaces and Serganova-Skorobogatov embeddings}
\label{SS:dP4-SS-Embedding}
The geometry of  a del Pezzo quartic surface (and in particular the geometry of the lines contained in it)  is related in a nice way to that of the spinor tenfold $\mathbb S_5$ and of its torus quotient $\boldsymbol{\mathcal Y}_5$. This is important for our purpose and because it also allows a convenient way to work with the weights of the minuscule representation $S^+$, we recall it now. \sk 

Let ${\rm dP}_4$ be a fixed smooth del Pezzo surface. In finitely many ways (none of which more canonical than the others), it is isomorphic to the total space 
$X={\bf Bl}_P(\mathbf P^2)$ of the projective plane along a subset $P=\{p_1,\ldots,p_5\}$ of five points $p_i$ in general position in $\mathbf P^2$.  There are 16 lines in $X$ and each can be identified with its class in ${\bf Pic}_{\mathbb Z}(X)$. If $e_i$ stands for the class of the exceptional divisor over $p_i$ in the blow-up $\beta=\beta_P : X={\bf Bl}_P(\mathbf P^2)\rightarrow \mathbf P^2$ along $P$ (for any $i\in [\hspace{-0.05cm}[5]\hspace{-0.05cm}]$, and if $h$ denotes the class of $\beta^{-1}(\ell)$ for any line $\ell\subset \mathbf P^2\setminus P$, then the Picard lattice ${\bf Pic}_{\mathbb Z}(X)$ is freely spanned over the integers by the $e_i$'s and $h$ and the  
the (classes of) lines in $X$ are the following: 
$$
e_i\, , \qquad h-e_i-e_j\, \qquad \mbox{ and } \qquad 2h-e_{tot}\, , 
$$
with $i$ and $j$ distinct and ranging in
$[\hspace{-0.05cm}[5]\hspace{-0.05cm}]$, where we use the notation $e_{tot}=\sum_{k=1}^5 e_k$.  We will denote by $\mathcal L_X$ the set of lines contained in $X$, viewed as a subset of the Picard lattice of $X$.\sk

As  is well-known, the lines of $X$ can be put in a 1-1 relation with the weights in $\mathfrak W^+$. Let us recall how it goes: let $R$ be the orthogonal, for the intersection product, of the canonical class $K_X=-3h+e_{tot}$ in the  Picard group 
${\bf Pic}(X)={\bf Pic}_{\mathbb Z}(X)\otimes \mathbf R$. Endowed with the opposite of the (restriction of the) intersection product, $R$ becomes a `root space of type $D_5$': it is Euclidean and admits as a basis the following five `fundamental roots' 
$$
\rho_i=e_{i+1}-e_i\quad \big(i=1,\ldots,4\big) \qquad \mbox{ and }\qquad 
\rho_5=h-e_1-e_2-e_3\, . 
$$

Denote by $\Pi: {\bf Pic}(X)\rightarrow R$  the orthogonal projection (whose kernel is $\langle K\rangle =R^\perp$.  Let $(f_i)_{i=1}^5$ be the other basis of $R$ determined by the relations 
$f_i-f_{i+1}=-\rho_i=e_i-e_{i+1}$ for $i=1,\ldots,4$ and $f_4+f_5=\rho_5$, and let 
\begin{equation}
\label{Eq:psi}
 \psi : R\longrightarrow \mathbf R^5,\, 
 \sum_{i=1}^5 u_i\,f_i\longmapsto 
 (u_i)_{i=1}^5
\end{equation}
be the associated isomorphism. 
Then the linear map $ \mu=\psi\circ \Pi :  {\bf Pic}(X)\rightarrow  \mathbf R^5$
 induces a bijection 
 \begin{equation}
 \label{Eq:bijection-mu}
 \mu : \mathcal L_X \simeq  \mathfrak W^+
 \end{equation}
  from the set of lines in $X$ 
onto the set of weights of $S^+$ which is explicitly given  by
  $$
 \mu(e_i)=\mathfrak w_{[\hspace{-0.05cm}[5]\hspace{-0.05cm}] \setminus \{i\}}  
 \, , \quad 
  \mu(h-e_i-e_j)=\mathfrak w_{ \{i,j\}} 
 \qquad \mbox{and} \qquad 
 \mu(2h-e_{tot})=\mathfrak w_{\emptyset}\, , 
 $$
for all $i,j\in [\hspace{-0.05cm}[5]\hspace{-0.05cm}]$ distinct.

The previous bijection $\mathcal L_X\simeq \mathfrak W^+$ can be used to compute the action of the Weyl group $W_{D_5}$ on the weights of $S^+$. Indeed, building a graph with the $i$-th vertice corresponding to $\rho_i$ and an arrow between two vertices with labels $i$ and $j$ if and only if the two corresponding roots satisfy $(\rho_i,\rho_j)=1$, one exactly gets the labeled Dynkin diagram of Figure \ref{Fig:Dynkin-diagram-D5}.  Then for any $i = 1, \ldots,5$, one considers the map
\begin{equation}
\label{Eq:si}
s_i=s_{\rho_i} :  d\mapsto d+(\rho_i,d)\,\rho_i \, .
\end{equation}
One verifies that it is an involutive automorphism of $\big({\bf Pic}(X), (\cdot,\cdot)\big)$ letting 
$K_X$ invariant, hence it gives rise to an orthogonal involution of $R$. The $s_i$'s satisfy the relations \eqref{Eq:relations-W(D5)} so what we have obtained is a concrete realization of the Weyl group $W_{D_5}$, 
previously abstractly defined as a subgroup of $O(R)$ (in particular, this justifies  having denoted 
by $s_i$ the map \eqref{Eq:si}). 
 This will allow us to compute in an explicit way the action of $W_{D_5}$ on the set $\mathfrak W^+$ of weights of $S^+$. As an example, we treat the case of $s_1$. 
 Via \eqref{Eq:si}, it acts linearly on ${\bf Pic}(X)$ and is fully determined by indicating that it exchanges $e_1$ and $e_2$ and lets all the other generators $e_3,e_4,e_5$ and $h$ unchanged. We deduce that its action on $\mathcal L_X
\simeq  \mathfrak W^+$ is given by 
\begin{equation}
\label{Eq:echanges-si}
\mathfrak w_{[\hspace{-0.05cm}[5]\hspace{-0.05cm}] \setminus \{1\}}  
\leftrightarrow 
 \mathfrak w_{[\hspace{-0.05cm}[5]\hspace{-0.05cm}] \setminus \{2\}} 
\qquad \mbox{ and } \qquad 
\mathfrak w_{ \{1,k\}} 
\leftrightarrow 
\mathfrak w_{  \{2,k\}}
\end{equation}
 with $k=3,4,5$, with the convention that the exchanges above (indicated by the double arrow $\leftrightarrow $) are the only non-trivial actions of $s_i$ on the set of weights. 
\begin{center}
$\star$
\end{center}

The 1-1 correspondence between $\mathcal L_X$ and $\mathfrak W^+$ actually can be given a geometric origin.  We denote by $L=\cup_{\ell \in \mathcal L} \ell $ with the lines viewed as subsets of $X$, and we set  $X^*=X\setminus L$.

For any line $\ell\subset X$, let $\sigma_\ell$ be a generator of the 1-dimensional 
space of global sections ${\bf H}^0(X, \mathcal O_X(\ell))$ and let us consider the map $\hat \sigma : X\dashrightarrow  S^+$ the $\mathfrak w$-component of which, for any weight $\mathfrak w\in \mathfrak W^+$, is given by the section $\sigma_\ell$ for the line $\ell$ which corresponds to $\mathfrak w$ up to the bijection $\mu : \mathcal L_X\simeq \mathfrak W^+$ discussed above: $\hat \sigma_{ \hspace{-0.06cm} \boldsymbol{\mathcal L} } =\big( \sigma_{\mu^{-1}(\mathfrak w)}\big)_{ \mathfrak w \in \mathfrak W^+}$. This map becomes a morphism when restricted to $X^*$ and is well-defined up to post-composition by an element of the diagonal torus of ${\rm GL}( S^+)$. We claim that there is a way to chose the components $\sigma_\ell$ such that the projectivisation 
\begin{equation}
\label{Eq:F-L}
F_{ \hspace{-0.06cm} \boldsymbol{\mathcal L} }=\big[ \hat \sigma_{ \hspace{-0.06cm} \boldsymbol{\mathcal L} } \big] : X^*\rightarrow \mathbf P(S^+)
\end{equation}
 has values in $\mathbb S_5^*$. Moreover, this map extends to a well-defined morphism from $X$ to $\mathbb S_5^{sf}$ denoted the same.  For any weight $\mathfrak w\in \mathfrak W^+$, let $H_w$ be the coordinate hyperplane in $\mathbf P(S^+)$ corresponding to the vanishing of the affine $\mathfrak w$-coordinate 
(with respect to the decomposition in direct sum \eqref{Eq:S+-direct-sum}). Then one has $F_{ \hspace{-0.06cm} \boldsymbol{\mathcal L} }^{-1}\big( H_{\mathfrak w(\ell)}
 \big)=\ell$ for any line $\ell\in \mathcal L$, which gives a geometric explanation to the bijection 
\eqref{Eq:bijection-mu}. 
\sk

 It can be proven that  the composition 
 $ \nu\circ F_{ \hspace{-0.06cm} \boldsymbol{\mathcal L} } $
 makes commutative the following diagram
\begin{equation}
\label{Eq:Embedding-SS}
  \xymatrix@R=1cm@C=0.4cm{ 
   &  &&&   &
 \ar@{->}[d]^\nu   \mathbb S_5^{sf}  &
\hspace{-0.7cm}
 \subset \mathbf P \big(S^+ \big)
 \\
 X
  \ar@/^1pc/[rrrrru]^{ F_{ \hspace{-0.06cm} \boldsymbol{\mathcal L} }} 
    \ar@{->}[rrrrr]^{f_{S\hspace{-0.03cm}S}  \hspace{0.3cm} } 
  &  &&& &\,  \boldsymbol{\mathcal Y}_5 \,
  }
\end{equation}
where the map 
$f_{S\hspace{-0.05cm}S}   : X\rightarrow \boldsymbol{\mathcal Y}_5$ is an embedding which has been introduced (in a non-constructive way) by Serganova and Skorobogatov in \cite{SerganovaSkorobogatov} (hence the two $S$'s in the notation $f_{S\hspace{-0.05cm}S} $). In addition to being an embedding, $f_{S\hspace{-0.05cm}S}$ satisfies several nice properties, such as the fact that the preimage of a 
weight divisor of $\boldsymbol{\mathcal Y}_5 $ is the line in $X$ corresponding to the weight\footnote{In mathematical terms:  for any weight $\mathfrak w\in \mathfrak W^+$, one has $f_{S\hspace{-0.05cm}S}^{-1}( D_{\mathfrak w})=\ell \subset X$, where $\ell$ stands for the line such that $\mu(\ell)=\mathfrak w$.} or that it induces an isomorphism of Picard lattices $
f_{S\hspace{-0.05cm}S} ^* : 
{\bf Pic}_{\mathbf Z}\big( 
\boldsymbol{\mathcal Y}_5 \big)\simeq 
{\bf Pic}_{\mathbf Z}( X )\simeq \mathbf Z^6$. 

For our purpose, we  need to make explicit the rational map  $ F_{ \hspace{-0.06cm} \boldsymbol{\mathcal L} }\circ \beta^{-1} : \mathbf P^2\dashrightarrow \mathbb S_5$, or rather its birational model 
$$ G_{ \hspace{-0.06cm} \boldsymbol{\mathcal L} }= 
W^{-1}\circ 
 F_{ \hspace{-0.06cm} \boldsymbol{\mathcal L} }\circ \beta^{-1} : \mathbf P^2\dashrightarrow 
{\rm Asym}_5(\mathbf C)  \, .$$   Assume that $X$ is the blow-up of $\mathbf P^2$ in five points $p_i$
which, with respect to the standard homogeneous coordinates, are the vertices of the standard simplex for $p_1,\ldots,p_4$, and with $p_5=[a: b:1]$ where $a,b$ are two  complex numbers  only assumed to be such that $ab(a-1)(b-1)(a-b)\neq 0$ (this to ensure that the $p_i$'s are in general position in $\mathbf P^2$). In \cite{PirioAFST}, we have established that 
the expression of 
$ G_{ \hspace{-0.06cm} \boldsymbol{\mathcal L} }$ 
in the affine coordinates $x,y$ is given by 
$$
G_{ \hspace{-0.06cm} \boldsymbol{\mathcal L} }
(x,y)=
\frac{1}{ \mathcal C_{ab} }
\scalebox{0.9}{$
\left[\begin{array}{ccccc}
0 & -b -a  & -\left(a +1\right) y  & \left(1-y \right) a  & y -b
\\
 b +a  & 0 & -x \left(1+b \right) & \left(1-x \right) b  & x-a  
\\
 \left(a +1\right) y  & x \left(1+b \right) & 0 & y -x  & bx-a y  
\\
 \left(y -1\right) a  & \left(x-1 \right) b  & x-y  & 0 & 
 (b-1)x + (1-a)y - b + a
\\
 b -y  & a -x  & a y -b x  &  (1-b)x + (a-1)y + b - a  & 0 
\end{array}\right]
$}
$$
%
%
where $C_{ab}=\left(a-b  \right) x y +b\left(1-a \right) x +a\left(b -1 \right) y$ is an affine 
quadratic polynomial which cuts out the (affine part of the) conic passing through the points $p_1,\ldots,p_5$. \mk

In the considerations  above, $a$ and $b$ are fixed parameters, but it makes sense and it is interesting to let them vary. Doing this,  $G_{ \hspace{-0.06cm} \boldsymbol{\mathcal L} }$ is seen as a rational map from $\mathbf C^4$ (with affine coordinates $x,y,a,b$) onto 
${\rm Asym}_5(\mathbf C)$, that we will denote simply by $G$. This map can be considered  as a birational model of a putative rational map $\mathcal G: d\mathcal P_4\dashrightarrow \mathbb S_5\subset \mathbf P(S^+)$, 
where the source space is the `universal surface $d\mathcal P_4$ over the moduli space $\mathcal M_{ {\rm dP}_4}$ (birational to $\mathbf P^2$) of quartic del Pezzo surfaces', such that the restriction of 
$\mathcal G$ along a fiber of $d\mathcal P_4 \rightarrow \mathcal M_{ {\rm dP}_4}$ birationally  coincides with the map \eqref{Eq:F-L} of the corresponding del Pezzo quartic surface. 

In the putative setting considered above, post-composing 
 $\mathcal G$ with the quotient mapping $\mathbb S_{5}\dashrightarrow \boldsymbol{\mathcal Y}_5$ would give a map 
$\mathcal F : d\mathcal P_4\dashrightarrow \boldsymbol{\mathcal Y}_5 $ whose restriction along the fibers of the universal del Pezzo quartic surface would coincide with Serganova-Skorobogatov's embedding of the surface. The well-defined rational map 
\begin{equation}
\label{Eq:map-F}
F=(F_i)_{i=1}^5= \mathcal P_5\circ G: \mathbf C^4 \dashrightarrow \mathbf C^5
\end{equation}
 can be thought of as a birational model of $\mathcal F$. 
 Its components $F_i\in \mathbf C(x,y,a,b)$ are the following ones: 
\begin{align*}
F_1= &\, 
\frac{\left(a y -b x -a +b +x -y \right) \left(a +1\right) y}{\left(b -y \right) \left(-y +x \right)}
\\
F_2= &\, 
\frac{x \left(b +1\right) \left(y -1\right) a}{\left(-y +x \right) \left(b +a \right)}
\\
F_3= &\, 
\frac{\left(b -y \right) \left(-1+x \right) b}{\left(b +a \right) \left(\left(1-x \right) b +x +\left(y -1\right) a -y \right)}
\\
F_4= &\, 
\frac{\left(-y +x \right) \left(a -x \right)}{x \left(\left(1-x \right) b +x +\left(y -1\right) a -y \right) \left(b +1\right)}
\\
\mbox{and } \quad  F_5= &\, 
\frac{\left(b +a \right) \left(a y -b x \right)}{\left(b -y \right) x \left(b +1\right)}
\, .
\end{align*}
%
%
%
%

Since for any del Pezzo quartic surface, the preimages by the associated map $f_{S\hspace{-0.05cm}S}$ of the 16 weight divisors $D_{\mathfrak w}=\nu\big( H_{\mathfrak w} \cap \mathbb S_5^{sf}\big)$ are the 16 lines of the del Pezzo surface, one can expect something similar for the map $F$.  And this does occur in a way. Indeed, the divisor $Z$ in $Y_5$  has only 10 components, but it can be verifies that the preimage of $Z=\{ 
\zeta_F=\prod_{i=1}^{10} \zeta_i=0\}$ by $F$ has 16 hypersurface components somehow. More precisely,  one verifies that the support of the divisor cut out  by 
$\zeta_Z(F)=\prod_{i=1}^{10}\zeta_i(F)=0$  in $\mathbf C^4$  has 15 irreducible components, namely the irreducible affine hypersurfaces cut out by the following polynomials in the variables $x,y,a$ and $b$:
\begin{align*}
x  \,\, , \,\,
   y
     \,\, , \,\,
      x-1  \,\, , \,\,
 x-a
  \,\, , \,\, 
 y-1   
  \,\, , \,\,
 y-b
 &
  \,\, , \,\,
 x-y 
  \,\, , \,\, 
 b x- a y
  \,\, , \,\, 
 a
   \,\, , \,\,
 a +1
   \,\, , \,\,
b  \,\, , \,\, 
b +1  \,\, , \,\, b +a   \,\, , 
  \\
  &\; 
ab (x - y)  -(a-b) xy +a y -b x
  \,\, , \,\,
(a-1) y -(b-1) x -a +b  
     \, .
\end{align*}
Together with the hyperplane at infinity in a compactification $\mathbf C^4\subset \mathbf P^4$, this makes 16 irreducible divisors in total, which correspond to the 16 weight divisors $D_{\mathfrak{w}}$ (with $\mathfrak w\in \mathfrak W^+$) of $\boldsymbol{\mathcal Y}_5$. We will not really use this below, but we wanted to stress this fact which indicates that  the coordinates $x,y,a,b$ and the map $F$  are well-adapted to study $\boldsymbol{\mathcal Y}_5$ from a birational point of view.

\subsubsection{\bf The birational action of $\boldsymbol{W_{D_5}}$}
\label{SS:Birational-action-w(D5)}
Conjugating the isomorphism \eqref{Eq:WD5=Aut(Y5)}
by the birational identification \eqref{Eq:Theta} gives rise to a birational representation 
of the Weyl group $W_{D_5}$ into the Cremona group ${\bf Bir}(\mathbf C^5)$ of birational transformations of $\mathbf C^5$. Our goal here is to make this representation explicit.
\sk 

Recall that $  W_{D_5}=N/H_{D_5}$
where $N=N_{ {\rm Spin}_{10}(\mathbf C)}\big(H_{D_5}\big)$ stands for the normalizer of $H_{D_5}$ in the spin group. From the proof of Theorem 2.2 in \cite{Skorobogatov}, we know that the action of an element $w\in W_{D_5}$ on $\boldsymbol{\mathcal Y}_5$ is induced by the one of a lift $\hat w$ in the normalizer. Recall that $w$ acts by permutations and transitively on the set of lines $\mathcal L$ and acts also naturally 
on the set of weights $\mathfrak W^+$. Moreover, since the bijection 
\eqref{Eq:bijection-mu} is $W_{D_5}$-equivariant (as an easy verification shows), one deduces 
a natural (but naive) lift for $w$, namely the linear map $\tilde w$ on $S^+$ defined requiring that it acts 
by permuting the element of the weight basis $(v_\varpi)_{\varpi \in \mathfrak W^+}$  
of $S^+$ (see \eqref{Eq:S+-direct-sum}) according to the weight, i.e. one has $\tilde w(v_\varpi)=v_{w(\varpi)}$ for any weight $\varpi$. However it is not the case that this lift belongs to the normalizer of the Cartan torus. Indeed, it is by no mean canonical, it actually depends on the chosen weight basis $(v_\varpi)_{\varpi\in \mathfrak W^+}$. Since each $v_\varpi$ is only well-defined up to multiplication by a non-zero scalar,  we look for 
a lift $\hat w$ acting as $v_\varpi\mapsto \lambda_{w}^\varpi \cdot v_{w(\varpi)}$ 
with $\lambda_{w}^\varpi\in \mathbf C^*$ for any $\varpi$. 
\sk

Let us apply this strategy to the first generator $s_1$ of $W_{D_5}$. 
 Let us first consider $\tilde s_i$, which stands for the `dumb lift' of $s_1$  mentioned above. 
 Given $\eta=\widehat W(X)$ for 
   $$
X=
 \left[\begin{array}{ccccc}
0 & x_{12}& {x_{13}}&  {x_{14}} &   \, {x_{15}} 
\\
-x_{12} & 0 &  {x_{23}} & {x_{24}} & {x_{25}}
\\
- {x_{13}} & -  {x_{13}}  & 0 &   x_{34}&   {x_{35}}  
\\
- {x_{14}}  &-  {x_{14}} & -  x_{34}& 0 &     x_{45}
\\
- {x_{15}}  & -  {x_{15}}&   -    {x_{35}} & -   x_{45}& 0 
\end{array}\right]\in {\rm Asym}_5(\mathbf C)\, , 
$$
generic, 
 one easily deduces from 
 \eqref{Eq:echanges-si} the weight coordinates  of $\tilde L_1(\eta)=(\tilde s_i\circ  W)(X)$ : one has
\begin{align*}
 \scalebox{0.9}{
$ \eta= 
 \Big( 1, x_{12}\, ,  \,  x_{13}\, , \, x_{14}\, , \, x_{15}\, , \, 
   x_{23}\, , \, 
 x_{24}\, , \, 
 x_{25}  \, , \, 
 x_{34}\, , \, 
 x_{35}\, , \, 
 x_{45}
 \, , \,   
 {\rm Pf}(X_{\hat 1})
 \, , \,
  {\rm Pf}(X_{\hat 2})
   \, , \,
  {\rm Pf}(X_{\hat 3})
   \, , \,
  {\rm Pf}(X_{\hat 4}) \, , \,
  {\rm Pf}(X_{\hat 5})\Big)$}\\
  \mbox{and }\, 
 \scalebox{0.9}{  $ \tilde L_1(\eta)= 
 \Big( 1, x_{12}\, ,   \,  
 \textcolor{blue}{x_{23}}\, , \, 
 \textcolor{blue}{x_{24}}\, , \, \textcolor{blue}{x_{25}}\, , \, \textcolor{blue}{x_{13}}\, , \, \textcolor{blue}{x_{14}}\, , \, \textcolor{blue}{x_{15}}\, , \, 
 x_{34}\, , \, x_{35}\, , \, x_{45}
 \, , \,  \textcolor{blue}{ {\rm Pf}(X_{\hat 2})}
 \, , \,
 \textcolor{blue}{ {\rm Pf}(X_{\hat 1})}
   \, , \,
  {\rm Pf}(X_{\hat 3})
   \, , \,
  {\rm Pf}(X_{\hat 4}) \, , \,
  {\rm Pf}(X_{\hat 5}) \Big)$}
 \end{align*}
 (the coordinates which have been permuted in $ \tilde s_1(\eta)$ are in blue). The transformation $\eta\mapsto  \tilde L_1(\eta)$ is induced by an invertible  linear map $\tilde L_1$ of $S^+$ (which permutes the coordinates in blue) which however 
  is not an element of the image of the spin group in ${\rm GL}(S^+)$ since it can be verified that $\tilde L_1(\mathbb S_5)\not \subset \mathbb S_5$.  Then let   
   $\overline{L}_1\in {\rm GL}(S^+)$ be obtained from $\tilde L_1$ by post-composing it with a diagonal matrix: there are non-zero complex numbers 
   $\lambda_k$ for $k=0,\ldots,5$
    and  $\lambda_{ij}$ for $(i,j)\in 
   [\hspace{-0.05cm}[5   ]\hspace{-0.05cm}]^2_<$ such that 
   $\overline{L}_1$ is entirely characterized by the fact that one has 
\begin{align*}
 &\scalebox{0.9}{  $ \overline L_1(\eta)= 
 \Big( \lambda_{0}, \lambda_{12} \, x_{12}\, ,   \,  
\lambda_{13} \, \textcolor{blue}{x_{23}}\, , \, 
 \lambda_{14} \,\textcolor{blue}{x_{24}}\, , \, 
\lambda_{15} \,  \textcolor{blue}{x_{25}}\, , \, \lambda_{23} \,\textcolor{blue}{x_{13}}\, , \, 
\lambda_{24} \,
\textcolor{blue}{x_{14}}\, , \, 
\lambda_{25} \, \textcolor{blue}{x_{15}}\, , \, 
 \lambda_{34} \,x_{34}\, , \, 
 \lambda_{35} \, x_{35}\, , \, 
 \lambda_{45} \,x_{45}
 \, , \,$} \\
 &  \hspace{6cm} \scalebox{0.9}{\, $
\lambda_1\,  \textcolor{blue}{ {\rm Pf}\big(X_{\hat 2}\big)}
 \, , \,
\lambda_2\,\textcolor{blue}{ {\rm Pf}\big(X_{\hat 1}\big)}
   \, , \,
\lambda_3\,  {\rm Pf}\big(X_{\hat 3}\big)
   \, , \,
\lambda_4\,  {\rm Pf}\big(X_{\hat 4}\big) \, , \,
\lambda_5\,  {\rm Pf}\big(X_{\hat 5}\big) \Big)$}
 \end{align*}
for $\eta=\widehat W(X)$ with $X$ generic.  Since post-composing by an element of the Cartan torus is irrelevant, there is no loss in generality by assuming that $\lambda_k=1$ for $k=1,\ldots,5$.  That $\overline{L}_1$ belongs to the normalizer of $H_{D_5}$ in the spin group implies in particular that $\overline{L}_1$ stabilizes $\mathbb S_5$.  Setting  $\Lambda=\{\,  \lambda_0\, \}\cup \big\{ \, \lambda_{ij}\,\lvert \, 1\leq i<j\leq 5\, \big\}$  and 
 $$ 
\overline X= \frac{1}{\lambda_0}
 \left[\begin{array}{ccccc}
0 & \lambda_{12} \,x_{12}& \lambda_{13} \, \textcolor{blue}{x_{23}}&  \lambda_{14}  \, \textcolor{blue}{x_{24}} &   \lambda_{15} \, \textcolor{blue}{x_{25}} 
\\
-\lambda_{12} \,x_{12} & 0 &    \lambda_{23}\, \textcolor{blue}{x_{13}} &  \lambda_{24}  \,\textcolor{blue}{x_{14}} &  \lambda_{25}\,  \textcolor{blue}{x_{15}}
\\
- \lambda_{13} \,\textcolor{blue}{x_{23}} & -  \lambda_{23} \, \textcolor{blue}{x_{13}}  & 0 &   \lambda_{34} \, x_{34}&    \lambda_{35} \, {x_{35}}  
\\
- \lambda_{14} \,  \textcolor{blue}{x_{24}}  &-  \lambda_{24} \,  \textcolor{blue}{x_{14}} & -  \lambda_{34} \, x_{34}& 0 &    \lambda_{45} \, x_{45}
\\
-  \lambda_{15} \, \textcolor{blue}{x_{25}}  & -  \lambda_{25} \,\textcolor{blue}{x_{15}}&   -   \lambda_{35} \, {x_{35}} & -  \lambda_{45} \, x_{45}& 0 
\end{array}\right]
$$
 the condition $\overline{L}_1(\mathbb S_5)= \mathbb S_5$ is equivalent to the fact that 
 the following pfaffian relations are satisfied for any generic (hence for any) matrix $X\in {\rm Asym}_5(\mathbf C)$: 
 \begin{equation}
 {\rm Pf}\big( \overline X_{\hat 1}\big)={\rm Pf}\big(  X_{\hat 2}\big)\, , \quad 
  {\rm Pf}\big(  \overline X_{\hat 2}\big)={\rm Pf}\big(  X_{\hat 1}\big)
 \qquad \mbox{ and }\qquad  
  {\rm Pf}\big(  \overline X_{\hat s}\big)={\rm Pf}\big(  X_{\hat s}\big)
  \quad \mbox{for}\quad s=3,4,5\, . 
 \end{equation}
These relations are polynomial identities in the indeterminates $x_{ij}$ with $1\leq i<j\leq 5$  
whose coefficients are rational expressions in the elements of $\Lambda$. Assuming that all these coefficients are zero corresponds to a system of polynomial equations in the $\lambda$'s which is not difficult to solve. One obtains that  the matrix $\overline X$ is necessarily the following one: 
  $$
i\, \left[\begin{array}{ccccc}
0 & x_{12}&  {x_{23}}&     {x_{24}} &     {x_{25}} 
\\
-x_{12} & 0 &     {x_{13}} &     {x_{14}} &    {x_{15}}
\\
-  {x_{23}} & -   {x_{13}}  & 0 & - x_{34}&   - {x_{35}}  
\\
-   {x_{24}}  &-    {x_{14}} & x_{34}& 0 &  - x_{45}
\\
-    {x_{25}}  & -{x_{15}}&   {x_{35}} & x_{45}& 0 
\end{array}\right]\, . 
$$
The Cremona map $\sigma_1$  induced by  $s_1$  is given by $\mathcal P_5\big( \overline{Y}\big)$ where $\mathcal P_5$ and $Y$ are given in Proposition \ref{Prop:maps-P5-Y}.  More explicitly,  for $y=(y_{13},y_{14},y_{24},y_{25},y_{35})$, one has 
$$
\sigma_1(y)=
 \mathcal P_5\Big( \overline{Y(y)}\Big)
 = 
 \mathcal P_5\left( \,{
\overline{
\scalebox{0.7}{$
 \left[\begin{array}{ccccc}
0 & 1& {y_{13}}& {y_{14}} & 1
\\
-1 & 0 & 1 & {y_{24}} &  {y_{25}} 
\\
-y_{13} & -1 & 0 & 1& {y_{35}}  
\\
- {y_{14}}  &- {y_{24}}  & -1& 0 &  1
\\
- 1  & -{y_{25}} & -{y_{35}} & -1& 0 
\end{array}\right]$}}}\,
\right) 
 = 
 \mathcal P_5\left( {
\scalebox{0.7}{$i \,\left[\begin{array}{ccccc}
0 & 1& 1& {y_{24}} & {y_{25}} 
\\
-1 & 0 & {y_{13}} & {y_{14}} &  1
\\
-1 & -{y_{13}}  & 0 & -1& -{y_{35}}  
\\
- {y_{24}}  &- {y_{14}}  & -1& 0 &  -1
\\
- {y_{25}}  & -1& {y_{35}} & 1& 0 
\end{array}\right]\,
$}}
\right) \, .
$$
One  obtains eventually that $\sigma_1$ is the following involutive Cremona transformation: 
\begin{equation}
\label{Eq:Formula-Sigma-1}
\sigma_1: y \longmapsto \left(\, \frac{1}{ {y_{25}}}
\, , \, 
 - {y_{24}}  {y_{13}}
 \, , \,  
- {y_{14}}  {y_{25}}
\, , \, 
 \frac{1}{ {y_{13}}}
 \, , \, 
-\frac{ {y_{35}}}{ {y_{13}}  {y_{25}}}
\, \right)\, .
\end{equation}

Proceeding in a similar way for each of the four other generators $s_2,s_3,s_4$ and $s_5$ of $W_{D_5}$, one obtains the following explicit formulas for  the Cremona transformations $\sigma_i$ induced by them:
%
%
\begin{align}
\label{Eq:Formula-Sigma-k}
\sigma_2: y \longmapsto &  \left(\, 
\frac{1}{ {y_{24}}} 
\, , \, 
-\frac{ {y_{14}}}{ {y_{24}}  {y_{13}}}
 \, , \, 
\frac{1}{ {y_{13}}}
 \, , \,  - {y_{35}}  {y_{24}}
  \, , \, 
- {y_{13}}  {y_{25}}
\, \right)
\nonumber 
\\
\sigma_3: y \longmapsto &  \left(\, 
- {y_{14}}  {y_{35}}
 \, , \,
  - {y_{24}}  {y_{13}}
   \, , \,
\frac{1}{ {y_{35}}}
 \, , \,
-\frac{ {y_{25}}}{ {y_{35}}  {y_{24}}}
 \, , \, 
\frac{1}{ {y_{24}}}
\, \right)
\\
\sigma_4: y \longmapsto &  \left(\, 
-\frac{ {y_{13}}}{ {y_{35}}  {y_{14}}}
 \, , \,
\frac{1}{ {y_{35}}}
 \, , \,
 - {y_{14}}  {y_{25}}
  \, , \,
- {y_{24}}  {y_{35}}, \frac{1}{ {y_{14}}}
\, \right)
\nonumber
\\
\sigma_5: y \longmapsto &  \left(\, 
 -P_2
\, , \,  
 {y_{14}}
\frac{P_1}{P_3}
\, , \,  
-  \frac{ {y_{24}}}{P_3}
 \, , \,  
-\frac{ {y_{25}}}{P_1}
\, , \,   {y_{35}} 
\frac{P_3}{P_1}
\, \right)
\nonumber
\end{align}
(where in the formula for $\sigma_5$, the $P_i$'s stand for the sub-pfaffians defined in 
\eqref{Eq:Pi}).

\begin{prop} 
The map $s_i\longmapsto \sigma_i$ for $i=1,\ldots,5$ gives rise to an embedding of groups 
\begin{equation}
\label{Eq:WD5->Bir(C5)}
W_{D_5}=\big\langle\, s_1,\ldots,s_5\, \rangle \longrightarrow {\rm Bir}\big(  \mathbf C^5\big) 
\end{equation}
which corresponds to Skorobogatov's isomorphism $W_{D_5}\simeq {\rm Aut}\big( 
\boldsymbol{\mathcal Y}_5
\big)$ up to the birational identification \eqref{Eq:Theta}.
\end{prop}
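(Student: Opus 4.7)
The plan is to show that the assignment $s_i\mapsto \sigma_i$ extends to a group homomorphism $W_{D_5}\to {\rm Bir}(\mathbf C^5)$ and that, under the birational identification $\Theta$, this homomorphism agrees with Skorobogatov's isomorphism $W_{D_5}\simeq {\rm Aut}(\boldsymbol{\mathcal Y}_5)$; injectivity then follows for free since the latter is a group isomorphism.  I would prefer to proceed \emph{intrinsically}, rather than by directly verifying all Coxeter relations (\ref{Eq:relations-W(D5)}) on the explicit Cremona formulas, because the construction preceding the statement already encodes the identification we need.

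The key step is to argue that each rescaled linear map $\overline L_i\in {\rm GL}(S^+)$ built in the body of the section above represents $s_i$ in ${\rm Aut}(\boldsymbol{\mathcal Y}_5)$ via Skorobogatov's isomorphism. Indeed, $\overline L_i$ was obtained by first choosing the naive lift $\tilde L_i$ permuting the weight basis $(v_\varpi)_{\varpi\in \mathfrak W^+}$ according to the action of $s_i$ on $\mathfrak W^+$, then rescaling the weight components by scalars $\lambda_\bullet$ determined by imposing that the pfaffian identities $\overline L_i(\mathbb S_5)=\mathbb S_5$ hold; this polynomial system in the $\lambda_\bullet$'s (with the convenient normalization $\lambda_k=1$ for $k=1,\ldots,5$) admits a unique solution, as already checked for $s_1$.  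By construction, $\overline L_i$ preserves $\mathbb S_5$ and permutes the weight lines, hence stabilizes the Cartan torus $H_{D_5}$; therefore it descends to an automorphism $[\overline L_i]\in {\rm Aut}(\boldsymbol{\mathcal Y}_5)$.  Since $[\overline L_i]$ acts on the weight divisors $D_{\mathfrak w}$ (whose classes generate ${\bf Pic}(\boldsymbol{\mathcal Y}_5)$, by \cite[Theorem 1.6]{Skorobogatov}) exactly as $s_i$ permutes the weights $\mathfrak w\in \mathfrak W^+$, Skorobogatov's isomorphism \eqref{Eq:WD5=Aut(Y5)} sends $s_i$ to $[\overline L_i]$.

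The second step is to translate $[\overline L_i]$ along the birational equivalence $\Theta$ of diagram \eqref{Eq:birat-yy}: the formula $\sigma_i=\mathcal P_5\circ \overline{Y(\cdot)}$ used to derive \eqref{Eq:Formula-Sigma-1} and \eqref{Eq:Formula-Sigma-k} is precisely $\sigma_i=\Theta^{-1}\circ [\overline L_i]\circ \Theta$ as birational maps of $\mathbf C^5$. The assignment $s_i\mapsto \sigma_i$ is therefore the conjugate of Skorobogatov's isomorphism by $\Theta$, which proves simultaneously that it extends to a well-defined group homomorphism $W_{D_5}\to {\rm Bir}(\mathbf C^5)$ (the Coxeter relations are automatically transported from those in ${\rm Aut}(\boldsymbol{\mathcal Y}_5)$) and that this homomorphism is an embedding corresponding to the isomorphism of \cite[Theorem 2.2]{Skorobogatov}.

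The main obstacle in this strategy is the verification that, for each $i=1,\ldots,5$, the polynomial system on the rescaling parameters $\lambda_\bullet$ arising from $\overline L_i(\mathbb S_5)\subset \mathbb S_5$ is simultaneously solvable and yields a unique solution modulo a global scalar—equivalently, that the naive lift $\tilde L_i$ admits a rescaling to the normalizer, unique up to the action of $H_{D_5}$.  For $i=1$ this was handled by direct inspection in the paragraphs above; the cases $i=2,3,4$ are formally identical (the relevant weight reshufflings coming from \eqref{Eq:echanges-si} adapted to each $s_i$ are again induced by an exchange of two matrix indices in $Y$), and the case $i=5$ is the only nontrivial one, since there $s_5$ exchanges weights of different ``layers'' of the half-spin representation, which is what produces the non-monomial sub-pfaffian expressions in the formula for $\sigma_5$.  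As a sanity check, one may verify on the explicit formulas \eqref{Eq:Formula-Sigma-1}–\eqref{Eq:Formula-Sigma-k} the involutive relations $\sigma_i^2={\rm Id}$, the commutation relations $(\sigma_i\sigma_j)^2={\rm Id}$ for $(i,j)\notin J$, and the braid relations $(\sigma_i\sigma_j)^3={\rm Id}$ for $(i,j)\in J$; all but the last (in particular $(\sigma_3\sigma_5)^3={\rm Id}$, which involves the transformation of the sub-pfaffians $P_i$ under $\sigma_5$) reduce to routine monomial substitutions.
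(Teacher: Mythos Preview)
Your approach is correct and is essentially the one the paper sketches in its first paragraph: the $\sigma_i$ are obtained by transporting Skorobogatov's automorphisms through $\Theta$, so the embedding and its identification with \eqref{Eq:WD5=Aut(Y5)} come for free. The paper states this tersely and then supplements it with a purely computational alternative (verifying the Coxeter relations \eqref{Eq:relations-W(D5)} on the explicit formulas and checking by computer that the group generated has order $1920=|W_{D_5}|$), which you mention only as a sanity check; your write-up is more detailed on the intrinsic side, but the underlying argument is the same.
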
 

\begin{proof} Actually, this follows from \cite[Theorem 2.2]{Skorobogatov} since  the formulas above for the $\sigma_i$'s have been obtained by following, in the specific case under scrutiny and working in a explicit manner, the proof given by 
Skorobogatov  in his paper. 

But that the proposition holds true can also be verified as follows: 
to ensure that \eqref{Eq:WD5->Bir(C5)} indeed induces a morphism of group, it suffices to verify that the Cremona transformations $\sigma_1,\ldots,\sigma_5$
satisfy all the relations \eqref{Eq:relations-W(D5)} satisfied by the generators 
$s_i$ of $W_{D_5}$. Using the explicit expressions 
for the $\sigma_i$'s given above, this is something straightforward to check.  Using a computer algebra system, there is no difficulty to  show that the $\sigma_i$'s generate a finite subgroup of 
${\rm Bir}(\mathbf C^5)$ with $1920$ elements. Since this is precisely the order of $W_{D_5}$, it follows that the morphism of groups \eqref{Eq:WD5->Bir(C5)} is injective. 
\end{proof}


We will denote by $\boldsymbol{\mathscr W}_{\hspace{-0.1cm}D_5}$ the subgroup of 
$ {\rm Bir}\big(  \mathbf C^5\big) $  generated by  the $\sigma_i$'s: 
\begin{equation}
\label{Eq:Bir-W-D5}
W_{D_5}\simeq \boldsymbol{\mathscr W}_{\hspace{-0.1cm}D_5}=\big\langle 
\sigma_1,\ldots,\sigma_5
\big\rangle  \subset {\rm Bir}\big(  \mathbf C^5\big) \, . 
\end{equation}

\begin{rem}
The birational realization 
$\boldsymbol{\mathscr W}_{\hspace{-0.1cm}D_5}$
of the Weyl group $W_{D_5}$  seems to be new.
\end{rem}

Because $W_{D_5}$ acts by permuations (and transitively) on the set of weights $\mathfrak W^+$ of $S^+$, it acts also (in exactly the same way) on the set of coordinate hyperplanes $\big\{
\, H_{\mathfrak w}\, \big\}_{\mathfrak w \in \mathfrak W^+} $ hence on the set of weight divisors 
$\big\{
\, D_{\mathfrak w}\, \big\}_{\mathfrak w \in \mathfrak W^+} \subset {\bf Pic}_{\mathbf Z}(\boldsymbol{\mathcal Y})$ as well.  For $w\in W_{D_5}$, denote by $\varphi_w$ the corresponding automorphism of $\boldsymbol{\mathcal Y}$. Then for any weight $\mathfrak w$ and any $w\in W_{D_5}$, one has $\varphi_w(D_{\mathfrak w})= D_{w(\mathfrak w)}$ from which it follows that $\varphi_w$ induces an automorphism of $\boldsymbol{\mathcal Y}_5^*$. 
 Combined with Proposition \ref{Prop:Map-Theta}, this gives us 
\begin{prop}
\label{Prop:WD5->BIRC5}
For any $i=1,\ldots,5$, the Cremona transformation $\sigma_i$ is defined on $\boldsymbol{Y}_5^* =\mathbf C^5\setminus Z_5$ and gives rise to an automorphism of the pair $(\mathbf C^5,Z_5)$. Consequently, the image $\boldsymbol{\mathscr W}_{\hspace{-0.1cm}D_5}$ of the group embedding \eqref{Eq:WD5->Bir(C5)} is a subgroup of 
${\rm Bir}\big(  \mathbf C^5\big)\cap {\rm Aut}(\mathbf C^5\setminus Z_5)$. 
\end{prop}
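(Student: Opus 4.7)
The plan is to assemble the proof from the two ingredients already in place immediately before the statement: the observation that each $\varphi_w\in {\rm Aut}(\boldsymbol{\mathcal Y}_5)$ induces an automorphism of $\boldsymbol{\mathcal Y}_5^*$, and Proposition~\ref{Prop:Map-Theta} which provides an isomorphism $\Theta|_{\boldsymbol{Y}_5^*} : \boldsymbol{Y}_5^* \xrightarrow{\sim} \boldsymbol{\mathcal Y}_5^*$. The key structural fact --- really established during the derivation of the explicit formulas \eqref{Eq:Formula-Sigma-1}--\eqref{Eq:Formula-Sigma-k} --- is that $\sigma_i$ is, by construction, the conjugate $\Theta^{-1} \circ \varphi_{s_i} \circ \Theta$ of the automorphism $\varphi_{s_i}$ of $\boldsymbol{\mathcal Y}_5$ corresponding to $s_i$ under Skorobogatov's isomorphism \eqref{Eq:WD5=Aut(Y5)}.

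Granting this identification, the proof proceeds in two steps. First, since $\varphi_{s_i}$ permutes the sixteen weight divisors by sending $D_{\mathfrak w}$ to $D_{s_i(\mathfrak w)}$, their union is $\varphi_{s_i}$-invariant; hence the complement $\boldsymbol{\mathcal Y}_5^*$ is also $\varphi_{s_i}$-invariant, and $\varphi_{s_i}$ restricts to a biregular automorphism of $\boldsymbol{\mathcal Y}_5^*$. Second, transporting this biregular automorphism back through the isomorphism $\Theta|_{\boldsymbol{Y}_5^*}$ of Proposition~\ref{Prop:Map-Theta} yields a biregular automorphism of $\boldsymbol{Y}_5^* = \mathbf C^5 \setminus Z$; by the explicit formulas \eqref{Eq:Formula-Sigma-1}--\eqref{Eq:Formula-Sigma-k}, this biregular automorphism is precisely the restriction of the Cremona transformation $\sigma_i\in {\rm Bir}(\mathbf C^5)$. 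In particular $\sigma_i$ is regular at every point of $\boldsymbol{Y}_5^*$, its indeterminacy locus is contained in $Z$, and it maps $Z$ to $Z$, so it defines an automorphism of the pair $(\mathbf C^5,Z)$ in the intended sense.

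The consequence for $\boldsymbol{\mathscr W}_{\hspace{-0.1cm}D_5}$ is then immediate: the set ${\rm Bir}(\mathbf C^5) \cap {\rm Aut}(\mathbf C^5 \setminus Z)$ is plainly a subgroup of ${\rm Bir}(\mathbf C^5)$, and it contains each of the five generators, hence the whole group. No serious obstacle appears along the way; the only point to be careful about is that the conjugacy identity $\sigma_i=\Theta^{-1}\circ\varphi_{s_i}\circ\Theta$ is a direct output of the construction carried out in \S\ref{SS:Birational-action-w(D5)} (where the $\sigma_i$'s were obtained precisely by transporting the action of a chosen lift $\overline{L}_i\in N$ of $s_i$ through $\mathcal P_5\circ W^{-1}$), so no extra verification is required. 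If one preferred to bypass the abstract argument entirely, one could instead establish the proposition by a direct, purely computational check, verifying for every pair $(i,k)\in[\hspace{-0.05cm}[5]\hspace{-0.05cm}]\times[\hspace{-0.05cm}[10]\hspace{-0.05cm}]$ that the pullback $\zeta_k\circ\sigma_i$ factors as a (signed) Laurent monomial in the ten functions $\zeta_1,\ldots,\zeta_{10}$; this is routine from \eqref{Eq:Formula-Sigma-1}, \eqref{Eq:Formula-Sigma-k} and the definitions \eqref{Eq:Pi}, \eqref{Eq:zeta-i}.
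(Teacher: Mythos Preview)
Your proposal is correct and follows essentially the same approach as the paper: the paper states the proposition as an immediate consequence of the paragraph preceding it, which establishes that each $\varphi_w$ permutes the weight divisors $D_{\mathfrak w}$ (hence restricts to an automorphism of $\boldsymbol{\mathcal Y}_5^*$) and then invokes Proposition~\ref{Prop:Map-Theta} to transport this to $\boldsymbol{Y}_5^*$ via $\Theta$. Your write-up makes explicit the conjugacy $\sigma_i=\Theta^{-1}\circ\varphi_{s_i}\circ\Theta$ that is implicit in the paper's construction, and your closing remark about a direct computational verification is a reasonable alternative the paper does not mention.
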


\section{\bf The Gelfand-MacPherson web $\boldsymbol{{\mathcal W}^{GM}_{ \hspace{-0.05cm}{\mathcal Y}_5}}$ and its abelian relations}
\label{S:Intro}
This is the main section of the paper, in which we study the web $\boldsymbol{\mathcal W}^{GM}_{  \hspace{-0.05cm} \boldsymbol{\mathcal Y}_5}$, in particular its abelian relations, which we make explicit and determine their invariance properties relative to the action of the Weyl group $W_{D_5}$. 

\subsection{\bf The web $\boldsymbol{{\mathcal W}^{GM}_{ \hspace{-0.05cm}{\mathcal Y}_5}}$ in coordinates}
\label{SS:WGMS5-in-coordinates}
We use here some results of \cite[\S4.5]{PirioAFST} to which we refer the reader for more details.

The moment polytope of the spinor tenfold, denoted by $\Delta_{D_5}$, is the $5$-demihypercube, realized in $\mathfrak h_{D_5}\simeq \mathbf R^5$ as the convex envelope of $\mathfrak W^+$. 
It has $10+2^4=26$ facets (= faces of codimension 1): 10 are $4$-demihypercubes, the 16 other being 4-simplices. The former are facets of type $D_4$, the latter of type $A_4$ 
 (see \cite[Fig.\,1]{PirioAFST}).   Seeing $\Delta_{D_5}$ as the convex envelope of lines in the Picard lattice of a given smooth del Pezzo surface $X={\rm dP}_4$, and seeing the facets as determined by their set of vertices, the facets of type $A_4$ are in correspondence with the description 
 of $X$ as a blow-up of $\mathbf P^2$ in 5 points, the five corresponding exceptional divisors on ${\rm dP}_4$ being the vertices of the corresponding facet. As for the facets which are demihypercubes, they correspond to the conic fibrations on $X$, the vertices of each such facet being the irreducible components (lines) of the non-irreducible fibers of the fibration. 
 
 Working in $\mathbf R^5$ by means of the isomorphism \eqref{Eq:psi}, then first 
 one has that $\Psi(\mathfrak W^+)$ is the set of 5-uplets $\frac{1}{2}\underline{\epsilon}$ 
 with $\underline{\epsilon}= (\epsilon_i)_{i=1}^5\in \{\pm 1\}^5$ even in the sense that $\epsilon_1\cdots \epsilon_5=1$. Then 
 it can be verified that the facets of type $A_4$ are given by intersecting $\Delta_{D_5}$ with the affine hyperplanes cut out by $\sum_{i=1}^5 \varepsilon_i x_i=3/2$ for all  $\underline{\varepsilon}=(\varepsilon_i )_{i=1}^5 \in \{\pm 1\}^5$ which are odd (that is such that $\varepsilon_1\cdots \varepsilon_5=-1$).  As explained in \cite{PirioAFST}, the $A_4$-facets of $\Delta_{D_5}$ are `{\it web-irrelevant'} and have not to be considered further.  
 
 The 10 demihypercubical facets are the intersections $\Delta_{5,i}^\epsilon=\Delta_{D_5}\cap \{ \, x_i= \epsilon/2\, \}$ for $i=1,\ldots,5$ and $\epsilon\in \{\pm 1\}$.  These 10 facets are `{\it web-relevant'} and from the material of  \cite[\S4.5]{PirioAFST} (in particular Figure 1 therein), we can see that the face map associated 
 to the facet $\Delta_{5,i}^\epsilon$ is a dominant rational map $w:\Pi_i^\epsilon : \mathbb S_5\dashrightarrow \mathbb S_4$. Here the target space $\mathbb S_4$ is the {\it `spinor 6-fold'}: it is the homogeneous space of type $(D_4,\omega_4)$. 
  Identifying $\mathbf P^7$ with the projectivization of $\mathbf C\oplus {\rm Asym}_4(\mathbf C) \oplus \mathbf C\simeq \mathbf C^8$, the spinor 6-fold  admits an affine  parametrization {\it \`a la Wick} given by 
\begin{align*}
 W_4: {\rm Asym}_4(\mathbf C)& \hookrightarrow \, \, \mathbb S_4\subset \mathbf P^7\\ 
 M& \longmapsto \big[ \, 1 :  M  :  {\rm Pf}(M)\, 
  \big]\, ,
\end{align*}
    from which it follows that 
$\mathbb S_4$ is isomorphic to the smooth hyperquadric $\mathbb Q^6\subset \mathbf P^7$.    
 
 
As for the expressions of the face maps $\Psi_i^\epsilon$ read in the antisymmetric matricial charts associated to Wick's parametrizations of the two corresponding spinor manifolds, there is a simple (and nice) formula when $\epsilon=+1$: for $i\in [\hspace{-0.05cm}[5]\hspace{-0.05cm}]$, one has 
\begin{align*}
\widetilde \Phi_i^+ = W_4^{-1}\circ \psi_i^+\circ W_5 \, : \, 
{\rm Asym}_5(\mathbf C)& \dashrightarrow {\rm Asym}_4(\mathbf C)\\
A& \longmapsto A_{\hat \imath}
\end{align*}
 where $A_{\hat \imath}$ stands for the $4\times 4$ antisymmetric matrix obtained from $A$ by deleting its $i$-th line and its $i$-th column.  One can give formulas for the maps 
 $\widetilde \Phi_i^-$ read in Wick's charts (see \cite[Prop.\,4.17]{PirioAFST}), but none as nice as the one above for the $ \widetilde \Phi_i^+$.\footnote{The reason behind the dichotomy for the formulas of the face maps $\widetilde \Phi_i^\pm$ read in matricial charts is that the target spaces of the 
 $\widetilde \Phi_i^+$'s are naturally identified with $\mathbb S_4$ embedded in $\mathbf P(S_4^+)$ whereas the images of the $\widetilde \Phi_i^-$'s naturally live in the projectivization $\mathbf P(S_4^-)$ of the other half-spin representation of ${\rm Spin}_8(\mathbf C)$. See \cite[\S4.5.2]{PirioAFST} for more details.}
 
 From the explicit formulas in Wick's charts for the face maps, one easily gets some formulas for some birational models of the $H_{D_5}$-equivariant quotient $\pi_i^\pm : \boldsymbol{\mathcal Y}_5\dashrightarrow \boldsymbol{\mathcal Y}_4$ of the face maps $\Pi_i^\pm$. Let us denote by $\boldsymbol{W}^{GM}_{\hspace{-0.05cm}\boldsymbol{Y}_5}$ the pull-back of $\boldsymbol{\mathcal W}^{GM}_{\hspace{-0.05cm}\boldsymbol{\mathcal Y}_5}=\boldsymbol{\mathcal W}\big( \, \pi_i^\pm\, \big)_{ i\in [\hspace{-0.05cm}[5]\hspace{-0.05cm}]}$ under the birational map \eqref{Eq:Theta}: 
 $$
 \boldsymbol{W}^{GM}_{\hspace{-0.05cm}\boldsymbol{Y}_5}=\Theta^*\Big( 
 \boldsymbol{\mathcal W}^{GM}_{\hspace{0.05cm}\boldsymbol{\mathcal Y}_5}
 \Big)\, . 
 $$
 Explicit first integrals for  $\boldsymbol{W}^{GM}_{\hspace{-0.05cm}\boldsymbol{Y}_5}
$ have been given in \cite{PirioAFST}. 
 For $i=1,\ldots,5$, let $\boldsymbol{F}_i$ (resp.\,$\boldsymbol{F}_{i+5}$) be the foliation on $\boldsymbol{Y}_5$ induced by $\pi_i^+\circ \Theta$ (resp.\, by $\pi_i^-\circ \Theta$).

\begin{prop}  
The following rational functions are first integrals of the foliations $\boldsymbol{F}_i$: 
\begin{align}
\label{Eq:Formules-Ui}
U_1
=&\, \bigg(\, {y_{25}}\, , \,  {y_{24}} {y_{35}}
\, \bigg)
&&
U_6
=
 \bigg(\, 
\frac{{y_{14}} {y_{35}} -{y_{13}} -1}{{y_{14}} \left({y_{13}} {y_{25}} -{y_{35}} -1\right) }
 \, , \, 
\frac{ {y_{13}}\left({y_{14}} {y_{25}} -{y_{24}} -1\right)}{{y_{14}} \left({y_{13}} {y_{25}} -{y_{35}} -1\right) }
 \,
\bigg) 
\nonumber
\\   \sk 
 U_2
=&\,
\bigg(\, \frac{1}{y_{13}}\, , \, 
\frac{{y_{14}} {y_{35}}}{{y_{13}}}\, \bigg)
&&
 U_7
=
\bigg(\, 
\frac{{y_{24}} {y_{35}} -{y_{25}} -1}{{y_{24}}\left({y_{13}} {y_{25}} -{y_{35}} -1\right) }
 \, , \, 
\frac{{y_{14}} {y_{25}} -{y_{24}} -1}{{y_{24}}\left({y_{13}} {y_{25}} -{y_{35}} -1\right) }
 \, 
\bigg)
\nonumber \\ \sk 
 U_3
=&\,
  \bigg( 
  \, {y_{24}}
 \, , \,  
   {y_{14}} {y_{25}}
   \, \bigg)
   &&
 U_8
=\bigg( 
\frac{{y_{13}} \left({y_{24}} {y_{35}} -{y_{25}} -1\right)}{{y_{13}} {y_{25}} -{y_{35}} -1}
 \, , \, 
\frac{{y_{14}} {y_{35}} -{y_{13}} -1}{{y_{13}} {y_{25}} -{y_{35}} -1}
 \,
\bigg)
\\ \sk 
 U_4
=&\,
\bigg(\,  \frac{1}{y_{35}}
\, , \, 
\frac{{y_{13}} {y_{25}}}{{y_{35}}}
\,
\bigg)
&&
 U_9
= 
 \bigg( 
\, 
\frac{{y_{14}} \left({y_{24}} {y_{35}} -{y_{25}} -1\right)}{{y_{14}} {y_{25}} -{y_{24}} -1}
 \, , \,
\frac{{y_{24}}\left({y_{14}} {y_{35}} -{y_{13}} -1\right) }{{y_{14}} {y_{25}} -{y_{24}} -1}
 \, \bigg)
\nonumber \\ \sk 
 U_5
=&\,  \bigg(\, {y_{14}}
\, , \, 
 {y_{13}} {y_{24}}
 \, 
\bigg) 
&& 
 U_{10}
=
\bigg( \, 
\frac{{y_{24}} {y_{35}} -{y_{25}} -1}{{y_{35}}\left({y_{14}} {y_{25}} -{y_{24}} -1\right) }
 \, , \,
\frac{ {y_{25}} \left({y_{14}} {y_{35}} -{y_{13}} -1\right)}{{y_{35}}\left({y_{14}} {y_{25}} -{y_{24}} -1\right) }\,
\bigg) 
\, . \nonumber
\end{align}
\end{prop}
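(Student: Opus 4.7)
The plan is to compute the rational maps $\pi_i^\pm\circ\Theta:\boldsymbol{Y}_5\dashrightarrow\boldsymbol{\mathcal Y}_4$ directly in the coordinates $y_{jk}$ and verify that their components coincide, up to a birational identification of the target $\boldsymbol{\mathcal Y}_4$, with the pairs $U_1,\ldots,U_{10}$ displayed in \eqref{Eq:Formules-Ui}. By definition, $\boldsymbol{F}_i$ (resp.\,$\boldsymbol{F}_{i+5}$) is the foliation on $\boldsymbol{Y}_5$ whose leaves are the fibers of $\pi_i^+\circ\Theta$ (resp.\,$\pi_i^-\circ\Theta$), so such an identification immediately yields the first-integral claim.

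First, I would construct a birational model of $\boldsymbol{\mathcal Y}_4$ parallel to the one furnished by Proposition \ref{Prop:maps-P5-Y} for $\boldsymbol{\mathcal Y}_5$. Since $\mathbb S_4$ is birational to ${\rm Asym}_4(\mathbf C)$ via Wick's parametrization $W_4$, and $H_{D_4}\simeq(\mathbf C^*)^4$ acts on ${\rm Asym}_4(\mathbf C)$ by $(B_{ij})\mapsto(h_ih_jB_{ij})$, a direct invariant-theoretic computation strictly analogous to the one proving Proposition \ref{Prop:maps-P5-Y} produces a rational map $\mathcal P_4:{\rm Asym}_4(\mathbf C)\dashrightarrow\mathbf C^2$ whose two components freely generate the field of $H_{D_4}$-invariants and which thereby provides a birational model of the quotient map $\mathbb S_4\dashrightarrow\boldsymbol{\mathcal Y}_4$.

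Second, for $i\in[\hspace{-0.05cm}[5]\hspace{-0.05cm}]$ the matricial form $\widetilde\Phi_i^+$ of $\Pi_i^+$ recalled just above the proposition is the simple deletion operation $A\mapsto A_{\hat\imath}$. Combining this with the factorisation $\Theta=\nu\circ W\circ Y$ provided by the diagram \eqref{Eq:birat-yy}, and with the equivariance of $\Pi_i^+$ with respect to the surjection $H_{D_5}\to H_{D_4}$, a straightforward diagram chase shows that a rational model of $\pi_i^+\circ\Theta$ is precisely $y\mapsto\mathcal P_4\bigl(Y(y)_{\hat\imath}\bigr)$. Substituting the explicit expression of $Y(y)$ from Proposition \ref{Prop:maps-P5-Y} and performing routine simplifications then reproduces the compact formulas for $U_1,\ldots,U_5$.

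For $i=6,\ldots,10$ the same strategy is applied to the face maps $\pi_{i-5}^-$, but the matricial forms $\widetilde\Phi_{i-5}^-$ are considerably less transparent, since their image naturally lies in the projectivization of the \emph{other} half-spin representation $S_4^-$ of ${\rm Spin}_8(\mathbf C)$. Using the explicit formulas recalled in \cite[Prop.\,4.17]{PirioAFST}, one substitutes $Y(y)$ into $\widetilde\Phi_{i-5}^-$ and then quotients by the appropriate analogue of $\mathcal P_4$ adapted to $S_4^-$. The main technical obstacle is the simplification of the resulting rational expressions: it is here that the sub-pfaffians $P_1,\ldots,P_5$ from \eqref{Eq:Pi} enter naturally, and after non-trivial cancellations one recovers the compact expressions displayed for $U_6,\ldots,U_{10}$. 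In practice this last step is most conveniently handled by a computer algebra system; the clean dependence of $U_6,\ldots,U_{10}$ on the $P_k$'s faithfully reflects the $W_{D_5}$-symmetry of the whole construction and provides a useful consistency check.
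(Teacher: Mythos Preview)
Your proposal is correct and follows exactly the approach the paper relies on: the paper does not give an independent proof here but simply states that ``explicit first integrals for $\boldsymbol{W}^{GM}_{\hspace{-0.05cm}\boldsymbol{Y}_5}$ have been given in \cite{PirioAFST}'', and the computation you outline---composing the matricial face maps $\widetilde\Phi_i^\pm$ with the section $Y$ and then quotienting by a rank-4 torus via an analogue of $\mathcal P_4$---is precisely the method used there (see in particular \cite[Prop.~4.17]{PirioAFST} for the negative face maps, which you correctly cite). Your description of the computation, including the role of the sub-pfaffians $P_k$ and the need for computer-algebra simplification in the $U_6,\ldots,U_{10}$ case, accurately reflects what is actually carried out.
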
 


The choice of the specific first integrals above is motivated by the fact that it will induce nice forms for the abelian relations of $\boldsymbol{W}^{GM}_{\hspace{-0.05cm}\boldsymbol{Y}_5}$ we will deal with. For describing the ARs of $\boldsymbol{W}^{GM}_{\hspace{-0.05cm}\boldsymbol{Y}_5}$, it is convenient to introduce a `third component' to the first integral $U_i=\big(U_{i,1} , U_{i,2}\big)$. One sets 
\begin{equation}
U_{i,3}=1+U_{i,1}
-U_{i,2} 
\end{equation}
for $i=1,\ldots,10$ and via straightforward computations, one gets: 
\begin{align}
\label{Eq:Formules-Ui3}
 U_{1,3}
=&
- {y_{24}}  {y_{35}} + {y_{25}} +1
&& 
 U_{6,3}=
\frac{ {y_{13}}  {y_{24}} - {y_{14}} -1}{\left( {y_{13}}  {y_{25}} - {y_{35}} -1\right)  {y_{14}}}
\nonumber
\\ \sk 
 U_{2,3}
=&\,
\frac{- {y_{14}}  {y_{35}} + {y_{13}} +1}{ {y_{13}}}
&& 
 U_{7,3}
= \frac{ {y_{25}} \left( {y_{13}}  {y_{24}} - {y_{14}} -1\right)}{  {y_{24}}\left( {y_{13}}  {y_{25}} - {y_{35}} -1\right)}
\nonumber
\\ \sk 
U_{3,3}
=&\,
- {y_{14}}  {y_{25}} + {y_{24}} +1
&&
U_{8,3}
=
\frac{ {y_{35}} \left( {y_{13}}  {y_{24}} - {y_{14}} -1\right)}{ {y_{13}}  {y_{25}} - {y_{35}} -1}
\\ \sk 
U_{4,3}
=&\,
\frac{- {y_{13}}  {y_{25}} + {y_{35}} +1}{ {y_{35}}}
&&
U_{9,3}
=
\frac{ {y_{13}}  {y_{24}} - {y_{14}} -1}{ {y_{14}}  {y_{25}} - {y_{24}} -1}
\nonumber
\\ \sk 
U_{5,3}
=&\, 
- {y_{13}}  {y_{24}} + {y_{14}} +1
&&
 U_{10,3}
=
\frac{ {y_{13}}  {y_{25}} - {y_{35}} -1}{ {y_{35}}\left( {y_{14}}  {y_{25}} - {y_{24}} -1\right) }
\, .
\nonumber
\end{align}

The most important property which the $U_{i,s}$'s for $i=1,\ldots,10$ and $s=1,2,3$  satisfy is given by the 
\begin{lem}
\label{L:u3}
Up to a sign, any $U_{i,s}$ can be written as the product of 
some factors of the form $\zeta_k^{\pm 1}$, where the $\zeta_k$'s are the affine polynomials on $\boldsymbol{Y}_5=\mathbf C^5$ defined in \eqref{Eq:zeta-i}. 
\end{lem}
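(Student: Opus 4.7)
My approach would be a direct case-by-case verification, built around one structural observation that reduces the lemma to routine bookkeeping. First I would note that, comparing the explicit expressions in \eqref{Eq:Formules-Ui} and \eqref{Eq:Formules-Ui3} with the definition \eqref{Eq:Pi} of the sub-pfaffians, the only non-monomial polynomials appearing in any numerator or denominator of the $U_{i,s}$'s are the five trinomials $y_{14}y_{35}-y_{13}-1$, $y_{13}y_{25}-y_{35}-1$, $y_{14}y_{25}-y_{24}-1$, $y_{24}y_{35}-y_{25}-1$ and $y_{13}y_{24}-y_{14}-1$, which coincide respectively with $-P_2, -P_4, -P_3, -P_1, -P_5$, that is, up to a sign, with $\zeta_7, \zeta_9, \zeta_8, \zeta_6, \zeta_{10}$. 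Every other factor that occurs is a monomial in the variables $y_{13}, y_{14}, y_{24}, y_{25}, y_{35}$, which are precisely $\zeta_1, \ldots, \zeta_5$.

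Given this observation, the verification splits naturally into two cases. For $i \in \{1, \ldots, 5\}$, the components $U_{i,1}$ and $U_{i,2}$ are monomials in $\zeta_1, \ldots, \zeta_5$ by inspection, while each $U_{i,3}$ read off from \eqref{Eq:Formules-Ui3} is, up to a sign, one of the $P_j$'s divided by at most one factor among $y_{13}$ or $y_{35}$; in each of the five cases this is manifestly a product of $\zeta_k^{\pm 1}$'s. For $i \in \{6, \ldots, 10\}$, each of the three components $U_{i,s}$ is a ratio whose numerator and denominator each decompose as a monomial in $\zeta_1, \ldots, \zeta_5$ times one of the five trinomials above; substituting $-\zeta_{k+5}$ for each such trinomial then yields the desired factorization as a signed product of $\zeta_k^{\pm 1}$'s.

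There is no real conceptual obstacle here: the only work is notational, namely recording the explicit factorization for each of the roughly twenty-five individual components. This is mechanical and can be carried out by inspection or confirmed by a brief symbolic computation. The only point requiring some care is keeping track of the overall sign when converting each trinomial to the corresponding $-P_j = -\zeta_{j+5}$; once this is done systematically, the identities fall out. If useful for later sections, one may also wish to record the resulting factorizations in a table, as they will reappear when analyzing the logarithmic components of the abelian relations of $\boldsymbol{\mathcal W}^{GM}_{\boldsymbol{\mathcal Y}_5}$.
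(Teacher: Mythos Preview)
Your proposal is correct and is precisely the verification the paper has in mind; in fact the paper offers no proof at all for this lemma, treating it as evident from the explicit formulas \eqref{Eq:Formules-Ui} and \eqref{Eq:Formules-Ui3} displayed immediately before. Your key structural observation---that the only non-monomial factors appearing are, up to sign, the five sub-pfaffians $P_1,\dots,P_5$---is exactly the point, and the remaining bookkeeping is routine (one small slip: there are $30$ components, not twenty-five).
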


The  five first functions $U_i$ define a subweb of 
$\boldsymbol{\mathcal W}^{GM}_{\hspace{-0.05cm}\boldsymbol{Y}_5}$ 
which will be of great interest for us. 
This web will be denoted by $\boldsymbol{W}^{GM}_{\hspace{-0.05cm}\boldsymbol{Y}_5}$, it is  defined by simple 
monomial first integrals: one has 
\begin{equation}
\label{Eq:W+Y5}
\boldsymbol{W}^{+}_{\hspace{-0.05cm}\boldsymbol{Y}_5}
= 
\scalebox{1}{$
\boldsymbol{\mathcal W}
\Bigg(\, \Big(\,  {y_{25}}\, , \,   {y_{24}}  {y_{35}}
\, \Big)
\, , \, 
\bigg(\,  \frac{1}{y_{13}}\, , \, 
\frac{ {y_{14}}  {y_{35}}}{ {y_{13}}}\, \bigg)
\, , \,
  \Big( 
  \,  {y_{24}}
 \, , \,  
    {y_{14}}  {y_{25}}
   \, \Big)
\, , \, 
\bigg(\,   \frac{1}{y_{35}}
\, , \, 
\frac{ {y_{13}}  {y_{25}}}{ {y_{35}}}
\,
\bigg)
\, , \, 
 \Big(\,  {y_{14}}
\, , \, 
  {y_{13}}  {y_{24}}
 \, 
\Big) \,\Bigg)\, .$}
\end{equation}
\begin{center}
$\star$
\end{center}
Essentially all the results to come have been obtained my means of explicit computations using the explicit expressions above for the first integrals of the webs we will consider. For this reason, all our results will be stated for the birational models 
$\boldsymbol{W}^{GM}_{\hspace{-0.05cm}\boldsymbol{Y}_5}$ and 
$\boldsymbol{W}^{+}_{\hspace{-0.05cm}\boldsymbol{Y}_5}$ but are of course valid for their geometric avatars $\boldsymbol{\mathcal W}^{GM}_{\hspace{-0.05cm}\boldsymbol{\mathcal Y}_5}$ and  
$\boldsymbol{\mathcal W}^{+}_{\hspace{-0.05cm}\boldsymbol{\mathcal Y}_5}=
\boldsymbol{\mathcal W}\big( \, \pi_i^+ \,\big)_{i\in [\hspace{-0.05cm}[5]\hspace{-0.05cm}]} $  as well. 

\subsubsection{\bf Ranks}
Determining the virtual ranks of a $c$-codimensional web
$\boldsymbol{\mathcal W}$ defined by rational first integrals depending on $n$  variables, amounts to computing the rank of certain vector spaces defined over 
 $\mathbf C(u_1,\ldots,u_n)$, where the $u_i$'s are to be regarded as the affine coordinates of the generic point of $ \mathbf{C}^n$.

As for the $k$-abelian relations of  $\boldsymbol{\mathcal{W}}$, for any $k = 0, \ldots, c$, they correspond to the solutions of a linear differential system (which is not difficult to make explicit), and which is of finite type when the corresponding virtual rank is finite. In such cases, a computational approach similar to the one described in~\cite[\S1.5]{ClusterWebs} can be employed to determine the $k$-rank of the web.
\sk 

The methods just outlined for computing the virtual or actual $k$-rank of a given web can be implemented in a computer algebra system. This provides computationally effective techniques for determining these invariants.\footnote{Maple worksheets for computing the $k$-ranks of webs in an arbitrary number of variables are available from the author upon request.} Using these, one easily computes all the ranks of the web $\boldsymbol{W}^{GM}_{\hspace{-0.05cm}\boldsymbol{Y}_5}$: 
%
%
\begin{prop}
\label{Prop:Virtual-Ranks}
{\bf 1.} One has 
\begin{align*}
\rho^{\bullet}_0\Big( 
\boldsymbol{W}^{GM}_{\hspace{-0.05cm}\boldsymbol{Y}_5}
\Big)
= &  \,   (15,15,10,1)
&&
\rho^{\bullet}_1\Big( 
\boldsymbol{W}^{GM}_{\hspace{-0.05cm}\boldsymbol{Y}_5}
\Big)
= ( 15, 20, 15, 2 )
&& \rho^{\bullet}_2\Big( 
\boldsymbol{W}^{GM}_{\hspace{-0.05cm}\boldsymbol{Y}_5}
\Big)
=  (5,5,1) 
\\
r_0\Big( 
\boldsymbol{W}^{GM}_{\hspace{-0.05cm}\boldsymbol{Y}_5}
\Big)
=  &   \, 25
&&
r_1\Big( 
\boldsymbol{W}^{GM}_{\hspace{-0.05cm}\boldsymbol{Y}_5}
\Big)
=   35 
&& r_2\Big( 
\boldsymbol{W}^{GM}_{\hspace{-0.05cm}\boldsymbol{Y}_5}
\Big)
=   11 \, .
\end{align*}

In particular,  Gelfand-MacPherson's web $\boldsymbol{W}^{GM}_{\hspace{-0.05cm}\boldsymbol{Y}_5}$ has AMP 2-rank. \sk 

\noindent {\bf 2.}  One has 
\begin{align*}
\rho^{\bullet}_0\Big( 
\boldsymbol{W}^{+}_{ \hspace{-0.05cm} {\mathcal Y}_5}
\Big)
= &  \,   (5)
&&
\rho^{\bullet}_1\Big( 
\boldsymbol{W}^{+}_{ \hspace{-0.05cm} {\mathcal Y}_5}
\Big)
= ( 5,1 )
&& \rho^{\bullet}_2\Big( 
\boldsymbol{W}^{+}_{ \hspace{-0.05cm}  {\mathcal Y}_5}
\Big)
=  (1) 
\\
r_0\Big( 
\boldsymbol{W}^{+}_{\hspace{-0.05cm}\boldsymbol{Y}_5}
\Big)
=  &   \, 5
&&
r_1\Big( 
\boldsymbol{W}^{+}_{\hspace{-0.05cm}\boldsymbol{Y}_5}
\Big)
=   6
&& r_2\Big( 
\boldsymbol{W}^{+}_{\hspace{-0.05cm}\boldsymbol{Y}_5}
\Big)
=   1\, .
\end{align*}

In particular,  the 5-web $\boldsymbol{W}^{+}_{\hspace{-0.05cm}\boldsymbol{Y}_5}$ has AMP k-rank for $k=0,1,2$. \sk 
\end{prop}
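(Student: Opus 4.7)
The proof is entirely computational and follows the standard procedure for determining ranks of a web defined by explicit first integrals, as detailed in \cite[\S1.5]{ClusterWebs}. Since the first integrals \eqref{Eq:Formules-Ui} are explicit rational functions in the five affine coordinates $(y_{13}, y_{14}, y_{24}, y_{25}, y_{35})$, every step reduces to linear algebra either over $\mathbf{C}(y)$ or over $\mathbf{C}$ after specialization at a generic point.

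First I would compute the virtual ranks. Fixing a generic point $m \in \boldsymbol{Y}_5$, the linearized constant web $T_m\boldsymbol{W}$ on $T_m\boldsymbol{Y}_5 \simeq \mathbf{C}^5$ has its $i$-th foliation defined by the two linear forms $dU_{i,1}(m), dU_{i,2}(m)$, explicitly available from \eqref{Eq:Formules-Ui}. For each $\sigma$ and each $k \in \{0,1,2\}$, the space $\boldsymbol{AR}^k_\sigma(T_m\boldsymbol{W})$ is the kernel of the `sum' map
\[
\bigoplus_i \mathrm{Sym}^\sigma(T_m^*\boldsymbol{Y}_5) \otimes \Lambda^k\bigl\langle dU_{i,1}(m), dU_{i,2}(m)\bigr\rangle \longrightarrow \mathrm{Sym}^\sigma(T_m^*\boldsymbol{Y}_5) \otimes \Lambda^k(T_m^*\boldsymbol{Y}_5),
\]
sending $(\eta_i)_i$ to $\sum_i \eta_i$. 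Each $\rho^k_\sigma$ is the dimension of this kernel, a purely linear-algebra quantity whose matrix can be assembled mechanically from the explicit partial derivatives of the $U_{i,s}$'s. Incrementing $\sigma$ until the contribution vanishes yields all the virtual ranks listed in the proposition.

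Next I would compute the actual ranks. A germ of $k$-AR $(\eta_i)_i$ is parameterized by unknown functions $a_{i,I}(U_i)$ of the foliation coordinates, through $\eta_i = \sum_{|I|=k} a_{i,I}(U_i)\, dU_{i,I}$, and the relation $\sum_i \eta_i = 0$ translates into a linear differential system on these unknowns. Since all virtual ranks are finite by the previous step, this system is of finite type; a standard iterated prolongation closes it into a finite-dimensional linear system whose solution space has dimension $r^k$. The AMP assertions then follow from the computed values by direct comparison: one checks that $r^2 = 11 = 5 + 5 + 1$ for $\boldsymbol{W}^{GM}_{\boldsymbol{Y}_5}$, and that $r^k$ equals the full sum $\sum_\sigma \rho^k_\sigma$ for every $k \in \{0,1,2\}$ in the case of $\boldsymbol{W}^+_{\boldsymbol{Y}_5}$.

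The hard part is not conceptual but computational: for the ten-foliation web $\boldsymbol{W}^{GM}_{\boldsymbol{Y}_5}$ the prolongation system involves very large matrices with rational entries in five variables, and a carefully implemented computer-algebra routine (the Maple worksheets alluded to in the footnote of the proposition) is essential to carry the calculation through within reasonable resources. No new mathematical input is required once the explicit first integrals \eqref{Eq:Formules-Ui} have been fixed.
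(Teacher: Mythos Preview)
Your proposal is correct and matches the paper's approach exactly: the paper states just before the proposition that the virtual ranks reduce to rank computations over $\mathbf C(y)$, that the actual $k$-ranks are obtained by solving the associated finite-type linear differential system following \cite[\S1.5]{ClusterWebs}, and that all of this is carried out via Maple worksheets. One small imprecision: in your displayed ``sum'' map the domain should have $\mathrm{Sym}^\sigma\bigl\langle dU_{i,1}(m), dU_{i,2}(m)\bigr\rangle$ rather than $\mathrm{Sym}^\sigma(T_m^*\boldsymbol{Y}_5)$, since the components of an AR of the linearized web must be basic (pulled back from the target), but this does not affect the substance of the argument.
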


\subsubsection{\bf The master 2-abelian relation $\boldsymbol{{\bf HLOG}_{ \boldsymbol{Y}_5}}$ of $\boldsymbol{W}^{GM}_{\hspace{-0.05cm}\boldsymbol{Y}_5}$}
\label{SSS:The-master-2-AR}
Given three variables $u_1,u_2$ and $u_3$ with the last one expressing 
in terms of the first two 
by $u_3=1+u_1-u_2$, one considers the following 'logarithmic' 2-form on $\mathbf C^3$
\begin{equation}
\label{Eq:Omega-0}
\Omega=\ln  u_{1}\,
d \ln  u_{2}\wedge 
d \ln  u_{3}
-
\ln  u_{2}\,
d \ln  u_{1}\wedge 
d \ln  u_{3}
+\ln  u_{3}\, 
d \ln  u_{1}\wedge 
d \ln  u_{2}
\end{equation}
that is
\begin{equation}
\label{Eq:Omega-u1u2u3}
\Omega=
\ln  u_{1}\, \left( 
\frac{ \mathit{du_2} \wedge  \mathit{du_3}
}{\mathit{u_2} \mathit{u_3}}
\right) 
-\ln    u_{2}\,   \left( 
\frac{ \mathit{du_1} \wedge  \mathit{du_3}
}{\mathit{u_1} \mathit{u_3}}
\right) 
+
\ln    u_{3}\, \left( 
\frac{ \mathit{du_1} \wedge  \mathit{du_2}
}{\mathit{u_1} \mathit{u_2}}
\right) \, , 
\end{equation}
%
or even more explicitly, setting $x=u_1$ and $u_2=y$: 
\begin{align}
\Omega= &\, 
\left( 
-\frac{\ln  u_{1}}{u_{2} u_{3}}
+\frac{\ln u_{2}}{u_{1} u_{3}}+\frac{\ln  u_{3}}{u_{1} u_{2}}
\right) du_1\wedge du_2
\nonumber
\\
= 
&\,
\left( 
\frac{\ln  \left(1+x-y\right)}{xy}
+\frac{\ln  \left(y\right)}{x(1+x-y) }
-\frac{\ln  \big(x\big)}{y(1+x-y) }
\right) dx\wedge dy\, .
\label{Eq:Omega-xy}
\end{align}


Let $I$ be the hypercube formed by 5-tuples $(y_i)_{i=1}^5 \in ]0,1[^5$ and denote by 
$\mathcal U$ an arbitrary but fixed open domain containing it. 
Since the five pfaffian determinants $P_1,\ldots,P_5$ ({\it cf.}\,\eqref{Eq:Pi}) are positive on $I$, it follows that the same occurs for the $U_{i,s}$ for all $i=1,\ldots,10$ and $s=1,2,3$. 

One sets 
\begin{equation}
\label{Eq:epsilon}
\big(\epsilon_i\big)_{i=1}^{10}=\big(1, -1, 1, -1, 1, 1, -1, 1, -1, 1\,\big)\, .
\end{equation}

\begin{prop}
\label{Eq:AR-HLOG-Y5}
The following differential relation is identically satisfied on $\mathcal U$: 
$$
\boldsymbol{\big( {\bf HLOG}_{ {\bf Y}_5}\big)}
\hspace{4cm}
\sum_{i=1}^{10} 
 \epsilon_i\,U_{i}^*\Big( \Omega \Big)=0\, .
\hspace{7cm} {}^{} 
%
%
%
%
$$
Consequently,  the 10-tuple  $\big( \,\epsilon_i U_{i}^*(\Omega )\, \big)_{i=1}^{10}$ can be seen as a  2-abelian relation for $\boldsymbol{W}^{GM}_{{\bf Y}_5}$, again denoted by $ {\bf HLOG}_{ \boldsymbol{Y}_5}$. \sk 
\end{prop}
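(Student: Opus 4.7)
The plan is to verify $({\bf HLOG}_{\boldsymbol{Y}_5})$ directly, by expanding the sum on a suitable basis of rational 2-forms, exploiting the monomial structure of the first integrals supplied by Lemma \ref{L:u3}.

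First, applying Lemma \ref{L:u3} to every $(i,s)\in\{1,\ldots,10\}\times\{1,2,3\}$, one writes $U_{i,s}=\varepsilon_{i,s}\prod_{k=1}^{10}\zeta_k^{n_{i,s,k}}$ for some sign $\varepsilon_{i,s}\in\{\pm 1\}$ and some integer matrix $(n_{i,s,k})$. Because all $\zeta_k$ and all $U_{i,s}$ are positive on the cube $I$, taking logarithms and differentials is unambiguous on $\mathcal{U}$, and one obtains $d\ln U_{i,s}=\sum_k n_{i,s,k}\,d\ln \zeta_k$ together with $\ln U_{i,s}=\sum_k n_{i,s,k}\ln \zeta_k+c_{i,s}$ for suitable constants $c_{i,s}$. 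Substituting into \eqref{Eq:Omega-0}, each pullback $U_i^*(\Omega)$ becomes a finite $\mathbf{Q}$-linear combination of elementary 2-forms of the shape $\ln(\zeta_a)\,d\ln \zeta_b\wedge d\ln \zeta_c$, plus a residual purely logarithmic 2-form in the $d\ln \zeta_k$'s coming from the constants $c_{i,s}$.

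Next, one groups the terms of $\sum_{i=1}^{10}\epsilon_i\,U_i^*(\Omega)$ according to the index $a$ of the logarithmic factor. Since $\ln \zeta_1,\ldots,\ln \zeta_{10}$ are algebraically independent over $\mathbf{C}(y)$, the identity $({\bf HLOG}_{\boldsymbol{Y}_5})$ splits into a constant-coefficient identity involving only forms $d\ln\zeta_b\wedge d\ln\zeta_c$ and, for each $a\in\{1,\ldots,10\}$, the vanishing of the explicit $\mathbf{Q}$-linear combination of the $d\ln\zeta_b\wedge d\ln\zeta_c$'s that is the coefficient of $\ln\zeta_a$. Each of these reduces to a finite combinatorial identity on the integer exponents $n_{i,s,k}$ and the signs $\epsilon_i$, which can be checked once the matrix $(n_{i,s,k})$ has been tabulated from the formulas \eqref{Eq:Formules-Ui}--\eqref{Eq:Formules-Ui3}.

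The cleanest concrete route to finish is to substitute the explicit formulas \eqref{Eq:Formules-Ui}--\eqref{Eq:Formules-Ui3} into \eqref{Eq:Omega-xy} and to check by direct expansion in the standard basis of rational 2-forms on $\boldsymbol{Y}_5=\mathbf{C}^5$ that the ten coefficients of $\sum_{i=1}^{10}\epsilon_i\,U_i^*(\Omega)$ all vanish identically as rational functions in $y_{13},y_{14},y_{24},y_{25},y_{35}$; this is a finite computation, best entrusted to a computer algebra system. The main obstacle is combinatorial bookkeeping: a large number of terms must cancel, and it is precisely the choice of the signs $(\epsilon_i)$ in \eqref{Eq:epsilon} that makes the cancellation happen. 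These signs are expected to reflect a natural parity function on the weights of the half-spin representation $S^+$ through the bijection $\boldsymbol{\mathcal L}\simeq\mathfrak W^+$ of \S\ref{SS:dP4-SS-Embedding}, pointing toward a more conceptual explanation rooted in the representation-theoretic structure underlying the web.
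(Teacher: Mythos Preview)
Your proposal is correct and follows essentially the same approach as the paper: both expand $\sum_i\epsilon_i U_i^*(\Omega)$, use Lemma~\ref{L:u3} together with the positivity of the $\zeta_k$ on $I$ to write everything as a $\mathbf C(y)$-linear combination of the $\ln\zeta_k$'s, invoke the linear independence of these logarithms, and then verify that each resulting rational coefficient vanishes by direct computation. The only organizational difference is that the paper expands first in the basis $dy_i\wedge dy_j$ and then isolates the $\ln\zeta_k$-coefficients, whereas you first isolate the $\ln\zeta_a$-coefficients as $\mathbf Q$-combinations of the $d\ln\zeta_b\wedge d\ln\zeta_c$'s (and your alternative ``cleanest concrete route'' is literally the paper's method); note also that your constants $c_{i,s}$ are in fact all zero here, since positivity on $I$ forces every sign $\varepsilon_{i,s}$ to be $+1$.
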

\begin{proof}
 By formal elementary computations, one can express the scalar components of the sum $\Theta=\sum_{i=1}^{10} 
  \epsilon_i\, U_{i}^*\big( \Omega \big)$ in the basis $dy_i\wedge dy_j$ with $i,j$ such that $1\leq i<j\leq 5$. For instance, the $dy_1\wedge dy_2$-component 
 $\Theta_{12}=\big( \partial_{y_1}\wedge \partial_{y_2} \big)\, \lrcorner \, \Theta$
 is  a sum of terms of the form $R(y) \ln M$ with $R(y)\in \mathbf C(y)$ and where $M$ is a monomial in the $\zeta_k$'s defined in \eqref{Eq:zeta-i}. Since these quantities are positive on $I$, using the functional equation of the logarithm, one can express  $\Theta_{12}$ as a linear combination in the $\ln \zeta_k$'s with coefficients in $\mathbf C(y)$. Straigtforward formal computations give that all these coefficients actually vanish, hence $\Theta_{12}\equiv 0$ on $I$.  Proceeding similarly for all the other components of $\Theta$, one gets that it vanishes identically on $I$ hence on the complex domain $\mathcal U$.
\end{proof}

From the presence of logarithms in the definition of $\Omega$, it follows that  $ {\bf HLOG}_{ \boldsymbol{Y}_5}$ is not a global AR for $\boldsymbol{W}^{GM}_{\boldsymbol{Y}_5}$ but a multivalued one, with additive monodromy.  Hence  one has to be a bit careful when wondering about the invariance property of `the abelian relation' $ {\bf HLOG}_{ \boldsymbol{Y}_5}$ under the 
birational action of $W_{D_5}$ on $\boldsymbol{Y}_5$.  There are two approaches for circumventing this non important technical issue. 
\sk

 The first one is to restrict to the reals and to deal with the real web  $\widetilde{\boldsymbol{W}}^{GM}_{\boldsymbol{Y}_5}$ on $\widetilde{\boldsymbol{Y}}_5^*=\mathbf R^5\setminus Z$. One considers the following real-analytic version of $\Omega$: 
 \begin{align*}
  \Omega^\omega= &\, 
 \ln  {\big| \,\mathit{u_1}\,  \big|} \left( 
\frac{ \mathit{du_2} \wedge  \mathit{du_3}
}{\mathit{u_2} \mathit{u_3}}
\right) 
-\ln  {\big | \,\mathit{u_2}\,  \big|} \left( 
\frac{ \mathit{du_1} \wedge  \mathit{du_3}
}{\mathit{u_1} \mathit{u_3}}
\right) 
+
\ln  {\big | \,  \mathit{u_3} \, \big |} \left( 
\frac{ \mathit{du_1} \wedge  \mathit{du_2}
}{\mathit{u_1} \mathit{u_2}}
\right) 
\\
=
& \, 
\left( \, 
\frac{\ln  \,\lvert \,1+x-y\,\lvert}{xy}
+\frac{\ln  \,\lvert \,y\,\lvert }{x(1+x-y) }
-\frac{\ln  \,\lvert \,x\, \lvert }{y(1+x-y) }
\, \right) dx\wedge dy\, . 
 \end{align*}
 
 The scalar components of $\sum_{i=1}^{10} 
 U_{i}^*\big( \,\Omega^\omega\,\big)$ are linear combinations
 of the quantities $\ln\lvert \zeta_k \lvert$'s with coefficients in $\mathbf R(y)$ which can easily be computed. One obtains the 
\begin{prop}
 The following differential relation 
$$
\boldsymbol{\Big( {\bf HLOG}_{ \boldsymbol{Y}_5}^\omega\Big)}
\hspace{3.8cm}
\sum_{i=1}^{10} 
  \epsilon_i\, U_{i}^*\Big( \,\Omega^\omega\,\Big)=0
\hspace{7cm} {}^{} 
$$
 is identically satisfied on $\boldsymbol{Y}_5$ 
 hence 
 $\Big(  \epsilon_i\, U_{i}^*\big(\Omega^\omega \big)\Big)_{i=1}^{10}$, 
which will be again denoted by $ {\bf HLOG}^\omega_{ \boldsymbol{Y}_5}$, can be considered as a global real-analytic 2-AR either for 
${\boldsymbol{W}}^{GM}_{\boldsymbol{Y}_5}$ or for its real version 
$\widetilde{\boldsymbol{W}}^{GM}_{\boldsymbol{Y}_5}$.
\end{prop}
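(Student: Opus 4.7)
The plan is to reduce the real-analytic identity to the purely formal cancellations already obtained in the proof of Proposition \ref{Eq:AR-HLOG-Y5}, by exploiting the fact that every logarithmic quantity appearing in both $\Omega$ and $\Omega^\omega$ ultimately reduces to a $\mathbf{Z}$-linear combination of logarithms of the ten affine polynomials $\zeta_k$ defined in \eqref{Eq:zeta-i}.

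First I would observe that $\Omega^\omega$ differs from $\Omega$ only in that $\ln u_s$ is replaced by $\ln|u_s|$ for $s = 1,2,3$, while the logarithmic $1$-forms $du_s/u_s$ are unchanged. Pulling back by the first integrals $U_i = (U_{i,1},U_{i,2})$ (with the auxiliary third component $U_{i,3} = 1 + U_{i,1} - U_{i,2}$) and expanding
\[
\Theta^\omega := \sum_{i=1}^{10} \epsilon_i\, U_i^*\bigl(\Omega^\omega\bigr)
\]
in the basis $(dy_a \wedge dy_b)_{1 \leq a < b \leq 5}$, each scalar component becomes a sum of terms of the form $R(y)\,\ln|U_{i,s}|$ with $R(y) \in \mathbf{R}(y)$, and the rational-function coefficients $R(y)$ are \emph{literally the same} as those appearing in the analogous expansion of $\Theta = \sum_i \epsilon_i\, U_i^*(\Omega)$ used in the proof of Proposition \ref{Eq:AR-HLOG-Y5}.

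Next, Lemma \ref{L:u3} guarantees that on $\widetilde{\boldsymbol{Y}}_5^* = \mathbf{R}^5 \setminus Z$ each $U_{i,s}$ equals a signed monomial $\pm \prod_{k=1}^{10} \zeta_k^{n_{i,s,k}}$ with integer exponents $n_{i,s,k}$. Since the real-valued functional equation $\ln|ab| = \ln|a| + \ln|b|$ holds for all nonzero reals $a,b$, one obtains
\[
\ln\bigl|U_{i,s}\bigr| = \sum_{k=1}^{10} n_{i,s,k}\,\ln\bigl|\zeta_k\bigr|
\]
identically on $\widetilde{\boldsymbol{Y}}_5^*$. Substituting, each scalar component of $\Theta^\omega$ becomes an $\mathbf{R}(y)$-linear combination of the ten functions $\ln|\zeta_k|$, and the rational coefficient in front of each $\ln|\zeta_k|$ is formally identical to the rational coefficient of $\ln \zeta_k$ obtained in the complex case by applying $\ln(ab) = \ln a + \ln b$ on the positivity domain $I$.

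Finally, the proof of Proposition \ref{Eq:AR-HLOG-Y5} establishes that every such rational coefficient vanishes identically as an element of $\mathbf{C}(y)$; these are purely algebraic identities in $\mathbf{R}(y) \subset \mathbf{C}(y)$ and therefore remain valid in the real-analytic setting. It follows that every scalar component of $\Theta^\omega$ vanishes identically on $\widetilde{\boldsymbol{Y}}_5^*$, which is the desired global identity. The only point requiring slight care — and the closest thing to an obstacle — is the passage from $\ln$ to $\ln|\cdot|$ when some of the $\zeta_k$'s change sign across the connected components of $\mathbf{R}^5 \setminus Z$; but this costs nothing here, since the cancellations take place at the level of the rational coefficients $R(y)$ and are entirely insensitive to which determination of the logarithm is chosen on each component.
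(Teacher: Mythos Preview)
Your proposal is correct and follows essentially the same approach as the paper: the paper simply remarks, just before stating the proposition, that the scalar components of $\sum_i \epsilon_i\, U_i^*(\Omega^\omega)$ are $\mathbf{R}(y)$-linear combinations of the $\ln|\zeta_k|$'s whose coefficients ``can easily be computed,'' leaving implicit the observation you spell out---namely that these rational coefficients coincide with those already shown to vanish in the proof of Proposition~\ref{Eq:AR-HLOG-Y5}. Your treatment is more detailed but not different in substance.
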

Since it is global, it now does make sense to 
consider the invariance properties of 
  $ {\bf HLOG}^\omega_{ \boldsymbol{Y}_5}$
with respect to the action of $W_{D_5}
\simeq  \boldsymbol{\mathscr W}_{\hspace{-0.1cm}D_5}$ on $\boldsymbol{Y}_5$ or on $\widetilde{\boldsymbol{Y}}_5$. But this is also possible 
when sticking to the holomorphic setting, 
 following a similar approach to the one described in \cite[p.\,96]{PirioWM06}. 
Indeed, as it follows from the description of the ARs of 
$\boldsymbol{W}^{GM}_{\boldsymbol{Y}_5}$ (see further), 
 the local system $\boldsymbol{\mathcal A\mathcal R}^2=\boldsymbol{\mathcal A\mathcal R}^2\big( \boldsymbol{W}^{GM}_{\boldsymbol{Y}_5} \big)$ of 2-ARs of ${\boldsymbol{W}}^{GM}_{\boldsymbol{Y}_5}$ (on 
$\boldsymbol{Y}^*_5$)  
 admits a 1-step `weight filtration' 
$F^\bullet  \boldsymbol{\mathcal A\mathcal R}^2$, with 
 $F^1  \boldsymbol{\mathcal A\mathcal R}^2= \boldsymbol{\mathcal A\mathcal R}^2$ and where $F^0 \boldsymbol{\mathcal A\mathcal R}^2=
\boldsymbol{AR}_{Rat}^2\big( \boldsymbol{W}^{GM}_{\boldsymbol{Y}_5} \big)
$ is the vector space of rational ARs of $\boldsymbol{W}^{GM}_{\boldsymbol{Y}_5}$. Then ${\rm Gr}^1  \boldsymbol{\mathcal A\mathcal R}^2$ is of dimension 1 and the multivalued abelian relation $ {\bf HLOG}_{ \boldsymbol{Y}_5}$ gives rise to a generator of this space.  It is easy to see that the birational action of $W_{D_5}$ on 
$\boldsymbol{Y}_5$ induces a linear action on 
the associated graded space 
$${\rm Gr}^\bullet  \boldsymbol{\mathcal A\mathcal R}^2=\boldsymbol{AR}_{Rat}^2\big( \boldsymbol{W}^{GM}_{\boldsymbol{Y}_5} \big)
\oplus {\rm Gr}^1  \boldsymbol{\mathcal A\mathcal R}^2\, .$$ 

Let us now discuss how the Cremona transformations $\sigma_i$ acts on 
${\bf HLOG}_{ \boldsymbol{Y}_5}$

\begin{prop}
\label{Prop:Transformation-sigma-i}
{\bf 1.} As elements of $\boldsymbol{AR}^2\big( \boldsymbol{W}^{GM}_{\boldsymbol{Y}_5} \big)$, one has $\sigma_i^*
\big(  {\bf HLOG}^\omega_{ \boldsymbol{Y}_5}\big)=-  \, {\bf HLOG}^\omega_{ \boldsymbol{Y}_5}$
for any $i=1,\ldots,5$.  Hence 
$ {\bf HLOG}^\omega_{ \boldsymbol{Y}_5}$ spans a 1-dimensional non-trivial $W_{D_5}$-subrepresentation 
of $\boldsymbol{AR}^2\big( \boldsymbol{W}^{GM}_{\boldsymbol{Y}_5} \big)$ 
which necessarily is the signature representation. 
\sk 

\noindent 
{\bf 2.} There is a similar statement for the linear action of $W_{D_5}$ on 
${\rm Gr}^\bullet  \boldsymbol{\mathcal A\mathcal R}^2$: the complex line 
${\rm Gr}^1  \boldsymbol{\mathcal A\mathcal R}^2$ spanned by ${\bf HLOG}_{ \boldsymbol{Y}_5}$ is $W_{D_5}$-stable and is the signature representation. 
\end{prop}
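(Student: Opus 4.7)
The plan is to reduce the proof to an explicit, finite verification by combining the formulas \eqref{Eq:Formules-Ui} for the first integrals with the Cremona formulas \eqref{Eq:Formula-Sigma-1}--\eqref{Eq:Formula-Sigma-k}. The birational realization \eqref{Eq:WD5->Bir(C5)} mirrors the $W_{D_5}$-action on the ten web-relevant $D_4$-facets of $\Delta_{D_5}$, so every $\sigma_i$ permutes the ten foliations $\boldsymbol{F}_j$ of $\boldsymbol{W}^{GM}_{\boldsymbol{Y}_5}$: there exist a uniquely determined index $\pi_i(j)\in\{1,\ldots,10\}$ and a birational self-map $\psi_{ij}:\mathbf{C}^2\dashrightarrow\mathbf{C}^2$ of the 2-dimensional target such that $U_j\circ\sigma_i = \psi_{ij}\circ U_{\pi_i(j)}$, whence $\sigma_i^*(U_j^*\Omega^\omega) = U_{\pi_i(j)}^*(\psi_{ij}^*\Omega^\omega)$. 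The first step is to tabulate the fifty pairs $(\pi_i(j),\psi_{ij})$ by direct substitution; for instance $U_1\circ\sigma_1 = U_2$ (trivial $\psi$) whereas $U_1\circ\sigma_5 = (-y_{25}/P_1,-y_{24}y_{35}/P_1)$, which defines the same foliation as $U_1$ so that $\pi_5(1)=1$ and $\psi_{5,1}(u_1,u_2) = (-u_1/u_3,-u_2/u_3)$ (with $u_3 = 1+u_1-u_2$).

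The crux of the argument is to show $\psi_{ij}^*\Omega^\omega = \delta_{ij}\cdot \Omega^\omega$ for some sign $\delta_{ij}\in\{\pm 1\}$, for every pair $(i,j)$. By Lemma \ref{L:u3} the components of every $U_j$ are signed monomials in the $\zeta_k$'s, and by Proposition \ref{Prop:WD5->BIRC5} each $\sigma_i$ permutes the $\zeta_k$'s up to Laurent monomial units. This forces every $\psi_{ij}$, when extended to act on the triple $(u_1,u_2,u_3)$ via $u_3 = 1+u_1-u_2$, to lie in a small finite group generated by permutations of $(u_1,u_2,u_3)$, sign flips $u_s\mapsto -u_s$, and the projective involution $(u_1,u_2,u_3)\mapsto(-u_1,-u_2,1)/u_3$---all of which preserve $\Omega^\omega$ up to an explicit sign. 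A direct check on the simplest representatives confirms this: sign flips leave $\Omega^\omega$ invariant (because $\ln|{-u}| = \ln|u|$ and $d\ln(-u) = d\ln u$), transpositions of $(u_1,u_2,u_3)$ change its sign by \eqref{Eq:Omega-u1u2u3}, and a direct computation shows $\psi_{5,1}^*\Omega^\omega = -\Omega^\omega$ for the projective involution above. Part (1) then reduces to verifying the combinatorial identity $\delta_{ij}\cdot\epsilon_{\pi_i(j)} = -\epsilon_j$ for all $(i,j)\in\{1,\ldots,5\}\times\{1,\ldots,10\}$ against the sign pattern \eqref{Eq:epsilon}, which I do case by case.

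For part (2), the Cremona action sends rational forms to rational forms, so the subspace $F^0\boldsymbol{\mathcal A\mathcal R}^2 = \boldsymbol{AR}^2_{Rat}(\boldsymbol{W}^{GM}_{\boldsymbol{Y}_5})$ is $W_{D_5}$-stable and the filtration descends to a linear action on the 1-dimensional quotient ${\rm Gr}^1\boldsymbol{\mathcal A\mathcal R}^2$. The class of ${\bf HLOG}_{\boldsymbol{Y}_5}$ in this quotient is independent of the choice of branch of the logarithm, so the same sign computation applies verbatim in the holomorphic setting. Since each simple reflection $s_i$ acts as $-1$ on this stable line and the $s_i$'s generate $W_{D_5}$, the associated character is precisely the signature representation $\mathrm{sgn}:W_{D_5}\to\{\pm 1\}$. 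The main technical obstacle is the middle step: pinning down every $\psi_{ij}$ in a form where its action on $\Omega^\omega$ is manifest, and then grinding through the combinatorial check; this is lengthy but mechanical, and is most efficiently organized by exploiting the natural $W_{D_5}$-orbit decomposition of the ten foliations.
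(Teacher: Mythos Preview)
Your approach is essentially the same as the paper's: both reduce Part~1 to a direct computation using the explicit formulas \eqref{Eq:Formula-Sigma-1}--\eqref{Eq:Formula-Sigma-k} for the $\sigma_i$'s and \eqref{Eq:Formules-Ui} for the $U_j$'s, and Part~2 is handled identically in both. The paper simply records the outcome of that computation as a short list of transformation formulas of the shape $\sigma_k^*({\bf HLOG}_{i,\epsilon}^\omega) = -\,{\bf HLOG}_{\nu_k(i),\epsilon}^\omega$ (with $\nu_k$ a transposition in $\mathfrak S_5$, and a slight variant for $k=5$), then reads off the claim. Your intermediate factorization $U_j\circ\sigma_i = \psi_{ij}\circ U_{\pi_i(j)}$, together with the observation that every $\psi_{ij}$ lies in a small finite group of (anti)symmetries of $\Omega^\omega$ generated by permutations of $(u_1,u_2,u_3)$, sign flips, and the projective involution, is a structural layer the paper does not make explicit; it clarifies \emph{why} each component transforms by a global sign rather than merely recording that it does, and it reduces the fifty individual checks to a handful of prototype computations on the target $\mathbf P^2$. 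Both routes ultimately require the same case-by-case bookkeeping (your condition $\delta_{ij}\,\epsilon_{\pi_i(j)}=-\epsilon_j$ is exactly what the paper's transformation formulas encode); your framing just makes that check more modular.
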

\begin{proof}
For $i=1,\ldots,5$, one 
sets $ {\bf HLOG}_{i,+}^{\omega}=
\big({U_{i,1}}\big)^*\big( \Omega^\omega
\big)$  and $
{\bf HLOG}_{i,-}^{\omega}=
\big({U_{i,2}}\big)^*\big( \Omega^\omega
\big)
$. 
Then by direct elementary computations, using the explicit expressions of the $\sigma_i$'s given above, 
one easily establish the following transformation formulas: 
\begin{itemize}
\item for $k\in \{1,2,3,4\}$, one sets $\nu_k=(k,k+1)\in \mathfrak S_5$. Then 
for $ i=1,\ldots,5$  and 
$\epsilon=\pm$, 
one has: 
\begin{equation}
\label{Eq:Transfoformulas_1}
\sigma_k^*\big( {\bf HLOG}_{i,\epsilon}^{\omega} \big) =- 
{\bf HLOG}_{\nu_k(i),\epsilon}^{\omega} \, ; 
\end{equation}
\item let $\nu_5$ be the transposition exchanging $4$ and $5$ (i.e.\,$\nu_5=\nu_4$). Then 
for $j\in \{1,2,3\}$, $\ell\in \{4,5\}$ and $\epsilon=\pm$, 
one has:
\begin{equation}
\label{Eq:Transfoformulas_2}
\sigma_5^*\big( {\bf HLOG}_{j,\epsilon}^{\omega} \big)=-
{\bf HLOG}_{j,\epsilon}^{\omega} 
\qquad \mbox{ and } \qquad 
\sigma_5^*\big( {\bf HLOG}_{\ell,\epsilon}^{\omega} \big)=-
{\bf HLOG}_{\nu_5(\ell),-\epsilon}^{\omega} \, 
\end{equation}
\end{itemize}
It follows that $\langle {\bf HLOG}_{\boldsymbol{Y}_5}^{\omega}\rangle$ is $W_{D_5}$-stable. Since this 1-dimensional representation is not trivial, it has to be the signature. 

The proof of the second point of the proposition 
is essentially similar. 
\end{proof}

\begin{rem}
The components of ${\bf HLOG}_{ \boldsymbol{Y}_5}$ are linear combinations of logarithms of rational functions, multiplied by wedge products of total derivatives of terms of the same type.  Such differential forms have already appeared in earlier works on polylogarithms and the functional equations they satisfy. For instance, see our previous work \cite{PirioWM06} on the curvilinear webs defined by the $n + 3 $ forgetful maps on the moduli spaces $\mathcal M_{0,n+3}$ (in particular, refer to formulas (5.61) and (5.65) therein). See also \cite{KLL}\footnote{More precisely, see the first line of page 157 in \cite{KLL}} where polylogarithms are studied from the perspective of the theory of algebraic cycles and reciprocity laws.

It would be of interest to gain a deeper understanding of the nature of such differential forms and to clarify why they naturally emerge in the context of functional identities and abelian relations.
\end{rem}

\subsection{\bf The rational 2-abelian relations of $\boldsymbol{{\mathcal W}^{GM}_{ {Y}_5}}$}
\label{SS:2-AR-of-WGMY5}
In this subsection, we give an explicit description of 10 linearly independent 
rational 
2-abelian relations  of 
$\boldsymbol{\mathcal W}^{GM}_{ \boldsymbol{Y}_5}$
which  are obtained as residues of ${\bf HLOG}_{\boldsymbol{Y}_5}$. 
Then considering Proposition \ref{Prop:Virtual-Ranks}, it will follow that together with 
${\bf HLOG}_{\boldsymbol{Y}_5}$, 
these 10 rational ARs  form a basis of  
 $\boldsymbol{AR}^2\big( 
\boldsymbol{\mathcal W}^{GM}_{ \boldsymbol{Y}_5} \big)$.

\subsubsection{\bf Residues of ${\bf HLOG}_{ \boldsymbol{Y}_5}$}
\label{SS:Residues-HLOG-Y5}
Recall that $\mathbf C(y)$ stands for the field of rational functions in the variables $y_i$ for $i=1,\ldots,5$, with $y_1=y_{13}$, $y_2=y_{14}$, $y_3=y_{24}$, $y_4=y_{25}$ and $y_5=y_{35}$.   Given  $F$, an irreducible non-constant polynomial in the $y_i$'s, we denote by $\nu_F$ the associated valuation $\mathbf C(y)\setminus \{0\}\rightarrow \mathbf Z$.  Then for $V_1,V_2,V_3\in \mathbf C(y)$, we set 
$$
{\bf Res}_F \Big( \ln \big(V_1\big)\, dV_2\wedge dV_3\Big)= \nu_F(V_1)\,dV_2\wedge dV_3
\in \Omega^2_{ \mathbf C(y)}\, , 
$$
and we claim that this 
definition makes sense, that is that the RHS 
is 
independent of the determination of $ \ln \big(V_1\big)$ taken in the LHS (its is 
an easy exercice, left to the reader).  We call $\nu_F(V_1)\,dV_2\wedge dV_3$ {\it the residue of 
the logarithmic 2-form $\ln \big(V_1\big)\, dV_2\wedge dV_3$ along the hypersurface cut out by $F$}.\footnote{The residue ${\bf Res}_F \Big( \ln \big(V_1\big)\, dV_2\wedge dV_3\Big)$ can also be defined topologically, in terms of the monodromy of 
$\ln \big(V_1\big)\, dV_2\wedge dV_3$ along a small loop around $\{\,F=0\,\}$, see \cite[\S2.2.6]{PirioAFST} 
for more details on this approach.}

We aim to take residues of (the components of) ${\bf HLOG}_{ \boldsymbol{Y}_5}$ with respect to the 10 irreducible components of the divisor $Z\subset \boldsymbol{Y}_5$ defined in \eqref{Eq:Def-Z}.   For any $i=1,\ldots,10$, one denotes by ${\bf HLOG}_i$ the $i$-th component of ${\bf HLOG}_{\boldsymbol{Y}_i}$, namely 
\begin{equation}
{\bf HLOG}_{i}= \epsilon_i\, U_i^*\big( \Omega\big)= \epsilon_i\,\left(
\ln U_{i,1}  \left( 
\frac{dU_{i,2}\wedge dU_{i,3}}{ U_{i,2}U_{i,3}}
\right)-
\ln U_{i,2}
 \left( 
\frac{dU_{i,1}\wedge dU_{i,3}}{ U_{i,1}U_{i,3}}
\right)
+ \ln U_{i,3} \left( 
\frac{dU_{i,1}\wedge dU_{i,2}}{ U_{i,1}U_{i,2}}
\right)\right)\, .
\end{equation}

From the topological definition of the residue (cf. the footnote below), it follows that for any non-constant irreducible polynomial $F$, the 10-tuple of residues 
\begin{equation}
{\bf Res}_F\big( {\bf HLOG}_{\boldsymbol{Y}_5} \big) 
=\Big( \, 
{\bf Res}_F\big( {\bf HLOG}_{i} \big) 
\, 
\Big)_{i=1}^{10}
\end{equation}
with 
$$
{\bf Res}_F \Big( {\bf HLOG}_i\Big)= 
 \nu_F\big( U_{i,1}) \left( 
\frac{dU_{i,2}\wedge dU_{i,3}}{ U_{i,2}U_{i,3}}
\right)
-
 \nu_F\big( U_{i,2}) \left( 
\frac{dU_{i,1}\wedge dU_{i,3}}{ U_{i,1}U_{i,3}}
\right)
+ \nu_F\big( U_{i,3}) \left( 
\frac{dU_{i,1}\wedge dU_{i,2}}{ U_{i,1}U_{i,2}}
\right)
$$
for any $i$,  belongs to $\boldsymbol{AR}_{Rat}^2\big( \boldsymbol{\mathcal W}_{ \boldsymbol{Y}_5}^{GM}\big)$. Specializing $F$ by taking for it one of the ten polynomials $\zeta_k$ defined in \eqref{Eq:zeta-i}, one gets 10 rational abelian relations
$${\bf Res}_{i}= {\bf Res}_{\zeta_i}=
{\bf Res}_{\zeta_i}\Big( {\bf HLOG}_{ \boldsymbol{Y}_5}\Big)\, , \quad 
i=1,\ldots,10
$$
which can be easily computed from the explicit expressions \eqref{Eq:Formules-Ui} and \eqref{Eq:Formules-Ui3} for the $U_{i,s}$.  For instance, one  gets that ${\bf Res}_{y_1}={\bf Res}_{y_1}\big( {\bf HLOG}_{ \boldsymbol{Y}_5}\big)$ corresponds to the following differential relation
\begin{align}
\label{Eq:Res-y1}
 U_2^*\Bigg( 
\bigg( 
\frac{du_{2}\wedge du_{3}}{ u_{2}u_{3}}
\bigg)-  & \, \bigg( 
\frac{du_{3}\wedge du_{1}}{ u_{3}u_{1}}
\bigg)
-\bigg( 
\frac{du_{1}\wedge du_{2}}{ u_{1}u_{2}}
\bigg)
\Bigg) +U_4^*\left(  
\frac{du_{3}\wedge du_{1}}{ u_{3}u_{1}}
\right) 
\\
+ & \, 
U_5^*\left(  
\frac{du_{3}\wedge du_{1}}{ u_{3}u_{1}}
\right) +
U_6^*\left(  
\frac{du_{3}\wedge du_{1}}{ u_{3}u_{1}}
\right) +
U_8^*\left(  
\frac{du_{2}\wedge du_{3}}{ u_{2}u_{3}}
\right)=0 
\nonumber
\end{align}
which can be directly verified to be  identically satisfied. 

By straightforward computations, it can be verified that 
one has $\rho^2\big( \boldsymbol{\mathcal W}_5\big) \leq 1$ 
for any 5-subweb 
$\boldsymbol{\mathcal W}_5$ of 
$\boldsymbol{\mathcal W}_{\boldsymbol{Y}_5}^{GM}$.  By analogy with the terminology introduced in \cite{Damiano}, any 2-abelian relation of such a 5-subweb  
$\boldsymbol{\mathcal W}_5$ will be said to be {\it `combinatorial'} and we will denote the space spanned by the combinatorial ARs by $\boldsymbol{AR}_C\big(\boldsymbol{\mathcal W}_{\boldsymbol{Y}_5}^{GM}\big)$. From above, it follows that ${\bf Res}_{y_1}$ is combinatorial. 
\mk 

By straightforward computations, one makes the 10 residues 
 ${\bf Res}_{i}$ entirely explicit from which one first deduces that all these ARs are combinatorial and rational. Using the  explicit formulas obtained for the residues ${\bf Res}_{i}$ together with the ones for the automorphisms of webs 
$\sigma_k$'s given in \eqref{Eq:Formula-Sigma-k}, it is straightforward (however a bit laborious) to compute all the pull-backs $\sigma_k^*({\bf Res}_{i})$ for 
$k=1,\ldots,5$ and  $i=1,\ldots,10$. One obtains that any $\sigma_k^*\big( {\bf Res}_{i}\big)$ is a linear combination (with non-zero coefficients $\pm 1$) of the ten residues of ${\bf HLOG}_{ \boldsymbol{Y}_5}$. For instance, one has 
%
%
$$
\sigma_1^*\Big( {\bf Res}_{y_1}
 \Big)
=-{\bf Res}_{y_2}+{\bf Res}_{y_4}+{\bf Res}_{y_5}+{\bf Res}_{P_3}+{\bf Res}_{P_{5}}
$$

More generally, by means of straightforward explicit computations, we get the 
\begin{prop}
{\bf 1.} The residues  ${\bf Res}_{i}\big( {\bf HLOG}_{ \boldsymbol{Y}_5}\big)$ ($i=1,\ldots,10$) all 
have exactly 5 non-trivial components hence 
belong to $\boldsymbol{AR}^2_C\Big( \boldsymbol{\mathcal W}_{Y_5}^{GM}\Big)$. Moreover, they form a  basis of this space, which coincides with the space 
$\boldsymbol{AR}^2_{Rat}\Big( \boldsymbol{\mathcal W}_{Y_5}^{GM}\Big)$ of rational ARs of  $\boldsymbol{\mathcal W}_{\boldsymbol{Y}_5}^{GM}$.

\noindent {\bf 2.} The birational maps $\sigma_k$'s induce linear automorphisms of $\boldsymbol{AR}^2_C\Big( \boldsymbol{\mathcal W}_{Y_5}^{GM}\Big)$ making of this space a 
$W_{D_5}$-representation.  As such, it is irreducible and isomorphic to $V^{10}_{[11,111]}$. 
\end{prop}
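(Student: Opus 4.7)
The overall plan is to establish both assertions by leveraging Lemma~\ref{L:u3} together with the explicit formulas \eqref{Eq:Formules-Ui}--\eqref{Eq:Formules-Ui3} and the virtual-rank data of Proposition~\ref{Prop:Virtual-Ranks}, finishing with a representation-theoretic identification.

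For Part~1, the first step is to tabulate, for each pair $(i,s) \in \{1,\ldots,10\}\times \{1,2,3\}$, the integer vector $\big(a_{i,s,k}\big)_{k=1}^{10}$ with $a_{i,s,k} = \nu_{\zeta_k}(U_{i,s})$. By Lemma~\ref{L:u3}, each $U_{i,s}$ is (up to sign) a Laurent monomial in the $\zeta_k$'s, so these integers are read off directly from the explicit formulas. The $i$-th component of ${\bf Res}_{\zeta_k}\big({\bf HLOG}_{\boldsymbol{Y}_5}\big)$ then reads
\begin{equation*}
\epsilon_i\bigg( a_{i,1,k}\frac{dU_{i,2}\wedge dU_{i,3}}{U_{i,2}U_{i,3}} - a_{i,2,k}\frac{dU_{i,1}\wedge dU_{i,3}}{U_{i,1}U_{i,3}} + a_{i,3,k}\frac{dU_{i,1}\wedge dU_{i,2}}{U_{i,1}U_{i,2}}\bigg),
\end{equation*}
which vanishes iff $a_{i,1,k}=a_{i,2,k}=a_{i,3,k}=0$, i.e.\ iff $\zeta_k$ is absent from every $U_{i,s}$. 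A direct inspection of \eqref{Eq:Formules-Ui}--\eqref{Eq:Formules-Ui3} shows that for each fixed $k$, exactly five of the ten indices $i$ fail this vanishing condition. Combined with the fact recalled just before the statement--that every 5-subweb of $\boldsymbol{\mathcal W}^{GM}_{\boldsymbol{Y}_5}$ has virtual 2-rank at most one--this proves each ${\bf Res}_i$ is combinatorial (and manifestly rational).

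For the basis claim, I would verify linear independence by direct inspection of the ten residues: each is supported on its own 5-element index subset of $\{1,\ldots,10\}$, the ten support patterns turn out to be pairwise distinct, and each residue generates the (at most 1-dimensional) 2-AR space of its 5-subweb, which together immediately yield $\mathbf{C}$-linear independence. To close, I invoke Proposition~\ref{Prop:Virtual-Ranks}: one has $r^2(\boldsymbol{W}^{GM}_{\boldsymbol{Y}_5}) = 11$, and since ${\bf HLOG}_{\boldsymbol{Y}_5}$ generates the unique ``transcendental'' (multivalued-logarithmic) dimension, this forces $\dim \boldsymbol{AR}^2_{Rat} = 10$. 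The ten rational residues therefore form a basis of $\boldsymbol{AR}^2_{Rat}$; the inclusion $\boldsymbol{AR}^2_C \subseteq \boldsymbol{AR}^2_{Rat}$ is automatic and the reverse follows by dimension.

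For Part~2, since $\sigma_k$ is a biregular automorphism of $\boldsymbol{Y}_5^* = \mathbf{C}^5\setminus Z$ (Proposition~\ref{Prop:WD5->BIRC5}), the pullback $\sigma_k^*$ preserves both $\boldsymbol{AR}^2_{Rat}$ and the combinatorial subspace, hence acts linearly on the basis $\{{\bf Res}_1,\ldots,{\bf Res}_{10}\}$. Explicit computation in the spirit of the sample formula given just before the statement---itself a direct consequence of the naturality of residues combined with Proposition~\ref{Prop:Transformation-sigma-i}---produces the five $10\times 10$ matrices of $\sigma_1,\ldots,\sigma_5$; one then checks the Coxeter relations \eqref{Eq:relations-W(D5)} to confirm the representation of $W_{D_5}$ is well-defined. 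To identify the module, I would compute the character $\chi$ on representatives of the conjugacy classes of $W_{D_5}$, verify the irreducibility criterion $\langle \chi,\chi\rangle_{W_{D_5}} = 1$, and compare with the character table of $W_{D_5}$; the bipartition $[11,111]$ labels the unique 10-dimensional irreducible representation whose character matches. The main obstacle is precisely this final character-theoretic identification: conceptually routine but numerically delicate, it requires either computer assistance or careful use of tabulated character data for $W_{D_5}$.
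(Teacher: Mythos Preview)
Your proposal is correct and follows essentially the same route as the paper: explicit tabulation of the residues from the valuation data of Lemma~\ref{L:u3}, direct verification that each has exactly five nonzero components, linear independence by inspection, a dimension count against Proposition~\ref{Prop:Virtual-Ranks} (using that ${\bf HLOG}_{\boldsymbol{Y}_5}$ is not rational), and finally explicit matrices for the $\sigma_k^*$ followed by character-table matching to identify $V^{10}_{[11,111]}$. One small caution on Part~1: the heuristic you offer---pairwise-distinct 5-element supports plus the 1-dimensionality of each supporting 5-subweb's AR space---does not by itself ``immediately yield'' linear independence, since the supports overlap and there is no clean projection argument; the independence genuinely requires the direct numerical check you announce (and which the paper also performs).
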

\begin{proof} The proof goes by explicit computations.\footnote{Another less computational proof  could have be given, using the action of $W_{D_5}\simeq \boldsymbol{\mathscr W}_{\hspace{-0.05cm}D_5}$ on the set of combinatorial abelian relations.} 
Since all the residues ${\bf Res}_i$'s have five non-zero terms which are rational, 
 they are  combinatorial 2-abelian relations which span a subspace of $\boldsymbol{AR}\big(\boldsymbol{\mathcal W}_{\boldsymbol{Y}_5}^{GM}\big)$ in direct sum with the line spanned by ${\bf HLOG}_{ \boldsymbol{Y}_5}$. The ${\bf Res}_i$'s being linearly independent, it follows from 
Proposition \ref{Prop:Virtual-Ranks} that $\boldsymbol{AR}_C\big(\boldsymbol{\mathcal W}_{Y_5}^{GM}\big)$ has dimension 10 and admits $\boldsymbol{\mathfrak R}=\big( {\bf  Res}_{s}\big)_{s=1}^{10}$ as a basis. This proves the first point of the proposition. 
\sk

Computing all the pull-backs $\sigma_k^*\big( {\bf Res}_i\big)$, one first obtains that they all have exactly five non-zero terms hence are combinatorial 2-abelian relations hence are linear combinations of the $ {\bf Res}_i$'s. It is then straightforward to get the explicit forms for the matrices of the $\sigma_k^*\in {\rm GL}\big( \boldsymbol{AR}^2_C\big( \boldsymbol{\mathcal W}_{\boldsymbol{Y}_5}^{GM}\big)\big)$ expressed in the basis  $\boldsymbol{\mathfrak R}$. For instance, one gets that 
$$
{\rm Mat}_{\boldsymbol{\mathfrak R}}\big({\sigma_1}^*\big)=
\scalebox{0.8}{
$\left[\begin{array}{cccccccccc}
0 & 0 & 0 & 1 & 0 & 0 & 0 & 0 & 0 & 0 
\\
 -1 & 0 & -1 & 0 & 0 & 0 & 0 & 0 & 0 & 0 
\\
 0 & -1 & 0 & -1 & 0 & 0 & 0 & 0 & 0 & 0 
\\
 1 & 0 & 0 & 0 & 0 & 0 & 0 & 0 & 0 & 0 
\\
 1 & 0 & 0 & 1 & -1 & 0 & 0 & 0 & 0 & 0 
\\
 0 & 0 & 0 & 0 & 0 & -1 & 0 & 0 & 0 & 0 
\\
 0 & 0 & 0 & 1 & 0 & 0 & 0 & 0 & 0 & -1 
\\
 1 & 0 & 0 & 1 & 0 & 0 & 0 & -1 & 0 & 0 
\\
 0 & 0 & 0 & 0 & 0 & 0 & 0 & 0 & -1 & 0 
\\
 1 & 0 & 0 & 0 & 0 & 0 & -1 & 0 & 0 & 0 
\end{array}\right]
$}\,.
$$
%
%
%
Knowing explicitly the matrices ${\rm Mat}_{\boldsymbol{\mathfrak R}}\big({\sigma_k}^*\big)$ for $k=1,\ldots,5$ and proceeding completely similarly as  in the Appendix of \cite{Pirio2022}, one computes the character of the representation of $W_{D_5}$ on $\boldsymbol{AR}^2_C\big( \boldsymbol{\mathcal W}_{Y_5}^{GM}\big)$: one get that this character is 
$$\chi=\big(\,  10, -2, 2, -4, 2, 0, -2, 2, -2, 0, 1, -1, 1, -1, 1, 0, 0, 0\, \big)\, . $$ 
It corresponds to the first line of the characters table of the Weyl group of type $D_5$ given in 
\cite[Table 4]{Pirio2022}, which gives us the second point of the proposition. 
\end{proof}

\begin{rem} 
In \cite[\S4.2]{PirioAFST}, it has been established that one also has 
$${\bf HLogAR}^2_{\rm asym}\simeq  V^{10}_{[1^2,1^3]} $$
as $W_{D_5}$-representations,  where the left hand side ${\bf HLogAR}^2_{\rm asym}$ stands for  the space of symbolic antisymmetric weight 2 AR of a del Pezzo web $\boldsymbol{\mathcal W}_{ {\rm dP}_4}$ of a quartic del Pezzo surface ${\rm dP}_4$. As it will be explained further, this is no mere coincidence. 
\end{rem}

From the results above completed by some computational checks, we deduce the following
\begin{cor}
\label{Cor:2-RA-WGM}
{\bf 1.} 
One has $\rho^2\big( \boldsymbol{\mathcal W}_5\big)\leq 1$ for any 5-subweb $\boldsymbol{\mathcal W}_5$ of $\boldsymbol{\mathcal W}_{\boldsymbol{Y}_5}^{GM}$. Those 
for which the virtual 2-rank is 1 actually are of maximal 2-rank 1. These subwebs are exactly the 
$ \boldsymbol{\mathcal W}^{\underline{\epsilon}}= 
\boldsymbol{\mathcal W}\big( \mathcal F_1^{{\epsilon}_1},\ldots, \mathcal F_5^{{\epsilon}_5}\big)$,  for all 5-tuples $\underline{\epsilon}=(\epsilon_i)_{i=1}^5 \in \{\pm 1\}^5$ of even parity (i.e. such that $\epsilon_1\cdots \epsilon_5=+1${\rm )}.

For each even $\underline{\epsilon}$, the space of 2-ARs of $ \boldsymbol{\mathcal W}^{\underline{\epsilon}}$ is spanned by a peculiar abelian relation ${\bf AR}^{\underline{\epsilon}}$, uniquely defined up to sign, whose components with respect to  each first integral $U_j=(U_{j,s})_{s=1}^3$ of $ \boldsymbol{\mathcal W}^{\underline{\epsilon}}$ is a linear combination, with coefficients $\pm 1$, of the rational 2-forms $$
{}^{} \quad d\,{\rm Log}\,U_{j,a}\wedge d\,{\rm Log}\,U_{j,b}= 
\frac{dU_{j,a}\wedge dU_{j,b}}{U_{j,a}U_{j,b}}
\qquad 
\big(\,1\leq a<b\leq 3\,)\, .
$$

Moreover, 
$\boldsymbol{AR}^2_C\Big( \boldsymbol{\mathcal W}_{\boldsymbol{Y}_5}^{GM}\Big)=
\big\langle \,{\bf AR}^{\underline{\epsilon}} \, \big\lvert \, \,
\underline{\epsilon}\in \{\pm1\}^5 \, \mbox{ is even} \,
\, \big\rangle $ coincides with the space $\boldsymbol{AR}^2_{Rat}\Big( \boldsymbol{\mathcal W}_{\boldsymbol{Y}_5}^{GM}\Big)$
of rational 2-ARs of $\boldsymbol{\mathcal W}_{\boldsymbol{Y}_5}^{GM}$ and this space is 10-dimensional: $ \dim \boldsymbol{AR}^2_C\Big( \boldsymbol{\mathcal W}_{\boldsymbol{Y}_5}^{GM}\Big)=10$.
\mk \\
{\bf 2.}
One has 
\begin{equation}
\label{Eq:Decomp-AR2-WGM}
\boldsymbol{AR}^2\Big( \boldsymbol{\mathcal W}_{\boldsymbol{Y}_5}^{GM}\Big)
=\boldsymbol{AR}^2_C\Big( \boldsymbol{\mathcal W}_{\boldsymbol{Y}_5}^{GM}\Big)\oplus \Big\langle \,
 {\bf HLOG}_{ \boldsymbol{Y}_5}\,
\Big\rangle\, . 
\end{equation}
from which it follows that the 2-rank of 
$\boldsymbol{\mathcal W}_{\boldsymbol{Y}_5}^{GM}$ is 11, that is is  AMP. 
\sk \\
{\bf 3.} By residues/monodromy, the abelian relation 
$ {\bf HLOG}_{ \boldsymbol{Y}_5}$
  spans  the subspace $\boldsymbol{AR}^2_C\Big( \boldsymbol{\mathcal W}_{\boldsymbol{Y}_5}^{GM}\Big)$ 
  of combinatorial ARs, which coincides with that of rational 2-ARs of $\boldsymbol{\mathcal W}_{\boldsymbol{Y}_5}^{GM}$: one has 
  $$
  {\bf Res}\big( {\bf HLOG}_{ \boldsymbol{Y}_5} \big) 
  = 
  \boldsymbol{AR}^2_C\Big( \boldsymbol{\mathcal W}_{\boldsymbol{Y}_5}^{GM}\Big)=\boldsymbol{AR}^2_{Rat}\Big( \boldsymbol{\mathcal W}_{\boldsymbol{Y}_5}^{GM}\Big)
  \, .
  $$
\noindent {\bf 4.} The decomposition in direct sum \eqref{Eq:Decomp-AR2-WGM} actually is the decomposition of $\boldsymbol{AR}^2\Big( \boldsymbol{\mathcal W}_{\boldsymbol{Y}_5}^{GM}\Big)$ 
into irreducible $W_{D_5}$-representations. The two pieces $\boldsymbol{AR}^2_C\Big( \boldsymbol{\mathcal W}_{\boldsymbol{Y}_5}^{GM}\Big)$ 
and $\big\langle \,
 {\bf HLOG}_{ \boldsymbol{Y}_5}\,
\big\rangle$ are  isomorphic to $V_{[11,111]}^{10}$ and to the signature representation respectively. 
\mk \\
\end{cor}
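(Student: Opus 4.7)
The plan is to assemble the corollary essentially by combining the previous proposition with the 2-rank computation of Proposition \ref{Prop:Virtual-Ranks}, Proposition \ref{Eq:AR-HLOG-Y5} (which constructs ${\bf HLOG}_{\boldsymbol{Y}_5}$ as a 2-AR), and Proposition \ref{Prop:Transformation-sigma-i} (which identifies the $W_{D_5}$-character of $\langle {\bf HLOG}_{\boldsymbol{Y}_5}\rangle$). The four parts are logically coupled, so I would handle (2) and (3) first, as the remaining two assertions fall out by dimension counting and representation theory.

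\textbf{Parts (2) and (3).} The space $\boldsymbol{AR}^2_C=\boldsymbol{AR}^2_{Rat}$ has dimension $10$ and admits $\boldsymbol{\mathfrak R}=\{{\bf Res}_i\}_{i=1}^{10}$ as a basis by the preceding proposition. Since ${\bf HLOG}_{\boldsymbol{Y}_5}$ has components involving genuine logarithms of non-constant rational functions, it is not rational, so $\langle{\bf HLOG}_{\boldsymbol{Y}_5}\rangle\cap\boldsymbol{AR}^2_{Rat}=\{0\}$; hence $\boldsymbol{AR}^2_C\oplus\langle{\bf HLOG}_{\boldsymbol{Y}_5}\rangle$ is an $11$-dimensional subspace of $\boldsymbol{AR}^2\big(\boldsymbol{\mathcal W}^{GM}_{\boldsymbol{Y}_5}\big)$. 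But Proposition \ref{Prop:Virtual-Ranks} gives ${\rm r}_2=11$, so the inclusion is an equality, proving \eqref{Eq:Decomp-AR2-WGM} and that the 2-rank is AMP. Part (3) is then immediate: each ${\bf Res}_i$ is by construction a residue of ${\bf HLOG}_{\boldsymbol{Y}_5}$, and since these span $\boldsymbol{AR}^2_C$, the residue/monodromy orbit of ${\bf HLOG}_{\boldsymbol{Y}_5}$ coincides with $\boldsymbol{AR}^2_C=\boldsymbol{AR}^2_{Rat}$.

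\textbf{Part (4).} The decomposition \eqref{Eq:Decomp-AR2-WGM} is $W_{D_5}$-invariant because both $\boldsymbol{AR}^2_{Rat}$ (stable under pull-back by birational automorphisms) and $\langle{\bf HLOG}_{\boldsymbol{Y}_5}\rangle$ (stable by Proposition \ref{Prop:Transformation-sigma-i}) are stable. The previous proposition identifies $\boldsymbol{AR}^2_C\simeq V^{10}_{[11,111]}$ and Proposition \ref{Prop:Transformation-sigma-i} identifies $\langle{\bf HLOG}_{\boldsymbol{Y}_5}\rangle$ with the signature. Since these two irreducibles are non-isomorphic, \eqref{Eq:Decomp-AR2-WGM} is automatically the isotypic decomposition.

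\textbf{Part (1): the main obstacle.} Here I would proceed as follows. First, to establish $\rho^2(\boldsymbol{\mathcal W}_5)\leq 1$ for \emph{every} 5-subweb, I would apply the virtual-rank algorithm of \cite[\S1.5]{ClusterWebs} (also underlying Proposition \ref{Prop:Virtual-Ranks}) to each of the $\binom{10}{5}=252$ subwebs—formally feasible in a CAS but the genuinely computational step. The count of $10$ ARs and the dimensions $(5,5,1)$ of $\rho^\bullet_2$ forces at most ten 5-subwebs of virtual rank $1$ among those carrying a combinatorial AR; the claim that exactly $16$ are obtained from the even 5-tuples $\underline{\epsilon}$ must therefore come with a parity refinement. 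To produce the ${\bf AR}^{\underline{\epsilon}}$ explicitly, I would use Lemma \ref{L:u3}: each $U_{i,s}$ factors, up to sign, as a monomial in the $\zeta_k^{\pm 1}$; consequently each ${\rm d}\!\log U_{j,a}\wedge {\rm d}\!\log U_{j,b}$ is a $\mathbf Z$-combination of the $\omega_{k\ell}:={\rm d}\!\log\zeta_k\wedge {\rm d}\!\log\zeta_\ell$. The condition that a $5$-tuple $(\eta_j)_{j=1}^5$ with $\eta_j\in \bigwedge^2 {\rm d}\!\log\mathbf C(U_j)$ sums to zero is then a linear system over $\mathbf Z$ in the $\omega_{k\ell}$ coefficients; a parity computation on the exponents $\nu_{\zeta_k}(U_{j,s})$ (which amounts to a sign bookkeeping across the $j$'s) shows that solvability in the subweb $\boldsymbol{\mathcal W}^{\underline{\epsilon}}$ is equivalent to $\epsilon_1\cdots\epsilon_5=+1$ and singles out a 1-dimensional solution space. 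Since the $16$ resulting ARs lie in the $10$-dimensional $\boldsymbol{AR}^2_C$, there are linear dependencies among them, to be made explicit by expressing each ${\bf AR}^{\underline{\epsilon}}$ on the basis $\boldsymbol{\mathfrak R}$. The main obstacle is precisely this combinatorial/parity bookkeeping on $(\nu_{\zeta_k}(U_{j,s}))_{j,s,k}$, which must be carried out uniformly across all $\underline{\epsilon}$ to obtain the count ``exactly $16$'' and the fact that each ${\bf AR}^{\underline{\epsilon}}$ is complete (all five components non-zero).
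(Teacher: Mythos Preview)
Your treatment of parts (2), (3), and (4) is correct and matches the paper: the corollary is indeed obtained by combining the preceding proposition (the ${\bf Res}_i$'s form a basis of $\boldsymbol{AR}^2_C\simeq V^{10}_{[11,111]}$), Proposition~\ref{Prop:Virtual-Ranks} ($r_2=11$), and Proposition~\ref{Prop:Transformation-sigma-i} (signature character on $\langle{\bf HLOG}_{\boldsymbol{Y}_5}\rangle$). The paper itself offers no further argument beyond ``from the results above completed by some computational checks''.

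For part (1), however, there is a slip and a difference in approach. The sentence ``the count of $10$ ARs and the dimensions $(5,5,1)$ of $\rho^\bullet_2$ forces at most ten 5-subwebs of virtual rank $1$'' is wrong: a $10$-dimensional space of combinatorial ARs in no way bounds the number of $5$-subwebs carrying a nontrivial AR---there are $16$, and the $16$ generators ${\bf AR}^{\underline\epsilon}$ satisfy $6$ linear relations. You seem to notice the tension, but the reasoning is muddled. Your proposed parity/valuation argument via Lemma~\ref{L:u3} is a reasonable heuristic, but the claim that solvability of the linear system is \emph{equivalent} to $\epsilon_1\cdots\epsilon_5=+1$ is asserted, not proved; carrying this out uniformly is exactly the computational burden you identify, and it is not obviously lighter than the paper's route. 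The paper instead (in the proposition immediately following the corollary) identifies the $16$ special subwebs with the $16$ lines $\ell\in\boldsymbol{\mathcal L}$ (equivalently, weights of $S^+$): each $\boldsymbol{\mathcal W}^{\underline\epsilon}$ is $\boldsymbol{\mathcal W}^{GM}_{\boldsymbol{\mathcal Y}_5,\ell}$ for the line $\ell$ corresponding to the even $5$-tuple $\underline\epsilon$, and its generating AR is precisely ${\bf Res}_\ell({\bf HLOG}_{\boldsymbol{Y}_5})$. The transitive $W_{D_5}$-action on lines then reduces everything to a single verification. This conceptual bijection is what makes the count ``exactly $16$'' transparent and explains the completeness of each ${\bf AR}^{\underline\epsilon}$ without case-by-case bookkeeping.
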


\subsubsection{\bf A specific combinatorial 2-abelian relation of 
$\boldsymbol{\boldsymbol{\mathcal W}_{\boldsymbol{Y}_5}^{GM}}$}
For what is to come further, it is interesting to consider the case of the 5-subweb $\boldsymbol{W}(U_1,\ldots,U_5)$  of $\boldsymbol{\mathcal W}_{\boldsymbol{Y}_5}^{GM}$.  This web will be denoted by $\boldsymbol{W}^{GM,+}_{ \boldsymbol{Y}_5}$ and is defined by simple monomial first integrals: 
\begin{equation*}
\boldsymbol{W}^{GM,+}_{ \boldsymbol{Y}_5}
= 
\boldsymbol{\mathcal W}
\Bigg(\, \Big(\,  {y_{25}}\, , \,   {y_{24}}  {y_{35}}
\, \Big)
\, , \, 
\Big(\,  \frac{1}{y_{13}}\, , \, 
\frac{ {y_{14}}  {y_{35}}}{ {y_{13}}}\, \Big)
\, , \,
  \bigg( 
  \,  {y_{24}}
 \, , \,  
    {y_{14}}  {y_{25}}
   \, \bigg)
\, , \, 
\bigg(\,  \frac{1}{y_{35}}
\, , \, 
\frac{ {y_{13}}  {y_{25}}}{ {y_{35}}}
\,
\bigg)
\, , \, 
 \Big(\,  {y_{14}}
\, , \, 
  {y_{13}}  {y_{24}}
 \, 
\Big) \,\Bigg)\, .
\end{equation*}

 According to the fourth point of the above corollary,  
this web  has maximal 2-rank (equal to 1). We aim below to describe very explicitly a generator of the complex line $\boldsymbol{AR}^2\big(\boldsymbol{W}^{GM,+}_{ \boldsymbol{Y}_5})$.
\sk 
 
%
%

Setting
\begin{equation}
\label{Eq:eta}
\eta=
d\,{\rm Log}\,x \wedge d\, {\rm Log}\,y
= \frac{dx}{x}
\wedge 
\frac{dy}{y}=
\frac{dx\wedge dy}{xy}
\,, 
\end{equation}
one verifies without any difficulty that the following proposition is satisfied: 
Recall (from \eqref{Eq:epsilon}) that one has $\epsilon_1=\epsilon_3=\epsilon_5=1$ and 
 $\epsilon_2=\epsilon_4=-1$. 
\begin{prop}
\label{Prop:AR-2-eta}
{\bf 1.} The 5-tuple ${\bf AR}^2_\eta=\Big( \epsilon_i\,U_i^*\big( \eta\big)\Big)_{i=1}^5$ is a rational 2-abelian relation  for $\boldsymbol{W}^{+}_{ \boldsymbol{Y}_5}$, i.e. in the space $\Omega^2_{\mathbf C(y)}$ of rational 2-forms on $\mathbf C^5$, one has 
 $$
\sum_{i=1}^5   \epsilon_i\,U_i^*\left( \frac{dx\wedge dy}{xy}\right)=
\sum_{i=1}^5   \epsilon_i\, \frac{dU_{i,1}\wedge dU_{i,2}}{
U_{i,1}U_{i,2}
}
=
0\, . 
$$
%
Moreover, in terms of the residues of ${\bf HLOG}_{\boldsymbol{Y}_5}$, one has 
$
{\bf AR}^2_\eta
=\sum_{i=1}^5 
{\bf Res}_{P_{i}}
$.

\noindent{\bf 2.} 
Consequently, one has $\boldsymbol{AR}^2\Big( \boldsymbol{W}^{+}_{ \hspace{-0.0cm}\boldsymbol{Y}_5}\Big)=
\big\langle \, {\bf AR}^2_\eta
\, \big\rangle
$. 
\end{prop}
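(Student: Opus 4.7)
The plan is to deduce both assertions directly from the residue machinery developed in \S\ref{SS:Residues-HLOG-Y5}. Regarding the 5-tuple ${\bf AR}^2_\eta$ as a 10-tuple in $\bigoplus_{i=1}^{10} \Omega^2_{\mathbf C(y)}$ by adjoining five zero entries in positions $i=6,\ldots,10$, I will show that this 10-tuple coincides with $\sum_{j=1}^5 {\bf Res}_{P_j}$. Since each ${\bf Res}_{P_j}$ is a 2-AR of the ambient web $\boldsymbol{\mathcal W}^{GM}_{\boldsymbol{Y}_5}$, the sum is as well, and the vanishing of the last five components will mean that this AR is supported on the subweb $\boldsymbol{W}^+_{\boldsymbol{Y}_5}$, yielding part~1 and the residue identity simultaneously.

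The first step is to compute the $P_j$-valuations of the $U_{i,s}$'s for $i \in \{1,\ldots,5\}$. A glance at formulas \eqref{Eq:Formules-Ui} and \eqref{Eq:Formules-Ui3} shows that $U_{i,1}$ and $U_{i,2}$ are Laurent monomials in the coordinates $y_k$ alone, so $\nu_{P_j}(U_{i,1}) = \nu_{P_j}(U_{i,2}) = 0$ for every $j$. Moreover, each $U_{i,3}$ for $i \in \{1,\ldots,5\}$ factors as $P_i$ times a monomial in the $y_k$'s, hence $\nu_{P_j}(U_{i,3}) = \delta_{ij}$. Consequently, the $i$-th component of ${\bf Res}_{P_j}$ reduces to $\delta_{ij}\, \epsilon_i\, dU_{i,1}\wedge dU_{i,2}/(U_{i,1}U_{i,2}) = \delta_{ij}\, \epsilon_i\, U_i^*(\eta)$, and summing over $j=1,\ldots,5$ recovers precisely the $i$-th component of ${\bf AR}^2_\eta$.

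The second step is to verify that the components in positions $i \in \{6,\ldots,10\}$ vanish. Using the explicit formulas \eqref{Eq:Formules-Ui} and \eqref{Eq:Formules-Ui3} together with Lemma~\ref{L:u3}, one checks that for each such $i$, every one of the three functions $U_{i,1}, U_{i,2}, U_{i,3}$ factors as $P_j/P_k$ times a Laurent monomial in the $y_k$'s, for two distinct indices $j, k \in \{1,\ldots,5\}$. In particular, $\sum_{j=1}^5 \nu_{P_j}(U_{i,s}) = 0$ for every $s \in \{1,2,3\}$. Summing the three contributions to the $i$-th component of $\sum_{j=1}^5 {\bf Res}_{P_j}$ and factoring out each of the three rational 2-forms $dU_{i,a}\wedge dU_{i,b}/(U_{i,a}U_{i,b})$ independently yields zero. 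This completes the identification $\sum_{j=1}^5 {\bf Res}_{P_j} = {\bf AR}^2_\eta$ and shows in particular that ${\bf AR}^2_\eta$ is a genuine 2-abelian relation of $\boldsymbol{W}^+_{\boldsymbol{Y}_5}$.

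For part~2, Proposition~\ref{Prop:Virtual-Ranks} asserts that $r_2\big(\boldsymbol{W}^+_{\boldsymbol{Y}_5}\big) = 1$. Since the $U_{i,s}$ are nonconstant algebraically independent rational functions, the components $\epsilon_i\, dU_{i,1}\wedge dU_{i,2}/(U_{i,1}U_{i,2})$ of ${\bf AR}^2_\eta$ are nonzero, so ${\bf AR}^2_\eta$ is a nonzero element of the one-dimensional space $\boldsymbol{AR}^2\big(\boldsymbol{W}^+_{\boldsymbol{Y}_5}\big)$ and therefore spans it. No step is genuinely hard: the only potential source of difficulty is the systematic bookkeeping of the $P_j$-valuations of the ten triples $(U_{i,1}, U_{i,2}, U_{i,3})$, which however becomes routine once one writes out the factorizations afforded by Lemma~\ref{L:u3}.
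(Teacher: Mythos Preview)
Your argument is correct and complete. The paper does not supply a proof of this proposition; it merely says ``one verifies without any difficulty that the following proposition is satisfied,'' leaving both the vanishing $\sum_i \epsilon_i U_i^*(\eta)=0$ and the residue identity as direct computations. Your organization is slightly cleaner than what the paper implicitly suggests: rather than checking the two claims separately, you establish the residue identity first (via the valuation bookkeeping), and obtain the abelian-relation property for free since each ${\bf Res}_{P_j}$ is already known to be a 2-AR. The key observation making the second step work --- that for $i\in\{6,\ldots,10\}$ the total $P$-valuation $\sum_{j=1}^5\nu_{P_j}(U_{i,s})$ vanishes for each $s$ because every such $U_{i,s}$ involves exactly one $P_j$ in the numerator and one in the denominator --- is exactly the kind of structural fact Lemma~\ref{L:u3} records, and you invoke it appropriately.
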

Since $\big( dx\wedge dy\big)/(xy)$ is closed, all the components $U_i^*\big( (dx\wedge dy)/(xy)\big)$ of 
$ {\bf AR}^2_\eta$ are closed as well  hence by Poincar\'e's lemma, the latter abelian relation admits a primitive, at least locally.  Since $\boldsymbol{AR}_C( \boldsymbol{\mathcal W}^{GM}_{ {Y}_5})$ is irreducible as a $W_{D_5}$-representation, this space is spanned by the orbit $W_{D_5}\cdot  {\bf AR}^2_\eta$ which is formed of closed  2-abelian relations. It follows that all the residues ${\bf Res}_i$ are closed (a fact which can also be verified directly) hence locally exact.

We will use this to  construct specific primitives of the residues ${\bf Res}_i$ which will span an interesting subspace of $\boldsymbol{AR}^1\big(\boldsymbol{\mathcal W}^{GM}_{ \boldsymbol{Y}_5}\big)$ (see \eqref{Eq:AR1-C} in Theorem \ref{Thm:1-AR-} below).

\subsubsection{\bf A more intrinsic and abstract approach}
All the results of the previous subsection have been obtained by direct computations. Here we say a few words about a more abstract approach to the 2-ARs of ${\boldsymbol{\mathcal W}_{\boldsymbol{Y}_5}^{GM}}$, one of the key ingredient of which being the action of $W_{D_5}$ on $\boldsymbol{Y}_5$.
\sk

Up to the birational identification $\Theta$, the divisors $\{\,\zeta_k=0\,\}$ in $\boldsymbol{Y}_5$ correspond to 10 of the weight divisors $D_{\mathfrak w}$ in 
$\boldsymbol{\mathcal Y}_5$ hence to some weights and lines in $\mathfrak W^+$ and $\boldsymbol{\mathcal L}_r$ respectively (see the discussion following 
\eqref{Eq:Theta}). These correspondences are given in the table below, where we use the following notations:  we denote by $\big(\boldsymbol{\frac{1}{2}}\big)^5=\big( \frac{1}{2},\frac{1}{2},\frac{1}{2},\frac{1}{2},\frac{1}{2}\big)$ the dominant weight of $\mathfrak W^+$. As elements of ${\rm Pic}_{\mathbf Z}\big({\rm dP}_4\big)$, 
we set 
$\boldsymbol{e}_{tot}=\sum_{k=1}^5  \boldsymbol{e}_k$ 
and $\boldsymbol{e}_{\hat \imath}=\boldsymbol{e}_{tot}-\boldsymbol{e}_i=\sum_{k\neq i} \boldsymbol{e}_k$ for $i\in [\hspace{-0.05cm}[5]\hspace{-0.05cm}]$. Finally, we denote by $(\boldsymbol{v}_k)_{k=1}^5$ the standard basis of $\mathbf R^5$, that is  
$\boldsymbol{v}_i=(\delta_{ij})_{j=1}^5$ for $i=1,\ldots,5$. 

\begin{table}[!h]
\begin{tabular}{|c|c|c|}
\hline
\, 
  {\bf Line} $\boldsymbol{\ell}$ & {\bf Weight}
$\boldsymbol{{\mathfrak w}_{\ell}}$  
& \begin{tabular}{c} \vspace{-0.35cm}\\
  {\bf Polynomial} $\boldsymbol{\zeta_k}$
\vspace{0.1cm}
\end{tabular}  
   \\ \hline 
  $\boldsymbol{e}_{i}$  & $\boldsymbol{v}_i-\big(\boldsymbol{\frac{1}{2}}\big)^5$ 
 &
 \begin{tabular}{c} \vspace{-0.25cm}\\
 ${P_i}$
\vspace{0.2cm}
\end{tabular}   
   \\ \hline
 $\boldsymbol{h}- 
\boldsymbol{e}_{i}-\boldsymbol{e}_{j}$ 
   &  
$  \big(\boldsymbol{\frac{1}{2}}\big)^5-\boldsymbol{v}_i-\boldsymbol{v}_j$ 
&
\begin{tabular}{c} 
\vspace{-0.25cm}\\
 $y_{ij}$
\vspace{0.2cm}
\end{tabular} 
    \\ \hline
\end{tabular}
\end{table}
\sk 

Similarly, the correspondances between the $W$-relevant facets of the weight polytope, the corresponding conic classes and the first integrals of $\boldsymbol{\mathcal W}^{GM}_{\boldsymbol{\mathcal Y}_5}$ are given in the second table just 
below.\footnote{We recall the notation for the $W$-relevant facets of 
 $\Delta_5$: one has 
$\Delta_{5,i}^\epsilon=\Delta_{D_5}\cap \{ \, x_i= \epsilon/2\, \}$ for 
$i\in [\hspace{-0.05cm}[5 ]\hspace{-0.05cm}]$
and $\epsilon= \pm 1$.}
\begin{table}[!h]
\begin{tabular}{|c|c|c|c|}
\hline
 {\bf index} $\boldsymbol{i}$ &  {\bf Facet} & 
\begin{tabular}{c} \vspace{-0.35cm}\\
  {\bf Conic class}
\vspace{0.1cm}
\end{tabular}  
 &  {\bf First integral} 
    \\ \hline 
\begin{tabular}{c} \vspace{-0.35cm}\\
$i\in \{1,\ldots,5\}$ 
\vspace{0.1cm}
\end{tabular}  
 & $ \Delta_{5,i}^+$ &  $2\boldsymbol{h}- 
\boldsymbol{e}_{\hat \imath}$   & $U_i$    \\ \hline 
\begin{tabular}{c} \vspace{-0.35cm}\\
$i\in \{6,\ldots,10\}$ 
\vspace{0.1cm}
\end{tabular}  
 & 
$ \Delta_{5,i}^-$
  &  $\boldsymbol{h}- 
\boldsymbol{e}_{i}$   & $U_{i+5}$    \\ \hline 
\end{tabular}
\end{table}

The variable $\zeta_1=y_1=y_{13}$ corresponds to the divisor associated to the line $
\boldsymbol{\ell}=\boldsymbol{h}-\boldsymbol{e}_1-\boldsymbol{e}_3$. The conic classes adjacent to it are $\boldsymbol{h}-\boldsymbol{e}_1$, $\boldsymbol{h}-\boldsymbol{e}_3$ and the three classes $2\boldsymbol{h}-\boldsymbol{e}_{\widehat k}$  for $k=2,4,5$.  The two former classes correspond to the foliations induced by $U_6$ and $U_8$, the three latter to the ones with $U_2,U_4$ and $U_5$ as first integrals.  This is in accordance with the explicit expression \eqref{Eq:Res-y1} for the residue of  ${\bf HLOG}_{\boldsymbol{Y}_5}$ along the divisor cut out by $y_{1}=0$.  In terms of Gelfand-MacPherson's web on 
${\boldsymbol{\mathcal Y}_5}$, this translates as the fact that 
\begin{equation}
\label{Eq:Res-along-Dl}
\scalebox{0.97}{\begin{tabular}{l}{\it `the residue of 
${\bf HLOG}_{\boldsymbol{\mathcal Y}_5}$ along the weight divisor 
associated to 
${\mathfrak w}_{\boldsymbol{\ell}}=(-\frac12,\frac12,-\frac12,\frac12,\frac12)$ is}\\
{\it   the subweb defined by the face maps $\pi_F: {\boldsymbol{\mathcal Y}_5}\rightarrow 
{\boldsymbol{\mathcal Y}_F}$ for all  facets $F$ of $\Delta_5$ adjacent to ${\mathfrak w}_{\boldsymbol{\ell}}$'.}
\end{tabular}}
\end{equation}

For a line $\boldsymbol{l}\in \boldsymbol{\mathcal L}$, 
let ${\boldsymbol{\mathcal W}_{\boldsymbol{\mathcal Y}_5, 
\boldsymbol{l}}^{GM}}$ be the subweb of 
${\boldsymbol{\mathcal W}_{\boldsymbol{\mathcal Y}_5}^{GM}}$
defined by the face maps $\pi_F: \boldsymbol{\mathcal Y}_5\rightarrow 
\boldsymbol{\mathcal Y}_F$ for all facets $F\subset \Delta_5$ adjacent to $\boldsymbol{l}$.  More formally, setting $\boldsymbol{\mathcal K}(\boldsymbol{l})$ for the subset of conic classes $\boldsymbol{\mathfrak c}\in \boldsymbol{\mathcal K}$ such that $\boldsymbol{\mathfrak c}-\boldsymbol{\ell}\in \boldsymbol{\mathcal L}$, one has 
$$
{\boldsymbol{\mathcal W}_{\boldsymbol{\mathcal Y}_5, 
\boldsymbol{l}}^{GM}}
=\boldsymbol{\mathcal W}\Big( \, \pi_{F_{\boldsymbol{\mathfrak c}}} \, \big\lvert 
\, \boldsymbol{\mathfrak c} \in \boldsymbol{\mathcal K}(\boldsymbol{l})\, \Big)\, . 
$$

For instance, the web 
$\boldsymbol{W}^{GM,+}_{ \boldsymbol{Y}_5}$ considered in the previous subsection coincides with 
${\boldsymbol{\mathcal W}_{\boldsymbol{\mathcal Y}_5, 
2\boldsymbol{h}-\boldsymbol{e}_{tot}}^{GM}}$.

\begin{prop}
{\bf 1.}  The sixteen 5-subwebs of ${\boldsymbol{\mathcal W}_{\boldsymbol{\mathcal Y}_5}^{GM}}$ with maximal 2-rank (equal to 1) are precisely the subwebs 
${\boldsymbol{\mathcal W}_{\boldsymbol{\mathcal Y}_5, \boldsymbol{\ell}}^{GM}}$'s for all 
lines $\boldsymbol{\ell}\in \boldsymbol{\mathcal L}$. \sk

\noindent 
{\bf 2.}  Moreover, for any line $\boldsymbol{\ell}$, the residue 
$ {\bf Res}_{{\boldsymbol{\ell}}} \big( 
{\bf HLOG}_{\boldsymbol{\mathcal Y}_5}
\big)$
of ${\bf HLOG}_{\boldsymbol{\mathcal Y}_5}$ along the divisor $D_{\boldsymbol{\ell}}$, 
denoted by ${\bf Res}_{\boldsymbol{\ell}}$,  
is a non-trivial AR for  ${\boldsymbol{\mathcal W}_{\boldsymbol{\mathcal Y}_5, \boldsymbol{\ell}}^{GM}}$. In other terms,  
one has 
$
\big\langle \, 
{\bf Res}_{\boldsymbol{\ell}}
\,  \big\rangle =\boldsymbol{AR}^2\Big( {\boldsymbol{\mathcal W}_{\boldsymbol{\mathcal Y}_5, \boldsymbol{\ell}}^{GM}} \Big) $. 
\sk

\noindent 
{\bf 3.}  For any $\boldsymbol{\ell}\in \boldsymbol{\mathcal L}$, the following linear  relation holds true in $\boldsymbol{AR}^2_C\big( {\boldsymbol{\mathcal W}_{\boldsymbol{\mathcal Y}_5}^{GM}} \big) $: 
\begin{equation*}
\boldsymbol{\big( {\mathcal R}{el}_{\ell}\big)} \hspace{4cm}  
{\bf Res}_{\boldsymbol{\ell}}=\sum_{
\boldsymbol{\mathfrak c}
\in \boldsymbol{\mathcal K}(\boldsymbol{\ell})
 } {\bf Res}_{\boldsymbol{\mathfrak c}-\boldsymbol{\ell}}\, . 
 \hspace{7cm}{}^{} 
\end{equation*}
Moreover, for any exceptional collection\footnote{An {\it `exceptional collection'} is a subset of 
$\boldsymbol{\mathcal L}$ formed by five pairwise disjoint lines.} $\boldsymbol{\mathcal E}\subset 
\boldsymbol{\mathcal L}$, 
the set $\big\{ \, 
\boldsymbol{ {\mathcal R}{el}_{\boldsymbol{l}}}
\,  \big\}_{ \boldsymbol{l} \in \boldsymbol{\mathcal E}}$ is a basis of the space of linear relations between the 16 residues of 
${\bf HLOG}_{\boldsymbol{\mathcal Y}_5}$. 
\sk

\noindent  
 {\bf 4.} For any exceptional collection $\boldsymbol{\mathcal E}\subset 
\boldsymbol{\mathcal L}$, there exists a unique line $\ell_{\boldsymbol{\mathcal E}}\in 
\boldsymbol{\mathcal L}$ which intersects all the elements of $\boldsymbol{\mathcal E}$. 
Setting $\overline{\boldsymbol{\mathcal E}}=\boldsymbol{\mathcal E}
\cup \{ \ell_{\boldsymbol{\mathcal E}} \}$, 
then 
the ten  
abelian relations 
 ${\bf Res}_{\boldsymbol{\ell}}$ for  $\boldsymbol{\ell}\in 
 \boldsymbol{\mathcal L}\setminus 
 \overline{\boldsymbol{\mathcal E}}$,  form a basis of the space of combinatorial 2-ARs of ${\boldsymbol{\mathcal W}_{\boldsymbol{\mathcal Y}_5}^{GM}}$, i.e.
 there is an isomorphism
$$
\boldsymbol{AR}^2_C\Big( \,{\boldsymbol{\mathcal W}_{\boldsymbol{\mathcal Y}_5}^{GM}} \, \Big) \simeq \bigoplus_{\boldsymbol{\ell}\in 
 \boldsymbol{\mathcal L}\setminus 
 \overline{\boldsymbol{\mathcal E}}}  
 \, \mathbf C\cdot  
{\bf Res}_{\boldsymbol{\ell}}
\, .
$$
\end{prop}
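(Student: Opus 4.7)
For parts 1 and 2, the strategy is to exhibit, for each line $\boldsymbol{\ell} \in \boldsymbol{\mathcal L}$, a nontrivial 2-abelian relation of $\boldsymbol{\mathcal W}_{\boldsymbol{\mathcal Y}_5, \boldsymbol{\ell}}^{GM}$, thereby forcing this 5-subweb to be one of the sixteen 2-rank-$1$ subwebs identified in Corollary \ref{Cor:2-RA-WGM}. The natural candidate is ${\bf Res}_{\boldsymbol{\ell}}$. To see that it is supported exactly on the face maps indexed by $\boldsymbol{\mathcal K}(\boldsymbol{\ell})$, one invokes Lemma \ref{L:u3}: each component $U_{i,s}$ is, up to sign, a monomial in the $\zeta_k$'s, so ${\bf Res}_{\zeta_k}$ applied to ${\bf HLOG}_i$ depends only on the $\zeta_k$-valuations of the $U_{i,s}$. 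These vanish unless the facet $F_i$ is adjacent to the vertex of $\Delta_5$ corresponding to $\boldsymbol{\ell}$, i.e.\ unless the associated conic class lies in $\boldsymbol{\mathcal K}(\boldsymbol{\ell})$. Non-triviality is verified on a single representative line as in the sample computation \eqref{Eq:Res-y1} and extends to all sixteen lines via the transitive action of $W_{D_5}$ on $\boldsymbol{\mathcal L}$. Counting yields a bijection between lines and 2-rank-$1$ subwebs, and ${\bf Res}_{\boldsymbol{\ell}}$ spans the corresponding one-dimensional AR space.

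For part 3, to establish the identity $\boldsymbol{({\mathcal R}{el}_{\ell})}$, consider the difference $\Delta_{\boldsymbol{\ell}} := {\bf Res}_{\boldsymbol{\ell}} - \sum_{\boldsymbol{\mathfrak c} \in \boldsymbol{\mathcal K}(\boldsymbol{\ell})} {\bf Res}_{\boldsymbol{\mathfrak c} - \boldsymbol{\ell}}$. A case-by-case analysis of which face-map components can appear shows that $\Delta_{\boldsymbol{\ell}}$ is itself an AR of $\boldsymbol{\mathcal W}_{\boldsymbol{\mathcal Y}_5, \boldsymbol{\ell}}^{GM}$: every residue ${\bf Res}_{\boldsymbol{\mathfrak c} - \boldsymbol{\ell}}$ is supported on the face maps adjacent to $\boldsymbol{\mathfrak c} - \boldsymbol{\ell}$, and among those the ones also adjacent to $\boldsymbol{\ell}$ are exactly the $\pi_{F_{\boldsymbol{\mathfrak c}'}}$ with $\boldsymbol{\mathfrak c}' \in \boldsymbol{\mathcal K}(\boldsymbol{\ell})$. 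Since this subweb has 2-rank $1$, $\Delta_{\boldsymbol{\ell}}$ must be a scalar multiple of ${\bf Res}_{\boldsymbol{\ell}}$, and a single component-by-component check in the $y$-coordinates (analogous to the verification of \eqref{Eq:Res-y1}) shows the scalar is zero; $W_{D_5}$-equivariance then propagates the identity to all sixteen lines. The basis claim for the space of relations reduces to dimension counting together with linear independence: since the sixteen residues span $\boldsymbol{AR}^2_C$, the relations space has the expected codimension, and the "leading-term" structure of $\boldsymbol{{\mathcal R}{el}_{\boldsymbol{l}}}$ (which involves ${\bf Res}_{\boldsymbol{l}}$ with coefficient $+1$ and five other residues with coefficient $-1$) makes independence on the maximally disjoint set provided by an exceptional collection essentially transparent.

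For part 4, the existence and uniqueness of $\ell_{\boldsymbol{\mathcal E}}$ is classical: contracting the five disjoint lines of $\boldsymbol{\mathcal E}$ realises $X={\rm dP}_4$ as $\mathrm{Bl}_P(\mathbf{P}^2)$ for five points $P=\{p_1,\ldots,p_5\}$ in general position, and $\ell_{\boldsymbol{\mathcal E}}$ is the proper transform of the unique plane conic through $P$, of class $2\boldsymbol{h}-\boldsymbol{e}_{tot}$; it meets every member of $\boldsymbol{\mathcal E}$ once and is the only line to do so. To conclude that the ten residues $\{{\bf Res}_{\boldsymbol{\ell}}\}_{\boldsymbol{\ell}\notin \overline{\boldsymbol{\mathcal E}}}$ form a basis of $\boldsymbol{AR}^2_C$, it suffices by the dimension count $\dim \boldsymbol{AR}^2_C = 10$ to establish linear independence. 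This follows from part 3: the relations $\boldsymbol{({\mathcal R}{el}_{\boldsymbol{l}})}$ for $\boldsymbol{l}\in \overline{\boldsymbol{\mathcal E}}$ express each residue indexed by a line in $\overline{\boldsymbol{\mathcal E}}$ in terms of residues indexed by the paired lines $\boldsymbol{\mathfrak c}-\boldsymbol{l}$, and by the incidence structure of lines and conic classes on ${\rm dP}_4$ (each $\boldsymbol{\mathfrak c}-\boldsymbol{l}$ with $\boldsymbol{l}\in \overline{\boldsymbol{\mathcal E}}$ lies in the complement $\boldsymbol{\mathcal L}\setminus\overline{\boldsymbol{\mathcal E}}$), any dependence among the ten complementary residues would contradict the spanning established in Corollary \ref{Cor:2-RA-WGM}.

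The principal technical obstacle is the verification of the combinatorial identity $\boldsymbol{({\mathcal R}{el}_{\ell})}$: while Weyl-equivariance reduces this to a single test case, that case still requires a nontrivial symbolic computation in the five variables $y_{ij}$ using the formulas \eqref{Eq:Formules-Ui}-\eqref{Eq:Formules-Ui3}. A secondary subtlety is the bookkeeping of the codimension of the relations space (consistent with the other statements when one tracks carefully which lines are excluded), which is best phrased in terms of the six-element set $\overline{\boldsymbol{\mathcal E}}$ whose complement indexes the basis of part 4.
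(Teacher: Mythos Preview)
Your overall strategy matches the paper's: explicit verification for a single representative, then propagation by the transitive $W_{D_5}$-action on lines (and on exceptional collections), using that $W_{D_5}$ acts by the signature on $\langle{\bf HLOG}_{\boldsymbol{\mathcal Y}_5}\rangle$ so that residues transform compatibly. Parts~1, 2 and~4 are handled exactly as in the paper.

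There is, however, a genuine gap in your argument for Part~3. You assert that a ``case-by-case analysis'' shows $\Delta_{\boldsymbol{\ell}}$ is supported on the foliations of $\boldsymbol{\mathcal W}_{\boldsymbol{\mathcal Y}_5,\boldsymbol{\ell}}^{GM}$, but the justification you give does not establish this: knowing which face maps of each ${\bf Res}_{\boldsymbol{\mathfrak c}-\boldsymbol{\ell}}$ lie in $\boldsymbol{\mathcal K}(\boldsymbol{\ell})$ does not explain why the components lying \emph{outside} $\boldsymbol{\mathcal K}(\boldsymbol{\ell})$ cancel in the sum. Concretely, for $\boldsymbol{\ell}=\boldsymbol{e}_1$ and a face map indexed by $\boldsymbol{\mathfrak c}'=2\boldsymbol{h}-\boldsymbol{e}_{\hat\imath}$ with $i\neq 1$ (which is \emph{not} in $\boldsymbol{\mathcal K}(\boldsymbol{e}_1)$), four of the five summands ${\bf Res}_{\boldsymbol{\mathfrak c}-\boldsymbol{e}_1}$ contribute a nonzero $\boldsymbol{\mathfrak c}'$-component; their cancellation is not a combinatorial triviality but requires the same explicit computation in the $y_{ij}$-coordinates that would directly verify $\boldsymbol{(\mathcal Rel_{\ell})}$ itself. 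So your two-step reduction (support claim, then scalar check) gains nothing over simply verifying the full identity for one line and invoking $W_{D_5}$-equivariance, which is what the paper does.

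Your closing remark about the dimension bookkeeping is well taken: sixteen residues spanning a ten-dimensional space force a six-dimensional space of linear relations, consistent with Part~4 (where $|\overline{\boldsymbol{\mathcal E}}|=6$) rather than with the five relations indexed by $\boldsymbol{\mathcal E}$ alone in Part~3 as stated.
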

\begin{proof}
This follows from explicit computations and  from \eqref{Eq:Res-along-Dl} combined with the facts that $W_{D_5}={\rm Aut}(\boldsymbol{\mathcal Y}_5)$ 
acts as the signature on the complex line spanned by  ${\bf HLOG}_{\boldsymbol{\mathcal Y}_5}$ (cf. Proposition \ref{Prop:Transformation-sigma-i}) and 
acts transitively on the set of weight divisors in $\boldsymbol{\mathcal Y}_5$ as well as on the set of exceptional collections $\boldsymbol{\mathcal E}\subset 
\boldsymbol{\mathcal L}$ (see \cite[Corollary 26.8]{Manin}). 
\end{proof}

\subsection{\bf The  1-abelian relations of $\boldsymbol{{\mathcal W}^{GM}_{ \boldsymbol{Y}_5}}$}
\label{SS:1-AR-of-WGMY5}
%
%
%
%
%
We consider the following privileged primitive of the $2$-form $\eta= d{\rm Log}(x)\wedge d{\rm Log}(y) $ : 
\begin{equation}
\label{Eq:delta}
\delta = 
 \frac{1}{2}\bigg(\,  
{\rm Log}(x)\, d\,{\rm Log}(y)-{\rm Log}(y)\,d\,{\rm Log}(x)\, 
\bigg) 
=
 \frac{1}{2}\bigg(\,  
{\rm Log}(x)\, \frac{dy}{y}-{\rm Log}(y)\,\, \frac{dx}{x}\, .
\bigg) 
\end{equation}

Via elementary computations, one gets 
\begin{align*}
 2\, U_1^*\big( \delta\big) = &\, 
\ln  {(  {y_{25}} )} \, \frac{ {dy_{24}}}{ {y_{24}}}-\ln  {\big(  {y_{24}}  {y_{35}} \big)}\,  \frac{ {dy_{25}}}{ {y_{25}}}+
\ln  {(  {y_{25}} )}\,  
\frac{ {dy_{35}}}{ {y_{35}}}
\\ 
 2\,U_2^*\big( \delta\big) = &\, 
\ln  \bigg( 
\frac{ {y_{14}}  {y_{35}}}{ {y_{13}}}
\bigg)\, \, \frac{  {dy_{13}}}{ {y_{13}}}-
\ln  \, (  {y_{13}} )\, 
\frac{  {dy_{14}}}{ {y_{14}}}-
\ln \,  (  {y_{13}} )\, 
\frac{  {dy_{35}}}{ {y_{35}}}
\\ 
2\,U_3^*\big( \delta\big) = &\, 
 \ln \, (  {y_{24}} )\,
\frac{  {dy_{14}}}{ {y_{14}}}-
\ln \, \big(  {y_{14}}  {y_{25}} \big)
\, 
\frac{  {dy_{24}}}{ {y_{24}}}+
\ln \,(  {y_{24}} )\, 
\frac{  {dy_{25}}}{ {y_{25}}}
\\ 
 2\,U_4^*\big( \delta\big) = &\, 
- \ln  \,(  {y_{35}} )  \, 
\frac{  {dy_{13}}}{ {y_{13}}}-
\ln\, (  {y_{35}} )\, 
\frac{ {dy_{25}}}{ {y_{25}}}+
 \ln \left(\,  \frac{ {y_{13}}  {y_{25}}}{ {y_{35}}}\right)\, 
\frac{  {dy_{35}}}{ {y_{35}}}
\\ 
\mbox{and }\quad  
2\,  U_5^*\big( \delta\big) = &\, 
 \ln \, (  {y_{14}} )\, 
\frac{ {dy_{13}}}{ {y_{13}}}-
\ln \,  \big(  {y_{13}}  {y_{24}} \big) \, 
\frac{ {dy_{14}}}{ {y_{14}}}+
\ln  (  {y_{14}} )\, 
\frac{  {dy_{24}}}{ {y_{24}}}
\, .
\end{align*}

Recall that  $\epsilon_1=\epsilon_3=\epsilon_5=1$ and 
 $\epsilon_2=\epsilon_4=-1$ (see \eqref{Eq:epsilon}).  The coefficient of the logarithmic differential ${\mathit{dy_{24}}}/{\mathit{y_{24}}}$ in 
the sum $\sum_{i=1}^5 \epsilon_i \, U_i^*\big(\,  
\delta \big)$ 
is the sum of those of the terms 
$ U_k^*\big( \delta\big) $ for $k=1,3, 5$, namely it is 
$$
\ln  \,({ \mathit{y_{25}} )} + \Big( - 
\ln  \big(  \mathit{y_{14}} \mathit{y_{25}} \big)\, 
\Big) +\ln \, ( \mathit{y_{14}} )\, , 
$$
a quantity which vanishes identically on any complex domain in $Y_5$ containing $(\mathbf R_{>0}\big)^5$. 
The same phenomenon occurs for all the logarithmic differential $dy_i/y_i$ with $i=1,\ldots,5$, which proves the 
\begin{prop}
\label{Prop:AR-Omega1}
 With  $\epsilon_1=\epsilon_3=\epsilon_5=1$ and 
 $\epsilon_2=\epsilon_4=-1$, 
  one has identically 
\begin{equation}
\label{Eq:AR-Omega1}
\sum_{i=1}^5 \epsilon_i \, U_i^*\big(\,  
\delta \, \big)
=\frac{1}{2}
\sum_{i=1}^5 \epsilon_i \,  U_i^*\bigg(\,  
{\rm Log}(x)\, \frac{dy}{y}-{\rm Log}(y)\,\, \frac{dx}{x}\, 
\bigg) =0\, 
\end{equation}
 hence 
 ${\bf AR}_\delta^1=\Big(  \epsilon_i \,U_i^*\big( \delta\big)\Big)_{i=1}^5$ is a 1-AR of weight 1 for $\boldsymbol{W}^{+}_{ \boldsymbol{Y}_5}$.  Moreover, one has 
 $d^1 \, \big({\bf AR}^1_\delta\big)= {\bf AR}^2_\eta$. 
\end{prop}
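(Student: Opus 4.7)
My plan is to prove both assertions by direct computation, exploiting the fact that the explicit formulas for the first integrals $U_i$ ($i=1,\ldots,5$) given in \eqref{Eq:Formules-Ui} are monomial, so that the pullbacks $U_i^*(\delta)$ can be expanded linearly in the five logarithmic differentials $dy_k/y_k$ ($k=1,\ldots,5$) with coefficients that are $\mathbf C$-linear combinations of the logarithms $\ln(y_j)$. The fact that the first integrals of $\boldsymbol{W}^{+}_{\boldsymbol{Y}_5}$ are monomial in the $y_j$'s (and in particular have monomial components $U_{i,1}, U_{i,2}$) is the key structural feature that makes the whole computation tractable.

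First I would record the five expansions $U_i^*(\delta)$ for $i=1,\ldots,5$ that are already displayed in the excerpt just before the statement of the proposition. Each $2 U_i^*(\delta)$ is a sum of at most three terms of the form $\ln(M_{i,k})\, dy_k/y_k$ where $M_{i,k}$ is a monomial in the $y_j$'s. Gathering in the signed sum $\sum_{i=1}^5 \epsilon_i U_i^*(\delta)$ the terms carrying a fixed logarithmic differential $dy_k/y_k$, I obtain a sum $\big(\sum_{i} \epsilon_i \,\ln M_{i,k}\big)\, dy_k/y_k$. Using the functional equation $\ln(ab) = \ln(a) + \ln(b)$, each such sum of logarithms reduces to a $\mathbf Z$-linear combination of the five generators $\ln(y_1),\ldots,\ln(y_5)$, and I just need to check that every such combination vanishes.

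The illustrative case $k=3$ (coefficient of $dy_{24}/y_{24}$) is already carried out in the text: the contributions come from $i=1,3,5$ (with signs $+,+,+$) and yield $\ln(y_{25}) - \ln(y_{14}y_{25}) + \ln(y_{14}) = 0$. The four other indices $k\in\{1,2,4,5\}$ are treated in exactly the same way: each coefficient becomes, after applying $\ln(ab)=\ln a+\ln b$, a telescoping sum of logarithms of monomials in the $y_j$'s that vanishes identically. This step is entirely routine once one has written down the expansions; there is no hidden obstacle, only bookkeeping.

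For the second assertion, I use that $\delta$ is by construction a primitive of $\eta = d\log x \wedge d\log y$, i.e.\ $d\delta = \eta$. Since pullback commutes with the exterior differential, one has $d\big( U_i^*\delta\big) = U_i^*(d\delta) = U_i^*(\eta)$ for every $i$, and hence
\begin{equation*}
d^1\big({\bf AR}^1_\delta\big) = \Big( d\big(\epsilon_i\, U_i^*\delta\big) \Big)_{i=1}^{5} = \Big( \epsilon_i\, U_i^*\eta \Big)_{i=1}^{5} = {\bf AR}^2_\eta,
\end{equation*}
as claimed. The only point requiring a (trivial) verification is that the total derivative of a 1-AR is indeed a 2-AR, which is immediate from \eqref{Eq:derivative-d1}. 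No genuine difficulty arises in either part of the proof: once the explicit pullbacks are written down, both statements reduce to elementary manipulations of logarithms and of the exterior derivative.
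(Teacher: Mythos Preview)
Your proposal is correct and follows essentially the same approach as the paper: the paper computes the five pullbacks $2\,U_i^*(\delta)$ explicitly, then collects the coefficient of each $dy_k/y_k$ in the signed sum and verifies (via the functional equation of the logarithm) that each such coefficient vanishes, doing the case $dy_{24}/y_{24}$ in detail and asserting the others are analogous. The paper does not spell out the argument for $d^1({\bf AR}^1_\delta)={\bf AR}^2_\eta$, but your justification via $d\delta=\eta$ and the naturality of pullback under $d$ is exactly the intended one.
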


For any $\ell\in \boldsymbol{\mathcal L}$, the 2-abelian relation ${\bf AR}_\ell$ is exact and is the total derivative of an 1-abelian relation  
${\bf AR}_\ell^1$ of weight 1, which is equivalent to \eqref{Eq:AR-Omega1}.  We denote by 
$\boldsymbol{AR}_C^1\big(\boldsymbol{\mathcal W}^{GM}_{ \boldsymbol{\mathcal Y}_5}\big)$ (or just $\boldsymbol{AR}_C^1$ for short) the subspace of 
$W_1\Big( \boldsymbol{AR}^2\big(\boldsymbol{\mathcal W}^{GM}_{ \boldsymbol{\mathcal Y}_5}\big)\Big)$ spanned by these ARs, a notation which is justified by the fact that $d^1$ sends 
$\boldsymbol{AR}_C^1\big(\boldsymbol{\mathcal W}^{GM}_{ \boldsymbol{\mathcal Y}_5}\big)$ onto 
$\boldsymbol{AR}_C^2\big(\boldsymbol{\mathcal W}^{GM}_{ \boldsymbol{\mathcal Y}_5}\big)$, a fact which in particular implies that $\dim \boldsymbol{AR}_C^1\geq 10$. 
From  
$r_1\Big( 
\boldsymbol{\mathcal W}^{GM}_{ \boldsymbol{Y}_5}
\Big)=35$ and essentially using explicit computations, one gets the following result which gives a quite 
detailed description of the structure of the space of 1-abelian relations of $\boldsymbol{\mathcal W}^{GM}_{ \boldsymbol{\mathcal Y}_5}$. 
%

\begin{thm}
\label{Thm:1-AR-}
{\bf 1.} 
One has $r^1\big( \boldsymbol{\mathcal W}_5\big)\leq 6$ for any 5-subweb $\boldsymbol{\mathcal W}_5$ of $\boldsymbol{\mathcal W}_{\boldsymbol{Y}_5}^{GM}$. Those 
for which the 1-rank is 6 are the subwebs  
$ \boldsymbol{\mathcal W}^{\underline{\epsilon}}= 
\boldsymbol{\mathcal W}\big( \mathcal F_1^{{\epsilon}_1},\ldots, \mathcal F_5^{{\epsilon}_5}\big)$  for any 5-tuple $\underline{\epsilon}=(\epsilon_i)_{i=1}^5 \in \{\pm 1\}^5$.
\mk \\
{\bf 2.} When $\underline{\epsilon}$ is odd, 
the space of  1-abelian relations of $\boldsymbol{\mathcal W}^{\underline{\epsilon}}$ 
has a basis formed of ARs corresponding to 
 identities of the form  $\sum_{i \in I^{\underline{\epsilon}}} \sum_{s=1}^3
c_{i,s} d\,{\rm Log} U_{i,s}=0$ for some  coefficients $c_{i,s}\in \{ -1,0,1\}$. In particular, the 1-ARs  of this web all are of weight 0: 
$$\boldsymbol{AR}^1\big( \boldsymbol{\mathcal W}^{\underline{\epsilon}}\big)=W_0 \Big(\boldsymbol{AR}^1\big( \boldsymbol{\mathcal W}^{\underline{\epsilon}}\big)\Big)\, .$$
{\bf 3.} When $\underline{\epsilon}$ is even, 
the situation is different since 
not all the 
ARs of $\boldsymbol{\mathcal W}^{\underline{\epsilon}}$ are of weight 0. Indeed: 
\begin{itemize}
\item the space  1-abelian relations of $\boldsymbol{\mathcal W}^{\underline{\epsilon}}$ of  weight 0  is 
of dimension 5 and has  a basis formed of ARs 
of the same form as in {\bf 2};
\sk 
\item  but there exists an 1-AR of weight 1, denoted by ${\bf AR}_{\underline{\epsilon}}^1$ whose each component is a linear combination with coefficients in $\{\, 0,\pm 1\,\}$ of the 1-forms 
with logarithmic coefficients
$$
{}^{} \quad \frac{1}{2}\Big( {\rm Log}\,U_{j,a}\,d\,{\rm Log}\,U_{j,b}
- {\rm Log}\,U_{j,b}\,d\,{\rm Log}\,U_{j,a}\Big)
\qquad 
\big(\,1\leq a<b\leq 3\,)\, .
$$
and which is such that $d {\bf AR}_{\underline{\epsilon}}^1={\bf Res}_{\underline{\epsilon}}$. 
\sk 
\end{itemize}
{\bf 4.} The space of 1-abelian relations 
of weight 0 of  $\boldsymbol{\mathcal W}_{\boldsymbol{Y}_5}^{GM}$ is of dimension 20 and 
has a basis of ARs as in {\bf 2},  that is which correspond to identities of the form $\sum_{i=1}^{10}\sum_{s=1}^3 c_{i,s} d {\rm Log}U_{i,s}=0$ for some coefficients $c_{i,s}\in \{-1,0,1\}$.    
 Moreover, one has 
 $W_0\Big(\boldsymbol{AR}^1\big( \boldsymbol{\mathcal W}_{\boldsymbol{Y}_5}^{GM}\big)\Big)=
 \boldsymbol{AR}^1_{Rat}\big( \boldsymbol{\mathcal W}_{\boldsymbol{Y}_5}^{GM}\big)$ and this space is a subspace of the space ${\rm Ker}\big(d^1\big)$ of closed 1-ARs of $ \boldsymbol{\mathcal W}_{\boldsymbol{Y}_5}^{GM}$. 
\mk \\
{\bf 5.}  The 1-ARs ${\bf AR}_{\underline{\epsilon}}^1$ for ${\underline{\epsilon}}$ even span a space of dimension 10 
on which the restriction of the derivative \eqref{Eq:derivative-d1}  induces an isomorphism onto $\boldsymbol{AR}^2_C\Big( \boldsymbol{\mathcal W}_{\boldsymbol{Y}_5}^{GM}\Big)$: setting 
\begin{equation}
\label{Eq:AR1-C}
\boldsymbol{AR}^1_C=\boldsymbol{AR}^1_C\Big( \boldsymbol{\mathcal W}_{\boldsymbol{Y}_5}^{GM}\Big)
=\big\langle \,{\bf AR}^1_{\underline{\epsilon}} \, \big\lvert \, \,
\underline{\epsilon}\in \{\pm1\}^5 \, \mbox{ is even} \,
\, \big\rangle \, ,
\end{equation} 
this isomorphism is given by
$d^1: \boldsymbol{AR}^1_C\Big( \boldsymbol{\mathcal W}_{\boldsymbol{Y}_5}^{GM}\Big)
 \stackrel{\sim }{\longrightarrow}
\boldsymbol{AR}^2_C\Big( \boldsymbol{\mathcal W}_{\boldsymbol{Y}_5}^{GM}\Big)\, , \, \, 
{\bf AR}_{\underline{\epsilon}}^1  \longmapsto {\bf Res}_{\underline{\epsilon}}$.
\mk \\
{\bf 6.} For any $i,j,k$ such that $1\leq i<j<k\leq 5$, the 6-subweb $\boldsymbol{\mathcal W}_{ijki^*j^*k^*}$ of $\boldsymbol{\mathcal W}_{\boldsymbol{Y}_5}^{GM}$ defined by the first integrals $U_l$ for $l\in \{i,j,k,i+5,j+5,k+5\}$ carries a complete and irreducible 1-AR of weight 1, denoted by 
${\bf AR}^1_{ijk}$ (uniquely defined up to sign), the $l$-th component of which is a linear combination, with coefficients in $\{-1,0,+1\}$, of the exact 1-forms
$$
{}^{} \quad  
d\Big(  {\rm Log}\,U_{l,a}\,{\rm Log}\,U_{l,b}\Big) =
 {\rm Log}\,U_{l,a}\,d\,{\rm Log}\,U_{l,b}
+ {\rm Log}\,U_{l,b}\,d\,{\rm Log}\,U_{l,a}
\qquad 
\big(\,1\leq a<b\leq 3\,)\, .
$$
The ten 1-abelian relations ${\bf AR}^1_{ijk}$ span a subspace of $W_1\big(
\boldsymbol{AR}^1_C\big( \boldsymbol{\mathcal W}_{\boldsymbol{Y}_5}^{GM}\big)\big) \cap {\rm Im}\big(d^0\big)$ of dimension 5, which will be denoted by $
\boldsymbol{AR}^1_{Sym}\big( \boldsymbol{\mathcal W}_{\boldsymbol{Y}_5}^{GM}\big)$ or just  $\boldsymbol{AR}^1_{Sym}$ for short. 
\mk \\
{\bf 7.}
There is a decomposition in direct sum: 
\begin{equation}
\label{Eq:Decomp-AR1-WGM}
\boldsymbol{AR}^1\Big( \boldsymbol{\mathcal W}_{\boldsymbol{Y}_5}^{GM}\Big)=\overunderbraces{&&\br{3}{
W_1\big(
\boldsymbol{AR}^1\big( \boldsymbol{\mathcal W}_{\boldsymbol{Y}_5}^{GM}\big)\big)
}}%
{&W_0\Big(
\boldsymbol{AR}^1\big( \boldsymbol{\mathcal W}_{\boldsymbol{Y}_5}^{GM}\big)\Big)^{{\red{20}}} \oplus &
\boldsymbol{AR}^1_{Sym}\Big( \boldsymbol{\mathcal W}_{\boldsymbol{Y}_5}^{GM}\Big)^{{\red{5}}}
&\oplus &
\boldsymbol{AR}^1_C\Big( \boldsymbol{\mathcal W}_{\boldsymbol{Y}_5}^{GM}\Big)^{{\red{10}}}
}%
{&\br{2}{ {\rm Im}\big(d^0\big)\,=\, {\rm Ker}\big(d^1\big)}&
}
\, . 
\end{equation}
where the upper exponents in red stand for the corresponding dimensions.  It follows that the 1-rank of 
$\boldsymbol{\mathcal W}_{\boldsymbol{Y}_5}^{GM}$ is 35. 
\mk \\
\noindent {\bf 8.} The decomposition in direct sum \eqref{Eq:Decomp-AR1-WGM} actually is the decomposition of $\boldsymbol{AR}^1\Big( \boldsymbol{\mathcal W}_{\boldsymbol{Y}_5}^{GM}\Big)$ 
into irreducible $W_{D_5}$-representations, with  the following isomorphisms:
\begin{equation}
\label{Eq:1AR-Representations}
W_0\Big(
\boldsymbol{AR}^1\Big)\simeq V_{[2,21]}^{{\red{20}}}\, , \quad 
\boldsymbol{AR}^1_{Sym}\simeq V_{[-,221]}^{{\red{5}}}
\quad 
\mbox{ and }
\quad 
\boldsymbol{AR}^1_C\simeq \boldsymbol{AR}^2_C\simeq V_{[11,111]}^{{\red{10}}}\,.
\end{equation}
\end{thm}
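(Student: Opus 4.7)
The plan is to establish the eight statements of the theorem through a combination of explicit construction of abelian relations, a uniform residue-primitive machinery already set up for \(2\)-ARs, and a counting argument saturating the virtual-rank bound \(\rho^1\bigl(\boldsymbol{\mathcal W}^{GM}_{\boldsymbol{Y}_5}\bigr)=52\) of Proposition \ref{Prop:Virtual-Ranks} via the actual rank \(r^1=35\) already computed there. The three guiding threads are: (i) produce enough concrete ARs in each claimed piece; (ii) prove their linear independence, either by a weight argument or by computing \(d^1\) and matching with the known residues \({\bf Res}_{\underline{\epsilon}}\); and (iii) use the birational realization \(\boldsymbol{\mathscr W}_{\hspace{-0.1cm}D_5}\) from \eqref{Eq:Bir-W-D5} to read off the \(W_{D_5}\)-module structure by characters.

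For the local statements (parts 1--3), I first observe that a subweb of \(\boldsymbol{\mathcal W}^{GM}_{\boldsymbol{Y}_5}\) which omits both integrals \(U_i\) and \(U_{i+5}\) for some \(i\) cannot carry a \(1\)-AR of rank \(6\) on dimensional grounds; hence only the sixteen subwebs \(\boldsymbol{\mathcal W}^{\underline{\epsilon}}\) are candidates. A direct virtual rank computation, analogous to that underlying Proposition \ref{Prop:Virtual-Ranks}, gives the uniform bound \(r^1\leq 6\). For odd parity \(\underline{\epsilon}\), the subweb has \(\rho^2=0\) by Corollary \ref{Cor:2-RA-WGM}.1, so \(d^1\) is injective on \(\boldsymbol{AR}^1\bigl(\boldsymbol{\mathcal W}^{\underline{\epsilon}}\bigr)\) only onto the trivial space; this forces every \(1\)-AR to lie in \(\mathrm{Ker}(d^1)\), and explicit monomial syzygies among the \(U_{i,s}\)'s (using Lemma \ref{L:u3} to express each \(U_{i,s}\) as a signed monomial in the \(\zeta_k\)) produce six linearly independent weight-\(0\) ARs. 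For even parity, exactness of \({\bf Res}_{\underline{\epsilon}}\) can be established by Poincaré's lemma applied componentwise, since each component of this residue is a closed rational \(2\)-form of logarithmic type; a canonical primitive is then built by the antisymmetric construction of formula \eqref{Eq:delta} applied uniformly to the components \(U_{i,s}\), yielding the weight-\(1\) abelian relation \({\bf AR}_{\underline{\epsilon}}^1\) with \(d^1({\bf AR}_{\underline{\epsilon}}^1)={\bf Res}_{\underline{\epsilon}}\).

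For the global statements (parts 4--6), I would construct the three subspaces separately. The weight-\(0\) part \(W_0\bigl(\boldsymbol{AR}^1\bigr)\) is identified with the space of multiplicative syzygies among the thirty functions \(U_{i,s}\): by Lemma \ref{L:u3} each such syzygy is an integer relation among the divisors \(Z_k\) pulled back via the \(U_i\), and the relevant kernel is computed from an explicit \(30\times 10\) integer matrix whose corank is \(20\). The combinatorial part \(\boldsymbol{AR}^1_C\) is then precisely the span of the ten ARs \({\bf AR}_{\underline{\epsilon}}^1\) obtained above; their linear independence is immediate from the fact that \(d^1\) carries them bijectively to the basis \(\{{\bf Res}_{\underline{\epsilon}}\}\) of \(\boldsymbol{AR}^2_C\). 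The symmetric part \(\boldsymbol{AR}^1_{\mathrm{Sym}}\) is produced by dualizing the antisymmetric construction: for each triple \((i,j,k)\) one writes down a global \(0\)-AR built from polynomials in \(\log U_{l,a}\,\log U_{l,b}\) over \(l\in\{i,j,k,i+5,j+5,k+5\}\) (one checks such a \(0\)-AR exists because \(\rho^0_2\bigl(\boldsymbol{\mathcal W}^{GM}_{\boldsymbol{Y}_5}\bigr)=1\)), and then takes \(d^0\) of it; ten such triples give a \(5\)-dimensional image by a direct rank computation on the resulting \(10\times 10\) coefficient matrix.

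To conclude parts 7 and 8, I would verify that the three subspaces meet pairwise trivially: \(W_0\subset\mathrm{Ker}(d^1)\) by construction, while \(d^1\) is injective on \(\boldsymbol{AR}^1_C\), so \(W_0\cap \boldsymbol{AR}^1_C=0\); the symmetric piece is exact hence closed and consists of ARs whose components carry one logarithm, while the weight-\(0\) piece has components with none, so these two also meet trivially. The dimensions \(20+5+10=35=r^1\) saturate the actual \(1\)-rank, proving completeness and identifying \(\mathrm{Ker}(d^1)=\mathrm{Im}(d^0)\) with \(W_0\oplus\boldsymbol{AR}^1_{\mathrm{Sym}}\). For the Weyl-group module decomposition, I would read off the action of each \(\sigma_k^*\) on each of the three subspaces from the explicit Cremona formulas \eqref{Eq:Formula-Sigma-1}--\eqref{Eq:Formula-Sigma-k}, applied to the explicit bases constructed above, and then compute the character on each conjugacy class of \(W_{D_5}\); matching against the character table of \(W_{D_5}\) reproduced in \cite{Pirio2022} identifies the three irreducibles \(V^{20}_{[2,21]}\), \(V^5_{[-,221]}\) and \(V^{10}_{[11,111]}\). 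The main obstacle will be the combinatorial bookkeeping required for the \(20\)-dimensional weight-\(0\) piece: a clean proof needs a \(W_{D_5}\)-equivariant parameterization of its basis in terms of the roots of \(D_5\), rather than the brute-force enumeration of monomial syzygies that a direct computation would produce.
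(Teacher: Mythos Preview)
Your plan is essentially the paper's own approach: the proof there reads ``essentially goes by explicit computations'' (with Maple worksheets available on request), and for part~8 it computes the matrices of the $\sigma_k^*$ on fixed bases of each summand, extracts the characters, and matches them against the table in \cite{Pirio2022}---exactly what you propose in your last paragraph. Your linear-algebra packaging of the weight-0 piece as the kernel of a $30\times 10$ integer exponent matrix (via Lemma~\ref{L:u3}) is the natural way to organize that computation.

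Two soft spots to tighten. First, the claim that a 5-subweb omitting both $U_i$ and $U_{i+5}$ for some $i$ has $r^1<6$ ``on dimensional grounds'' is not an argument: there is no a~priori dimensional obstruction here, and you should simply run the virtual-rank computation over all $\binom{10}{5}$ subwebs (as the paper implicitly does). Note also there are $32$ subwebs $\boldsymbol{\mathcal W}^{\underline{\epsilon}}$, not sixteen---you have conflated this with the 2-rank case of Corollary~\ref{Cor:2-RA-WGM}. Second, in the odd-$\underline{\epsilon}$ case, the inference ``$\rho^2=0 \Rightarrow$ every 1-AR is closed $\Rightarrow$ weight~0'' does not go through: closed 1-ARs can perfectly well have weight~1 (the symmetric forms $d(\log U_a\cdot\log U_b)$ are exact). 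What actually proves part~2 is your subsequent sentence---produce six independent weight-0 syzygies and invoke the bound $r^1\leq 6$---so the $d^1$ aside is a red herring you should drop.
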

\begin{proof} 
The proof essentially goes by explicit computations that we do not reproduce here.\footnote{Short Maple worksheets are available from the author upon request.}

For the eighth point, we proceeded as follows:  thanks to the explicit expressions  \eqref{Eq:Formula-Sigma-1} and \eqref{Eq:Formula-Sigma-k} for the generators $\sigma_i$ of 
$\boldsymbol{\mathscr W}_{\hspace{-0.1cm}D_5}$ (see \eqref{Eq:Bir-W-D5}), and working with an arbitrary but fixed basis of any one of the spaces 
$W_0\big(\boldsymbol{AR}^1\big)$,  
$\boldsymbol{AR}^1_{Sym}$ or 
$\boldsymbol{AR}^1_C$, noted here by $S$, there is no difficulty first to verify that the pull-back maps $\sigma_i^*$ give rise to automorphisms of $S$, second to get explicit matrices of these automorphisms with respect to the chosen basis. 
 Then proceeding as in the Appendix of \cite{Pirio2022}, there is no difficulty to compute the character of the representation 
$W_{D_5}\simeq \boldsymbol{\mathscr W}_{\hspace{-0.1cm}D_5} \hookrightarrow {\rm GL}(S)$. For instance, one obtains that the 
character of the action of $W_{D_5}$ on $W_0\Big( \boldsymbol{AR}^1\big( \boldsymbol{\mathcal W}_{\boldsymbol{Y}_5}^{GM}\big)\Big)$ is 
$$ \chi_{W_0( \boldsymbol{AR}^1)}= \big( \, 20, -4, 4, 2, -2, 2, -2, 0, 0, 0, -1, 1, -1, -1, 1, 0, 0, 0\,\big)\, .
$$ 
Looking at the character table of $W_{D_5}$ (cf.\,Table 4 in \cite{Pirio2022} for instance), the first isomorphism in \eqref{Eq:1AR-Representations} follows. 
 The two other cases are handled in the same way. 
\end{proof}

\subsection{\bf The  0-abelian relations of $\boldsymbol{{\mathcal W}^{GM}_{ {Y}_5}}$}
\label{SS:0-AR-of-WGMY5}
%
%
One has 
$r_0\big( 
\boldsymbol{\mathcal W}^{GM}_{ \boldsymbol{Y}_5}
\big)=25$ and because the differential $d^0$ induces an isomorphism 
$\boldsymbol{AR}^0=\boldsymbol{AR}^0\big( 
\boldsymbol{\mathcal W}^{GM}_{ \boldsymbol{Y}_5}
\big) \simeq {\rm Im}\big( d^0\big)= W_0\big( 
\boldsymbol{AR}^1\big)
\oplus \boldsymbol{AR}^1_{Sym}
$, the structure of $\boldsymbol{AR}^0$ essentially has already been given in the previous subsection. 

For $i \in [\hspace{-0.05cm}[5]\hspace{-0.05cm}]$, one sets $i^*=i+5\in \{6,\ldots,10\}$ and  $l,m,n\in [\hspace{-0.05cm}[10]\hspace{-0.05cm}] $ pairwise distinct, 
one denotes by $\boldsymbol{\mathcal W}_{lmn}$ the 3-subweb of 
$\boldsymbol{\mathcal W}^{GM}_{ \boldsymbol{Y}_5}$ defined by the first integrals $U_l$, $U_m$ and $U_n$: $\boldsymbol{\mathcal W}_{lmn}=\boldsymbol{\mathcal W}(U_l,U_m,U_n)$. By direct explicit computations, we get the following result which just adds a few details to what can be deduced from 
Theorem \ref{Thm:1-AR-}. 

\begin{prop}
{\bf 1.} One has a  direct sum 
$$
\boldsymbol{AR}^0\Big( 
\boldsymbol{\mathcal W}^{GM}_{ \boldsymbol{Y}_5}
\Big)=W_1\Big(\boldsymbol{AR}^0\Big)^{{\red{20}}}
\oplus 
W_2\Big(\boldsymbol{AR}^0\Big)^{{\red{5}}}
$$ 
with the differential $d^0$ inducing  isomorphisms  of $W_{D_5}$-representations 
$$
W_1\big(\boldsymbol{AR}^0\big)\simeq W_0\big(\boldsymbol{AR}^1\big)\simeq 
V^{{\red{20}}}_{[2,21]}
\qquad \mbox{and} \qquad 
W_2\big(\boldsymbol{AR}^0\big)\simeq \boldsymbol{AR}^1_{Sym}
\simeq 
V^{{\red{5}}}_{[-,221]}\, . 
$$

\noindent 
{\bf 2.} The 3-subwebs of $\boldsymbol{\mathcal W}^{GM}_{ \boldsymbol{Y}_5}
$ have 0-rank less than or equal to 1. Those with rank 1 are the 3-subwebs 
$\boldsymbol{\mathcal W}_{ijk}$, 
$\boldsymbol{\mathcal W}_{ijk^*}$, 
$\boldsymbol{\mathcal W}_{ij^*k^*}$, $\boldsymbol{\mathcal W}_{i^*j^*k^*}$, and are therefore 80 in number. Their 0-ARs all have weight 1 and they span the whole space $W_1\big(\boldsymbol{AR}^0\big)$ which is of dimension 20. 

\noindent 
{\bf 3.} For any $i,j,k$ such that $1\leq i<j<k\leq 5$, the 
1-AR ${\bf AR}^1_{ijk}$ of 
the 6-subweb $\boldsymbol{\mathcal W}_{ijki^*j^*k^*}$ of $\boldsymbol{\mathcal W}_{\boldsymbol{Y}_5}^{GM}$ is exact, and is the derivative of a 0-AR denoted by 
${\bf AR}^0_{ijk}$, which corresponds to  the (unique up to sign) functional identity of the form
$\sum_l \sum_{1\leq a<b\leq 3} c_{l}^{a,b} \,{\rm Log}\,U_{l,a} \, {\rm Log}\,U_{l,b}=0$ where the first sum is for $l$ ranging in $\{i,j,k,i^*,j^*,k^*\}$, and where all the coefficients $c_{l}^{a,b}$ belong to $\{-1,0,1\}$. 

 The ten 0-abelian relations ${\bf AR}^0_{ijk}$ span  the whole space 
$W_2\big( \boldsymbol{AR}^0\big)$ 
of  0-abelian relations of weight 2 of  $\boldsymbol{\mathcal W}_{\boldsymbol{Y}_5}^{GM}$, which is of dimension 5.   
Moreover, the space ${\bf Res}\big( W_2\big( \boldsymbol{AR}^0\big)\big)$ spanned 
by the residues of the  weight 2 abelian relations ${\bf AR}^0_{ijk}$ coincides with 
$W_1\big( \boldsymbol{AR}^0\big)$. One has:
$$
 {\bf Res}\Big( W_2\big( \boldsymbol{AR}^0\big)\Big)
 =\bigg\langle\,
{\bf Res}_{\zeta_\ell} \Big( {\bf AR}^1_{ijk}
\Big) \,\, \Big\lvert  \,
\scalebox{0.9}{\begin{tabular}{l}
$\ell=1,\ldots,10$\\
$1\leq i<j<k\leq 5$
\end{tabular}}
 \bigg\rangle = W_1\Big( \boldsymbol{AR}^0\Big)\, .
$$

\end{prop}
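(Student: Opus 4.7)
The plan is to derive this proposition essentially as a corollary of Theorem~\ref{Thm:1-AR-} on the structure of $\boldsymbol{AR}^1$, using throughout the isomorphism $d^0 : \boldsymbol{AR}^0 \xrightarrow{\sim} {\rm Im}(d^0) = {\rm Ker}(d^1)$ already noted at the opening of \S\ref{SS:0-AR-of-WGMY5}. The key observation is that $d^0$ lowers the total logarithmic degree of an AR by exactly one: differentiating a polynomial of log-degree $w$ in the $\log U_{i,s}$ produces a 1-form whose coefficients are polynomials of log-degree $w-1$ in the same logarithms. Consequently $d^0$ restricts to isomorphisms between the corresponding weight pieces.

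For point~1, combining the injectivity of $d^0$ with the decomposition \eqref{Eq:Decomp-AR1-WGM} and noting that $W_0(\boldsymbol{AR}^1) \subset {\rm Ker}(d^1)$ (by Theorem~\ref{Thm:1-AR-}.4) immediately yields
\[
\boldsymbol{AR}^0 \;=\; W_1\bigl(\boldsymbol{AR}^0\bigr) \,\oplus\, W_2\bigl(\boldsymbol{AR}^0\bigr)
\]
together with the announced $W_{D_5}$-equivariant isomorphisms $W_1(\boldsymbol{AR}^0) \simeq W_0(\boldsymbol{AR}^1) \simeq V^{20}_{[2,21]}$ and $W_2(\boldsymbol{AR}^0) \simeq \boldsymbol{AR}^1_{\rm Sym} \simeq V^{5}_{[-,221]}$. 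The total 0-rank $20+5 = 25$ matches Proposition~\ref{Prop:Virtual-Ranks}.

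For point~2, the bound $r^0(\boldsymbol{\mathcal W}_3) \leq 1$ on every 3-subweb is part of the virtual rank computation underlying Proposition~\ref{Prop:Virtual-Ranks}. To identify which 3-subwebs attain rank one, a non-trivial 0-AR of $\boldsymbol{\mathcal W}(U_l, U_m, U_n)$ must be of the form $G_l(U_l) + G_m(U_m) + G_n(U_n) = 0$; by Lemma~\ref{L:u3} each component $U_{i,s}$ is a signed monomial in the $\zeta_k$, so such an identity reduces to a $\mathbf Z$-linear relation among the corresponding weight vectors. Enumerating these relations selects exactly the four shapes $\boldsymbol{\mathcal W}_{ijk}$, $\boldsymbol{\mathcal W}_{ijk^*}$, $\boldsymbol{\mathcal W}_{ij^*k^*}$, $\boldsymbol{\mathcal W}_{i^*j^*k^*}$, giving $\binom{5}{3} \cdot 2^3 = 80$ rank-one 3-subwebs whose 0-ARs are logarithms of monomial identities and therefore of weight~1. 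That these ARs span all of $W_1(\boldsymbol{AR}^0)$ then follows from the irreducibility of $V^{20}_{[2,21]}$ as a $W_{D_5}$-representation, combined with the transitivity of the $W_{D_5}$-action on the family of 80 subwebs: the span is a nonzero $W_{D_5}$-submodule of an irreducible module, hence the whole space.

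For point~3, the exactness of ${\bf AR}^1_{ijk}$ is immediate from Theorem~\ref{Thm:1-AR-}.6 since $\boldsymbol{AR}^1_{\rm Sym} \subset {\rm Im}(d^0)$. Rewriting each symmetric integrand as $\log U_{l,a}\, d\log U_{l,b} + \log U_{l,b}\, d\log U_{l,a} = d\bigl(\log U_{l,a}\, \log U_{l,b}\bigr)$ produces an explicit primitive ${\bf AR}^0_{ijk}$ of the announced form, unique by injectivity of $d^0$. The ten ${\bf AR}^0_{ijk}$ map under $d^0$ to a basis of the $5$-dimensional $\boldsymbol{AR}^1_{\rm Sym}$, hence span the $5$-dimensional space $W_2(\boldsymbol{AR}^0)$. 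The remaining residue assertion is the most computational step: each residue ${\rm Res}_{\zeta_\ell}\bigl({\bf AR}^0_{ijk}\bigr)$ is a weight-1 0-AR of a 3-subweb of the type classified in point~2, hence lies in $W_1(\boldsymbol{AR}^0)$. By $W_{D_5}$-equivariance of the residue operation and irreducibility of $V^{20}_{[2,21]}$, it suffices to exhibit one nonzero such residue in order to force the span to be all of $W_1(\boldsymbol{AR}^0)$. The main obstacle is thus purely bookkeeping: producing a single explicit nontrivial residue using the formulas \eqref{Eq:Formules-Ui} and \eqref{Eq:Formules-Ui3}, after which the representation-theoretic argument concludes.
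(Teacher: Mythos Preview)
Your proposal follows essentially the same line as the paper: the text immediately preceding the proposition states that since $d^0$ induces an isomorphism $\boldsymbol{AR}^0 \simeq {\rm Im}(d^0) = W_0(\boldsymbol{AR}^1)\oplus \boldsymbol{AR}^1_{\rm Sym}$, the structure of $\boldsymbol{AR}^0$ is already determined by Theorem~\ref{Thm:1-AR-}, and the remaining assertions are obtained ``by direct explicit computations''. No further proof is supplied. Your derivation of point~1 via the weight-shifting property of $d^0$, and of point~3 via explicit antidifferentiation of the symmetric 1-forms, is exactly what the paper has in mind.

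Where you differ is in replacing brute computation by representation-theoretic shortcuts (irreducibility of $V^{20}_{[2,21]}$ plus transitivity of the $W_{D_5}$-action to conclude spanning in points~2 and~3). This is cleaner than the paper's unspecified computer check, but there is one soft spot in your point~2. You write that a 0-AR of a 3-subweb ``reduces to a $\mathbf Z$-linear relation among the corresponding weight vectors'' via Lemma~\ref{L:u3}; this presupposes that the unknown functions $G_l$ are already logarithmic, i.e.\ that the 0-AR has weight~1, which is part of what is to be shown. The gap is easily closed once point~1 is in hand: any 0-AR lies in $W_1(\boldsymbol{AR}^0)\oplus W_2(\boldsymbol{AR}^0)$, and the weight-2 piece is spanned by the ${\bf AR}^0_{ijk}$, each of which is complete and irreducible on a 6-subweb, so no nonzero element of $W_2(\boldsymbol{AR}^0)$ can be supported on only three foliations. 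Hence a 3-subweb 0-AR lies in $W_1(\boldsymbol{AR}^0)$, and your monomial/weight-vector reduction then becomes legitimate. With that clarification your argument is complete and somewhat more informative than the paper's.
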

All the 0-ARs of $\boldsymbol{\mathcal W}_{\boldsymbol{Y}_5}^{GM}$ can be made explicit.  For instance,  the following weight 2 functional identity 
\begin{align*}
\Bigg(
\ln  \left(\frac{U_{1,1}}{U_{1,2}}\right) \ln  \big(U_{1,3}\big)\Bigg) 
\,+\,&
\Bigg(  
\ln  \left({U_{2,1}}U_{2,2}\right) \ln  \left(U_{2,3}\right)
\Bigg)
\,+\,\bigg( 
\ln  \left(\frac{U_{3,1}}{U_{3,2}}\right) \ln  \left(U_{3,3}\right)
\bigg)  \\
+\,\Bigg( 
\ln  \left(\frac{U_{6,1}}{U_{6,2}}\right) \ln  \left(U_{6,3}\right)
 \Bigg)
\,+\,&\Bigg( 
\ln  \left(U_{7,1}U_{7,2}\right) \ln  \left(U_{7,3}\right)
\Bigg)
\,+\,\Bigg(
\ln  \left(\frac{U_{8,1}}{U_{8,2}}\right) \ln  \left(U_{8,3}\right)
\Bigg) =0
\end{align*}
corresponds to the weight 2 abelian relation ${\bf AR}^0_{123}$ of the web 
$\boldsymbol{\mathcal W}_{123}=\boldsymbol{\mathcal W}(U_1,U_2,U_3,U_6,U_7,U_8)$.

\subsection{\bf The various links between the spaces of ARs of $\boldsymbol{{\mathcal W}^{GM}_{ {Y}_5}}$ summarized in a table}
\label{SS:pictural}
%
%
We find interesting/enlightening as well as convenient to gather all our findings about the abelian relations of $\boldsymbol{\mathcal W}^{GM}_{\boldsymbol{Y}_5}$ in Table \ref{Table:ARs} below. In it: 
\begin{itemize}
\item[$-$]  the columns are labelled by the weight of the ARs, the lines by their degree; 
\sk
\item[$-$] the diagonal black arrows are isomorphisms of $W_{D_5}$-representations induced by the differentials $d^k: \boldsymbol{AR}^{k}
\rightarrow \boldsymbol{AR}^{k+1}$ (with $k=0,1$);
\sk
\item[$-$] a dashed red arrow $\boldsymbol{T}\,{\red{\dashleftarrow}}\,\boldsymbol{S}$ means that the whole target space $\boldsymbol{T}$ is 
spanned by  the residues of the elements of the source space $\boldsymbol{S}$;
\sk
\item[$-$] the upper exponents in red stand for the dimension of each space;
\sk
\item[$-$] ${\bf HLOG}$ stands for the complex line spanned by the `master 2-abelian relation' ${\bf HLOG}_{\boldsymbol{Y}_5}$.
\sk
\end{itemize}


\begin{table}[h!]
\centering
\scalebox{0.37}{
\includegraphics{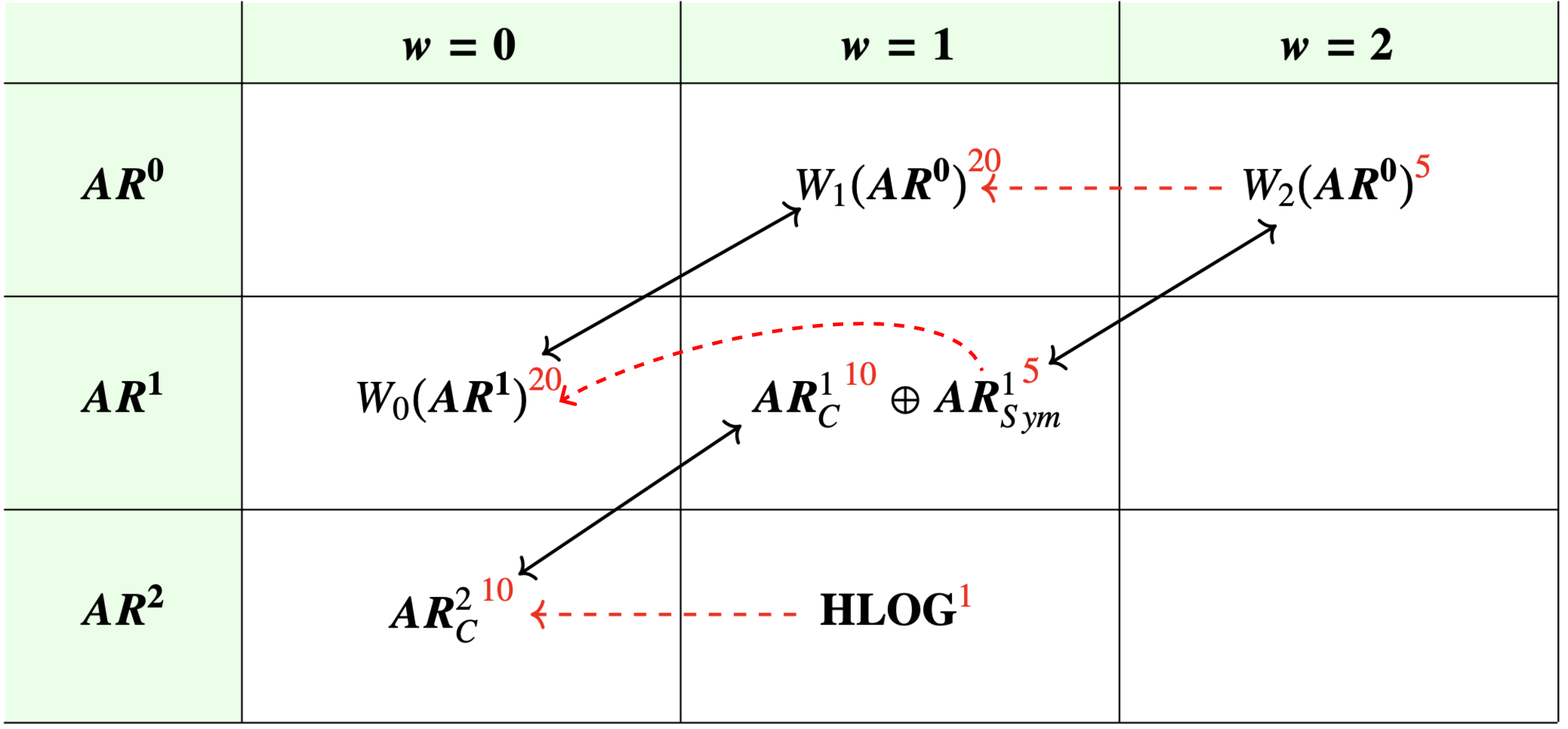}
}
\vspace{0.2cm}
\caption{The subspaces of the space of abelian relations of $\boldsymbol{\mathcal W}_{\boldsymbol{Y}_5}^{GM}$ and the many relations between them.}
\label{Table:ARs}
\end{table}
\vspace{-0.2cm}
It may be useful to gather the structures as $W(D_5)$-representations of the subspaces appearing 
in this table as well. This is given in the following Table 2.

\begin{table}[h!]
\centering
\scalebox{0.41}{\includegraphics{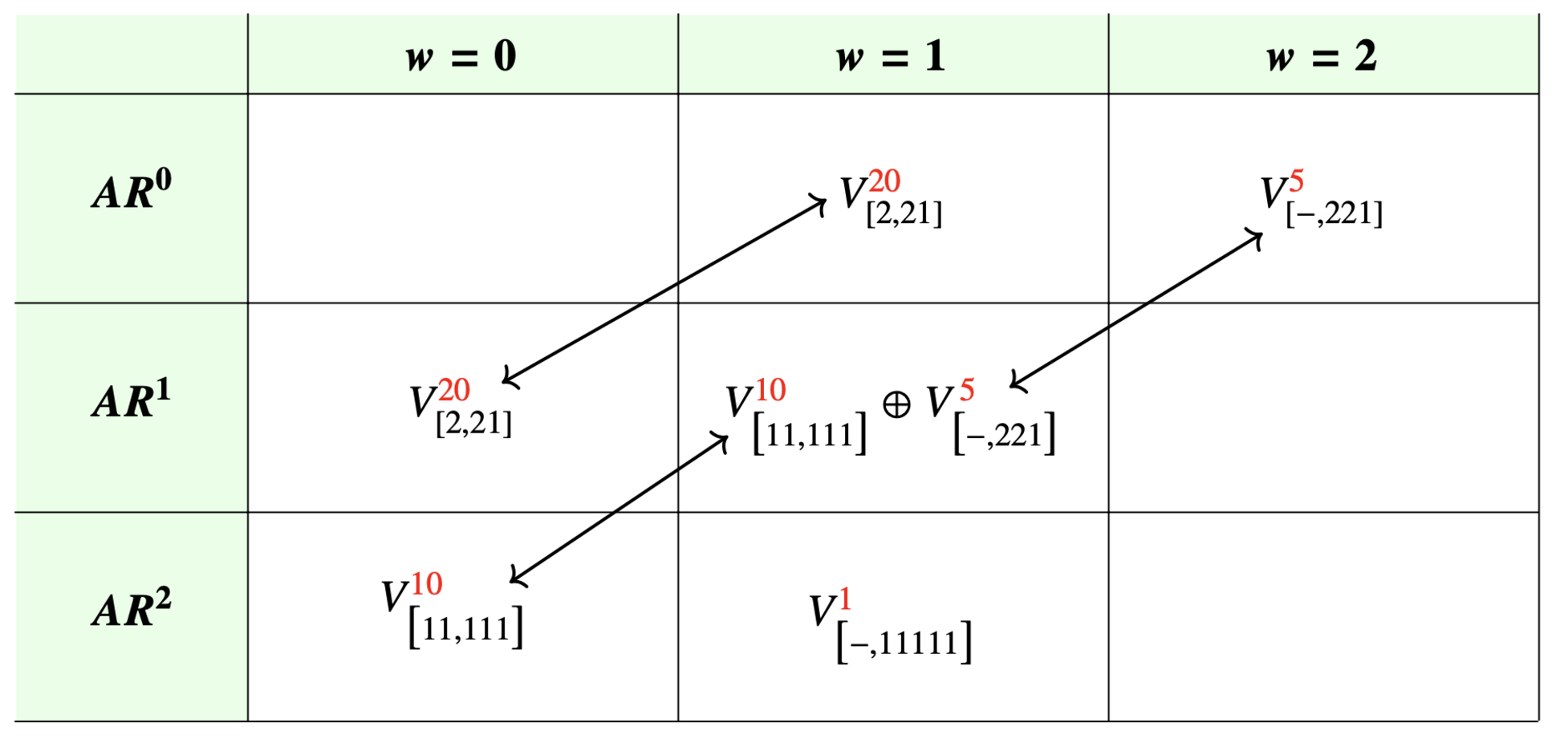}}
\vspace{0.2cm}
\caption{The subspaces of 
$\boldsymbol{AR}\big( \boldsymbol{\mathcal W}_{\boldsymbol{Y}_5}^{GM}\big)$ as $W_{D_5}$-representations.}
\label{Table:ARs}
\end{table}

\section{\bf A cluster view on  $\boldsymbol{\boldsymbol{\mathcal W}^{GM}_{\hspace{-0.1cm} \boldsymbol{\mathbb S}_5}}$}
\label{SS:Cluster-View}
${}^{}$
A particularly interesting feature of Abel's identity  of the dilogarithm is that it has a  `cluster nature', in the sense that it can be written in an equivalent nice form in terms of some cluster variables (see \cite{ClusterWebs} for details).  Indeed,  if $X_\ell$ (with $\ell \in \mathbf Z/5\mathbf Z$)  stands for the $\mathscr X$-cluster variables of type $A_2$ characterized by the 5-cyclic recurrence relations $X_{\ell-1}X_{\ell+1}=1+X_\ell$ for any 
$\ell$, then setting $X_1=u_1$ and $X_2=u_2$,  one has $X_3=\frac{1+u_2}{u_1}$, $X_4=\frac{1+u_1+u_2}{u_1u_2}$, $X_5=\frac{1+u_1}{u_2}$ and $X_{\ell+5}=X_\ell$ for every $\ell\in \mathbf Z$.\footnote{It seems that, in a slightly different form, the cyclic recurrence of the $X_\ell$'s was already known to Gauss, see \href{https://www.math.uni-bielefeld.de/~sek/cluster/pentagramma/}{here}.}
Moreover it is well-known that 
$\boldsymbol{ \big({\mathcal{A}b}\big)}$ is equivalent to the following functional identity
\begin{equation}
\label{Eq:R-A2}
\sum_{i=1}^5 \mathsf{R}\big(X_\ell\big)=\frac{{}^{} \pi^2}{2}\, 
\end{equation}
which is satisfied for all $X_1,X_2>0$ by the `cluster dilogarithm' $\mathsf{R}$.\footnote{This is the function defined by  $\mathsf{R}(u)=\frac{1}{2} \int_{0}^u 
\big( {\ln(1+t)}/{t}$ $-  {\ln(t)}/{(1+t)}\,
\big) dt   $
for any $u>0$ (see \cite[\S2.2.2.1]{ClusterWebs}). In terms of the classical bilogarithm, it is expressed as ${\mathsf R}(u)$ $= -{\bf L}{\rm i}_{2} (-u)-\frac{1}{2} {\rm Log}(u){\rm Log}(1+u)$ for  any $u\in \mathbf R_{>0}$.}
In addition to furnishing a very nice formal way to write this identity, the cluster perspective on it offers a more conceptual way to interpret it, since 
\eqref{Eq:R-A2} (hence Abel's identity $\boldsymbol{ \big({\mathcal{A}b}\big)}$)
 can be seen as the manifestation of a certain property of an important object associated to the $\mathscr X$-cluster algebra of type $A_2$, namely the associated scattering diagram $S\hspace{-0.05cm}C_{A_2}$.  Then that \eqref{Eq:R-A2} holds true follows from the 
fact that  this scattering diagram is {\it `consistent'} (see \cite[Theorem 3.6]{Nakanishi}). 
\sk 

If one believes that ${\bf HLOG}_{\boldsymbol{\mathcal Y}_5}$ is a natural generalization of the five terms identity of the dilogarithm, it is natural to wonder whether the former differential identity may be explained in terms of a certain property of a putative scattering diagram $S\hspace{-0.05cm}C_{\boldsymbol{\mathcal Y}_5}$ or not. Moreover, because ${\bf HLOG}_{\boldsymbol{\mathcal Y}_5}$ is given by the vanishing of a sum with finitely many terms, a naive expectation would be that $S\hspace{-0.05cm}C_{\boldsymbol{\mathcal Y}_5}$ be of finite type. Thus if one is fool and dreamy enough to want explain ${\bf HLOG}_{\boldsymbol{\mathcal Y}_5}$ by means of a scattering diagram, a first step would be to find a cluster-like structure associated to the spinor 10-fold $\mathbb S_5$, which firstly is of finite type and secondly, is well suited to the web $\boldsymbol{\boldsymbol{\mathcal W}^{GM}_{\hspace{-0.1cm} \boldsymbol{Y}_5}}$ under consideration.  There are many works on some cluster structures on some  pieces  (more precisely, some open domains of some peculiar subvarieties) of generalized flag manifolds, but except in very few known cases, these cluster structures are not of finite type. For the case under consideration, we are not aware of any classical cluster structure on (some dense open-subset of) $\mathbb S_5$ which is of finite type, which  may temper our dream of explaining ${\bf HLOG}_{\boldsymbol{\mathcal Y}_5}$ via a finite scattering diagram. However, some recent works by Ducat and Daisey-Ducat suggest that allowing to consider more general cluster-like structures than the classical ones might be the suitable path to reach our goal. 
\sk

In \cite{Ducat}, for the case of $\boldsymbol{\mathcal G}_5=\mathbb S_5$, and in \cite{DaiseyDucat} for the case of the Cayley plane $\boldsymbol{\mathcal G}_6=\mathbb O\mathbf P^2$, Ducat then Ducat together with Daisey describe a cluster-like structure of finite type on the coordinate ring $R_r$  of a certain Zariski open domain in $\boldsymbol{\mathcal G}_r$ for $r=5,6$. These cluster-like structures are known as LPAs\footnote{LPA is the acronym for `Laurent Phenomenon Algebra'.}, a generalization of the classical notion of cluster algebra introduced by Lam and Pylyavskyy in \cite{LamPylyavskyy}.  As the name suggests, the main feature of such an  algebra is that any `cluster' variable obtained by means of a finite sequence of `generalized mutations' from the initial cluster variables is a Laurent polynomial in the latter.  For $r=4$, one has $\boldsymbol{\mathcal G}_4=G_2(\mathbf C^5)$
and this grassmannian carries a classical $\mathscr A$-cluster structure of finite type $A_2$ which, after quotienting by the action of the rank 4 Cartan torus $H_4$ of ${\rm SL}_5(\mathbf C)$, gives rise to the $\mathscr X$-cluster structure of type $A_2$ on $\boldsymbol{\mathcal Y}_4^*=\boldsymbol{\mathcal G}_4^*/\hspace{-0.05cm}/ H_4= 
G_2(\mathbf C^5)^*/\hspace{-0.05cm}/ H_4\simeq \mathcal M_{0,5}$. 
\sk

In \cite{Ducat}, Ducat endows the coordinate ring of the complement $\mathbb S_5^\circ$ of a divisor in the spinor 10-fold with the structure of a finite LPA. 
Unfortunately, the theory of LPAs has not been developed that much so far. The notion of mutation for these algebras is a generalization of the binomial $\mathscr A$-mutation of the classical theory of cluster algebras but as of the time of writing, an equivalent notion for LPAs of that of $\mathscr X$-mutation has not be worked out yet. For that reason, we do not have a cluster description of Gelfand-MacPherson web on 
$\boldsymbol{\mathcal Y}_5^*$ as nice as the one of 
$\boldsymbol{\mathcal W}^{GM}_{ \boldsymbol{Y}_4}$ in terms of the $\mathscr X$-cluster variable of type $A_2$, which can be written in a concise mathematical form as 
$$
\boldsymbol{\mathcal W}^{GM}_{ \boldsymbol{Y}_4}\simeq 
\boldsymbol{\mathscr X \hspace{-0.45cm} \mathscr X\hspace{-0.1cm}\mathcal W}_{A_2}=\boldsymbol{\mathcal W}\big( X_i\big)_{i=1}^5
=\boldsymbol{\mathcal W}\bigg(\,u_1
\, , \, u_2
\, , \, \frac{1+u_2}{u_1}
\, , \,
\frac{1+u_1+u_2}{u_1u_2}
\, , \,
\frac{1+u_1}{u_2}
 \,\bigg)\,. 
$$

Because ther is no generalization to the case of LPAs of the notion of $\mathcal X$-mutation of the classical theory of cluster algebras yet, we are going to deal  with Gelfand-MacPherson's  web  of the spinor tenfold $\mathbb S_{5}$
instead. In short, we prove that $\boldsymbol{\mathcal W}^{GM}_{ \boldsymbol{\mathbb S}_5}$ is cluster with respect to the finite LPA structure on $\mathbb S_{5}$ constructed by Ducat in \cite{Ducat}. 

\begin{prop}
\label{Prop:Cluster}
{\bf 1.} Wick's parametrization \eqref{Eq:map-W-proj} of $\mathbb S_{5}$ is cluster, {\it i.e.} the Wick coordinates $x_{ij}\circ W^{-1}:  \mathbb S_5\simeq {\rm Asym}_5(\mathbf C)\dashrightarrow \mathbf C$ (for $i,j$ such that $1\leq i<j\leq 5$) naturally identify with some of the cluster variables of Ducat's LPA of $\mathbb S_{5}$. \sk

\noindent 
{\bf 2.}  The ten face maps $\Pi_F: \mathbb S_5\dashrightarrow  \mathbb S_{4,F}$ are cluster in the sense that their components, when written in the initial cluster variables 
at the source 
 and some Wick's coordinates at the target, are cluster variables, possibly up to sign and up to multiplication by a monomial in the frozen variables.
\end{prop}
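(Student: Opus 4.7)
The plan is to work directly with Ducat's construction in \cite{Ducat} of a finite Laurent phenomenon algebra (LPA) structure on the coordinate ring of an explicit open locus $\mathbb S_5^\circ \subset \mathbb S_5$, to recognize the ambient coordinates and the face-map components within its combinatorial data. Ducat gives an initial seed built from certain Pl\"ucker-type functions on $OG_5^+(\mathbf C^{10})\simeq \mathbb S_5$, together with an explicit list of exchange polynomials producing a finite set of cluster variables by iterated (LPA) mutation. The first step is to transport this seed through the birational equivalence ${\rm Asym}_5(\mathbf C)\simeq \mathbb S_5$ of \eqref{Eq:Asym5-bir-S5} and to read Ducat's initial cluster variables in the affine matricial chart $X=(x_{ij})\in {\rm Asym}_5(\mathbf C)$. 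The claim to check is that after this identification, the 10 off-diagonal entries $x_{ij}$ (with $1\leq i<j\leq 5$) together with the five $4\times 4$ principal sub-Pfaffians $P_i={\rm Pf}(X_{\hat \imath})$ (and possibly a global normalization corresponding to the first Wick component $1$) coincide with a distinguished subset of Ducat's cluster variables, up to an explicit sign. This is a straightforward but tedious identification of rational functions which can be carried out by matching weights under the residual torus action and comparing normalizations with Wick's parametrization \eqref{Eq:map-W}.

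For the second part, I would handle separately the two classes of face maps $\Pi_i^+$ and $\Pi_i^-$. The $(+)$-case is essentially formal: as recalled in \S\ref{SS:WGMS5-in-coordinates}, the map $\widetilde\Pi_i^+ = W_4^{-1}\circ \Pi_i^+\circ W_5$ read in Wick charts on source and target sends $A\mapsto A_{\hat\imath}$, so each of its components is either an entry $x_{jk}$ with $j,k\neq i$ or a Pfaffian of a $4\times 4$ minor of $X_{\hat \imath}$. In either case, part~1 immediately identifies these as cluster variables in Ducat's LPA. The $(-)$-case requires working with the explicit formulas of \cite[Prop.\,4.17]{PirioAFST} for $\widetilde\Pi_i^-$; the output components there are, up to a common monomial prefactor in the Pfaffians $P_j$, polynomial expressions in the $x_{ij}$ which I expect to coincide, up to sign and multiplication by monomials in the frozen variables of the LPA, with cluster variables obtained from the initial seed by a short sequence of LPA mutations dictated by the combinatorics of the facet (precisely, the path in the LPA exchange graph corresponding to moving from the vertex encoding $\Delta_{5,i}^+$ to the one encoding $\Delta_{5,i}^-$).

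The key technical obstacle will be the $(-)$-case: whereas the $(+)$-face maps are tautologically cluster, the $(-)$-face maps carry nontrivial denominators (involving the Pfaffians $P_j$) and their components are not literally initial cluster variables but have to be shown to appear as values at some (necessarily finite-type) mutation of Ducat's seed. Concretely, I plan to compute $\widetilde\Pi_i^-$ for a single representative index, say $i=1$, match its components with a specific sequence of LPA mutations starting from the initial seed, and then use the $W_{D_5}$-equivariance of the whole picture (cf.\,\eqref{Eq:WD5=Aut(Y5)} and the induced action on $\mathbb S_5$) to propagate the identification to all other indices $i=2,\ldots,5$. The necessary bookkeeping of signs and frozen monomials is where the ``possibly up to sign and up to multiplication by a monomial in the frozen variables'' clause in the statement enters, and allows one to accommodate the unavoidable ambiguities in the identification of Wick coordinates with Ducat's cluster variables.

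Once these explicit matchings are in hand, the proposition follows: part~1 from the identification of initial cluster variables with Wick coordinates, and part~2 from the observation that, in each case, the components of $\Pi_i^\pm$ read in the chosen charts lie in the (finite) set of LPA cluster variables of $\mathbb S_5^\circ$, up to the prescribed ambiguities. All verifications reduce to polynomial identities in the $x_{ij}$ which can be checked directly (and have been, in a companion Maple worksheet available from the author upon request).
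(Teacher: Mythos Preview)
Your overall strategy---direct computation in the matricial Wick chart---matches the paper's, and your treatment of the $(+)$-face maps (via $A\mapsto A_{\hat\imath}$) is exactly what the paper does. Two points deserve comment.

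For Part~1, the paper's identification is more concrete than ``matching weights under the residual torus action.'' Ducat's labelling of the sixteen spinor coordinates is given only pictorially, as vertices in a Coxeter-plane projection (\cite[Fig.~3(a)]{DaiseyDucat}). The paper therefore computes an explicit Coxeter element $C=S_1S_2S_3S_4S_5$, finds the Coxeter plane as the eigenspace for $e^{2\pi i/8}$, projects the sixteen weights \eqref{Eq:Weights-R5} onto it, and overlays the result with Ducat's figure. This yields an explicit dictionary (the relation \eqref{Eq:A=Truc}) between Ducat's variables $x_i,a_j$ and the entries $x_{ij}$ of the Wick matrix, with specific signs $\nu_i\in\{\pm1\}$. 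Your approach would eventually reach the same place, but the Coxeter-projection step is the efficient mechanism for reading off the correspondence.

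For Part~2$(-)$, your proposed shortcut via $W_{D_5}$-equivariance has a genuine gap. The Weyl group does permute the ten face maps, but Ducat's LPA structure is \emph{not} $W_{D_5}$-equivariant: it is attached to a fixed initial seed, and there is no reason a Weyl element should carry cluster variables to cluster variables. Knowing that the components of $\widetilde\Pi_1^-$ are cluster variables therefore does not propagate to $\widetilde\Pi_2^-,\ldots,\widetilde\Pi_5^-$. The paper simply avoids this by computing all five negative face maps explicitly: using the formulas for the maps $\Psi_{i,j}$ from \cite[Cor.~4.8]{PirioAFST} and the dictionary \eqref{Eq:A=Truc}, it writes out the five matrices $A_{1,5},A_{2,1},A_{3,4},A_{4,1},A_{5,4}$ and observes that every entry is, up to sign, either a monomial in the frozen variables $a_j$ or such a monomial times one of the (already enumerated) unfrozen cluster variables $x_i,X_I$. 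No mutation sequences need to be traced; since Ducat's LPA is of finite type, the full list of eighteen cluster variables is available from the outset and one simply checks membership.
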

In the lines below, we explain how we proceeded to get this result. 


\subsection{\bf Ducat's LPA on $\boldsymbol{\mathbb S_5}$}
Ducat's LPA on $\mathcal A_5$ has rank 3 and 18 cluster variables in total, among which 8 are frozen. These latter are denoted by  $a_1,\ldots,a_8$. Denoting by $x_1,x_2,x_3$  the initial unfrozen cluster variables, all the other ones can be obtained using the following SageMath script written by Daisey and which can be run online at \href{https://oliverdaisey.github.io/code/}{his webpage}: 
\begin{lstlisting}
sage: var("x1,x2,x3")
sage: coeffs = [var("a%d" %i) for i in range(1, 9)] # coefficients
sage: F1 = a5*x2 + a8*x3 + a2*a3
sage: F2 = a6*x1*x3 + a3*a4*x1 + a8*a1*x3 + a1*a2*a3
sage: F3 = a4*x1 + a7*x2 + a1*a2
sage: S = LPASeed({x1:F1, x2: F2, x3:F3})
sage: show(S.variable_class())
\end{lstlisting}

In addition to the three initial cluster variables $x_1,x_2,x_3$, the other unfrozen cluster variables are the following: 
\begin{align*}
& \frac{a_2  a_3 +  a_5  x_2 +  a_8  x_3}{ x_1}
 \, , \, 
   \frac{ a_1  a_2  a_3 +  a_1  a_8  x_3 +  a_3  a_4  x_1 +  a_6  x_1  x_3}{ x_2} 
  \, , \, 
 \frac{a_1  a_2 +  a_4  x_1 +  a_7  x_2}{ x_3}   \, , \, \\
  &
   \frac{ a_1  a_2  a_3 +  a_1  a_5  x_2 +  a_1  a_8  x_3 +  a_3  a_4  x_1 +  a_6  x_1  x_3}{ x_1  x_2}
  \, , \,  
    \frac{ a_1  a_2  a_3 +  a_1  a_8  x_3 +  a_3  a_4  x_1 +  a_3  a_7  x_2 +  a_6  x_1  x_3}{ x_2  x_3} 
    \, , \, \\
    &  
           \frac{ a_1  a_2^2  a_3 +  a_1  a_2  a_5  x_2 +  a_1  a_2  a_8  x_3 +  a_2  a_3  a_4  x_1 +  a_2  a_3  a_7  x_2 +  a_4  a_5  x_1  x_2 +  a_5  a_7  x_2^2 +  a_7  a_8  x_2  x_3}{ x_3  x_1}              \, , \,
 \\
& 
\frac{ a_1  a_2^2  a_3 +  a_1  a_2  a_5  x_2 +  a_1  a_2  a_8  x_3 +  a_2  a_3  a_4  x_1 +  a_2  a_3  a_7  x_2 +  a_2  a_6  x_1  x_3 +  a_4  a_5  x_1  x_2 +  a_5  a_7  x_2^2 +  a_7  a_8  x_2  x_3}{ x_1  x_2  x_3}\, .
\end{align*}

These cluster variables are Laurent polynomials in the initial unfrozen cluster variables $x_1,x_2$ and $x_3$, with coefficients in the monoid $\mathbb N_{>0}[a_1,\ldots,a_8]$. The non initial unfrozen cluster variables are characterized by their denominators which are monomials in the $x_i$'s. Hence one can denote these cluster variables by $X_i$, $X_{ij}$ and $X_{123}$ for $i,j\in \{1,2,3\}$ distinct, where $X_i$ (resp. $X_{ij}$ or $X_{123}$) stands for the cluster variables with monomial denominator $x_i$ (resp. $x_{i}x_j$ or $x_1x_2x_3$), that is: 
$$  X_1 = \frac{\mathit{a_2} \mathit{a_3} +\mathit{a_5} \mathit{x_2} +\mathit{a_8} \mathit{x_3}}{\mathit{x_1}} \, , \quad 
X_2 = 
\frac{\mathit{a_1} \mathit{a_2} \mathit{a_3} +\cdots+\mathit{a_6} \mathit{x_1} \mathit{x_3}}{\mathit{x_2}}
\, , \ldots 
 \,  , \, 
X_{123} =  
\frac{\mathit{a_1} \,\mathit{a_2}^{2} \mathit{a_3} +\cdots+\mathit{a_7} \mathit{a_8} \mathit{x_2} \mathit{x_3}}{\mathit{x_1} \mathit{x_2} \mathit{x_3}}
\, . 
$$

\subsection{\bf The initial cluster on $\boldsymbol{\mathbb S_5}$ and Wick's coordinates}
The initial cluster of Ducat's LPA structure on  (the affine cone $\widehat{\boldsymbol{\mathbb S}_5}$ over) $\boldsymbol{\mathbb S_5}$ is formed by the 
 triple $(x_1,x_2,x_3)$ together with the frozen variables $a_1,\ldots,a_8$.  These cluster variables are the restrictions to $\widehat{\boldsymbol{\mathbb S}_5}$ of some of the weight coordinates on the spinor representation $S^+_5$. So each of the initial cluster coordinates is associated to a well-defined weight of the spinor representation.  The same holds for the Wick coordinates we use here for  parametrizing birationnally $\boldsymbol{\mathbb S_5}$ and it is by comparing the weights of all these coordinates that we are going to make explicit the way they are related. 
 \mk

 In \cite{DaiseyDucat}, it is by means of a figure (namely Figure 3.(a)) that is indicated 
 what are the weights of  the 16 coordinates  $x_i,a_i$ for $i=1,\ldots,8$ considered by Ducat. 
 For the Wick coordinates, the analogous correspondence is given in  \cite[Table\,4.1]{PirioAFST}.
Since \cite[Figure 3.(a)]{DaiseyDucat} pictures the images of the weights by `the' Coxeter projection, it suffices to have an explicit expression for the latter in order to get how Ducat's cluster coordinates and Wick's ones are related. This is what we do below. 
  
  Recall that a useful description of the weights of $S_5^+$ is in terms of the sixteen lines of an arbitrary (but fixed) smooth del Pezzo surface ${\rm dP}_4$ described as the blow-up of the projective plane in five points.  Then ${\rm Pic}_{\mathbf Z}({\rm dP}_4)$ is freely generated by the classes 
  $\boldsymbol{e}_{1},\ldots,\boldsymbol{e}_{5}$  of the five exceptional divisors of the blow up plus the class $\boldsymbol{h}$ of the preimage under the blow-up map of a generic line in $\mathbf P^2$.  With these notations,  we use the following labels for (the classes  of) the lines $ \boldsymbol{\ell}_i$ ($i=1,\ldots,16$) of the considered del Pezzo surface: 
\begin{align*}
\Big( \boldsymbol{\ell}_i \Big)_{i=1}^{16}=\bigg(\, & \, \boldsymbol{e}_{1} \, , \, \boldsymbol{e}_{2}\, , \, \boldsymbol{e}_{3}\, , \,\boldsymbol{e}_{4}\, , \, \boldsymbol{e}_{5}\, , \, \boldsymbol{h} -\boldsymbol{e}_{1}-\boldsymbol{e}_{2}\, , \, \boldsymbol{h} -\boldsymbol{e}_{1}-\boldsymbol{e}_{3}\, ,\,  
\boldsymbol{h} -\boldsymbol{e}_{1}-\boldsymbol{e}_{4}\, , 
\\
&  \hspace{0.5cm} \boldsymbol{h} -\boldsymbol{e}_{1}-\boldsymbol{e}_{5}\, , \, \boldsymbol{h} -\boldsymbol{e}_{2}-\boldsymbol{e}_{3}\, , \, \boldsymbol{h} -\boldsymbol{e}_{2}-\boldsymbol{e}_{4}\, , \,  \boldsymbol{h} -\boldsymbol{e}_{2}-\boldsymbol{e}_{5}\, ,
\\
& \, \boldsymbol{h} -\boldsymbol{e}_{3}-\boldsymbol{e}_{4}\, , \, \boldsymbol{h} -\boldsymbol{e}_{3}-\boldsymbol{e}_{5}
\, , \,
 \boldsymbol{h} -\boldsymbol{e}_{4}-\boldsymbol{e}_{5}
 \, , \, 
2 \boldsymbol{h} -\boldsymbol{e}_{1}-\boldsymbol{e}_{2}-\boldsymbol{e}_{3}-\boldsymbol{e}_{4}-\boldsymbol{e}_{5}\, \bigg) \, .
\end{align*}
The canonical class is 
$
\boldsymbol{\kappa}=-3 \boldsymbol{h}+\sum_{i=1}^5 \boldsymbol{e}_{i}$ and as a basis for its orthogonal $\boldsymbol{\kappa}^\perp$, we take the one formed by the following classes: 
$$
\scalebox{0.9}{
\begin{tabular}{c}
$\boldsymbol{f}_{1}=\frac{1}{2}\Big( \, {\boldsymbol{h}}+\boldsymbol{e}_{1}-\boldsymbol{e}_{2}-{\boldsymbol{e}_{3}}-{\boldsymbol{e}_{4}}-{\boldsymbol{e}_{5}}\,\Big)
\quad 
\boldsymbol{f}_{2}=
\frac{1}{2}\Big( \, {\boldsymbol{h}}-{\boldsymbol{e}_{1}}+{\boldsymbol{e}_{2}}-{\boldsymbol{e}_{3}}-{\boldsymbol{e}_{4}}-{\boldsymbol{e}_{5}}\Big)
\quad 
\boldsymbol{f}_{3}=
\frac{1}{2} \Big(\,  {\boldsymbol{h}}
-{\boldsymbol{e}_{1}}-{\boldsymbol{e}_{2}}+
{\boldsymbol{e}_{3}}-{\boldsymbol{e}_{4}}-{\boldsymbol{e}_{5}}\, \Big)$\vspace{0.25cm}\\
$\boldsymbol{f}_{4}= \frac{1}{2} \Big(\, 
\, {\boldsymbol{h}}-{\boldsymbol{e}_{1}}-{\boldsymbol{e}_{2}}-{\boldsymbol{e}_{3}}+
{\boldsymbol{e}_{4}}-{\boldsymbol{e}_{5}}\, \Big)
\quad \mbox{ and } \quad 
\boldsymbol{f}_{5}=\frac{1}{2} \Big(\,  {\boldsymbol{h}}
-{\boldsymbol{e}_{1}}-{\boldsymbol{e}_{2}}-{\boldsymbol{e}_{3}}
-{\boldsymbol{e}_{4}}
+{\boldsymbol{e}_{5}}
\,\Big)$\, .
\end{tabular}}
$$

For $i=1,\ldots,16$, we denote by $\omega_i
=\omega_{\boldsymbol{\ell}_i}
\in  \mathbf R^5 $  the vector of coordinates of  $\boldsymbol{\ell}_i-\big(\boldsymbol{\kappa},\boldsymbol{\ell}_i\big)\boldsymbol{\kappa}\in \boldsymbol{\kappa}^\perp $ expressed in the basis $\boldsymbol{f}=(\boldsymbol{f}_i\big)_{i=1}^5$. These sixteen 5-tuples are the weights of the spinor representation we are working with:
\begin{align}
\label{Eq:Weights-R5}
 \nonumber
\omega_{{1}}=&\, \left({\frac{1}{2}}, -{\frac{1}{2}}, -{\frac{1}{2}}, -{\frac{1}{
2}}, -{\frac{1}{2}}\right)  && 
\omega_{{2}}=\left(-{\frac{1}{2}}, {\frac{1}{2}}, -{
\frac{1}{2}}, -{\frac{1}{2}}, -{\frac{1}{2}}\right) 
&& \omega_{{3}}=\left(-{\frac{1}{
2}}, -{\frac{1}{2}}, {\frac{1}{2}}, -{\frac{1}{2}}, -{\frac{1}{2}}
\right) 
\\
 \nonumber
\omega_{{4}}=&\,  \left(-{\frac{1}{2}}, -{\frac{1}{2}}, -{\frac{1}{2}}, {\frac{
1}{2}}, -{\frac{1}{2}}\right)
&& 
\omega_{{5}}=
 \left(-{\frac{1}{2}}, -{\frac{1}{2}}, -
{\frac{1}{2}}, -{\frac{1}{2}}, {\frac{1}{2}}\right),
&&
\omega_{{6}}=
 \left(-{\frac{1}{
2}}, -{\frac{1}{2}}, {\frac{1}{2}}, {\frac{1}{2}}, {\frac{1}{2}}\right)
\\
\omega_{{7}}=&\, 
 \left(-{\frac{1}{2}}, {\frac{1}{2}}, -{\frac{1}{2}}, {\frac{1}{2}}
, {\frac{1}{2}}\right)
&&
\omega_{{8}}=
 \left(-{\frac{1}{2}}, {\frac{1}{2}}, {\frac{1}{2}}, -{\frac{1}{2}}, {\frac{1}{2}}\right)
&&
\omega_{{9}}=
 \left(-{\frac{1}{2}}, {\frac{1}{2}}, {\frac{1}{2}}, {\frac{1}{2}}, -{\frac{1}{2}}\right)
 \\
 \nonumber
\omega_{{10}}=&\, 
\left({\frac{1}{2}}, -{\frac{1}{2}}, -{\frac{1}{2}}, {\frac{1}{2}}, {
\frac{1}{2}}\right)
&&
\omega_{{11}}=
 \left({\frac{1}{2}}, -{\frac{1}{2}}, {\frac{1}{2}}, -{\frac{1}{2}}, {\frac{1}{2}}\right)
&&
\omega_{{12}}= 
 \left({\frac{1}{2}}, -{\frac{1}{2}}, {\frac{1}{2}}, {\frac{1}{2}}, -{\frac{1}{2}}\right)
  \\
\omega_{{13}}=&\,  \left({
\frac{1}{2}}, {\frac{1}{2}}, -{\frac{1}{2}}, -{\frac{1}{2}}, {\frac{1}{2}}\right)
&& \omega_{{14}}=
 \left({\frac{1}{2}}, {\frac{1}{2}}, -{\frac{1}{2}}, {
\frac{1}{2}}, -{\frac{1}{2}}\right)
&& \omega_{{15}}=
 \left({\frac{1}{2}}, {\frac{1}{2}
}, {\frac{1}{2}}, -{\frac{1}{2}}, -{\frac{1}{2}}\right)
 \nonumber
\end{align}
\vspace{-0.2cm}
\begin{align*}
{}^{} \hspace{-1.1cm} \mbox{and } \quad \omega_{{16}}= 
 \left({\frac{1}{2}}, {\frac{1}{2}}, {\frac{1}{2}}, {\frac{1}{2}}, {\frac{1}{2}}\right)\, . 
 \nonumber
\end{align*}

As generators of the Weyl group, we take the involutions  $S_i$ for $i=1,\ldots,4$, which exchange  $\boldsymbol{f}_i $ with $\boldsymbol{f}_{i+1} $ and let the others $\boldsymbol{f}_k $'s fixed, and $S_5$ is the one such that   $S_5(\boldsymbol{f}_i)=\boldsymbol{f}_i$ for $i=1,2,3$ and $S_5(\boldsymbol{f}_j)=-\boldsymbol{f}_k$ for $\{j,k\}=\{4,5\}$. The corresponding matrices in the basis $\boldsymbol{f}$ are denoted the same: 
\begin{align*}
S_{\hspace{-0.04cm}1}=\scalebox{0.6}{$\left[\begin{array}{ccccc}
0 & 1 & 0 & 0 & 0 
\\
 1 & 0 & 0 & 0 & 0 
\\
 0 & 0 & 1 & 0 & 0 
\\
 0 & 0 & 0 & 1 & 0 
\\
 0 & 0 & 0 & 0 & 1 
\end{array}\right]$} 
\quad 
S_{\hspace{-0.04cm}2}=\scalebox{0.6}{$\left[\begin{array}{ccccc}
1 & 0 & 0 & 0 & 0 
\\
 0 & 0 & 1 & 0 & 0 
\\
 0 & 1 & 0 & 0 & 0 
\\
 0 & 0 & 0 & 1 & 0 
\\
 0 & 0 & 0 & 0 & 1 
\end{array}\right]$}
\quad 
S_{\hspace{-0.04cm}3}=\scalebox{0.6}{$\left[\begin{array}{ccccc}
1 & 0 & 0 & 0 & 0 
\\
 0 & 1 & 0 & 0 & 0 
\\
 0 & 0 & 0 & 1 & 0 
\\
 0 & 0 & 1 & 0 & 0 
\\
 0 & 0 & 0 & 0 & 1 
\end{array}\right]$} 
\quad 
S_{\hspace{-0.04cm}4}= \scalebox{0.6}{$\left[\begin{array}{ccccc}
1 & 0 & 0 & 0 & 0 
\\
 0 & 1 & 0 & 0 & 0 
\\
 0 & 0 & 1 & 0 & 0 
\\
 0 & 0 & 0 & 0 & 1 
\\
 0 & 0 & 0 & 1 & 0 
\end{array}\right]$}
\quad  
S_{\hspace{-0.04cm}5}=\scalebox{0.6}{$\left[\begin{array}{ccccc}
1 & 0 & 0 & 0 & 0 
\\
 0 & 1 & 0 & 0 & 0 
\\
 0 & 0 & 1 & 0 & 0 
\\
 0 & 0 & 0 & 0 & -1 
\\
 0 & 0 & 0 & -1 & 0 
\end{array}\right]$}\,.
\end{align*}

As a Coxeter element, we take 
$$C=S_{\hspace{-0.04cm}1}S_{\hspace{-0.04cm}2}S_{\hspace{-0.04cm}3}S_{\hspace{-0.04cm}4}S_{\hspace{-0.04cm}5}=\scalebox{0.7}{$\left[\begin{array}{ccccc}
0 & 0 & 0 & -1 & 0 
\\
 1 & 0 & 0 & 0 & 0 
\\
 0 & 1 & 0 & 0 & 0 
\\
 0 & 0 & 1 & 0 & 0 
\\
 0 & 0 & 0 & 0 & -1 
\end{array}\right]$}
$$
whose order is verified to be the Coxeter number $h=h_{D_5}$ of $W(D_5)$, namely $h=8$.  Let $P_C$ be the Coxeter plane  and $\Pi_C: \mathbf R^5\rightarrow P_C$ be the corresponding Coxeter projection. 
By definition, $P_C$ is the 2-plane in $\mathbf R^5$ which is fixed by $C$ and such that $C\lvert _{P_C}$ is a rotation of angle $2\pi/h=\pi/4$.  Over the complex, the Coxeter element $C$ admits $\lambda=e^{\frac{2i\pi}{h}}$ as an eigenvalue, with associated eigenspace of dimension 1 and spanned by the eigenvector 
$\nu=\big( \, {(-1+i)}/{\sqrt{2}}\, , \, i \, , \, 
 {(1+i)}/{\sqrt{2}}\, , \, 1 \, , \, 0 \, \big)$. 
 Then $P_C$ is the plane spanned by the real and the imaginary parts of $v$. After normalization, 
 we get that the following two vectors form a basis of $P_C$ which is 
 orthonormal with respect to the standard Euclidean structure on $\mathbf R^5$: 
$$ \nu_{\rm re}=\left(\,
-\frac{1}{2} \, , \,  0  \, , \, \frac{1}{2}  \, , \, \frac{1}{\sqrt{2}}  \, , \, 0 \, 
\right)\, , \, 
\nu_{\rm im}= 
 \left(\,
\frac{1}{2} \, , \,   \frac{1}{\sqrt{2}}  \, , \, \frac{1}{2}  \, , \, 0  \, , \, 0 \, 
\right)
$$ 
Then with respect to the bases $\boldsymbol{f}$ and 
$\big(\nu_{\rm re} , \nu_{\rm im}\, \big)$, Coxeter projection 
is written  $x\mapsto \big( 
\langle x,  \nu_r\rangle \, , \, \langle x,  \nu_{im}\rangle 
\big)$ and one can compute all points $\Pi_C(\omega_i)\in \mathbf R^{2}$ for $i=1,\ldots,16$. One obtains the sixteen black dots in Figure \ref{16-Coxeter-Projection-of-Weights} with the corresponding label indicated in green inside each. Then (up to an irrelevant rotation), one can identify the black dots of our figure with the vertices of \cite[Figure 3.(a)]{DaiseyDucat}. These latter being labeled by Ducat's notation $x_1,\ldots,x_8,a_1,\ldots,a_8$ for the weight variables, one can associate one of our weights $\omega_i$ to each of them. On the other hand, we indicated 
what are the weights of Wick's coordinates in \cite[Table 4.1]{PirioAFST}.  We thus deduce the following relations between Ducat's labelling of the weight variables and the components of Wick's parametrization: there exist complex constants $ \nu_i$'s (for $i=1,\ldots,10$) such that 
the following relations (as rational functions) hold true: 
 \begin{align}
\frac{x_1}{a_5} = & \, \nu_1 \, x_{1 4},\quad 
 \frac{x_2}{a_5} = \nu_2\, P_{2},  \quad 
 \frac{x_ 3}{a_5} = \nu_3 \,x_{1 3}, \quad 
 \frac{a_1}{a_5} = \nu_4 { P}_{5}, \quad 
  \frac{a_2}{a_5} = \nu_5  \, x_{15},\,  
  \nonumber 
 \\ 
  \frac{a_3}{a_5} =& \,   \nu_6 \, x_{34} , \quad  
  \frac{a_4}{a_5} = \nu_7  \,{P}_{4},\quad 
  \frac{a_6}{a_5} = \nu_8 \,
{P}_{1},\quad 
  \frac{a_7}{a_5} = \nu_9 \, x_{12},\quad 
  \frac{a_8}{a_5} = \nu_{10} \, x_{45}\, . 
  \nonumber 
\end{align} 

Taking  
$\big(\nu_i\big)_{i=1}^{10}=\big( 
\,  1\, , \,  -1\, , \,  1\, , \,  1\, , \,  1\, , \,  1\, , \, -1\, , \,  -1\, , \,  1\, , \,  1
\big)$ (that is $\nu_i=-1$ for $i\in \{2,7,8\}$ and $\nu_i=1$ otherwise), 
the relations above can be solved nicely.  We obtain that 
$A=(x_{ij})_{i,j=1}^5\in {\rm Asym}_5(\mathbf C)$ is such that $W(A)\in \mathbb S_5$ has 
Ducat's coordinates $x_1,\ldots,x_8,a_1,\ldots,a_8$ if and only if the following equality is satisfied: 
\begin{equation}
\label{Eq:A=Truc}
 \begin{pmatrix}
0 &x_{12}  & 
{x_{13}}  & 
{x_{14}}  & {x_{15}}  
\\
 -x_{12}  & 0 & {x_{23}} & 
x_{24}  & 
x_{25}\\
 -{x_{13}}  &  -{x_{23}}  & 0 & {x_{34}}  & {x_{35}}
 \\
 -{x_{14}}   & -{x_{24}}  & -{x_{34}}  & 0 & {x_{45}}  
\\
 -{x_{15}}   & -{x_{25}} & -{x_{35}}  & - {x_{45}}    & 0 
 \end{pmatrix}
=
\frac{1}{a_5} \begin{pmatrix}
0 & {a_7}  & 
{x_3}  & 
{x_1}  & {a_2}  
\\
 -{a_7}  & 0 & {X_{12}} & 
 {X_{23}}  & 
 {X_{123}}\\
 -{x_3}  & -{X_{12}} & 0 & {a_3}  & 
 {X_1}
 \\
 -{x_1}  & -{X_{23}}  & -{a_3}  & 0 & {a_8}  
\\
 -{a_2}  & -{X_{123}} & -{X_1}  & -{a_8}  & 0 
 \end{pmatrix} \, .
\end{equation}

This proves the first point of Proposition \ref{Prop:Cluster}. 
\mk 

With 
\eqref{Eq:A=Truc} at hand, proving the second point of Proposition  \ref{Prop:Cluster} becomes straightforward using some  formulas obtained in our previous article. 
In \cite{PirioAFST}, we gave explicit expressions for the 10 face maps expressed in Wick coordinates.  The weight polytope $\Delta_{D_5}\subset \mathbf R^5$ is the convex enveloppe

\newpage 
${}^{}$
\vspace{6.2cm}

\begin{center}
\begin{figure}[h!]
\begingroup%
  \makeatletter%
  \providecommand\color[2][]{%
    \errmessage{(Inkscape) Color is used for the text in Inkscape, but the package 'color.sty' is not loaded}%
    \renewcommand\color[2][]{}%
  }%
  \providecommand\transparent[1]{%
    \errmessage{(Inkscape) Transparency is used (non-zero) for the text in Inkscape, but the package 'transparent.sty' is not loaded}%
    \renewcommand\transparent[1]{}%
  }%
  \providecommand\rotatebox[2]{#2}%
  \newcommand*\fsize{\dimexpr\f@size pt\relax}%
  \newcommand*\lineheight[1]{\fontsize{\fsize}{#1\fsize}\selectfont}%
  \ifx\svgwidth\undefined%
    \setlength{\unitlength}{225bp}%
    \ifx\svgscale\undefined%
      \relax%
    \else%
      \setlength{\unitlength}{\unitlength * \real{\svgscale}}%
    \fi%
  \else%
    \setlength{\unitlength}{\svgwidth}%
  \fi%
  \global\let\svgwidth\undefined%
  \global\let\svgscale\undefined%
  \makeatother%
  \begin{picture}(1,0.5)%
    \lineheight{1}%
    \setlength\tabcolsep{0pt}%
    \put(-0.4,0){\includegraphics[scale=0.5]{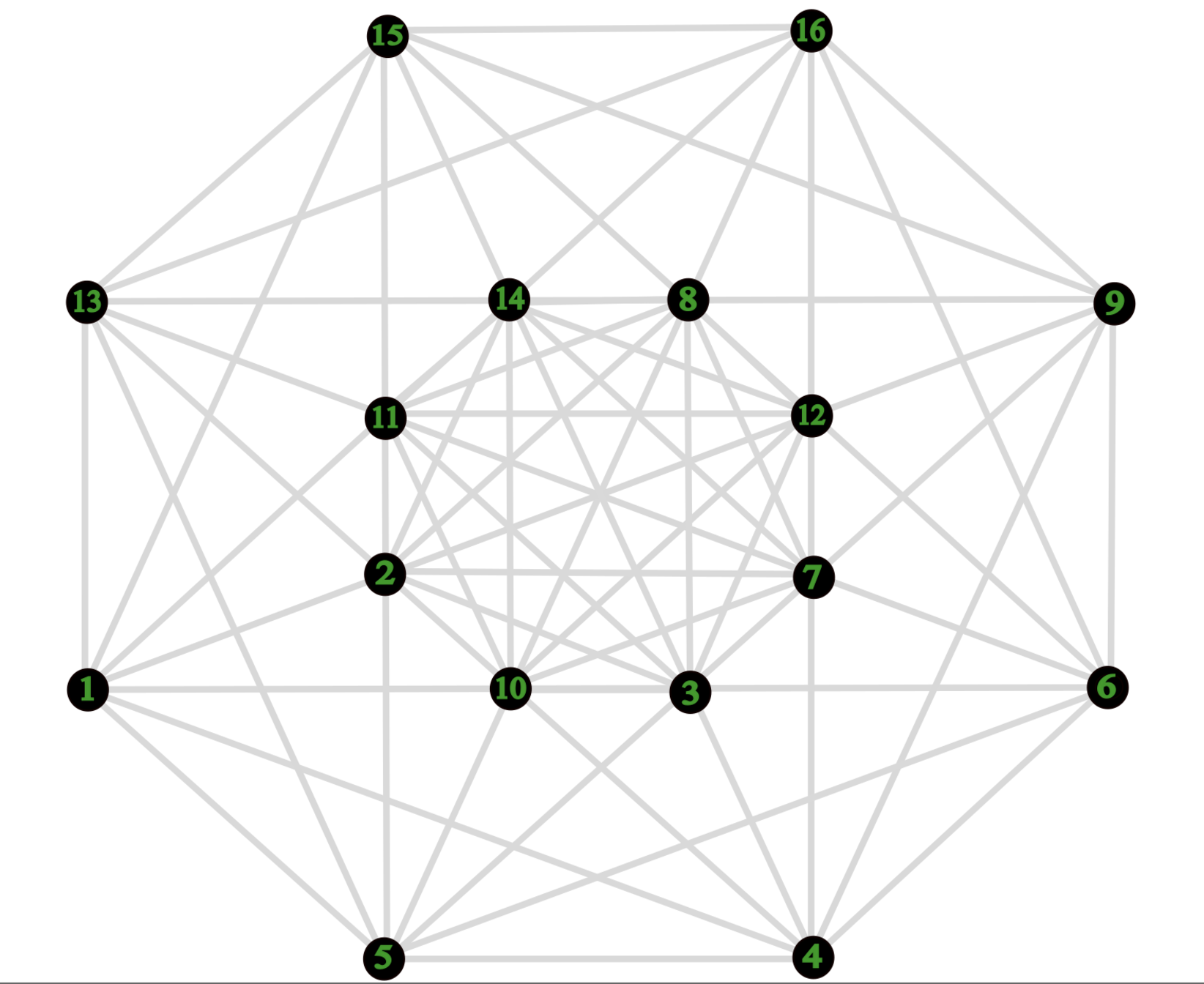}}%
    \put(0.6349934,1.017){\color[rgb]{0.07843137,0.62352941,0}\makebox(0,0)[lt]{\lineheight{1.25}\smash{\begin{tabular}[t]{l}
    ${\red{x_1}}$
    \end{tabular}}}}
    \put(0.097,0.53938042){\color[rgb]{0.07843137,0.62352941,0}\makebox(0,0)[lt]{\lineheight{1.25}\smash{\begin{tabular}[t]{l}
     ${\red{x_2}}$
    \end{tabular}}}}
    \put(0.082,1.38){\color[rgb]{0.07843137,0.62352941,0}\makebox(0,0)[lt]{\lineheight{1.25}\smash{\begin{tabular}[t]{l}
     ${\red{a_8}}$
    \end{tabular}}}}
    \put(-0.3577675,0.4257964){\color[rgb]{0.07843137,0.62352941,0}\makebox(0,0)[lt]{\lineheight{1.25}\smash{\begin{tabular}[t]{l}
     ${\red{a_6}}$ 
    \end{tabular}}}}
    \put(0.25580761,1.02){\color[rgb]{0.07843137,0.62352941,0}\makebox(0,0)[lt]{\lineheight{1.25}\smash{\begin{tabular}[t]{l}
     ${\red{x_4}}$
    \end{tabular}}}}    
    \put(-0.3577675,0.9808701){\color[rgb]{0.07843137,0.62352941,0}\makebox(0,0)[lt]{\lineheight{1.25}\smash{\begin{tabular}[t]{l}
      ${\red{a_3}}$
    \end{tabular}}}}
    \put(0.253549,0.3947964){\color[rgb]{0.07843137,0.62352941,0}\makebox(0,0)[lt]{\lineheight{1.25}\smash{\begin{tabular}[t]{l}
     ${\red{x_{5}}}$
    \end{tabular}}}}
    \put(0.08208947,0.02836255){\color[rgb]{0.07843137,0.62352941,0}\makebox(0,0)[lt]{\lineheight{1.25}\smash{\begin{tabular}[t]{l}
    ${\red{a_1}}$
    \end{tabular}}}} 
    \put(0.107,0.8638042){\color[rgb]{0.07843137,0.62352941,0}\makebox(0,0)[lt]{\lineheight{1.25}\smash{\begin{tabular}[t]{l}
     ${\red{x_7}}$
    \end{tabular}}}}
    \put(0.6409934,0.3947964){\color[rgb]{0.07843137,0.62352941,0}\makebox(0,0)[lt]{\lineheight{1.25}\smash{\begin{tabular}[t]{l}
${\red{x_{8}}}$
    \end{tabular}}}}
    \put(0.83,0.02836255){\color[rgb]{0.07843137,0.62352941,0}\makebox(0,0)[lt]{\lineheight{1.25}\smash{\begin{tabular}[t]{l}
    ${\red{a_4}}$
    \end{tabular}}}}
    \put(0.825,1.38) 
    {\color[rgb]{0.07843137,0.62352941,0}\makebox(0,0)[lt]{\lineheight{1.25}\smash{\begin{tabular}[t]{l}${\red{a_5}}$\end{tabular}}}}
    \put(1.2702667,0.9808701){\color[rgb]{0.07843137,0.62352941,0}\makebox(0,0)[lt]{\lineheight{1.25}\smash{\begin{tabular}[t]{l}
    ${\red{a_2}}$
    \end{tabular}}}}
    \put(1.2602667,0.4257964){\color[rgb]{0.07843137,0.62352941,0}\makebox(0,0)[lt]{\lineheight{1.25}\smash{\begin{tabular}[t]{l}
    ${\red{a_7}}$
    \end{tabular}}}}
    \put(0.805,0.53738042){\color[rgb]{0.07843137,0.62352941,0}\makebox(0,0)[lt]{\lineheight{1.25}\smash{\begin{tabular}[t]{l}
     ${\red{x_3}}$
    \\\end{tabular}}}}
         \put(0.8,0.8738042){\color[rgb]{0.07843137,0.62352941,0}\makebox(0,0)[lt]{\lineheight{1.25}\smash{\begin{tabular}[t]{l}
     ${\red{x_6}}$
    \\\end{tabular}}}}
  \end{picture}%
\endgroup%
\mk 
\caption{Projection of Gosset's graph\protect \footnotemark on the Coxeter plane and correspondance between
the weights (with their label used in this paper in green) 
   and the spinor coordinates of \cite[Figure 3.(a)]{DaiseyDucat} (in red).}
   \label{16-Coxeter-Projection-of-Weights}
\end{figure}
   \footnotetext{Gosset's graph is the 1-sekeleton of Gosset's polytope $1_{21}$, which here coincides with the weight polytope $\Delta_{D_5}$.}
\end{center}

\vspace{-0.5cm}
\noindent 
of the 16 weights \eqref{Eq:Weights-R5}. It has 10 $\mathcal W$-relevant 
 facets, five `positive' facets $\Delta_{D_5,i}^+=\Delta_{D_5}\cap \big\{\,x_i=+1/2\, \big\}$ and five `negative' ones $\Delta_{D_5,i}^-=\Delta_{D_5}\cap \big\{\,x_i=-1/2\, \big\}$ (with $i=1,\ldots,5$). 

The face maps $\mathbb S_5\dashrightarrow \mathbb S_4$ associated with 
the five positive facets are written very simply in Wick coordinates since they correspond to the maps ${\rm Asym}_5(\mathbf C)\ni  A
\mapsto  A_{\hat \imath}\in  {\rm Asym}_4(\mathbf C)$ 
where $A_{\hat \imath}$ stands for the $4\times 4$ matrix obtained by removing from $A$ 
its $i$-th line and  its $i$-th column ({\it cf.}\,Proposition 4.7 in \cite{PirioAFST}). 
Viewed \eqref{Eq:A=Truc}, we immediately get the second point of Proposition  \ref{Prop:Cluster}  for the five face maps associated to the positive facets of the weight polytope. \sk 

The expressions in Wick coordinates of the five face maps associated to the negative facets of $\Delta_{D_5}$ are more involved.  Some birational models of these maps are the maps 
$A\mapsto A_{i,j}=\Psi_{i,j}(A)$ with $i\neq j$ where 
$\Psi_{i,j}:  {\rm Asym}_5(\mathbf C)
\dashrightarrow   {\rm Asym}_4(\mathbf C)$ 
is the rational map considered in (actually just above) 
Corollary 4.8 of \cite{PirioAFST}.\mk

Using \eqref{Eq:A=Truc} and the explicit formulas for the $\Psi_{i,j}$'s, it is straightforward to get the following expressions for some models of the five `negative face map' written in terms of the initial cluster coordinates: 
\begin{align*}
A_{1,5} =  &\, 
\frac{1}{a_2}  
\left[\begin{array}{cccc}
0 & - {a_4}  {a_5}  & - {a_5}  {X_3}  &  {a_7}  
\\
  {a_4}  {a_5}  & 0 & - {a_5}  {x_2}  &  {x_3}  
\\
  {a_5}  {X_3}  &  {a_5}  {x_2}  & 0 &  {x_1}  
\\
 - {a_7}  & - {x_3}  & - {x_1}  & 0 
\end{array}\right]
\hspace{1.1cm} 
 A_{2,1} =    
\frac{1}{a_{7}} 
\left[\begin{array}{cccc}
0 & X_{12}  & X_{23}  & X_{123}
\\
-X_{12}   & 0 & - {a_1}  {a_5}  & a_4a_5 
\\
-X_{23}  &  {a_1}  {a_5}  & 0 & a_5 {X_{3}}  
\\
 -X_{123}  & -a_4a_5   & -a_5 {X_{3}}    & 0 
\end{array}\right] \\
  A_{3,4} =\, &   \frac{1}{a_{3}} 
\left[\begin{array}{cccc}
0 &  {a_1}  {a_5}  &  {x_3}  & - {a_5}  {x_2}  
\\
 - {a_1}  {a_5}  & 0 &  {X_{12}}  & - {a_6}  {a_5}  
\\
 - {x_3}  & - {X_{12}}  & 0 & - {X_1}  
\\
  {a_5}  {x_2}  &  {a_6}  {a_5}  &  {X_1}  & 0 
\end{array}\right]
\hspace{1cm}
A_{4,1} =\,   \frac{1}{x_{1}} 
\left[\begin{array}{cccc}
0 &  {X_{23}}  &  {a_3}  &  {a_8}  
\\
 - {X_{23}}  & 0 & - {a_1}  {a_5}  &  {a_5}  {X_3}  
\\
 - {a_3}  &  {a_1}  {a_5}  & 0 &  {a_5}  {x_2}  
\\
 - {a_8}  & - {a_5}  {X_3}  & - {a_5}  {x_2}  & 0 
\end{array}\right]\\
& \hspace{1cm} \mbox{and } \quad  A_{5,4} =  \, \frac{1}{a_{8}} 
 \left[\begin{array}{cccc}
0 &  {a_5}  {X_3}  &  {a_5}  {x_2}  & - {a_2}  
\\
 - {a_5}  {X_3}  & 0 &  {a_6}  {a_5}  & - {X_{123}}  
\\
 - {a_5}  {x_2}  & - {a_6}  {a_5}  & 0 & - {X_1}  
\\
  {a_2}  &  {X_{123}}  &  {X_1}  & 0 
\end{array}\right] \, .
\end{align*}
Up to an irrelevant minus sign for some entries, the coefficients of these matrices are all either monomials in the frozen cluster variables or products of a cluster variable with such a monomial, which completes the proof of Proposition \ref{Prop:Cluster}.
\vspace{-0.4cm}
\begin{center}
$\star$
\end{center}

We finish this subsection by two remarks. \sk 

The first is that, if our results above are only about the cluster nature of Gelfand-MacPherson's web of the spinor tenfold $\mathbb S_5$, we are convinced that they must have a kind of $\mathcal X$-counterparts for the web $\boldsymbol{\mathcal W}^{GM}_{ \boldsymbol{\mathcal Y}_5}$ which is truly the one we are interested in. The lack of a theory of `coefficients' and above all of their mutations is the reason why we didn't work with $\boldsymbol{\mathcal W}^{GM}_{ \boldsymbol{\mathcal Y}_5}$.  We hope to come back to this in the future.\mk 

Another remark concerning the above material is that while the  approach used to prove Proposition \ref{Prop:Cluster} ultimately relies on some explicit calculations, it  nevertheless points to a geometric way of constructing the LPA cluster structure on $\mathbb S_5$, namely by means of the face maps associated with the facets of the weight polytope. In an ongoing research project, we have verified that this geometric approach generalizes to several other homogeneous spaces. We hope to continue our researches in this direction in the near future as well.

\section{\bf The web $\boldsymbol{W^{+}_{\hspace{-0.1cm} \boldsymbol{Y}_5}}$, Bol's web, and their abelian relations}
\label{SS:WGM-+-Bol's-Web}
In this subsection, we study a certain 5-subweb $\boldsymbol{W^{+}_{\hspace{-0.1cm} \boldsymbol{Y}_5}}$ of $\boldsymbol{\mathcal W}_{\boldsymbol{Y}_5}^{GM}$ which carries interesting ARs which are in a 1-1 correspondance with those of Bol's web. In particular, one recovers Abel's 5-term identity from an 1-AR with logarithmic coefficients and monomial arguments, which is new as far we know.  The latter abelian relation will be used further in section \S\ref{SS:Recovering} for recovering the del Pezzo hypertrilogarithmic identities ${\bf HLog}_{{\rm dP}_4}^3$ from ${\bf HLOG}_{\boldsymbol{Y}_5}$. 
\bk

%
%
%
%

\subsection{\bf The web $\boldsymbol{W^{+}_{\hspace{-0.1cm} \boldsymbol{Y}_5}}$ and its abelian relations}

In this subsection, we aim to study the relations between the web 
\begin{equation*}
\boldsymbol{W}^{+}_{ {Y}_5}
= 
\boldsymbol{\mathcal W}
\Bigg(\, \Big(\,    {y_{25}}\, , \,     {y_{24}}    {y_{35}}
\, \Big)
\, , \, 
\Big(\,    \frac{1}{y_{13}}\, , \, 
\frac{   {y_{14}}    {y_{35}}}{   {y_{13}}}\, \Big)
\, , \,
  \bigg( 
  \,    {y_{24}}
 \, , \,  
      {y_{14}}    {y_{25}}
   \, \bigg)
\, , \, 
\bigg(\,     \frac{1}{y_{35}}
\, , \, 
\frac{   {y_{13}}    {y_{25}}}{   {y_{35}}}
\,
\bigg)
\, , \, 
 \Big(\,    {y_{14}}
\, , \, 
    {y_{13}}    {y_{24}}
 \, 
\Big) \,\Bigg)
\end{equation*}
and its abelian relations to those of Bol's web.

The 1-AR ${\bf AR}_\delta^1$ of 
Proposition \ref{Prop:AR-Omega1}, which corresponds to the 
differential 
identity 
$$
0=
\sum_{i=1}^5 \epsilon_i\, U_i^*\big(\,  
\delta \, \big)
 =\frac{1}{2}
\sum_{i=1}^5 \epsilon_i\, U_i^*\Big(\,  
{\rm Log}(x)\, {dy}/{y}-{\rm Log}(y)\,\, {dx}/{x}\, 
\Big)$$ spans the subspace of weight 1 abelian relations of $\boldsymbol{W}^{GM,+}_{ \boldsymbol{Y}_5}$: one has
$$
W_1\Big( \boldsymbol{AR}\big( \boldsymbol{W}^{GM,+}_{ \boldsymbol{Y}_5}
\big) 
\Big) =\big\langle 
{\bf AR}_\delta^1
\big\rangle \, .
$$

Determining the 1-ARs of weight 0 of $\boldsymbol{W}^{+}_{ {Y}_5}$  amouts to find the scalar constants $c_{i,s}$ (with $i=1,\ldots,5$ and $s=1,2$) such that the differential identity $
\sum_{i=1}^5 \Big(c_{i,1} \, {dU_{i,1}}/{U_{i,1}}+c_{i,2} \, {dU_{i,2}}/{U_{i,2}} \Big) =0
$ is satisfied, hence reduces to elementary linear algebra. We find that any 3-subweb of $\boldsymbol{W}^{GM,+}_{ \boldsymbol{Y}_5}$ carries such an 1-AR (which moreover is unique up to multiplication by a non-zero scalar) which will be said `combinatorial'. These ARs span a space 
$\boldsymbol{AR}_C\big( \boldsymbol{W}^{GM,+}_{ \boldsymbol{Y}_5}\big)$ which is of dimension 5. From Proposition \ref{Prop:Virtual-Ranks}{.2}, it follows that 
we have the following decomposition in direct sum: 
\begin{equation}
\label{Eq:Decomp-AR1-W+}
\boldsymbol{AR}^1\Big( \boldsymbol{W}_{\boldsymbol{Y}_5}^{+}\Big)=
{\boldsymbol{AR}^1_{C}\Big( \boldsymbol{W}_{\boldsymbol{Y}_5}^{+}\Big)^{{\red{5}}}}
\oplus \,
\big\langle 
{\bf AR}_\delta^1
\big\rangle^{{\red{1}}}
\end{equation}
with 
$\boldsymbol{AR}^1_{C}\Big( \boldsymbol{W}_{\boldsymbol{Y}_5}^{+}\Big)=
{W_0\Big( \boldsymbol{AR}^1\big(\boldsymbol{W}_{\boldsymbol{Y}_5}^{+}\big)\Big)}
$ and 
$\langle 
{\bf AR}_\delta^1
\big\rangle= W_1\Big( \boldsymbol{AR}^1\big(\boldsymbol{W}_{\boldsymbol{Y}_5}^{+}\big)\Big)$.

Let $\mathscr W'$ be the subgroup of the Weyl group $\boldsymbol{\mathscr W}_{\hspace{-0.1cm}D_5}$ 
spanned by the birational involutions $\sigma_i$ for $i=1,\ldots,4$ (see \eqref{Eq:Formula-Sigma-k}). It is a group isomorphic to the Weyl group of type $A_4$ (hence to the symmetric group $\mathfrak S_5$) which lets the 5-web $\boldsymbol{W}_{\boldsymbol{Y}_5}^{+}$ invariant.  
Moreover, it can be verified that 
$\mathscr W'$ acts by linear automorphisms on the space of 1-ARs of $\boldsymbol{W}_{\boldsymbol{Y}_5}^{+}$ making of it a representation of $\mathfrak S_5$. 
 By straightforward computations, we prove the 
 \begin{prop}
 The decomposition in direct sum \eqref{Eq:Decomp-AR1-W+} actually is the decomposition of $\boldsymbol{AR}\big( \boldsymbol{W}_{\boldsymbol{Y}_5}^{+}
 \big)$ into irreducible representations of $\mathfrak S_5$. 
 Moreover, we have the following isomorphisms of 
$\mathfrak S_5$-representations:
 \begin{equation}
 \label{Eq:1-ARs-W+}
 \boldsymbol{AR}^1_{C}\Big( \boldsymbol{W}_{\boldsymbol{Y}_5}^{+}\Big)
 \simeq V_{[221]}^{{\red{5}}}
 \qquad \mbox{ and } \qquad 
 \langle 
{\bf AR}_\delta^1
\big\rangle\simeq  V_{[1^5]}^{{\red{1}}}\, . 
 \end{equation}
 \end{prop}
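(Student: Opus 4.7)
The plan is to proceed along the same computational/representation-theoretic lines used earlier in the paper (e.g.\ for Theorem~\ref{Thm:1-AR-}{.}8), adapted to the smaller group $\mathscr W'\simeq \mathfrak S_5$ and the smaller web $\boldsymbol{W}^{+}_{\boldsymbol{Y}_5}$. First, I would verify that the pull-backs $\sigma_i^*$ (for $i=1,\ldots,4$) preserve $\boldsymbol{AR}^1\big(\boldsymbol{W}^{+}_{\boldsymbol{Y}_5}\big)$ and its weight filtration. Stability of the full space of 1-ARs is a formal consequence of the fact that $\sigma_1,\ldots,\sigma_4$ are birational automorphisms of the pair $(\mathbf C^5,Z)$ permuting the first integrals $U_1,\ldots,U_5$ of $\boldsymbol{W}^{+}_{\boldsymbol{Y}_5}$ (this follows from the explicit formulas \eqref{Eq:Formula-Sigma-1}--\eqref{Eq:Formula-Sigma-k} and a direct check on $U_1,\ldots,U_5$). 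Stability of the weight-0 piece $\boldsymbol{AR}^1_C$ follows because the $\sigma_i^*$ send a rational logarithmic 1-form of the form $d{\rm Log}\, U_{j,s}$ to a rational logarithmic 1-form, hence preserve weight~0; the complementary line $\langle {\bf AR}^1_\delta\rangle$ is then automatically stable.

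Next, for the isomorphism $\langle {\bf AR}^1_\delta\rangle\simeq V_{[1^5]}$, since $\mathfrak S_5$ has exactly two $1$-dimensional representations (the trivial and the sign), it is enough to exhibit one generator $\sigma_i$ acting by $-1$. This is already available in Proposition~\ref{Prop:Transformation-sigma-i} at the level of the 2-AR ${\bf HLOG}^\omega_{\boldsymbol Y_5}$; essentially the same transformation formulas \eqref{Eq:Transfoformulas_1} applied to the 1-form $\delta$ instead of $\Omega$, combined with the sign pattern $\underline\epsilon=(1,-1,1,-1,1)$ appearing in \eqref{Eq:epsilon}, give $\sigma_k^*\big({\bf AR}^1_\delta\big)=-{\bf AR}^1_\delta$ for $k=1,\ldots,4$. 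This identifies the line $\langle {\bf AR}^1_\delta\rangle$ with the sign representation $V_{[1^5]}$.

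For the 5-dimensional piece $\boldsymbol{AR}^1_C$, I would fix a basis of combinatorial 1-ARs (one for each of the five 3-subwebs of $\boldsymbol{W}^{+}_{\boldsymbol{Y}_5}$ of rank~1, indexed e.g.\ by the excluded pair of indices in $[\![5]\!]$), and use the explicit formulas for the $\sigma_i$ and $U_i$ to write out the $5\times 5$ matrices of $\sigma_1^*,\ldots,\sigma_4^*$ in this basis, exactly as was done for the larger web in the proof of Theorem~\ref{Thm:1-AR-}{.}8. One then reads off the character $\chi$ at representatives of the seven conjugacy classes of $\mathfrak S_5$ and compares with the character table. The two $5$-dimensional irreducibles of $\mathfrak S_5$ are $V_{[3,2]}$ and $V_{[2,2,1]}$, and they are distinguished, for instance, by their value on a transposition: $V_{[3,2]}$ gives $+1$ and $V_{[2,2,1]}=V_{[3,2]}\otimes\mathrm{sgn}$ gives $-1$. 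I expect the computation to yield the latter value on each $\sigma_i$ (consistent with the fact that, in the decomposition \eqref{Eq:Decomp-AR1-W+}, it is $\boldsymbol{AR}^1_C$ that contains the ARs produced by taking residues of the signed object ${\bf HLOG}_{\boldsymbol Y_5}$, cf.\ point~{\bf 8} of Theorem~\ref{Thm:1-AR-}), thereby identifying $\boldsymbol{AR}^1_C\simeq V_{[2,2,1]}$.

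The main (minor) obstacle is not conceptual but computational: producing an explicit basis of $\boldsymbol{AR}^1_C$ on which the $\sigma_i^*$ can be directly evaluated, and organising the resulting $5\times 5$ matrices so that enough traces can be computed cleanly. A second, smaller subtlety is that the irreducibility of $\boldsymbol{AR}^1_C$ is not automatic from dimension count: one should either verify it a posteriori from the character (by checking $\langle\chi,\chi\rangle_{\mathfrak S_5}=1$) or invoke the fact that $\dim\boldsymbol{AR}^1_C=5$ together with the absence of a non-trivial $\mathfrak S_5$-subrepresentation of dimension $<5$ that could embed into it, given the character values computed in the previous step. In either case, once the character is in hand, the identification with $V_{[2,2,1]}$ is automatic and, together with the previous paragraph, yields the asserted isomorphisms \eqref{Eq:1-ARs-W+} as well as the fact that \eqref{Eq:Decomp-AR1-W+} is the decomposition into $\mathfrak S_5$-irreducibles.
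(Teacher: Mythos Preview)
Your proposal is correct and follows essentially the same approach as the paper, which simply asserts the result ``by straightforward computations'' without further detail; your outline (fix a basis, compute the matrices of the $\sigma_i^*$, extract the character, compare with the $\mathfrak S_5$ table) is exactly the intended procedure, and your argument for the sign representation on $\langle {\bf AR}^1_\delta\rangle$ via the transformation formulas is on point. One small slip: there are $\binom{5}{3}=10$ three-subwebs of $\boldsymbol{W}^+_{\boldsymbol{Y}_5}$, not five, and the paper states that \emph{each} carries a combinatorial 1-AR; these ten ARs span the 5-dimensional space $\boldsymbol{AR}^1_C$, so you should simply pick five linearly independent ones as your basis.
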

Our aim is now to relate $\boldsymbol{W^{+}_{\hspace{-0.1cm} \boldsymbol{Y}_5}}$ and its ARs to Bol's web and more specifically  to its main abelian relation, the dilogarithmic AR given by Abel's relation. 

\subsection{\bf The web $\boldsymbol{W^{+}_{\hspace{-0.1cm} \boldsymbol{Y}_5}}$ and Bol's web}
In \S4.5.2 of 
\cite{PirioAFST},  
 we gave several birational models of Serganova-Skorobogatov's embedding 
$f_{S\hspace{-0.03cm}S}: X_r \hookrightarrow \boldsymbol{\mathcal Y}_r$.  Let $(a,b)\in \mathbf C^2$ be such that $X_r$ identifies with the blow-up of the plane at the vertices of the standard quadrilateral in $\mathbf P^2$ plus the point $[a:b:1]$. In 
\cite{PirioAFST} (around Corollary 4.12 more precisely), we 
 obtained that for generic complex constants $s_1,s_2,s_3\in \mathbf C$ and setting $s_{ij}=s_i-s_j$ for any $i,j$ and 
 $ \boldsymbol{\rm y}=\big(y_{1,3},y_{1,4}, y_{2,4}, y_{2,5}, y_{3,5}\big)$, the following rational map 
\begin{align*}
(x,y)& \longmapsto  \boldsymbol{\rm y}= 
\left(\, 
\frac{\left(x -1\right)  {s_{13}}}{\left(a -1\right)  {s_3}}
\, , \, \frac{x \left(y -1\right) \left(a -b \right)  {s_1}  {s_{23}}}{\left(x -y \right) a \left(b -1\right)  {s_3}  {s_{12}}}
\, , \, 
\frac{y \left(a -b \right)  {s_2}}{\left(x -y \right) b  {s_{12}}}
\, , \, 
\frac{\left(b -1\right)  {s_3}}{\left(y -1\right)  {s_{23}}}
\, , \, 
\frac{\left(x -y \right) \left(b -1\right)  {s_{12}}}{\left(y -1\right) \left(a -b \right)  {s_{23}}}
\right)
\end{align*}
is a birational model for $f_{S\hspace{-0.03cm}S}$.  If one is no longer interested by taking track of the parameters $a$ and $b$, by eliminating some parameters, one can consider the rational map 
%
%
%
%
\begin{align*}
(x,y)& \longmapsto  \boldsymbol{\rm y}= 
\left(\, 
g_{13}\,
\left(x -1\right) 
\, , \, 
g_{14}\,
\frac{ x \left(y -1\right)}{\left(x -y \right) }
\, , \, 
\frac{g_{24}\,y }{\left(x -y \right) }
\, , \, 
\frac{g_{25}}{\left(y -1\right) }
\, , \, 
g_{35}\,
\frac{\left(x -y \right) }{\left(y -1\right) }
\right)
\end{align*}
where the $g_{ij}$'s are non zero complex parameters required to satisfy the following polynomial relation:
\begin{align*}
1=g_{1  3} g_{1  4} g_{2  4} g_{2  5} g_{3  5}&\   -g_{1  3} g_{1  4} g_{2  4}-g_{1  3} g_{1  4} g_{3  5}-g_{1  3} g_{2  5} g_{3  5}-g_{2  4} g_{2  5} g_{1  4} \\
&\  -g_{2  4} g_{2  5} g_{3  5}  +g_{1  3} g_{1  4}+g_{1  3} g_{3  5}+g_{1  4} g_{2  4}+g_{2  4} g_{2  5}+g_{2  5} g_{3  5}\,.
\end{align*}
This relation admits $g_{13}=g_{14}=g_{24}=g_{25}=g_{35}=1$ as a peculiar solution.  We will work with the corresponding rational embedding of $\mathbf C^2 $ into $\boldsymbol{Y}_5=\mathbf C^5$, which is given by 
\begin{equation} 
\label{Eq:map-D} 
{\mathcal D} : (x,y)\longmapsto \big(\,
 y_{13}, y_{14}, y_{24}, y_{25}, y_{35} \,\big)= 
\bigg(\,  x -1
\, , \, 
 \frac{x \left(y -1\right)}{x -y}
\, , \, 
 \frac{y}{x -y}
\, , \, 
\frac{1}{y -1}
 \, , \, 
 \frac{x -y}{y -1}\, \bigg)\, .
 \end{equation} 

One verifies that 
\begin{align*}
{\mathcal D}^*\Big(  \boldsymbol{ W}^{+}_{ \boldsymbol{Y}_5}
\Big) 
=\boldsymbol{\mathcal W} 
\Bigg( \, \bigg(\frac{1}{y -1} ,   \frac{y}{y -1}\bigg)
\, , & \, 
\bigg(
\frac{1}{x -1}, \frac{x}{x -1}\bigg)\, ,  \,\bigg(\frac{y}{x -y}, \frac{x}{x -y}\bigg)
\, 
\\ 
&\, 
  \bigg(\frac{y -1}{x -y}, \frac{x -1}{x -y}
\bigg)
\, , \, 
 \bigg(\frac{x \left(y -1\right)}{x -y}, 
\frac{y\left(x -1\right)}{x -y}\bigg)\, \Bigg)
\end{align*} 
from which it follows that $d\big({U_{i,1}}\circ {\mathcal D}\big)\wedge 
d\big({U_{i,2}}\circ {\mathcal D}\big)=0$ for $i=1,\ldots,5$.  This implies that the pull-back under $\mathcal D$ of the 2-codimensional web $\boldsymbol{W}^{+}_{ \boldsymbol{Y}_5}$ is only 1-codimensional. More precisely, setting $\varpi_i={U_{i,1}}\circ {\mathcal D}$ for $i=1,\ldots,5$, that is 
$$
\varpi_1= \frac{1}{y -1} 
\, , \quad 
\varpi_2=
\frac{1}{x -1} 
\, , \quad 
\varpi_3 = \frac{y}{x -y} 
\, , \quad 
\varpi_4 =  \frac{y -1}{x-y}
\, , \quad 
\varpi_5 = \frac{x(y -1)}{x -y}\, ,
$$
it comes that ${\mathcal D}^*\Big(  \boldsymbol{W}^{+}_{ {Y}_5}
\Big) $ is the following web: $$
\boldsymbol{B}=
\boldsymbol{\mathcal W}\Big(\, \varpi_i\, \Big)_{i=1}^5 = 
\boldsymbol{\mathcal W}\bigg(\, 
\frac{1}{y -1} 
\, ,\, 
\frac{1}{x -1} 
\, ,\, 
 \frac{y}{x -y} 
\, ,\, 
 \frac{y -1}{x-y}
\, ,\, 
 \frac{x(y -1)}{x -y}
\, 
\bigg) \, .
$$ 
By using the criterion of \cite[\S3.5.2]{PirioAFST}, 
this web is easily seen  to be a model of Bol's web and, as such, 
carries a version of Abel's 5-terms identity of the dilogarithm. We are going to prove that the latter can be obtained in a very simple way from the logarithmic identity \eqref{Eq:AR-Omega1}.

We identify all the target affines spaces of the first integrals $U_i$ to a same 
affine space with standard coordinates denoted by $u_1,u_2$. 
Then one verifies that for any $i=1,\ldots,5$, the image of $  U_i\circ \mathcal D$ is the  line cut out by 
$0= 1+u_1-u_2$
which is parametrized by $L: z\mapsto (z,z+1)$. One has 
$$
L^*(\delta)=\frac{1}{2}\left( 
{\rm Log}(z) \frac{dz}{1+z}-
{\rm Log}(z+1) \frac{dz}{z}
\right) = - d {\mathsf R}(z) 
$$
where ${\mathsf  R}$ stands for the `{\it cluster dilogarithm}' defined by 
$$
{}^{} \quad 
{\mathsf  R}(u)=\frac{1}{2} \int_{0}^u 
\left( 
 \frac{{\rm Log}(1+t)}{t}
-  \frac{{\rm Log}(t)}{1+t}\,
\right) dt  \qquad \mbox{ for } \, u\geq 0\, .
$$
Pulling-back the identity $0=\frac{1}{2} \sum_{i=1}^5 \epsilon_i U_i^*
\big( 
{\rm Log}(x) \frac{dy}{y}-
{\rm Log}(y) \frac{dx}{x}\big)$ under $\mathcal D$ is written 
$$
\omega_1^*\big( d{\mathsf  R}\big)
-\omega_2^*\big( d{\mathsf  R}\big)+\omega_3^*\big( d{\mathsf  R}\big)
-\omega_4^*\big( d{\mathsf  R}\big)+\omega_5^*\big( d{\mathsf  R}\big)=0 
$$
or equivalently, in integrated form
\begin{equation}
\label{Eq:5-terms-R-cluster}
{\mathsf  R} \left( \frac{1}{y -1}  \right) 
-{\mathsf  R} \left( \frac{1}{x -1}  \right) 
+{\mathsf  R} \left( \frac{y}{x -y}  \right) 
-{\mathsf  R} \left( \frac{y -1}{x-y} \right) 
+{\mathsf  R} \left( \frac{x(y -1)}{x -y} \right) = \frac{{}^{}\, \pi^2}{3}\, 
\end{equation}
a functional relation which is identically satisfied on any complex domain containing the `triangle' $\{ (x,y)\in \mathbf R^2\, \big\lvert \, 1<y<x\, \}$. It is a version of Abel's identity of the dilogarithm. We will denote this identity as well as the corresponding AR of $\boldsymbol{B}$ by $\boldsymbol{Ab}_{\mathrm R}$. 

More is true: it can be verified that given any 1-AR of a 3-subweb of  $\boldsymbol{W}^{+}_{ {Y}_5}$, its pull-back under $\mathcal D$ is an abelian relation of the corresponding 3-subweb of $\boldsymbol{B}$. With a few more  computational checks, we get the following
\begin{prop}
\label{Prop:W+-B}
 The pull-back under $\mathcal D$ gives rise to two linear isomorphisms 
$$
\big\langle {\bf AR}^1_\delta \big\rangle \simeq \big\langle \boldsymbol{Ab}_{\mathrm R} \big\rangle
\qquad 
\mbox{ and }  \qquad 
\boldsymbol{AR}^1_{C}\big( \boldsymbol{W}_{\boldsymbol{Y}_5}^{+}\big) 
\simeq \boldsymbol{AR}_{C}\big( \boldsymbol{B}\big) 
$$
which actually turn out to be isomorphisms of $\mathfrak S_5$-representations.
\end{prop}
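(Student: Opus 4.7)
The general strategy is to exploit the fact, already established just before the statement, that $\mathcal{D}^{*}\bigl(\boldsymbol{W}^{+}_{\boldsymbol{Y}_5}\bigr)=\boldsymbol{B}$. Concretely, since $\mathcal{D}^{*}(U_{i,1})$ and $\mathcal{D}^{*}(U_{i,2})$ both factor through $\varpi_i$ (their common image lies on the line $1+u_1-u_2=0$), any $U_i$-basic form pulls back under $\mathcal{D}$ to a $\varpi_i$-basic form. Consequently $\mathcal{D}^{*}$ induces a well-defined linear map on the corresponding spaces of $1$-abelian relations, and this map preserves the weight filtration (it sends rational ARs to rational ARs and logarithmic coefficients to logarithmic coefficients). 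The plan is then to check, first, that the pull-back is non-zero on each of the two direct summands in the decomposition \eqref{Eq:Decomp-AR1-W+}, and second, that it intertwines the natural $\mathfrak{S}_5$-actions on source and target.

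For the first isomorphism, I would simply invoke the computation already carried out in the excerpt. There it is shown that $\mathcal{D}^{*}\bigl(U_i^{*}\delta\bigr)=\pm\,\varpi_i^{*}\bigl(L^{*}\delta\bigr)=\mp\,\varpi_i^{*}\bigl(d\mathsf{R}\bigr)$, so that $\mathcal{D}^{*}\bigl({\bf AR}^{1}_\delta\bigr)$ is (up to sign) precisely the differentiated form of Abel's dilogarithmic identity \eqref{Eq:5-terms-R-cluster}, namely $\boldsymbol{Ab}_{\mathrm R}$. In particular the restriction of $\mathcal{D}^{*}$ to the line $\langle{\bf AR}^{1}_\delta\rangle$ is non-zero, so it yields a linear isomorphism onto $\langle\boldsymbol{Ab}_{\mathrm R}\rangle$, both spaces being one-dimensional.

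For the second isomorphism, the plan is to proceed 3-subweb by 3-subweb. For each triple $(i,j,k)$ with $1\leq i<j<k\leq 5$, the 3-subweb $\boldsymbol{W}_{ijk}\subset\boldsymbol{W}^{+}_{\boldsymbol{Y}_5}$ carries a combinatorial AR unique up to scalar (this is the content of the discussion preceding \eqref{Eq:Decomp-AR1-W+}), and its pull-back under $\mathcal{D}$ is a rational $1$-AR on the 3-subweb $\boldsymbol{B}_{ijk}=\mathcal{D}^{*}\bigl(\boldsymbol{W}_{ijk}\bigr)\subset\boldsymbol{B}$, which likewise carries a unique (up to scalar) combinatorial AR. One then needs to check by direct computation that this pull-back is non-zero; given the explicit monomial form of the first integrals of $\boldsymbol{W}^{+}_{\boldsymbol{Y}_5}$ and the formula \eqref{Eq:map-D}, this is elementary. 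Thus $\mathcal{D}^{*}$ restricts to a linear map $\boldsymbol{AR}^{1}_C\bigl(\boldsymbol{W}^{+}_{\boldsymbol{Y}_5}\bigr)\to\boldsymbol{AR}_C(\boldsymbol{B})$ which is non-zero on each of the ten lines $\bigl\langle{\bf AR}^{1}(\boldsymbol{W}_{ijk})\bigr\rangle$. A short linear-algebra argument (both spaces being 5-dimensional and spanned by these ten lines, with the same incidence combinatorics by construction) then upgrades this non-degeneracy into an isomorphism.

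The main obstacle is the last point, the $\mathfrak{S}_5$-equivariance, because the natural $\mathfrak{S}_5$-action on $\boldsymbol{AR}^{1}_C\bigl(\boldsymbol{W}^{+}_{\boldsymbol{Y}_5}\bigr)$ comes from the birational realization $\mathscr{W}'=\langle\sigma_1,\sigma_2,\sigma_3,\sigma_4\rangle\subset\mathrm{Bir}(\boldsymbol{Y}_5)$, whereas the $\mathfrak{S}_5$-action on $\boldsymbol{AR}_C(\boldsymbol{B})$ is the classical one coming from the modular interpretation $\boldsymbol{B}\simeq\overline{\mathcal M}_{0,5}$, and there is no a priori reason these should be compatible via $\mathcal{D}$. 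Two complementary routes are available. The first, direct route is to produce, for each generator $\sigma_k$ ($k=1,\dots,4$), a birational self-map $\tau_k$ of $\mathbf{P}^2$ making the diagram $\sigma_k\circ\mathcal{D}=\mathcal{D}\circ\tau_k$ commute (as rational maps into $\boldsymbol{Y}_5$), and then to verify that $\langle\tau_1,\dots,\tau_4\rangle$ realises the standard $\mathfrak{S}_5$-action permuting the five first integrals of $\boldsymbol{B}$; this is a finite check using the explicit formulas \eqref{Eq:Formula-Sigma-1}, \eqref{Eq:Formula-Sigma-k} and \eqref{Eq:map-D}. The second, more conceptual route is to combine \eqref{Eq:1-ARs-W+}, which identifies $\boldsymbol{AR}^{1}_C\bigl(\boldsymbol{W}^{+}_{\boldsymbol{Y}_5}\bigr)$ with the irreducible $\mathfrak{S}_5$-module $V_{[221]}$, with the well-known fact that $\boldsymbol{AR}_C(\boldsymbol{B})$ is also the irreducible $\mathfrak{S}_5$-module $V_{[221]}$ (see \cite[\S1.1]{PirioAFST}); Schur's lemma then reduces the problem to checking equivariance for a single transposition, which can be done by evaluating both sides on one convenient combinatorial AR. Either way, the verification is of a computational nature; I would choose the Schur-lemma shortcut to keep the write-up concise, and carry out the direct check only on the transposition $\sigma_1$ to conclude.
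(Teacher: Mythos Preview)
Your plan is correct and essentially matches the paper's approach. The paper establishes the first isomorphism exactly as you do (the computation $\mathcal D^*(U_i^*\delta)=\mp\,\varpi_i^*(d\mathsf R)$ is carried out just before the statement), and for the combinatorial ARs it simply says that pulling back any 1-AR of a 3-subweb under $\mathcal D$ yields an AR of the corresponding 3-subweb of $\boldsymbol{B}$, leaving the details as ``computational checks''.

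The one place where you diverge is the $\mathfrak S_5$-equivariance. The paper takes your ``direct route'': it explicitly produces the birational self-maps $\tilde\sigma_k$ of $\mathbf P^2$ satisfying $\mathcal D\circ\tilde\sigma_k=\sigma_k\circ\mathcal D$, namely
\[
\tilde\sigma_1(x,y)=(x/y,1/y),\quad \tilde\sigma_2(x,y)=(y,x),\quad \tilde\sigma_3(x,y)=(1-x,1-y),\quad \tilde\sigma_4(x,y)=(1/x,1/y),
\]
and observes that this induces the relevant $\mathfrak S_5$-action on $\boldsymbol{AR}(\boldsymbol B)$. This is slightly stronger than what your Schur-lemma shortcut yields: it shows that the $\mathfrak S_5$-action on $\boldsymbol B$ is \emph{defined} via $\mathcal D$ from the $\mathscr W'$-action on $\boldsymbol Y_5$, so equivariance is automatic rather than something to be checked on a single transposition. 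Your Schur argument is perfectly valid too, but note it presupposes that the $\mathfrak S_5$-action on $\boldsymbol{AR}(\boldsymbol B)$ has already been identified independently---which the paper avoids by constructing the $\tilde\sigma_k$'s first.
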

The last statement of this proposition requires a few lines of explanation.  The point is that 
the group $\mathscr W'=\langle \sigma_1,\ldots,\sigma_4\rangle \subset {\bf Bir}\big(\boldsymbol{Y}_5\big)$ lets invariant the image of the map \eqref{Eq:map-D} hence gives rise to a birational action on the source space of $\mathcal D$. More precisely, one verifies that setting 
$$\tilde \sigma_1(x,y)=(x/y,1/y)\, \quad 
\tilde \sigma_2(x,y)=(y,x)\, \quad 
\tilde \sigma_3(x,y)=(1-x,1-y)\, \quad 
\tilde \sigma_4(x,y)=(1/x,1/y)$$
then,  as rational maps from 
$\mathbf C^2 \dashrightarrow \mathbf C^5=\boldsymbol{Y}_5$, 
 one has  $\mathcal D\circ \tilde \sigma_i= \sigma_i \circ \mathcal D$
 for $i=1,\ldots,5$.  The birational action $\mathscr W'\rightarrow  {\bf Bir}\big(\mathbf C^2\big)$ gives rise to a linear  action of $\mathfrak S_5$ on $\boldsymbol{AR}\big( \boldsymbol{B}
 \big)$. This gives a precise and rigorous meaning to the last assertion of Proposition \ref{Prop:W+-B}.
 
 \begin{rem}
{\bf 1}.  $\boldsymbol{B}$ is isomorphic to the web $\boldsymbol{\mathcal W}_{\hspace{-0.05cm}\boldsymbol{\mathcal M}_{0,5}}$ 
defined on the moduli space $\boldsymbol{\mathcal M}_{0,5}$ by the five forgetful maps $\boldsymbol{\mathcal M}_{0,5}\rightarrow \boldsymbol{\mathcal M}_{0,4}$. Up to this isomorphism, the birational action of 
$\mathscr W'\simeq \mathfrak S_5$ on $\mathbf C^2$ corresponds to the one of 
${\rm Aut}(\boldsymbol{\mathcal M}_{0,5})\simeq \mathfrak S_5$ on $\boldsymbol{\mathcal M}_{0,5}$.  That $\boldsymbol{AR}\big(
\boldsymbol{\mathcal W}_{\hspace{-0.05cm}\boldsymbol{\mathcal M}_{0,5}}
\big) $ is isomorphic to $V_{[221]}^{{\red{5}}}\oplus   V_{[1^5]}^{{\red{1}}}$ as a $\mathfrak S_5$-representation is due to 
Damiano (see \cite[Remark 3.1]{PirioAFST}{\rm )}.\mk\\
{\bf 2}.  We give above a new derivation of the five-term identity of the dilogarithm 
(in the form \eqref{Eq:5-terms-R-cluster}) from the differential relation in five variables  \eqref{Eq:AR-Omega1} which involves only logarithms and monomial functions. As far we know, this is new. It would be interesting to investigate whether the latter relation in five variables can be obtained formally from Abel's identity in two variables or not. 
\mk\\
{\bf 3}.  The identities 
\eqref{Eq:AR-Omega1} and 
\eqref{Eq:5-terms-R-cluster} admit real analytic versions which are globally satisfied. 
Setting $\delta^+=\frac{1}{2}\big( {\rm Log}\lvert x\lvert dy/y- {\rm Log}\lvert y\lvert dx/x\big)$, one verifies that $\sum_{i=1}\epsilon_i U_i^*\big( \delta^+\big)=0$ holds true identically on $\boldsymbol{Y}_5$. Similarly, if one defines the `positive cluster dilogarithm' by setting 
${\mathsf  R}^+(u)=\frac{1}{2} \int_{0}^u 
\left( 
 {
 {\rm Log}\lvert 1+t \lvert}/{t}
- 
{
 {\rm Log}\lvert t\lvert }/{(1+t)}\,
\right) dt 
$, then  for each connected component $O$ of the complement 
in $\mathbf R^2$
of the arrangement of lines cut out by $xy \left(y -1\right) \left(x -1\right) \left(x -y \right)=0$, there exists an integer $n_O\in \mathbf Z$ such that  
$\sum_{i=1}^5 \epsilon_i \, {\mathsf  R}^+(\varpi_i)\equiv   \frac{{}^{}\, \pi^2}{3} n_O$  on $O$.
 \end{rem}
%
%
%
%

\section{\bf Recovering the functional identities $\boldsymbol{{\bf HLog}_{{\rm dP}_4}}$'s 
 from $\boldsymbol{{\bf HLOG}_{{\mathcal Y}_5}}$}
\label{SS:Recovering}
In this section, we state and prove a precised version of the seventh point of Theorem \ref{THM:WdP5-WGMS5-similarities} which is about the fact that any weight 3 hyperlogarithmic functional identity ${\bf HLog}_{{\rm dP}_4}$ 
of an arbitrary but fixed smooth del Pezzo quartic surface ${\rm dP}_4$ 
can be recovered from 
the identity between 2-differential forms ${\bf HLOG}_{{\mathcal Y}_5}$. 
\mk 

We will first argue symbolically: we will associate a symbolic relation to 
${\bf HLOG}_{\boldsymbol{Y}_5}$ 
and will explain how to get from it the symbol corresponding to the hyperlogarithmic identity 
${\bf HLog}({\rm dP}_d)$. Then we will argue more concretely, in terms of abelian relations. 

\subsection{\bf Arguing symbolically}
\label{SS:Arguing-symbolically}
Let $\square$ stand for the quadrilateral cut out by $xyz(z+x-y)=0$ in the homogeneous coordinates $[x:y:z]$ on $\mathbf P^2$. Then the space ${\bf H}_{\square}= {\bf H}^0\big(\mathbf P^2, \Omega^1_{\mathbf P^2}( {\rm Log}\,\square )\big)$ is of dimension 3 and 
admits the following  logarithmic 1-forms as a basis: 
$$
\nu_1=d{\rm Log}(x)=\frac{dx}{x}\, , \qquad 
\nu_2=d{\rm Log}(y)=\frac{dy}{y} \quad
\mbox{ and } \quad  
\nu_3=d{\rm Log}(1+x-y)=\frac{dx-dy}{1+x-y} \, . 
$$

Recall the 2-form $\Omega$ defined in \eqref{Eq:Omega-u1u2u3} (see also \label{Eq:Omega-xy}). To it, we associate the symbol 
$$
\mathcal S(\Omega)= \nu_1\otimes \big( \nu_2\wedge \nu_3
\big) 
- 
\nu_2\otimes \big( \nu_1\wedge \nu_3
\big) 
+
\nu_3\otimes \big( \nu_1\wedge \nu_2
\big)  \in 
{\bf H}_{\square} \otimes \wedge^2 {\bf H}_{\square}\, . 
$$

Let $\iota_{\square}: {\bf H}_{\square}\hookrightarrow \Omega^1_{\mathbf C(\mathbf P^2)}$ be the natural embedding into the space of rational 1-forms on the projective plane.  For $p\in \mathbf P^2\setminus \square$, one has a well-defined integration map 
${\rm II}_p : {\bf H}_{\square}\rightarrow \mathcal O_{\mathbf P^2,p}$, $\nu\mapsto 
\big(\, {\rm II}_p : z\mapsto 
\int_{p}^z \nu\big)$ from which one gets a realization map 
$$\mathcal R_p= {\rm II}_p \otimes \wedge^2 \iota_{\square} : {\bf H}_{\square} \otimes \wedge^2 {\bf H}_{\square}\rightarrow \Omega^2_{\mathbf P^2,p}$$ which associates to 
a tensor $\nu_a\otimes ( \nu_b\wedge \nu_c)$ the germ of 2-form $
\big(\int_{p}^\bullet \nu_a\big)
\big(\nu_b\wedge \nu_c\big)$ at $p$. Then it is clear that 
the germ of $\Omega$ at $p$ is the image of the symbol $\mathcal S(\Omega)$ by the 
realization map $\mathcal R_p$.

For any $i\in [\hspace{-0.05cm}[ 10  ]\hspace{-0.05cm}]$, the pull-back $U_i^*\big( \Omega\big)$ has the same nature on $\boldsymbol{Y}_5$ that $\Omega$ has: it is a linear combination of 2-forms of the type ${\rm Log}(f) \big( d{\rm Log}(g) \wedge {\rm Log}(h)\big)$ hence one can associate a symbol to it. For $i=1,\ldots,10$, let $\eta_i$ be the logarithmic derivative of the 
 polynomial $\zeta_i$ defined in \eqref{Eq:zeta-i}: one has 
 $$
 \eta_1=\frac{dy_{13}}{y_{13}},\quad 
  \eta_2=\frac{dy_{14}}{y_{14}},\quad 
  \eta_3=\frac{dy_{24}}{y_{24}},\quad 
  \eta_4=\frac{dy_{25}}{y_{25}},\quad 
  \eta_5=\frac{dy_{35}}{y_{35}}
  \quad 
  \mbox{and} \quad 
  \eta_{k+5}=\frac{dP_k}{P_k}\quad 
 $$
 for $k=1,\ldots,5$. Let ${\bf H}_{\boldsymbol{Y}_5}$ be the vector space admitting the $\eta_i$'s as a basis:\footnote{For concreteness, one can take for $\boldsymbol{H}$ the subspace spanned by the $\eta_k$'s in $\Omega^2_{\mathbf C( \boldsymbol{Y}_5)}$. But we believe it is better to deal with $\boldsymbol{H}$ as an abstract vector space and not as a subspace of a bigger space.} 
 $$
 {\bf H}_{\boldsymbol{Y}_5}=\bigoplus_{i=1}^{10} \mathbf C\,\eta_i\, . 
 $$
 
 For any $i\in [\hspace{-0.05cm}[ 10  ]\hspace{-0.05cm}]$ and any $s=1,2,3$,  one can express the logarithmic derivative of 
 $U_{i,s}$ as a linear combination in the $\eta_k$'s, with non-zero coefficients equal to $\pm 1$. For instance, for $i=1$, one has 
 \begin{align*}
 d {\rm Log}\big(U_{1,1}\big)=   \eta_4\, , \qquad 
  d {\rm Log}\big(U_{1,2}\big)=   \eta_3+\eta_5
  \quad 
\mbox{and } \quad   d {\rm Log}\big(U_{1,3}\big)=  \eta_6\, .
 \end{align*}
Accordingly, the `symbol'  we associate to 
 $U_1^*(\Omega)$ is the element of ${\bf H}_{\boldsymbol{Y}_5}\otimes \wedge^2
 {\bf H}_{\boldsymbol{Y}_5}$ given by 
 $$
 \mathcal S\Big(  U_1^*\big(\Omega\big)\Big)=
 \eta_4\otimes \big( (\eta_3+\eta_5) \wedge \eta_6\big)
 - (\eta_3+\eta_5) \otimes \big( \eta_4  \wedge \eta_6\big)
 +\eta_6\otimes \big(  \eta_4 \wedge (\eta_3+\eta_5) \big)\, . 
 $$
 
The $\eta_i$'s being closed, each is locally integrable on $\boldsymbol{Y}_5$. For any $z\in \boldsymbol{Y}_5\setminus Z$, one thus has a realization map 
\begin{equation}
\label{Eq:realization}
\mathcal R_z : {\bf H}_{\boldsymbol{Y}_5}\otimes \wedge^2
 {\bf H}_{\boldsymbol{Y}_5} \longrightarrow \Omega^2_{\boldsymbol{Y}_5,z}
\end{equation}
 which associates to a tensor $\eta_a\otimes (\eta_b\wedge \eta_c)$ the germ of 2-form 
 $(\int_{z}^\bullet \eta_a)\eta_b\wedge \eta_c$ at $z$.
 
 The nine other symbols $\mathcal S\big(  U_i^*(\Omega)\big)$, $i=2,\ldots,10$ can be computed easily. Recall (see \eqref{Eq:epsilon}) that $\epsilon_i=-1$ for $i\in \{2,4,7,9\}$, and $\epsilon_i=1$ otherwise. 
 \begin{lem}  
 \label{Lemma:1}
 In ${\bf H}_{\boldsymbol{Y}_5}\otimes \wedge^2
 {\bf H}_{\boldsymbol{Y}_5}$, one has 
 $
 \sum_{i=1}^{10} \epsilon_i \,  \mathcal S\big(  U_i^*\big(\Omega\big)\big)=0
 $.
 \end{lem}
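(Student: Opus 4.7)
The strategy is to show that the claimed symbolic identity is nothing more than a repackaging of the 2-form identity $\sum_{i=1}^{10} \epsilon_i U_i^*(\Omega) = 0$ of Proposition \ref{Eq:AR-HLOG-Y5}, where ``repackaging'' means matching coefficients in a suitable basis. The decisive point is that this 2-form identity was established not by a cohomological or asymptotic argument but by explicitly verifying that every scalar coefficient cancels.

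\textbf{Step 1 --- linearization of $U_i^*$.} By Lemma \ref{L:u3}, for each $i \in [\![10]\!]$ and $s \in \{1,2,3\}$ there exist integers $m_{i,s,k} \in \mathbb{Z}$ (for $k=1,\ldots,10$) and signs $\varepsilon_{i,s} \in \{\pm 1\}$ such that $U_{i,s} = \varepsilon_{i,s} \prod_{k=1}^{10} \zeta_k^{m_{i,s,k}}$; consequently $d\ln U_{i,s} = \sum_k m_{i,s,k}\,\eta_k$. The pull-back $U_i^*: {\bf H}_\square \to {\bf H}_{\boldsymbol{Y}_5}$ sending $\nu_s \mapsto \sum_k m_{i,s,k}\,\eta_k$ is therefore a well-defined linear map, and by construction $\mathcal{S}(U_i^*(\Omega)) = (U_i^* \otimes \wedge^2 U_i^*)\big(\mathcal{S}(\Omega)\big)$ in ${\bf H}_{\boldsymbol{Y}_5}\otimes \wedge^2 {\bf H}_{\boldsymbol{Y}_5}$.

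\textbf{Step 2 --- matching the coefficients.} Expanding on the symbol side, there are integers $C_{c,a,b} \in \mathbb{Z}$, depending only on the matrix $(m_{i,s,k})$ and on the signs $\epsilon_i$, such that
\[
\sum_{i=1}^{10} \epsilon_i\,\mathcal{S}\big(U_i^*(\Omega)\big) \;=\; \sum_{c,\,a<b} C_{c,a,b}\;\eta_c \otimes (\eta_a \wedge \eta_b).
\]
On the 2-form side, the same substitutions yield
\[
\sum_{i=1}^{10}\epsilon_i\,U_i^*(\Omega) \;=\; \sum_{c,\,a<b} C_{c,a,b}\;\ln \zeta_c \cdot(\eta_a\wedge\eta_b) \;+\; \sum_{a<b} D_{a,b}\;\eta_a\wedge\eta_b
\]
for certain constants $D_{a,b}$ arising from the signs $\varepsilon_{i,s}$ (these play no role at the symbolic level, as the symbol map annihilates them). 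Crucially, the coefficient $C_{c,a,b}$ appearing in the symbolic sum is literally the same integer as the coefficient appearing in front of $\ln\zeta_c\cdot(\eta_a\wedge\eta_b)$ in the 2-form expansion.

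\textbf{Step 3 --- the 2-form identity implies $C_{c,a,b} = 0$.} The proof of Proposition \ref{Eq:AR-HLOG-Y5} actually establishes more than the bare vanishing of the 2-form: it shows that for every pair $(a,b)$ the scalar coefficient of $\eta_a\wedge\eta_b$, which is a linear combination of the $\ln \zeta_c$'s with coefficients in $\mathbf C(y)$ (indeed with rational coefficients after collecting), vanishes identically. Since the $\ln\zeta_c$'s are $\mathbf{C}$-linearly independent modulo $\mathbf C(y)$-functions --- which can be seen, for instance, by computing monodromies around each irreducible component of $Z$, or equivalently by taking iterated residues along the $Z_k = \{\zeta_k = 0\}$ --- this forces each integer $C_{c,a,b}$ to vanish separately. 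Combining with Step 2 gives the lemma.

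\textbf{Expected obstacle.} The only non-formal point is the separation argument used in Step 3, i.e.\ justifying that ``$\sum_c f_c(y)\ln\zeta_c \equiv 0$ on $\boldsymbol{Y}_5$'' with $f_c \in \mathbf C(y)$ implies $f_c \equiv 0$ for every $c$. This is a standard application of the fact that the $\zeta_k$'s cut out ten distinct irreducible divisors whose logarithms have non-trivial and pairwise independent monodromy around each component $Z_k$; but since the proof of the Proposition already proceeds by isolating and cancelling each $\ln\zeta_c$-contribution individually, this argument is in fact implicit in the computation we have already carried out, and no additional work is needed.
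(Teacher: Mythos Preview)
Your proof is correct and follows essentially the same approach as the paper's. The paper gives two arguments: first, that the lemma follows from the 2-form identity $\sum_i \epsilon_i U_i^*(\Omega)=0$ together with the injectivity of the realization map $\mathcal R_z$; second, and ``more elementarily'', that one can simply expand the symbolic sum in the basis $\eta_c\otimes(\eta_a\wedge\eta_b)$ and check directly that every coefficient vanishes. Your Steps~2--3 amount to an explicit unpacking of the first argument (the linear-independence claim for the $\ln\zeta_c$'s is exactly what makes $\mathcal R_z$ injective), while simultaneously reducing to the coefficient-by-coefficient cancellation of the second. One minor remark: on the hypercube domain all $U_{i,s}$ and all $\zeta_k$ are positive, so the signs $\varepsilon_{i,s}$ in your Step~1 are in fact all $+1$, and the constants $D_{a,b}$ you introduce do not actually arise.
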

 \begin{proof}
 This follows from the fact that $ \sum_{i=1}^{10} \epsilon_i \,   U_i^*\big(\Omega\big)\equiv 0$ and that the realization map \eqref{Eq:realization} is injective. But a more elementary proof is given by expressing the sum $S=
 \sum_{i=1}^{10} \epsilon_i \,  \mathcal S\big(  U_i^*\big(\Omega\big)\big)$  in the basis 
 $\eta_i\otimes (\eta_j\wedge \eta_k)$ of ${\bf H}_{\boldsymbol{Y}_5}\otimes \wedge^2
 {\bf H}_{\boldsymbol{Y}_5}$, for $i\in [\hspace{-0.05cm}[ 10  ]\hspace{-0.05cm}]$ and 
 $j,k$ such that $1\leq j<k\leq 10$.  Then one obtains that $S=0$ by pure elementary linear algebra.
\end{proof} 
From the very definition of $\Omega$ (see the formulas at the beginning of \S\ref{SSS:The-master-2-AR}), it can be easily verified that the symbol of $\Omega$
not only is an element of  
${\bf H}_{\square}\otimes \wedge^2
 {\bf H}_{\square}$ but actually lies in 
$\wedge^3 {\bf H}_{\square}$, the latter being seen as a subspace of $
 {\bf H}_{\square}\otimes {\bf H}_{\square}\otimes {\bf H}_{\square}$. It follows that 
 for any $i\in [\hspace{-0.05cm}[ 10  ]\hspace{-0.05cm}]$,  $S\big(  U_i^*\big(\Omega\big)\big)$
  lies in $ \wedge^3
 {\bf H}_{\boldsymbol{Y}_5}$ from what we deduce immediately the 
 \begin{lem}  
 The symbolic identity 
  of Lemma  \ref{Lemma:1} actually lies in $ \wedge^3
 {\bf H}_{\boldsymbol{Y}_5}$. One has 
\begin{equation}
\label{Eq: Sigma-i(UiOmega)}
 \sum_{i=1}^{10} \epsilon_i \,  S\Big(  U_i^*\big(\Omega\big)\Big)=0
 \quad \mbox{in  } \, \wedge^3
 {\bf H}_{\boldsymbol{Y}_5}\, . 
 \end{equation}
 \end{lem}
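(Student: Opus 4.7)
The plan is to recognize this as a formal consequence of Lemma~\ref{Lemma:1} combined with the functoriality of the symbol construction. Let $\iota: \wedge^3 V \hookrightarrow V \otimes \wedge^2 V$ denote the canonical injection sending a pure wedge $v_1 \wedge v_2 \wedge v_3$ to $v_1 \otimes (v_2 \wedge v_3) - v_2 \otimes (v_1 \wedge v_3) + v_3 \otimes (v_1 \wedge v_2)$. By direct inspection of the definition of $\mathcal S(\Omega)$ given just above Lemma~\ref{Lemma:1}, one has $\mathcal S(\Omega) = \iota(\nu_1 \wedge \nu_2 \wedge \nu_3)$, which is precisely the assertion (quoted in the text just before the statement to be proved) that $\mathcal S(\Omega) \in \wedge^3 {\bf H}_\square$. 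The task is thus to transport this alternating structure through each pullback $U_i^*$.

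First I would make precise the linear pullback $U_i^*: {\bf H}_\square \to {\bf H}_{\boldsymbol{Y}_5}$ on logarithmic forms. By Lemma~\ref{L:u3}, each $U_{i,s}$ is, up to sign, a monomial in the $\zeta_k$'s, so $d{\rm Log}(U_{i,s})$ is a $\pm 1$ integer combination of the $\eta_k$'s, i.e.\ genuinely belongs to ${\bf H}_{\boldsymbol{Y}_5}$. Setting $\tilde{\nu}_{i,s} := U_i^*(\nu_s) = d{\rm Log}(U_{i,s}) \in {\bf H}_{\boldsymbol{Y}_5}$ for $s=1,2,3$, the defining formula for $\mathcal S$ yields
\[
\mathcal S\big(U_i^*\Omega\big) = \iota\big(\tilde{\nu}_{i,1} \wedge \tilde{\nu}_{i,2} \wedge \tilde{\nu}_{i,3}\big),
\]
which visibly lies in $\iota\big(\wedge^3 {\bf H}_{\boldsymbol{Y}_5}\big)$. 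Equivalently, the triple tensor operator $(U_i^*)^{\otimes 3}$ carries the subspace $\wedge^3 {\bf H}_\square \subset {\bf H}_\square^{\otimes 3}$ into $\wedge^3 {\bf H}_{\boldsymbol{Y}_5}$ compatibly with $\iota$, and one has the functoriality identity $\mathcal S(U_i^*\Omega) = (U_i^*)^{\otimes 3}\big(\mathcal S(\Omega)\big)$.

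To conclude, Lemma~\ref{Lemma:1} gives $\sum_{i=1}^{10} \epsilon_i \, \mathcal S(U_i^*\Omega) = 0$ in ${\bf H}_{\boldsymbol{Y}_5} \otimes \wedge^2 {\bf H}_{\boldsymbol{Y}_5}$; since every summand sits in the image of the injection $\iota$, the identity descends through $\iota^{-1}$ to the same identity in $\wedge^3 {\bf H}_{\boldsymbol{Y}_5}$. The main difficulty here is rhetorical rather than mathematical: the statement is essentially formal once one accepts Lemma~\ref{Lemma:1} and the observation that $\mathcal S(\Omega)$ is alternating in $\nu_1,\nu_2,\nu_3$. If a fully self-contained proof is preferred, one can bypass Lemma~\ref{Lemma:1} altogether and verify the identity directly by expanding $\sum_i \epsilon_i \, \tilde{\nu}_{i,1} \wedge \tilde{\nu}_{i,2} \wedge \tilde{\nu}_{i,3}$ in the basis $\eta_j \wedge \eta_k \wedge \eta_l$ with $1 \leq j < k < l \leq 10$ of $\wedge^3 {\bf H}_{\boldsymbol{Y}_5}$; by Lemma~\ref{L:u3}, this reduces to a finite integer linear-algebra check of the same nature as---though more compact than---the one underlying Lemma~\ref{Lemma:1}.
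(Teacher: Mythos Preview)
Your proposal is correct and follows essentially the same approach as the paper: the paper's argument is the brief observation preceding the lemma (that $\mathcal S(\Omega)\in\wedge^3{\bf H}_\square$, hence each $\mathcal S(U_i^*\Omega)\in\wedge^3{\bf H}_{\boldsymbol{Y}_5}$, so the vanishing from Lemma~\ref{Lemma:1} descends to $\wedge^3{\bf H}_{\boldsymbol{Y}_5}$), and you spell this out explicitly via the canonical injection $\iota$ and the functoriality $\mathcal S(U_i^*\Omega)=(U_i^*)^{\otimes 3}\mathcal S(\Omega)$.
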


\begin{rem} 
\noindent 
In \cite{CP}, that ${\bf HLog}\big({\rm dP}_4\big)$ is satisfied is proven by showing that its symbol $\mathcal S\big( {\bf HLog}\big({\rm dP}_4\big)\big)$   vanishes in the third wedge product of the space of global 1-forms on ${\rm dP}_4$ with poles of order at most 1 along the lines contained in \( dP4 \). Hence, the vanishing of $\mathcal S\big( {\bf HLog}\big({\rm dP}_4\big)\big)=0$  is formally analogous to the vanishing of 
$\sum_{i=1}^{10} \epsilon_i \,  S\big(  U_i^*\big(\Omega\big)\big)$. 
However, there is a significant distinction: $\mathcal S\big( {\bf HLog}\big({\rm dP}_4\big)\big)$   is a sum of 10 weight 3 tensors, all of which are integrable in the sense of Chen, meaning that the iterated integrals associated with each tensor are homotopy invariant (see \cite[\S6]{Brown}  for further details). This is not the case for the symbols 
 $ \epsilon_i \,  S\big(  U_i^*\big(\Omega\big)\big)$
 as some (if not all) of them are not integrable. Therefore, \eqref{Eq: Sigma-i(UiOmega)} represents an algebraic identity that does not have an analytic counterpart in functional form.
\end{rem} 

From the material presented in \S\ref{SS:dP4-SS-Embedding}, we get that the map 
\begin{equation}
\label{Eq:F-SS}
F_{S\hspace{-0.03cm}S} : (x,y)\longmapsto \Big( \, Y_{13}\, , \, Y_{14}\, , \, 
Y_{24}\, , \,
Y_{25}\, , \,
Y_{35}\, \Big)
\end{equation}
with 
\begin{align*}
 {Y_{13}} =  &\, 
\frac{y \left(a +1\right)\left(a y -b x -a +b +x -y \right) }{\left(x -y \right) \left(b -y \right)}
\hspace{1.2cm}  {Y_{14}} = 
\frac{a x \left(y-1 \right)  \left(b +1\right)}{\left(b +a \right) \left(x -y \right)}
\\  {Y_{24}} = &\, 
\frac{\left(b -y \right) \left(-1+x \right) b}{\left(b +a \right) \left(\left(1-x \right) b +x +\left(-1+y \right) a -y \right)}
\hspace{0.6cm} 
 {Y_{25}} = 
\frac{\left(a -x \right) \left(x -y \right)}{\left(b +1\right) x \left(\left(1-x \right) b +x +\left(-1+y \right) a -y \right)} \\
& \, \hspace{2.6cm}
 \mbox{and } \quad   {Y_{35}} = \frac{\left(a y -b x \right) \left(b +a \right)}{x \left(b +1\right) \left(b -y \right)}\, . 
\end{align*}
is a birational model of Serganova-Skorobogatov embedding $f_{S\hspace{-0.03cm}S}: {\rm dP}_4\hookrightarrow \boldsymbol{\mathcal Y}_5$. 

By direct computation, we get that $F_{S\hspace{-0.03cm}S}^*\big( \boldsymbol{\mathcal W}^{GM}_{ \boldsymbol{Y}_5}\big)$ coincides with the 10-web on $\mathbf P^2$, denoted by $ \boldsymbol{W}_{a,b}$, which is the direct image of del Pezzo's web $\boldsymbol{\mathcal W}_{ {\rm dP}_4 }$
by the blow-up map $\beta=\beta_{a,b} : {\rm dP}_4\rightarrow \mathbf P^2$
at 
the five points 
$[1:0:0]$, $[0:1:0]$, $[0:0:1]$, $[1:1:1]$ and $[a:b:1]$: 
one has 
\begin{equation} 
\label{Eq:Web-above}
 \boldsymbol{W}_{a,b}=F_{S\hspace{-0.03cm}S}^*\Big( \boldsymbol{\mathcal W}^{GM}_{ \boldsymbol{Y}_5}\Big)=\beta_*\Big( \boldsymbol{\mathcal W}_{ {\rm dP}_4 }\Big)\, .
\end{equation} 

 Actually, 
we have that for any $i=1,\ldots,10$, one has $F_{S\hspace{-0.03cm}S}^*\big( 
dU_{i,1}\wedge dU_{i,2}\big)=0$ hence the map $U_i\circ F_{S\hspace{-0.03cm}S}$ has rank 1 (at the generic point of $\mathbf C^2$) and 
 defines foliation $F_{S\hspace{-0.03cm}S}^*\big( \mathcal F_{U_i}\big)$ which coincides with one of the web \eqref{Eq:Web-above}. 
 More explicitly, setting $\mathcal F_f$ for the foliation defined by the level subsets of $f$, one has 
 $$
 F_{S\hspace{-0.03cm}S}^*\big( \mathcal F_{U_i}\big)
 = \,  
 \mathcal F_{ 
\phi_i}
 $$
 for $i=1,\ldots,10$, where the $\phi_i$'s are the following rational functions
 \begin{align*}
\phi_1= &\,  \frac{(x-1)(bx-ay)}{(x-a)(x-y)}   && \phi_6= y 
\\
\phi_2=&\,  \frac{(y-1)(bx-ay)}{(y-b)(x-y)} 
&& \phi_7= x 
\\
 \phi_3=&\,  \frac{(y-1)(x-a)}{(x-1)(y-b)}
&& \phi_8=\frac{x}{y}
\\
  \phi_4=&\,  \frac{y(x-a)}{x(y-b)}
&& \phi_9=\frac{x-1}{y-1}
\\
 \phi_5=&\,  \frac{y(x-1)}{x(y-1)}
&& \phi_{10}=\frac{b(x-a)}{a(y-b)}\,.
 \end{align*}
(compare with the last pages of \cite[\S4.5]{PirioAFST}).

The preimage by $F_{S\hspace{-0.03cm}S}$ of the divisor $Z$ of $\boldsymbol{Y}_5$ defined in \eqref{Eq:Def-Z} is the divisor 
$\widetilde Z$ defined as the union of the ten irreducible divisors 
 $\widetilde Z_i=\{ \, \chi_i=0\, \}\subset \mathbf C^2$ for $i=1,\ldots,10$, where the $\chi_i$'s are given by : 
\begin{align*}
\chi_1=&\, x && \chi_2= y   \hspace{0.5cm} \chi_3=x-1 \hspace{0.5cm} \chi_4=y -1
\hspace{0.6cm}
 \chi_5= a -x
\hspace{0.6cm}
\chi_6= y-b  
\hspace{0.6cm}
  \chi_7= x-y
\\
  \chi_8=&\,  a y -b x
&& \chi_9= a y -b x -a +b +x -y 
\hspace{1.3cm}
 \chi_{10}=
 ab(x-y)+(b-a)xy+ay-bx
\, . 
\end{align*}
The 
logarithmic derivatives $\lambda_i=d {\rm Log}( \chi_i)$ for $i\in [\hspace{-0.05cm}[ 10  ]\hspace{-0.05cm}]$ are linearly independant over $\mathbf C$ then span 
a subspace, denoted by  ${\bf H}_{a,b}$, of the space of rational 1-forms on $\mathbf C^2$.
 All the pull-backs $\widetilde \eta_k=F_{S\hspace{-0.03cm}S}^*(\eta_k)$ of the rational 1-forms $\eta_k$'s under the map \eqref{Eq:F-SS} are elements of ${\bf H}_{a,b}$ and it is straightforward to compute them explicitly but we will not even need that. Just for the record, we state the 
 \begin{lem}
 The pull-back under $F_{S\hspace{-0.03cm}S}$ induces a linear isomorphism 
$F_{S\hspace{-0.03cm}S}^* \, : \, 
 {\bf H}_{\boldsymbol{Y}_5} 
 \stackrel{\sim}{\longrightarrow}
{\bf H}_{a,b} $ 
which is defined over $\mathbf Z$ with respect to the bases $(\eta_k)_{k=1}^{10}$ and $(\lambda_i)_{i=1}^{10}$ of  ${\bf H}_{\boldsymbol{Y}_5} $ and 
$
{\bf H}_{a,b} $ respectively. 
 \end{lem}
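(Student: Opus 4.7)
The argument reduces to an explicit factorization calculation.

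\emph{Reduction to factorization.} By construction, $\eta_k = d\log(\zeta_k)$, so one has $F_{S\hspace{-0.03cm}S}^*(\eta_k) = d\log\bigl(\zeta_k \circ F_{S\hspace{-0.03cm}S}\bigr)$ for every $k \in \{1,\ldots,10\}$. It therefore suffices to show that each rational function $\zeta_k \circ F_{S\hspace{-0.03cm}S}$ on $\mathbf C^2$ factors as
\begin{equation*}
\zeta_k \circ F_{S\hspace{-0.03cm}S} \,=\, c_k \, \prod_{i=1}^{10} \chi_i^{\, n_{i,k}}
\end{equation*}
for some nonzero $c_k \in \mathbf C(a,b)^\times$ and integers $n_{i,k} \in \mathbf Z$; then taking $d\log$ gives $F_{S\hspace{-0.03cm}S}^*(\eta_k) = \sum_{i=1}^{10} n_{i,k}\,\lambda_i$, which shows at once that $F_{S\hspace{-0.03cm}S}^*$ maps ${\bf H}_{\boldsymbol{Y}_5}$ into ${\bf H}_{a,b}$ with integer matrix $N = (n_{i,k})$.

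\emph{Establishing the monomial factorization.} The key geometric input is the observation made in \S\ref{SS:dP4-SS-Embedding} (just after the definition of $F$): for generic $a,b$, the divisor of the product $\prod_{k=1}^{10} \zeta_k \circ F_{S\hspace{-0.03cm}S}$ is supported on $\widetilde Z = \bigcup_{i=1}^{10} \widetilde Z_i$. Hence for each individual $k$, every irreducible factor occurring in the numerator or denominator of $\zeta_k \circ F_{S\hspace{-0.03cm}S}$ must be associate (in the UFD $\mathbf C(a,b)[x,y]$) to one of the irreducible polynomials $\chi_i$. This yields the claimed factorization with integer exponents $n_{i,k}$.

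\emph{Explicit computation and invertibility.} For $k=1,\ldots,5$, the pullback $\zeta_k \circ F_{S\hspace{-0.03cm}S}$ is literally the component $Y_{ij}$ of $F_{S\hspace{-0.03cm}S}$, and the factorization can be read off immediately from the formulas in \S\ref{SS:dP4-SS-Embedding}; for instance, $Y_{13} = -(a{+}1)\,\chi_2\,\chi_6^{-1}\chi_7^{-1}\chi_9$ gives $F_{S\hspace{-0.03cm}S}^*(\eta_1) = \lambda_2 - \lambda_6 - \lambda_7 + \lambda_9$. For $k = 6,\ldots,10$, i.e.\ for the pullback of the pfaffians $P_{k-5}$, one expands the relevant $4\times 4$ pfaffian of the antisymmetric matrix $Y\circ F_{S\hspace{-0.03cm}S}$, clears denominators, and identifies the resulting polynomial in $(x,y)$ as a monomial in the $\chi_i$'s. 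The sample computation for $P_1$ gives
\begin{equation*}
P_1 \circ F_{S\hspace{-0.03cm}S} \,=\, \frac{-\,\chi_{10}}{(b+1)\,\chi_1\,\chi_9}\, ,
\end{equation*}
so that $F_{S\hspace{-0.03cm}S}^*(\eta_6) = \lambda_{10} - \lambda_1 - \lambda_9$. Once all ten rows of $N$ have been assembled in this way, a direct determinant calculation confirms $\det N \neq 0$, which (both source and target being $10$-dimensional) establishes that $F_{S\hspace{-0.03cm}S}^*$ is a linear isomorphism.

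\emph{Main obstacle.} The principal technical difficulty lies in the computations for $k \geq 6$: although Step 2 guarantees \emph{a priori} that the numerator of $P_{k-5} \circ F_{S\hspace{-0.03cm}S}$ (after clearing denominators) factors as a monomial in the $\chi_i$, it is not immediately visible from the raw algebraic expression which $\chi_i$ appear nor with what signs and multiplicities. In particular, the conic factor $\chi_{10}$ appears in each of the five pfaffian pullbacks—this reflects the fact that the pullback of the weight divisor attached to an exceptional line $\boldsymbol{e}_j$ on ${\rm dP}_4$ to $\mathbf C^2$ degenerates to a point, and the residual divisorial information is carried by the conic $2\boldsymbol{h} - \boldsymbol{e}_{\mathrm{tot}}$. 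These five pfaffian simplifications are best carried out in a computer algebra system.
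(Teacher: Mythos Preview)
Your approach is correct and is essentially what the paper has in mind: the paper does not actually give a proof of this lemma, stating it ``just for the record'' after remarking that the pullbacks $\widetilde\eta_k = F_{S\hspace{-0.03cm}S}^*(\eta_k)$ ``are elements of ${\bf H}_{a,b}$ and it is straightforward to compute them explicitly but we will not even need that.'' Your outline (factor each $\zeta_k\circ F_{S\hspace{-0.03cm}S}$ as a monomial in the $\chi_i$'s, read off the integer matrix, check its determinant) is exactly the direct computation the paper leaves implicit, and your sample calculations for $Y_{13}$ and $P_1\circ F_{S\hspace{-0.03cm}S}$ are accurate.

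One small remark on your ``main obstacle'' paragraph: the heuristic that $\chi_{10}$ carries ``residual divisorial information'' because the exceptional lines $e_j$ collapse to points is suggestive but not a rigorous explanation of why $\chi_{10}$ must appear in each pfaffian pullback; the actual reason this happens is simply what the explicit computation shows, and your deferral to computer algebra for the five pfaffian cases is the honest way to handle it. This does not affect the validity of your argument.
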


Setting 
$$u_{i,s}=F_{S\hspace{-0.03cm}S}^*\big( U_{i,s}
 \big)=U_{i,s}\circ F_{S\hspace{-0.03cm}S}$$
  for $i\in [\hspace{-0.05cm}[ 10  ]\hspace{-0.05cm}]$ and $s=1,2,3$, taking the pull-back of 
 \eqref{Eq: Sigma-i(UiOmega)} under $F_{S\hspace{-0.03cm}S}$, one gets that the following relation holds true  in $\wedge^3 {\bf H}_{a,b}$: 
\begin{equation}
\label{Eq: HLOG-Hlog}
 \sum_{i=1}^{10} \epsilon_i \,  \bigg( 
\frac{d u_{i,1}}{ u_{i,1} }
\wedge 
\frac{d u_{i,2}}{ u_{i,2} }
\wedge 
\frac{d u_{i,3}}{ u_{i,3} }
  \bigg)=0\, . 
 \end{equation}


Since everything is explicit,  \eqref{Eq: HLOG-Hlog} can be made explicit as well. 
For instance, for $i=6$, one has 
$$
U_6\circ F_{S\hspace{-0.03cm}S}=
\left(\frac{b +a}{\left(y-1 \right) a}, 
\frac{\left(a +1\right) y}{\left(y-1 \right) a}, 
\frac{b -y}{\left(y-1 \right) a}\right)
$$
hence 
\begin{equation}
\label{Eq:du-i-s}
\frac{d u_{6,1}}{ u_{6,1} }= {-\frac{dy}{y-1}}\, , \qquad  \frac{d u_{6,2}}{ u_{6,2} }=
\frac{dy}{y}-\frac{dy}{y-1}\qquad  \mbox{and}
\qquad  
\frac{d u_{6,3}}{ u_{6,3} }=
{\frac{dy}{y-b}-\frac{dy}{y-1}}
\end{equation}
from what it follows that 
$$
\frac{d u_{6,1}}{ u_{6,1} }
\wedge 
\frac{d u_{6,2}}{ u_{6,2} }
\wedge 
\frac{d u_{6,3}}{ u_{6,3} }
=
 \frac{dy}{y} \wedge 
\frac{dy}{(y-1)}
\wedge  \frac{dy}{(y-b)}
\, . 
$$

The computations of all the others wedge products 
$$\varpi_i= {d \ln(u_{i,1}})
\wedge 
{d \ln(u_{i,2}})
\wedge 
{d \ln(u_{i,3}})\in \wedge^3 {\bf H}_{a,b}$$ are as straightforward as the one above.
 Setting 
 $$
r_1=r_6=b
\, ,  \quad 
r_2=r_7=a
\, ,  \quad 
r_3=r_8=\frac{a}{b}
\, ,  \quad 
r_4=r_9=\frac{a-1}{b-1}
 \quad \mbox{ and } \quad 
r_5=r_{10}=\frac{b(a-1)}{a(b-1)}\, , 
$$
one obtains that for any $i\in [\hspace{-0.05cm}[ 10  ]\hspace{-0.05cm}]$, one has 
$$
\varpi_i
= 
  \left(\frac{d\phi_i}{\phi_i} \wedge 
\frac{d\phi_i}{\Big(\phi_i-1\Big)}
\wedge  \frac{d\phi_i}{\Big( \,\phi_i-
{r_i}\, 
\Big)}\right)\, . 
%
%
$$

In other terms, for any $i$, $\varpi_i$ is equal  to the symbol
of the weight 3 iterated integral $\boldsymbol{AH}^3_i\big( \phi_i\big)$ on $\mathbf C^2$, where 
$\boldsymbol{AH}^3_i$ stands for the complete antisymmetric weight 3 hyperlogarithm on $\mathbf P^1$ with respect to the 4-tuple of points $(0,1,r_i,\infty)$ (see \cite[\S2.1]{CP}).
Injecting this in \eqref{Eq: HLOG-Hlog} gives 
$$
\sum_{i=1}^{10}\epsilon_i\,   \left(\frac{d\phi_i}{\phi_i} \wedge 
\frac{d\phi_i}{(\phi_i-1)}
\wedge  \frac{d\phi_i}{\big( \,\phi_i-
{r_i}\, 
\big)}\right)=0\, .
$$
As explained in \cite{CP}, this is equivalent to the fact that the weight 3 hyperlogarithmic identity 
$$
\sum_{i=1}^{10}\epsilon_i\,   \boldsymbol{AH}^3_i\big( \phi_i\big) =0\, , 
$$
that is ${\bf HLog}\big({\rm dP}_4\big)$,  holds true.

\subsection{\bf From ${\bf HLOG}_{ \boldsymbol{Y}_5 }$ to 
${\bf HLog}\big({\bf dP}_4\big)$, by manipulating abelian relations}
\label{SS:manipulating-ARs}
While we consider that the above symbolic derivation of 
of ${\bf HLog}\big({\rm dP}_4\big)$ from 
${\bf HLOG}_{ \boldsymbol{Y}_5 }$ is pretty much clear, we have a feeling that it is somehow  not very concrete. 
  In this subsection, we describe succinctly how to 
obtain ${\bf HLog}\big({\rm dP}_4\big)$ from ${\bf HLOG}_{ \boldsymbol{Y}_5 }$ by only working with genuine abelian relations.  To simplify the notation, we write 
${\bf HLOG}$ instead of ${\bf HLOG}_{ \boldsymbol{Y}_5 }$ below. Non justified arguments are easy to check (and this is left to the reader...). 
\mk 

Let $F$ be one of the polynomials $\zeta_k$ defined in \eqref{Eq:zeta-i}, then 
${\bf Res}_F={\bf Res}_F\big( {\bf HLOG}\big) $ is a rational 2-AR of $\boldsymbol{\mathcal W}_{\boldsymbol{Y}_5}^{GM}$ carried by the 5-subweb $\boldsymbol{\mathcal W}_{\boldsymbol{Y}_5,F}^{GM}=\boldsymbol{\mathcal W}\big( U_j\, \big\lvert \, j \in J_F\big)$ for a certain subset $J_F\subset  [\hspace{-0.05cm}[  10 ]\hspace{-0.05cm}]$ 
of cardinality 5 (see \S\ref{SS:Residues-HLOG-Y5}). Moreover, its non-trivial components, denoted by 
${\bf Res}_{F,j} $ for 
$j\in J_F$, 
are linear combinations with coefficients $0,1$ or $-1$ of the wedge products 
$\big(dU_{j,a}\wedge dU_{j,b}\big)/(U_{j,a} dU_{j,b})$ for $a,b$ such that $1\leq a<b \leq 3$ (see Corollary 
\ref{Cor:2-RA-WGM}).  Replacing each such term by its primitive 
$\frac12 \big( \ln(U_{j,a}) d\ln(U_{j,b})-\ln(U_{j,b}) d\ln(U_{j,a})\big)$, 
one obtains a 1-AR denoted by $\int  {\bf Res}_F\in \boldsymbol{AR}^1\big( \boldsymbol{\mathcal W}_{\boldsymbol{Y}_5}^{GM} \big) $, which is such that $d \hspace{-0.05cm}\int  {\bf Res}_F= {\bf Res}_F$. The pull-back of it under $F_{S\hspace{-0.03cm}S}$ 
is a  1-abelian relation for $F_{S\hspace{-0.03cm}S}^*\big( \boldsymbol{\mathcal W}_{\boldsymbol{Y}_5,F}\big)=\boldsymbol{\mathcal W}\big( \phi_j\, \big\lvert \, j \in J_F\big)$ which is closed hence admits a primitive, that we will denote by 
$$
\int 
\hspace{-0.09cm} 
F_{\hspace{-0.05cm} 
\hspace{-0.04cm}
S\hspace{-0.03cm}S}^*\bigg[ 
\hspace{-0.0cm}
\scalebox{1.2}{$\int$}  \hspace{-0.03cm} {\bf Res}_F\bigg]\,.
$$

As an explicit example, let us consider the case when  $F=\zeta_{10}=P_5=1+y_{14}-y_{13}y_{24}$.  This case is completely similar\footnote{This not a coincidence. Indeed, since the Weyl group $W_{D_5}\simeq {\rm Aut}( \boldsymbol{\mathcal Y}_5)$ acts transitively on the set of weights of the half-spin representation, it acts similarly on the set of weight divisors $D_w\subset \boldsymbol{Y}_5$ hence on the set of residues of 
${\bf HLOG}_ {\boldsymbol{\mathcal Y}_5}$ along the latter (possibly up to rescaling but this is not relevant).} to the one of the web $\boldsymbol{W}_{\boldsymbol{Y}_5}^+$ whose ARs have been considered in 
Proposition \ref{Prop:AR-2-eta} and Proposition \ref{Prop:AR-Omega1} above. 
When $F=P_5$ : 
\begin{itemize}
\item the polynomial $F=P_5$ is a factor of  $U_{i,s}$ for $(i,s)=(j,3)$ with 
$j\in J_F=\{5,6,7,8,9\}$. \sk
\item  moreover for any $j\in J_F$, $F$ appears in the numerator of $U_{j,3}$, with multiplicity 1. It follows that ${\bf Res}_{F}$ is the rational 2-abelian relation 
$0= 
\sum_{j \in J_F} \epsilon_j\,   U_j^*\big( \eta \big) 
$  (where $\eta$ is the rational 2-form defined in \eqref{Eq:eta}); 
\sk 
\item 
its primitive $\int {\bf Res}_{F}$ is the 1-abelian relation 
$0= 
\sum_{j \in J_F} \epsilon_j\,   U_j^*\big( \delta \big) 
$ where $\delta$ is the privileged primitive of $\eta$  defined in 
\eqref{Eq:delta}; 
\sk 
\item then it is not difficult to compute the pull-backs 
of the components 
$\int  \hspace{-0.0cm} {\bf Res}_{F,j}$ under 
$F_{\hspace{-0.03cm} 
\hspace{-0.04cm}
S\hspace{-0.03cm}S}$. One obtains that, 
for any $j\in J_F$, 
up to the addition of 1-forms of the type $d \ln( \phi_j-z_j)$ with $z_j\in \{0,1,r_j\}$, 
one has 
\begin{equation}
\label{Eq:FSS-Int-ResFj}
F_{\hspace{-0.05cm} 
\hspace{-0.04cm}
S\hspace{-0.03cm}S}^*\Big( 
\hspace{-0.0cm}
\scalebox{1}{$\int$}  \hspace{-0.0cm} {\bf Res}_{F,j}\Big)=
\frac{1}{2}\left(  \frac{\ln(\phi_j)}{\phi_j-1}- 
 \frac{\ln(\phi_j-1)}{\phi_j}
\right)d\phi_j = \phi_j^*\big( d\tilde R\big)\, .
\end{equation}
where $\tilde R: x\mapsto \frac12 \int_1^x \big( {\ln  \left(u \right)}/{(u -1)}-{\ln  \left(u -1\right)}/{u}\big)du$ is a version of Rogers' dilogarithm defined on $[1,+\infty[$.  After integrating the identity $0=\sum_{j\in J_F} \epsilon_j F_{\hspace{-0.05cm} 
\hspace{-0.04cm}
S\hspace{-0.03cm}S}^*\big( 
\hspace{-0.0cm}
\scalebox{1}{$\int$}  \hspace{-0.0cm} {\bf Res}_{F,j}\big)$, one obtains the following explicit form for the identity corresponding to the 0-AR $\int F_{\hspace{-0.05cm} 
\hspace{-0.04cm}
S\hspace{-0.03cm}S}^*\big( 
\hspace{-0.0cm}
\scalebox{1}{$\int$}  \hspace{-0.0cm} {\bf Res}_{F}\big)$: 
\begin{equation}
\label{Eq:Rtilde}
\tilde R \left( \frac{y(x-1)}{x(y-1)}\right)
+ \tilde R \left( y\right)
-\tilde R \left(x\right)
+\tilde R \left( \frac{x}{y}\right)
-\tilde R \left( \frac{x-1}{y-1}\right)=0\, ,
\end{equation}
a relation  satisfied by all $x,y\in [1,+\infty[$ such that $1< y <x$. 
\end{itemize} 
\bk 

Given $\xi\in \mathbf P^1\setminus \{0,1,\infty\}$, one sets $\Sigma_{\xi}=\{0,1,\xi,\infty\}$ and for $i\in  [\hspace{-0.05cm}[ 10  ]\hspace{-0.05cm}]$, 
one denotes by ${\bf H}_{i}$ the pull-back under $\phi_i$ of the space of holomorphic 1-form on $\mathbf P^1\setminus\Sigma_{r_i}$ with logarithmic poles at the points of $\Sigma_{r_i}$: 
$$
{\bf H}_{i}=\phi_i^* 
{\bf H}^0\Big( \mathbf P^1, \Omega_{ \mathbf P^1}\big( {\rm Log}\,\Sigma_{r_i}
 \big) \Big)\subset {\bf H}_{a,b}\, . 
 $$
This space admits ${d\phi_i}/{\phi_i}$, ${d\phi_i}/({\phi_i}-1)$ and 
${d\phi_i}/({\phi_i}-r_i)$ as a basis and for any  $\sigma, \sigma' \in  \{0,1,r_i\} $, one defines a  weight 2 antisymmetric symbol by setting
$$
{\mathfrak R}^i_{\sigma, \sigma'}= \bigg( \frac{d\phi_i}{\phi_i-\sigma}\bigg) \wedge 
\bigg( \frac{d\phi_i}{\phi_i- \sigma'}\bigg)
=\phi_i^*\bigg( \Big( 
 \frac{dz}{z-\sigma}\Big) \wedge 
\Big( \frac{dz}{z- \sigma'}\Big)
\bigg) 
\in \wedge^2 {\bf H}_{i} \subset \wedge^2 {\bf H}_{a,b}
\, .
$$ 
Clearly, each ${\mathfrak R}^i_{\sigma,\tilde \sigma}$ is the symbol of a dilogarithmic function composed with $\phi_i$. For instance, it follows from \eqref{Eq:FSS-Int-ResFj}
 that ${\mathfrak R}^6_{0,1}$ is the symbol of the term $\tilde R(y)$ appearing in the analytic expression \eqref{Eq:Rtilde} of 
$\int F_{\hspace{-0.05cm} 
\hspace{-0.08cm}
S\hspace{-0.03cm}S}^*\big( 
\hspace{-0.0cm}
\scalebox{1}{$\int$}  \hspace{-0.0cm} {\bf Res}_{F}\big)$. \mk

For any $i$, the determination of the weight 2 symbol of 
the $i$-th components of the abelian relations $\int F_{\hspace{-0.05cm} 
\hspace{-0.08cm}
S\hspace{-0.03cm}S}^*\big( 
\hspace{-0.0cm}
\scalebox{1}{$\int$}  \hspace{-0.0cm} {\bf Res}_{\zeta_k}\big)$  of $\boldsymbol{W}_{a,b}=\boldsymbol{\mathcal W}\big(\phi_1,\ldots,\phi_{10}\big)$ 
for any 
$\zeta_k$
can be obtained by means of straightforward formal computations from the decompositions of the logarithmic derivatives of the components of the first integrals $U_{i}$ of $\boldsymbol{\mathcal W}_{\boldsymbol{Y}_5}^{GM}$. 
As an example, below we give details in the case $i=6$. 
\sk

Setting $\Delta_i= \big( d\ln U_{i,s} \big)_{s=1}^3$ for $i=1,\ldots,10$, one has the following expressions  of the $ d\ln U_{i,s}$'s as linear combinations of the $\eta_k$'s: 
\begin{align*}
\Delta_1= \, &\big( \, \eta_{4}, \eta_{5}+\eta_{3}, \eta_{6}\, \big) 
\\
\Delta_2= \, &\big( \, -\eta_{1}, \eta_{5}-\eta_{1}+\eta_{2}, \eta_{7}-\eta_{1}\, \big) 
\\
\Delta_3= \,&\big( \, \eta_{3}, \eta_{4}+\eta_{2}, \eta_{8}\, \big) 
\\
\Delta_4= \,&\big( \, -\eta_{5}, -\eta_{5}+\eta_{4}+\eta_{1}, -\eta_{5}+\eta_{9}\, \big) 
\\
\Delta_5= \,&
\big( \, \eta_{2}, \eta_{3}+\eta_{1}, \eta_{10}\, \big) 
\\
\Delta_6= \,&
\big( \, \eta_{7}-\eta_{2}-\eta_{9}, \eta_{8}+\eta_{1}-\eta_{2}-\eta_{9}, \eta_{10}-\eta_{2}-\eta_{9}\, \big) 
\\
\Delta_7= \,&
\big( \, \eta_{6}-\eta_{3}-\eta_{9}, -\eta_{3}+\eta_{8}-\eta_{9}, -\eta_{3}+\eta_{10}+\eta_{4}-\eta_{9}\, \big) 
\\
\Delta_8= \,&
\big( \, \eta_{6}+\eta_{1}-\eta_{9}, \eta_{7}-\eta_{9}, \eta_{5}+\eta_{10}-\eta_{9}\, \big) 
\\
\Delta_9= \,&
\big( \, \eta_{6}-\eta_{8}+\eta_{2}, \eta_{7}+\eta_{3}-\eta_{8}, \eta_{10}-\eta_{8}\, \big) 
\\
\Delta_{10}= \, &
\big( \, \eta_{6}-\eta_{5}-\eta_{8}, \eta_{7}-\eta_{5}+\eta_{4}-\eta_{8}, -\eta_{5}-\eta_{8}+\eta_{9}\, \big) 
%
%
\end{align*}

The indices $k$ such that the $6$-th component of the residue 
$${\bf Res}_k
={\bf Res}_{\zeta_k}\big({\bf HLOG}_{\boldsymbol{Y}_5} \big) 
\in \boldsymbol{AR}^2\Big( 
\boldsymbol{\mathcal W}_{\boldsymbol{Y}_5}^{GM}
\Big)$$  is non-trivial are exactly the ones such that $\eta_k$ appears in the expression of $\Delta_6$ above. Namely, the set of such indices is $\mathfrak K_6=\{1,2,7,8,9,10\}$. For each $k\in \mathfrak K_6$, let $\kappa_k\in \{-1,0,1\}^3$ be the triplet obtained from $\Delta_6$ be deriving it formally with respect to $\eta_k$ (the latter hence being 
then considered as a symbolic indeterminate). One has 
\begin{align*}
\kappa_1= & \, (0, 1, 0)
&& \kappa_2=(-1, -1, -1) 
&& \kappa_7=(1, 0, 0) 
\\
\kappa_8= & \, (0, 1, 0)
&& \kappa_9=(-1, -1, -1) 
&& \kappa_{10}=(0, 0, 1)\, . 
\end{align*}
From the formula \eqref{Eq:Omega-0} for $\Omega$, it follows that 
for any $\kappa_k=(\alpha_k,\beta_k,\gamma_k)$, the $6$-th component 
${\bf Res}_{k,6}=$
of  the 
rational 2-abelian relation 
${\bf Res}_k$ is 
$${\bf Res}_{k,6}= \alpha_k \left( \frac{dU_{6,2}}{U_{6,2}}\wedge 
\frac{dU_{6,3}}{U_{6,3}}
\right) 
-\beta_k  \left(
\frac{dU_{6,1}}{U_{6,1}}\wedge 
\frac{dU_{6,3}}{U_{6,3}}\right) 
+\gamma_k \left( \frac{dU_{6,1}}{U_{6,1}}\wedge 
\frac{dU_{6,2}}{U_{6,2}}\right)  \, .$$
Each ${\bf Res}_{k,6}$ is a rational 2-form, but the expression above, up to a scaling factor (equal to 1/2) can also be seen as an element of $\wedge^2 {\bf H}_{\boldsymbol{Y}_5}$, which will be denoted by $\frac12 {\bf Res}_{k,6}^{symb}$. 
Using \eqref{Eq:du-i-s}, for any $k\in {\mathfrak K}_6$ one can compute the pull-back  $ 
\varrho_k^6=
F_{\hspace{-0.05cm} 
\hspace{-0.08cm}
S\hspace{-0.03cm}S}^*\big( \frac12  {\bf Res}_{k,6}^{symb}\big)$ which belongs to $\wedge^2 {\bf H}_6$. 
One has 
\begin{align*}
\varrho_k^6 = &\, \frac{\alpha_k}{2} \left(
\bigg(  \frac{dy}{y}- \frac{dy}{y-1}
\bigg)
\wedge 
\bigg(  \frac{dy}{y-b}- \frac{dy}{y-1}
\bigg)
\right) 
 -
 \frac{\beta_k}{2}  \left(
\bigg(  -\frac{dy}{y-1} \bigg)
\wedge 
\bigg(  \frac{dy}{y-b}- \frac{dy}{y-1}
\bigg)
\right) \\
&\, \hspace{1.5cm}+
 \frac{\gamma_k}{2}
 \left( 
\bigg(  -\frac{dy}{y-1} \bigg)
\wedge 
\bigg(  \frac{dy}{y}- \frac{dy}{y-1}
\bigg)
\right)  \, .
\end{align*}

Expressed in terms of the 
dilogarithmic symbols $\mathfrak R^6_{\sigma,\sigma'}$ defined above and considering that $r_6=b$,  one obtains 
\begin{align*}
\varrho_k^6 = &\, \frac{\alpha_k}{2} \left(
{\mathfrak R}^6_{0,b}
-{\mathfrak R}^6_{0,1}-{\mathfrak R}^6_{1,b}
\right) 
 +
 \frac{\beta_k}{2}  \left(
{\mathfrak R}^6_{1,b}
\right)+
 \frac{\gamma_k}{2}
 \left( 
{\mathfrak R}^6_{0,1}
\right)  \, .
\end{align*}
hence 
\begin{align}
\label{Eq:varrho}
\varrho^6_1= & \,{\mathfrak R}^6_{1,b}
&& \varrho^6_2=- {\mathfrak R}^6_{0,b}
&& \varrho^6_7=-{\mathfrak R}^6_{0,1}-{\mathfrak R}^6_{1,b}+{\mathfrak R}^6_{0,b} 
\\
\varrho^6_8= & \, {\mathfrak R}^6_{1,b}
&& \varrho^6_9=-{\mathfrak R}^6_{0,b}
&& \varrho^6_{10}={\mathfrak R}^6_{0,1}\, .
\nonumber
%
%
\end{align}
One can verify that for any $k\in {\mathfrak K}_6$,  $\varrho^6_k$ is also the symbol 
of the dilogarithmic function which is the 6-th component of the 0-AR 
$\int F_{\hspace{-0.05cm} 
\hspace{-0.08cm}
S\hspace{-0.03cm}S}^*\big( 
\hspace{-0.0cm}
\scalebox{1}{$\int$}  \hspace{-0.0cm} {\bf Res}_{\zeta_k}\big)$. 
Then using  \eqref{Eq:varrho}, straightforward computations give us that the 
symbol of the sixth component  of the sum 
\begin{equation}
\label{Eq:quasi-la-fin}
\sum_{k=1}^{10} F_{\hspace{-0.05cm} 
\hspace{-0.08cm}
S\hspace{-0.03cm}S}^*
\big(
\eta_k 
\big) 
\int F_{\hspace{-0.05cm} 
\hspace{-0.08cm}
S\hspace{-0.03cm}S}^*\big( 
\hspace{-0.0cm}
\scalebox{1}{$\int$}  \hspace{-0.0cm} {\bf Res}_{k}\big)
\end{equation}
is equal to 
$$
\Big(\frac{dy}{y-b}\Big)\otimes 
{\mathfrak R}_{0,1}
-
\Big(\frac{dy}{y-1}\Big)\otimes 
{\mathfrak R}_{0,b}
+
\Big(\frac{dy}{y}\Big)\otimes 
{\mathfrak R}_{1,b}\,.
$$
This is the symbol of the complete antisymmetric weight-3 hyperlogarithm ${AH}_3(\phi_6)$ with respect to the set $\Sigma_{r_6}=\Sigma_b = \{0,1,b,\infty\} \subset {\mathbf P}^1$ (cf. \cite[\S2.1.3]{CP}). Through similar and straightforward computations, one can verify that the same result holds for each component of equation \eqref{Eq:quasi-la-fin}. It follows that the 1-abelian relation
\eqref{Eq:quasi-la-fin} 
 of $\boldsymbol{W}_{a,b}$ corresponds to the derivative of the hypertrilogarithmic  functional abelian relation ${\bf Hlog}({\rm dP}_4)$. This leads to the justification of the formula:
\begin{equation}
\label{Eq:la-fin}
{{\bf Hlog}({\rm dP}_4)}=\sum_{k=1}^{10} \bigintsss 
\left[
F_{\hspace{-0.05cm} 
S\hspace{-0.03cm}S}^*\big(
\eta_k 
\big) 
\hspace{-0.08cm}
\bigintssss    F_{\hspace{-0.05cm} 
\hspace{-0.08cm}
S\hspace{-0.03cm}S}^*\left( 
\hspace{-0.0cm}
\int
  \hspace{-0.08cm} {\bf Res}_{k}\Big(
{\bf HLOG}_{\boldsymbol{Y}_5}
 \Big)\right)\right]
\end{equation}
which summarizes the preceding discussion on how the weight-3 hyperlogarithmic functional identity ${\bf Hlog}({\rm dP}_4)$ can be derived 
from the differential identity between differential 2-form with logarithmic coefficients ${\bf HLOG}_{\boldsymbol{Y}_5}$.

\section{\bf The 
$\boldsymbol{(r-3)}$-rank  and the
$\boldsymbol{(r-3)}$-abelian relations  
of $\boldsymbol{\boldsymbol{\mathcal W}^{GM}_{\hspace{-0.1cm} {\mathcal Y}_r}}$ for $\boldsymbol{r=4,\ldots,7}$}
\label{SS:r=6,7}
In this section, we extend some of the results obtained above to the cases $r = 6$ and $r = 7$. The current proofs are based on explicit computations carried out using a computer algebra system. These computational details are omitted here, but we intend to provide conceptual proofs in a future work.

 \subsection{}  
 \label{S:7-1}
 In addition to the notations introduced in the Introduction, we use also the following ones: 
\begin{itemize}
\item we denote by $\boldsymbol{D}_r^\bullet$ the 1-marked Dynkin diagram associated to the simple Lie group $G_r$, with one marked vertex corresponding to the maximal parabolic subgroup $P_r\subset G_r$ such that 
$\boldsymbol{\mathcal G}_r=G_r/P_r$. We denote by $\gamma_r$ the dimension of $\boldsymbol{\mathcal G}_r$. Then $\gamma_r-r$ is equal to the dimension of the quotient 
space $\boldsymbol{\mathcal Y}_r$ , denoted by $y_r$. One has 
$$
\gamma_r=\dim \boldsymbol{\mathcal G}_r\qquad \mbox{ and } 
\qquad y_r=\dim \boldsymbol{\mathcal Y}_r=\gamma_r-r\, ;
$$
\sk 
\item  $V_r$ stands for the corresponding minuscule representation (of dimension $\upsilon_r$ equal to 10, 16, 27 and 56 for $r=4,5,6,7$ respectively);
\sk 
\item $\Delta_r$ stands for the moment polytope of $\boldsymbol{\mathcal G}_r$, which coincides with the weight polytope in $\mathfrak h_r^\vee (\mathbf R)$ of the minuscule representation $V_r$.  The series of $\Delta_r$'s coincides with the one of Gosset's polytopes $(r-4)_{21}$. Each admits two kinds of facets, some $(r-1)$-simplices and some $(r-1)$-cross-polytopes (also known as `orthoplexes'). Only the latter are $\boldsymbol{\mathcal W}$-relevant. 
The first two of the series are also known by other names: 
$\Delta_4$ is the hypersymplex $\Delta_{2,5}$ and $\Delta_5$ is the 5-demihypercube. 
\sk 
\item  $\boldsymbol{\mathcal L}_r$ (resp.\,$\boldsymbol{\mathcal K}_r$) denotes the number of lines (resp.\,of conic fibrations) on a fixed smooth del Pezzo surface ${\rm dP}_{9-r}$; 
\sk 
\item we recall that  there are natural bijections between $\boldsymbol{\mathcal L}_r$ and the set of weights $\mathfrak W_r$ of $V_r$ which is also the set of 
vertices $V_0(\Delta_r)$ of the weight polytope. Hence one has 
$$\ell_r=\big\lvert \boldsymbol{\mathcal L}_r\big\lvert= \big\lvert V_0(\Delta_r)\big\lvert \,  =\dim V_r=\upsilon_r\, .$$ Similarly, the set $\boldsymbol{\mathcal K}_r$ of conic classes on ${\rm dP}_{9-r}$ identifies with the one of cross-polytope facets of $\Delta_r$. 
\end{itemize}

\begin{table}[!h]
\begin{tabular}{|c|c|c|c|c|c|c|c|c|}
\hline
 \begin{tabular}{c}  \\  \vspace{0.4cm} $\boldsymbol{r}$ 
\\  \end{tabular}
 & $\boldsymbol{D}_r^\bullet$  & 
 ${}^{}$
 \hspace{-0.15cm} 
 \begin{tabular}{l} 
 $\boldsymbol{\upsilon_r}=\dim V_r$\\
 ${}^{}$ \hspace{0.2cm} $= \big\lvert \, 
\boldsymbol{\mathcal L}_r\,  \big\lvert =\ell_r$
 \end{tabular}
  & $\boldsymbol{\mathcal G}_r
\subset \mathbf PV_r  $ & $\boldsymbol{\gamma_r}=\dim \boldsymbol{\mathcal G}_r$ & $\boldsymbol{y_r}= \dim  \boldsymbol{\mathcal Y}_r$ &  $\kappa_r=\big\lvert \boldsymbol{\mathcal K}_r\big\lvert$ \\
 \hline  \hline
 \begin{tabular}{c}  \\  \vspace{0.4cm} $4$ 
\\  \end{tabular} &  
 \begin{tabular}{c}\vspace{-0.2cm}\\
\scalebox{0.16}{\includegraphics{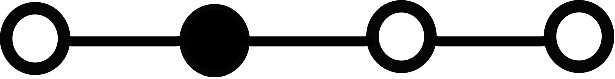}}
\end{tabular}
 & 10 & $G_2\big({\mathbf C}^5\big)\subset \mathbf P^{9}$  &   5    &   2   & 5 
 \\
  \hline  
 \begin{tabular}{c}  \\  \vspace{0.4cm} $5$ 
\\  \end{tabular} & 
 \begin{tabular}{c}\vspace{-0.2cm}\\
\scalebox{0.16}{\includegraphics{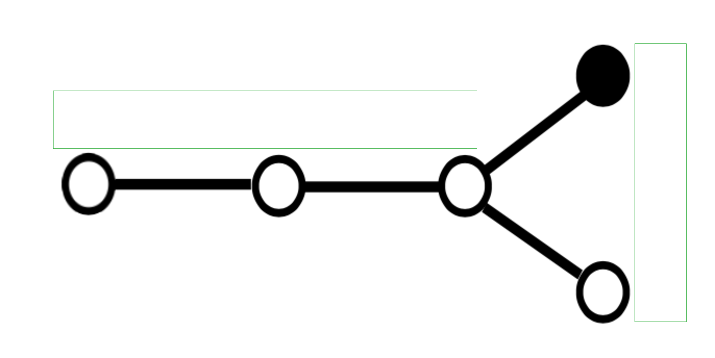}}
\end{tabular}
  & 16 & $\mathbb S_5\subset \mathbf P^{15}$  &   10    &   5  & 10 
 \\
   \hline  
 \begin{tabular}{c}  \\  \vspace{0.4cm} $6$ 
\\  \end{tabular} &  
 \begin{tabular}{c}\vspace{-0.2cm}\\
\scalebox{0.16}{\includegraphics{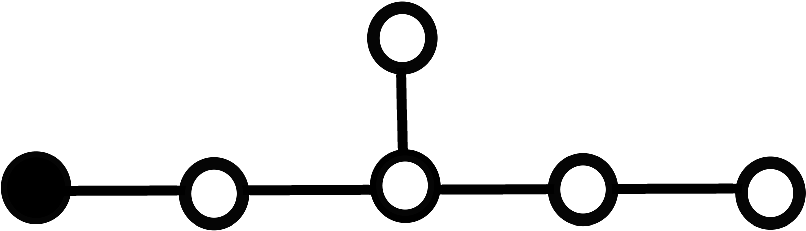}}
\end{tabular}
 & 27 & $\mathbb O_{\mathbf C}\mathbf P^2 \subset \mathbf P^{26}$  &   16    &   10   & 27
 \\
   \hline  
 \begin{tabular}{c}  \\  \vspace{0.4cm} $7$ 
\\  \end{tabular} &
 \begin{tabular}{c}\vspace{-0.2cm}\\
\scalebox{0.16}{\includegraphics{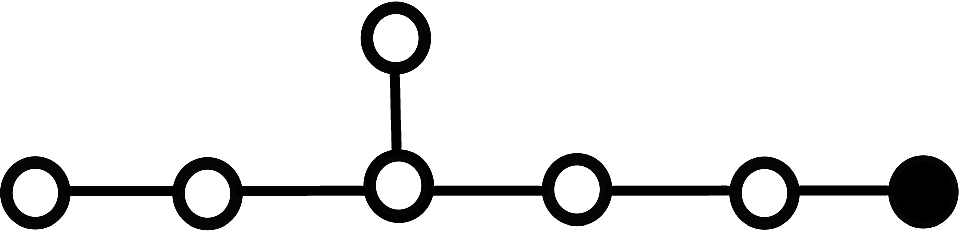}}
\end{tabular}
   & 56 & ${\bf Fr}\subset \mathbf P^{55}$  &   27     &  20   & 126 
 \\
 \hline 
 \end{tabular}
\bk  
 \label{Table:3}
\end{table}

Many features and 
numerical quantities of the objects considered above are given in the table 
above 
in which $\mathbf O\mathbf P^2 \subset \mathbf P^{26}$ stands for the Cayley plane over the complex octonions $\mathbf O$ and where ${\bf Fr}\subset \mathbf P^{55}$ denotes the 27-dimensional Freudenthal variety $E_7/P_6$. Both varieties are considered in their fundamental embeddings which can also respectively be described as the projectivizations 
of the exceptional rank 3 complex Jordan algebra $\mathbf J={\rm Herm}_3\big( \mathbf O\big)$  and of the space $Z_2(\mathbf J)$ of Zorn $2\times 2$ matrices  over $\mathbf J$. 
\mk 

The Gelfand-MacPherson web we are interested in lives on the quotient space 
$\boldsymbol{\mathcal Y}_{r}= \boldsymbol{\mathcal G}_r /\hspace{-0.06cm}/ H_r$. Given an orthoplex facet $F$ of $\Delta_r$, , there is a commutative diagram:
$$
\scalebox{1}{
  \xymatrix@R=0.2cm@C=1.5cm{ 
   \boldsymbol{\mathcal G}_r
\ar@{->}[dd]
   \ar@{-->}[r]^{ \hspace{0.1cm}\Pi_F} &  {\mathcal Q}_F
\ar@{->}[dd]
\\ & \\
\boldsymbol{\mathcal Y}_r  \ar@{-->}[r]^{ \hspace{0.1cm}\pi_F} 
& \boldsymbol{\mathcal Y}_F \, 
  }}
$$
in which the vertical maps are the quotient maps, and the horizontal maps are the corresponding \emph{face maps}.
It can be verified that ${\mathcal Q}_F$ is a 
 smooth hyperquadric in $\mathbf P^{2r-3}$ and that $\boldsymbol{\mathcal Y}_F$ embeds into $\mathbf P^{r-3}$ as a Zariski open subset. 
  With respect to this embedding, one has
 $\boldsymbol{\mathcal Y}_F^\times =\mathbf P^{r-3}\setminus \mathcal H_F$ where $\mathcal H_F$ is an arrangement of $r-1$ hyperplanes in general position.

We have that the corresponding Gelfand-MacPherson web considered here 
$$
\boldsymbol{\mathcal W}^{GM}_{\hspace{-0.1cm} \boldsymbol{\mathcal Y}_r}=
\boldsymbol{\mathcal W}\Big( \, \pi_F\, \big\lvert \, F\in \boldsymbol{\mathcal K}_r \Big)
$$
is a $\kappa_r$-web of codimension $r-3$ on $\boldsymbol{\mathcal Y}_{r}$which is a rational variety of dimension $y_r$ equal to 2,5,10 and 20 for $r=4,5,6$ and $7$ respectively.  Since the hyperplanes of $\mathcal H_r$ are in general position, for some homogeneous coordinates $u_0,u_1,\ldots,u_{r-3}$ on $\mathbf P^{r-3}$, one can assume that $\mathcal H_r$ is cut out by 
$$u_0u_1u_2\ldots u_{r-3}\big(u_0+u_1+\cdots+u_{r-3}\big)=0\, .$$ 

\subsubsection{\bf Explicit birational models of $\boldsymbol{{\mathcal W}^{GM}_{\hspace{-0.1cm} {\mathcal Y}_r}}$ for $\boldsymbol{r=6,7}$}
\label{SS:Explicit-birational-models}
The approach used above to study $\boldsymbol{\mathcal W}^{GM}_{\hspace{-0.1cm} \boldsymbol{\mathcal Y}_5}$ and its abelian relations relies on an explicit birational model of this web, obtained via a concrete birational parametrization of the spinor variety $\mathcal{S}_5$ embedded in the projectivization of the half-spin representation.  A key observation is that, for $r = 6,7$ as well, one can construct similar (birational) parametrizations of each corresponding homogeneous space $\boldsymbol{\mathcal G}_r$, considered in their respective minuscule embeddings. This allows the same strategy, based on explicit formal computations, assisted by a computer algebra system, to be effectively extended to these higher-rank cases. In this subsection, we succinctly describe the parametrizations  of $\boldsymbol{\mathcal G}_6=\mathbf O\mathbf P^2$ and  $\boldsymbol{\mathcal G}_7={\bf Fr}$ that we have worked with.  Our main reference for the material presented here is Yokota's remarkable book \cite{Yokota}. \sk

In order to obtain, in each case, explicit and well-behaved formulas (defined over ${\mathbf R}$, say) for the action of the Cartan torus, we considered a real rational parametrization of the split real form $\boldsymbol{\mathcal G}_r^{\mathbf R}$ of the complex homogeneous space $\boldsymbol{\mathcal G}_r$, embedded into the projectivization of the split real form $V^\mathbb{R}_r$ of the corresponding (complex) minuscule representation $V_r$.
To this end, we worked with the algebra ${\mathbb O}_s$ of split (real) octonions rather than with the usual (Hurwitz) octonion algebra $\mathbb{O}$. This choice is ultimately inconsequential, since tensoring $\mathbb{O}_s$ and $\mathbb{O}$ with $\mathbf{C}$ yields two isomorphic complex algebras.
In what follows, we will work with the complex algebra 
$\mathbf O=\mathbb O_s$ that we will call 
`the algebra of complex octonions'. \sk

Let us consider the following two invertible linear maps from $\mathbf R^8$ into itself. 
\begin{align*}
\phi : (x_i)_{i=0}^7 &\longmapsto \big(\, x_{0}-x_{7}, x_{1}-x_{6}, x_{2}-x_{5}, x_{3}-x_{4}, x_{3}+x_{4}, 
x_{2}+x_{5}, x_{1}+x_{6}, x_{0}+x_{7}\,\big) \\
\psi : 
(u_i)_{i=0}^7 & \longmapsto  \frac12
\big(\, {u_{0}}  + {u_{7}}  \, , \,  {u_{1}}  + {u_{6}}  
\, , \,   {u_2  }  + {u_{5}}  \, , \,  {u_{3}}  + {u_{4}}  \, , \, 
   {u_{4}}- {u_{3}}  \, , \,    {u_{5}}- {u_2  }  \, , \,
   {u_{6}}- {u_{1}}  \, , \,  {u_{7}}  - {u_{0}}  
 \, \big) 
\end{align*}
These maps are inverses of each other and denoting by 
$*$ the usual octonionic product on $\mathbb O=\mathbf R^8$, one defines $\mathbb O_s$ as $\mathbf R^8$ endowed with the product given by
\begin{equation}
\label{Eq:Special-Octonionic-Product}
u\cdot v=\phi\big(  \psi(u)*\psi(v) \big)
\end{equation}
 for any $u,v\in \mathbb O_s$. 
 Then the unit element ${\bf 1}$, the conjugate $\overline{u}$ and the norm $\lVert \, u\, \lVert^2$ of any element $u=(u_i)_{i=0}^7$ are given respectively by 
 \begin{align*}
{\bf 1}=&\, (1,0,0,0,0,0,0,1)\\
 \overline{u}=&\,
\big( \, u_{7} \, , \, -u_{1} \, , \, -u_{2} \, , \, -u_{3}\, , \, -u_{4} \, , \, -u_{5} \, , \, -u_{6} \, , \, u_{0}\, 
\big) 
\\ \mbox{ and } \quad 
\lVert\,u\,\lVert^2 =&\, 
u_{0} u_{7}+u_{1} u_{6}+u_{2} u_{5}+u_{3} u_{4}\, .
\end{align*}
(We thus have $u\cdot \overline{u} = \overline{u} \cdot u=
\lVert\,u\,\lVert^2 \,{\bf 1}$ for every $u$).

For $O$ one of the three algebras $\mathbb O$, $\mathbb O_s$ or $\mathbf O$, with definition field $\bf k=\mathbf R$ or $\mathbf C$, we define the space of hermitian $3\times 3$ matrices with coefficients in $O\simeq {\bf k}^{8}$ by setting 
$$
 {\rm Herm}_3(O)=\left\{\,
\, \begin{bmatrix}
s_1 &    v_3 & \overline{v_2} \\
\overline{v_3 } & s_2 & v_1 \\
v_2 & \overline{v_1} & s_3 
\end{bmatrix}\, \bigg\lvert \, \begin{tabular}{c}
$s_1,s_2,s_3\in {\bf k}$\\
$v_1,v_2,v_3\in O$
\end{tabular}\, 
\right\}\simeq  {\bf k}^3\oplus O^3\simeq 
{\bf k}^{27}\, , 
$$
where the symbols $\simeq$ stand for isomorphisms of {\bf k}-vector spaces.\footnote{Endowed with the symmetrization of the naive matricial product, ${\rm Herm}_3(O)$ becomes a rank 3 Jordan algebra over ${\bf k}$, which is simple if $O$ is non-degenerate (that is when $O\in \{ \mathbb O,\mathbf O\, \}$).  We will not really use these facts in this paper, 
they are mentioned only in passing.} In this definition, the diagonal coefficients $s_i\in{\bf k}$ have to be understood as scalar elements of $O$, that is modulo the embedding ${\bf k}\hookrightarrow O$, $s\longmapsto s\,{\bf 1}$, where ${\bf 1}$ is the unit element.  In what follows, we set $\mathbb J$, $\mathbb J_s$ and $\mathbf J$ for $ {\rm Herm}_3(O)$ when $O=\mathbb O$, $\mathbb O_s$ and $\mathbf O$ respectively. When $O$ is unspecified, we will denote $J$ for  ${\rm Herm}_3(O)$ to shorten the notations.  The algebra $J$ being power-associative and of rank 3, there exist homogeneous forms $T, Q,N\in {\rm Sym}\big(J^\vee\big)$, of degree 1,2 and 3 respectively, such that $X^3-T(X)X^2+Q(X)X-N(X){\bf 1}=0$ for any $X\in J$. The linear map $T$ is the natural trace and when  
$J= {\rm Herm}_3(O)$,  the cubic form $N$ is known as the {\it `determinant'} . As such, it is denoted by ${\rm det} : J\rightarrow {\bf k}$ and is explicitly given by 
$$
{\rm det}\left( \,
\begin{bmatrix}
s_1 &    v_3 & \overline{v_2} \\
\overline{v_3 } & s_2 & v_1 \\
v_2 & \overline{v_1} & s_3 
\end{bmatrix}
\, \right) = 
{T}\Big(\big[v_1,v_2,v_3 \big]\Big)- \sum_{i=1}^3 s_i \, \lVert\, v_i \, \lVert^2
+s_1s_2s_3
\, . 
$$
where 
$[\cdot,\cdot,\cdot] : J^3\rightarrow {\bf k}$ denotes the associated  {\it `triple product'} given by 
$[u,v,w]=(u v) w-(u w) v+(v w) u$ for $u,v,w\in J$.  Then one can define {\it `the adjoint $X^\#$'} of an element of $J$  by setting 
$$X^\#=X^2-T(X)\,X+Q(X)\,{\bf 1}$$
for any $X\in J$.  One has 
$X X^\#=X^\# X={\rm det}(X) \, {\bf 1}_J$ for any $X$.


To deal with the case $r=7$ and the $27$-dimensional Freudenthal variety ${\bf Fr}=E_6/P_6=\boldsymbol{\mathcal G}_7\subset \mathbf PV_7\simeq \mathbf P^{55}$, we need to go one step further and consider the {\it space 
 of Zorn $2\times 2$ matrices with coefficients in $J$}, defined by  
$$
\boldsymbol{\mathcal Z}_2(J)=\left\{
\, \, \begin{bmatrix}
{}^{}\, \zeta_1 &   Z_1 {}^{}\, \\
{}^{}\, Z_2 & \zeta_2 {}^{}\, 
\end{bmatrix}\,\, \bigg\lvert \, \begin{tabular}{c}
$\zeta_1,\zeta_2 \in {\bf k}$\\
$Z_1,Z_2\in J$
\end{tabular}\, 
\right\}\simeq  {\bf k}\oplus J\oplus J\oplus {\bf k}\simeq 
{\bf k}^{56}\, .
$$

Then we consider the following Veronese maps 
\begin{align*}
\nu_O^2\, : \, \, O\oplus O & \longrightarrow \,\,J  && 
\nu_J^3 \, : \, \,  J   \longrightarrow  \boldsymbol{\mathcal Z}_2(J)
\\
(u,v)& \longmapsto 
\scalebox{0.9}{$\begin{bmatrix}
{}^{}\, 1 &    u & \overline{v} \\
{}^{}\, \overline{u} & \lVert\,u \,\lVert^2 & 
\overline{v\cdot u}\, {}^{} 
 \\
v & v\cdot u &  \lVert\,v \,\lVert^2
\end{bmatrix}$}
&& {}^{} \hspace{0.8cm} X \longmapsto 
\scalebox{0.9}{$\begin{bmatrix}
1 &    X \\
X^\# & {\rm det}(X)
\end{bmatrix}$}\, .
\end{align*} 
These two Veronese maps are affine embeddings. 
The image of $\nu_O^2$ (resp.\,of $\nu_J^3$) lands into the algebraic subvariety of $J$ (resp.\,of $\boldsymbol{\mathcal Z}_2(J)$) cut out by the quadratic equation  $X^2=T(X)X$ (resp.\,the set of quadratic equations $Z_1Z_2=\zeta_1\zeta_2{\bf 1}$, 
$Z_1^\#-\zeta_1Z_2=Z_2^\#-\zeta_2 Z_1=0$).

\begin{prop}
Over $\mathbf C$, that is for $O=\mathbf O$ and $J=\mathbf J$, the Zariski closures of the images of the two Veronese maps above are the minuscule homogeneous spaces 
$\boldsymbol{\mathcal G}_6$ and $\boldsymbol{\mathcal G}_7$ in their minuscule embeddings respectively. In other terms, one has : 
$$ 
\overline{\nu_{\mathbf O}^2\big(\mathbf O\oplus \mathbf O\big)}=\mathbf  O\mathbf P^2\subset \mathbf P(\mathbf J)\simeq \mathbf P^{26}
\qquad 
\mbox{ and } 
\qquad 
\overline{\nu_{\mathbf J}^3\big(\mathbf J\big)}={\bf Fr} 
\subset 
\mathbf P\big(\boldsymbol{\mathcal Z}_2(\mathbf J)\big)\simeq
\mathbf P^{55}\, . 
$$
\end{prop}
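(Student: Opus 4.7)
The plan is to verify, in each of the two cases, that the image of the affine Veronese map satisfies the classical equations cutting out the target minuscule homogeneous variety (viewed as the projectivized cone of rank-one elements in the relevant Jordan-theoretic structure), and then conclude via a dimension count together with irreducibility of the image.

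For the first parametrization, I would begin by recalling the classical description of $\mathbf O\mathbf P^2 = E_6/P_1$: under its minuscule embedding into $\mathbf P(\mathbf J)$, it coincides with the projectivization of the cone of nonzero rank-one elements of the exceptional Jordan algebra $\mathbf J = \mathrm{Herm}_3(\mathbf O)$, i.e.\ of those $X \in \mathbf J\setminus\{0\}$ with vanishing adjoint $X^\# = 0$, equivalently $X^2 = T(X) X$. I would then check directly that $X = \nu_{\mathbf O}^2(u,v)$ satisfies $X^\# = 0$: reading off the definition of $\#$ through $X^\#_{ij}$ as the appropriate ``octonionic $2\times 2$ minors'', each coefficient collapses to a short identity using only $\lVert u\rVert^2 \mathbf 1 = u\cdot \overline u$, $\overline{v\cdot u}=\overline u\cdot \overline v$ and $\lVert v\cdot u\rVert^2 = \lVert u\rVert^2\lVert v\rVert^2$ (all valid on $\mathbf O$). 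Since $\nu_{\mathbf O}^2$ is an affine embedding, with a rational section that reads off $u$ and $v$ from the two off-diagonal entries of its first row, its image is an irreducible locally closed subvariety of dimension $16 = \dim(\mathbf O\oplus \mathbf O) = \dim \mathbf O\mathbf P^2$. The Zariski closure is therefore an irreducible subvariety of $\mathbf O\mathbf P^2$ of the right dimension, hence equal to $\mathbf O\mathbf P^2$.

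For the second parametrization, I would use the analogous Brown--Freudenthal description of ${\bf Fr} = E_7/P_7$ in its minuscule embedding: its affine cone in $\boldsymbol{\mathcal Z}_2(\mathbf J)$ is the set of strictly regular (``rank-one'') elements of the Freudenthal triple system, characterized by the three quadratic identities
\begin{equation*}
Z_1 \cdot Z_2 = \zeta_1\zeta_2\,\mathbf 1,\qquad Z_1^\# = \zeta_1\, Z_2,\qquad Z_2^\# = \zeta_2\, Z_1.
\end{equation*}
For $\nu_{\mathbf J}^3(X) = (1,X,X^\#,\det X)$ these become $X\cdot X^\# = \det(X)\,\mathbf 1$, $X^\# = 1\cdot X^\#$ (trivial), and $(X^\#)^\# = \det(X)\,X$. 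The first is one of the defining identities for the trace/determinant data of $\mathbf J$, the third is the classical adjoint identity $X^{\#\#} = \det(X)\, X$ for rank-$3$ simple Jordan algebras. Both hold over $\mathbf C$ for $\mathbf J$. Generic injectivity of $\nu_{\mathbf J}^3$ is obvious (recover $X$ from the upper-right block), so its image is irreducible of dimension $27 = \dim \mathbf J = \dim {\bf Fr}$, and the closure must coincide with ${\bf Fr}$.

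The genuinely nontrivial input is the adjoint identity $X^{\#\#} = \det(X)\, X$ on $\mathbf J$ (and the companion $X\cdot X^\# = \det(X)\,\mathbf 1$); I would cite these as standard facts about the exceptional Jordan algebra rather than reprove them, since they are the main technical ingredient that makes the cubic Veronese land in the Freudenthal variety. Apart from this, both verifications reduce to multilinear manipulations within $\mathbf O$ and $\mathbf J$, and the remainder of the argument is the routine dimension-plus-irreducibility bookkeeping sketched above.
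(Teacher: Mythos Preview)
Your proposal is correct and follows essentially the same route the paper sketches: the text immediately preceding the proposition already records that the images of $\nu_O^2$ and $\nu_J^3$ land in the loci cut out by $X^2=T(X)X$ and by $Z_1Z_2=\zeta_1\zeta_2\mathbf 1$, $Z_1^\#=\zeta_1 Z_2$, $Z_2^\#=\zeta_2 Z_1$ respectively, and the proposition itself is then stated without further proof, with Yokota's book as the background reference. Your dimension-plus-irreducibility argument simply fills in the details the paper leaves implicit.
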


The reason for our choice of the specific form\eqref{Eq:Special-Octonionic-Product} of the octonionic product lies in the fact that it yields particularly nice and fully explicit formulas-- moreover defined over \( \mathbb{R} \)-- for the action of the maximal torus of \( E_6 \) and \( E_7 \) on the associated minuscule representations \( V_6 \) and \( V_7 \).

Let us consider the following coordinates on the spaces of matrices introduced above: 
\begin{itemize}
\item  on $J= {\rm Herm}_3(\mathbb O)$,  $(\xi,v)$ 
with $\xi=(\xi_i)_{i=1}^3 \in {\bf k}^3$ and $v=(v_i)_{i=1}^3  \in O^3$ will stand for the coordinates of the hermitian matrix 
$
\bigg[ \hspace{-0.1cm} \scalebox{0.75}{
\begin{tabular}{rlc}
$\xi_1$ & $v_3$ & $\overline{v_2}$ \vspace{-0.1cm}\\
$\overline{v_3}$ & $s_2$ & ${v_1}$ \vspace{-0.1cm} \\
$v_2$ & $\overline{v_1}$ & $\xi_3$ 
\end{tabular}}
\hspace{-0.1cm}  \bigg]$; 
\item on $\mathcal Z_2(J)$, the coordinates of  the Zorn matrix 
$
\Big[ \hspace{-0.1cm} 
\scalebox{0.75}{
\begin{tabular}{cc}
$\zeta_1$ & $Z_1$   \vspace{-0.1cm}\\
$Z_2$ & $\zeta_2$ 
\end{tabular}}
\hspace{-0.1cm}  \Big]$
is the 4-tuple 
$(\zeta_1,Z_1,Z_2,\zeta_2)$  with 
$\zeta_i\in {\bf k}$ and $Z_i\in J$ for $i=1,2$. 
\end{itemize}

Starting from now, we deal with $O=\mathbb O_s$ and $J=\mathbb J_s={\rm Herm}_3\big(\mathbb O_s \big)$.

We start by considering the 
action of ${\bf H}^4_{>0}=\{\, \boldsymbol{h}=(h_i)_{i=1}^4 \in \big( \mathbf R_{>0}\big)^4\, \big\}$ on $\mathbb O_s$ given by
\begin{equation}
\label{Eq:Action-H-SO8}
\boldsymbol{h}\cdot u=\big(h_i)_{i=1}^4\cdot \big(u_j\big)_{j=1}^8= \Big(\, 
h_1 u_1\, , \, 
h_2 u_2\, , \, 
h_3 u_3\, , \, 
h_4 u_4\, , \, 
{h_4}^{-1} u_5\, , \, 
{h_3}^{-1} u_6\, , \, 
{h_2}^{-1} u_7\, , \, 
{h_1}^{-1} u_8\, 
\,\Big)
\end{equation}
for any $\boldsymbol{h}=(h_i)_{i=1}^4 \in {\bf H}^4_{>0}$ and any $u=(u_i)_{i=1}^8\mathbb O_s$. Clearly one has $\lVert \boldsymbol{h}\cdot u \lVert^2=\lVert  u \lVert^2$ hence the complexification of \eqref{Eq:Action-H-SO8} yields the action of the Cartan torus of ${\rm SO}_8(\mathbf C)$ on its standard representation. 
 
The triality on  ${\rm SO}_8(\mathbf C)$ gives us the two following order 3 automorphisms 
  $\varphi $ : $\boldsymbol{h}\mapsto \boldsymbol{h}^\varphi $ 
and $\psi=\varphi^2$: $\boldsymbol{h}\mapsto \boldsymbol{h}^\psi $
of ${\bf H}^4_{>0}$, defined by  (with $\boldsymbol{h}=(h_i)_{i=1}^4 \in {\bf H}^4_{>0}$): 
\begin{align*}
\boldsymbol{h}^\varphi=\varphi(\boldsymbol{h})=  & \, 
\left(\,
\frac{\sqrt{h_{3}}}{\sqrt{h_{1}\,h_{2}\,h_{4}}}
\, , \,  
\frac{\sqrt{h_{1}\,h_{2}\,h_{3}}}{\sqrt{h_{4}}}
\, , \,  
\frac{\sqrt{h_{2}\,h_{3}\,h_{4}}}{\sqrt{h_{1}}}
\, , \,  
\frac{\sqrt{h_{1}\,h_{3}\,h_{4}}}{\sqrt{h_{2}}}
\, 
\right)\qquad \\
\mbox{and }\quad 
\boldsymbol{h}^\psi=\psi(\boldsymbol{h}) = &\,  
\left(\, \frac{\sqrt{h_{2}\,h_{4}}}{\sqrt{h_{1}\,h_{3}}}
\, , \,  
 \frac{\sqrt{h_{2}\, h_{3}}}{\sqrt{h_{1}\, h_{4}}}
 \, , \,  
\sqrt{h_{1}\, h_{3}\, h_{2}\, h_{4}}
\, , \,  
\frac{\sqrt{h_{4}\, h_{3}}}{\sqrt{h_{1}\, h_{2}}}\, \right)\, . 
\end{align*}
\sk

The `positive torus' ${\bf H}^4_{>0}$ acts on $\mathbb J_s= {\rm Herm}_3(\mathbb O_s)$ by means of the following formula:  one sets 
\begin{equation}
\label{Eq:H4-action}
\boldsymbol{h}\cdot \big(\, \xi\, ,\, v\,\big)= 
\boldsymbol{h}\cdot \Big(\, 
\xi\, , \, 
\big(v_j\big)_{j=1}^3\,
\Big)= \Big(\, 
\xi\, , \, 
\big(\, \boldsymbol{h}\cdot v_1\, , \, 
\boldsymbol{h}^\phi\cdot v_2
\, , \, 
\boldsymbol{h}^\psi\cdot v_3\, 
\big)\,
\Big)
\end{equation}
for any ${\bf h}=(h_i)_{i=1}^4\in {\bf H}^4_{>0}$
and any $(\xi,v)\in \mathbf R^3\times \mathbb O_s^3$. 
We define another action of  ${\bf L}^2_{>0}= \{ \, \boldsymbol{\ell}=\big(l_1,l_2,l_3\big)\in \big( \mathbf R>0\big)^3 \, \big\lvert \,  l_1l_2l_3=1\, 
\big\}\simeq \big( \mathbf R_{>0}\big)^2$ on  $\mathbb J_s$ by setting 
\begin{equation}
\label{Eq:L2-action}
\boldsymbol{\ell}\cdot \big(\, \xi,v\,\big)= 
\Big(\, 
\big(l_i^2 \xi_i\big)_{i=1}^3  \, , \, 
\big(\, l_i^{-1} v_i\big)_{i=1}^3\,
\Big)\,.
\end{equation}

The actions \eqref{Eq:H4-action} and \eqref{Eq:L2-action} commute hence give rise to an action of the `positive torus'
$${\bf T}_{>0}^6= {\bf L}^2 \times {\bf H}^4 \simeq \big( \mathbf R_{>0}\big)^6$$
 on $\mathbb J_s$ given by
\begin{equation}
\label{Eq:H4L2-action}
\big(\boldsymbol{\ell}, \boldsymbol{h}\big) 
\bullet \begin{bmatrix}
\xi_1 &    v_3 & \overline{v_2}\,  {}^{} \\
 {}^{}\,  \overline{v_3 } & \xi_2 & v_1 \\
v_2 & \overline{v_1} & \xi_3 
\end{bmatrix}= 
\scalebox{1.1}{$
\begin{bmatrix}
(l_1)^2\,\xi_1 &    \boldsymbol{h}^\psi\cdot v_3 & \overline{\boldsymbol{h}^\varphi\cdot v_2}
\,\,  {}^{}\vspace{0.15cm} \\
{}^{}\, \,\overline{\boldsymbol{h}^\psi\cdot v_3 } & (l_2)^2\,\xi_2 & \boldsymbol{h}\cdot v_1 \vspace{0.15cm} \\
\boldsymbol{h}^\varphi\cdot v_2 & \overline{\boldsymbol{h}\cdot v_1} & (l_3)^2\,\xi_3 
\end{bmatrix}$}
\, . 
\end{equation}

We extend the preceding action to an action of the larger group 
$$ {\bf T}^7_{>0}= \big(\mathbf R_{>0}\big)\times {\bf L}^2 
\times 
{\bf H}^4 
\simeq \big( \mathbf R_{>0}\big)^7
$$
 on  $\boldsymbol{\mathcal Z}_2\big(\mathbb J_s\big)$ by setting 
\begin{equation}
\label{Eq:H(E7)-action}
\big( t\, ,\, (\boldsymbol{\ell}, \boldsymbol{h})\big) \cdot   \begin{bmatrix}
\zeta_1 &    Z_1 \\
 Z_2 & \zeta_2 
\end{bmatrix}
=\begin{bmatrix}
t\, \zeta_1 &   t^{1/3} \big(\boldsymbol{\ell}, \boldsymbol{h}\big) \bullet Z_1 
\,\,{}^{}
\vspace{0.1cm}\\
{}^{}\, \,  t^{-1/3}  \big(\boldsymbol{\ell}^{-1}, \boldsymbol{h}\big)
\bullet Z_2 & t^{-1}\zeta_2 
\end{bmatrix}
\end{equation}
for all $( t\, ,\, \big(\boldsymbol{\ell}, \boldsymbol{h}\big)\big)$ in the group and all Zorn matrices  
$\Big[
\hspace{-0.15cm}
\scalebox{0.78}{
\begin{tabular}{cc}
$\zeta_1$ & \hspace{-0.3cm} $Z_1$\\
$Z_2$ & \hspace{-0.3cm}  $\zeta_2$
\end{tabular}}\hspace{-0.12cm} \Big]\in \boldsymbol{\mathcal Z}_2\big(\mathbb J_s\big)$. 
\sk

We have explicitly described several isomorphisms of 
${\mathbf R}$-vector spaces above:  
$\mathbb O_s\simeq \mathbf R^8$, $\mathbb J_s={\rm Herm}_3\big( 
\mathbb O_s \big) \simeq \mathbf R^3\oplus \mathbb O_s^{3}$ and 
$\boldsymbol{\mathcal Z}_2\big(\mathbb J_s\big)\simeq \mathbf R\oplus 
\mathbb J_s\oplus \mathbb J_s \oplus \mathbf R$. From these, one gets 
well-defined isomorphisms of real vector spaces 
\begin{equation}
\label{Eq:R-ev-Isom}
\mathbb J_s\simeq \mathbf R^{27}
\qquad 
\mbox{ and }
 \qquad \boldsymbol{\mathcal Z}_2\big(\mathbb J_s\big)\simeq \mathbf R^{56}\,.
\end{equation}
We denote by 
$(\mathfrak E_i)_{i=1}^{27}$ and $(\mathfrak Z_j)_{j=1}^{56}$ 
the bases of 
$\mathbb J_s$ and 
$\boldsymbol{\mathcal Z}_2\big(\mathbb J_s\big)$ corresponding  to the canonical bases of 
$\mathbf R^{27}$  and $\mathbf R^{56}$ respectively. 

The genuine interest of working with the specific product \eqref{Eq:Special-Octonionic-Product} and the explicit actions 
 \eqref{Eq:H4L2-action} becomes apparent by considering the 
  content of the following 
\begin{prop}
{\bf 1.} The images in ${\rm GL}\big( \mathbf J\big)$  and 
${\rm GL}\big( \boldsymbol{\mathcal Z}_2\big(\mathbf J\big)\big)$ 
of the `positive tori' ${\bf T}_{>0}^6$ and 
${\bf T}_{>0}^7$ land into $E_6$ and $E_7$ respectively. Consequently, 
up isogenies, the actions 
\eqref{Eq:H4L2-action} 
and 
\eqref{Eq:H(E7)-action} are induced by the ones of some Cartan subtori of these two simple complex Lie groups.
\sk

\noindent 
{\bf 2.} The bases $(\mathfrak E_i)_{i=1}^{27}$ and $(\mathfrak Z_j)_{j=1}^{56}$  are bases of weight vectors for ${\bf T}_{>0}^6$ and 
${\bf T}_{>0}^7$ respectively.  

More precisely, for any $i\in \{1,\ldots,27\}$, 
there exists a weight $w_i=(w_{i,s})_{s=1}^6 \in \frac{1}{2}\mathbf Z^6$ such that for any 
$(\boldsymbol{\ell},\boldsymbol{h})\in {\bf T}_{>0}^6$ with $
\boldsymbol{\ell}=(\ell_1,\ell_2)\in \big( \mathbf R_{>0}\big)^2$ and $ 
\boldsymbol{h}=(h_i)_{i=1}^4\in \big( \mathbf R_{>0}\big)^4$, one has 
$$
(\boldsymbol{\ell},\boldsymbol{h})\bullet \mathfrak E_i= 
\omega_i(\boldsymbol{\ell},\boldsymbol{h} ) \,\mathfrak E_i=
\Big({\ell_1}^{\omega_{i,1}}
{\ell_2}^{\omega_{i,2}}
\prod_{k=1}^4 {h_k}^{\omega_{i,k+2}}\Big)
\, 
\mathfrak E_i \, . 
$$
And there is a similar statement for the action \eqref{Eq:H(E7)-action}, but with weights in $\frac{1}{2}\mathbf Z^7\cup \frac{1}{3}\mathbf Z^7$. 
\end{prop}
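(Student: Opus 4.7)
The two parts of the proposition have quite different flavors: part~1 is a structural statement that has to be extracted from the way $E_6$ and $E_7$ are realized as stabilizers of the ``determinant'' invariants on $\mathbf J$ and $\boldsymbol{\mathcal Z}_2(\mathbf J)$, whereas part~2 ultimately reduces to reading off exponents in the explicit formulas \eqref{Eq:Action-H-SO8}, \eqref{Eq:H4-action}, \eqref{Eq:L2-action}, \eqref{Eq:H4L2-action} and \eqref{Eq:H(E7)-action}. I would present them in this order.

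For part~1, the plan is to use the classical characterization of $E_6$ (resp.\ $E_7$) as the identity component of the stabilizer of the cubic form $\det$ on $\mathbf J$ (resp.\ of the quartic Freudenthal form on $\boldsymbol{\mathcal Z}_2(\mathbf J)$). So the key check is that the action \eqref{Eq:H4L2-action} preserves $\det$ and that \eqref{Eq:H(E7)-action} preserves the quartic form. Writing $\det(\xi,v)=T([v_1,v_2,v_3])-\sum_i \xi_i\|v_i\|^2+\xi_1\xi_2\xi_3$, I would verify invariance term by term. The action of $\mathbf{H}_{>0}^4$ on $\mathbb O_s$ in \eqref{Eq:Action-H-SO8} manifestly preserves the norm $\|u\|^2=\sum_i u_i u_{9-i}$; the twists by the triality automorphisms $\varphi,\psi$ are designed precisely so that the $\mathrm{Spin}_8$-triality identity $\boldsymbol{h}\cdot(u\cdot v)=(\boldsymbol{h}^\varphi\cdot u)\cdot(\boldsymbol{h}^\psi\cdot v)$ holds (this can be checked directly from the formulas); combined, this makes $T([v_1,v_2,v_3])$ invariant under $\mathbf{H}_{>0}^4$. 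The $\mathbf{L}_{>0}^2$-action rescales $\xi_i$ by $l_i^2$ and $v_i$ by $l_i^{-1}$, and since $l_1 l_2 l_3=1$ each of the four summands of $\det$ is preserved. Hence ${\bf T}_{>0}^6$ maps into the stabilizer of $\det$, and for dimensional reasons (a $6$-dimensional connected abelian subgroup of a simple group of rank $6$) its image is a Cartan torus of $E_6$ up to isogeny. For $E_7$, the argument is identical in spirit but uses the quartic Freudenthal invariant $\mathcal Q(\zeta_1, Z_1, Z_2, \zeta_2) = -\zeta_1\zeta_2\,T(Z_1 Z_2) + T(Z_1^\# Z_2^\#) + \tfrac14(\zeta_1\zeta_2 - T(Z_1 Z_2))^2 - \zeta_1 \det(Z_2) - \zeta_2 \det(Z_1)$; its invariance under \eqref{Eq:H(E7)-action} follows because the $\mathbf T_{>0}^6$-action on $\mathbf J$ preserves $\det$ and $T(X Y)$, and because the $t$-rescaling in \eqref{Eq:H(E7)-action} is balanced between the $Z_1$ and $Z_2$ slots.

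For part~2, the plan is the following elementary observation: the coordinate basis $(e_k)_{k=1}^8$ of $\mathbb O_s$ is, by \eqref{Eq:Action-H-SO8}, a basis of weight vectors for $\mathbf{H}_{>0}^4$ with weights $\pm\varepsilon_j$ (in an obvious notation). Then the triality twists simply change these weights through the explicit linear maps giving $\boldsymbol{h}^\varphi$ and $\boldsymbol{h}^\psi$. Given the isomorphisms of $\mathbf R$-vector spaces \eqref{Eq:R-ev-Isom}, each basis vector $\mathfrak E_i$ of $\mathbb J_s$ is either a diagonal entry (supported in exactly one $\xi_j$) or an off-diagonal entry (supported in exactly one coordinate of one $v_j$), so by \eqref{Eq:H4L2-action} it is an eigenvector of ${\bf T}_{>0}^6$; reading off the exponents of $l_1,l_2,h_1,\dots,h_4$ in \eqref{Eq:H4-action} and \eqref{Eq:L2-action} gives the weight $w_i\in\frac12\mathbf Z^6$ explicitly. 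The passage to $\boldsymbol{\mathcal Z}_2(\mathbf J)$ is then automatic: each $\mathfrak Z_j$ lies in exactly one of the four blocks $\mathbf R\oplus \mathbf J\oplus \mathbf J\oplus\mathbf R$, and \eqref{Eq:H(E7)-action} adjoins the $t$-weight $\pm 1$ (on the scalar blocks) or $\pm 1/3$ (on the $\mathbf J$-blocks) to the already known $\mathbf T_{>0}^6$-weight, producing a weight in $\tfrac12\mathbf Z^7\cup \tfrac13\mathbf Z^7$.

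The main obstacle is the clean verification of the triality identity $\boldsymbol{h}\cdot(u\cdot v)=(\boldsymbol{h}^\varphi\cdot u)\cdot(\boldsymbol{h}^\psi\cdot v)$ with the specific split product \eqref{Eq:Special-Octonionic-Product} and the specific choice of $\varphi,\psi$: this is where the invariance of $\det$ under $\mathbf H_{>0}^4$ really sits, and it is also what forces the denominators with square roots in the formulas defining $\boldsymbol{h}^\varphi$ and $\boldsymbol{h}^\psi$. In practice, this check is a finite (and tedious) coordinate computation that can be delegated to a computer algebra system, but a conceptual proof would invoke the branching of the $\mathbf{Spin}_8$-triality to a maximal torus and the fact that, in the basis we use, the three $8$-dimensional representations $V_{\omega_1}, V_{\omega_3}, V_{\omega_4}$ have weight systems obtained from one another by the outer automorphism pair $(\varphi,\psi)$. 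Once this is accepted, everything above is formal.
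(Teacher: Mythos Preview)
Your proposal is correct and follows essentially the same approach as the paper: verify that the torus actions preserve the determinant on $\mathbf J$ (resp.\ the Freudenthal quartic on $\boldsymbol{\mathcal Z}_2(\mathbf J)$), invoke the characterization of $E_6$ and $E_7$ as stabilizers of these invariants, and then read off the weights from the explicit formulas. The paper's own proof is terser---it simply cites Yokota's description of $E_6$, $E_7$ and their Cartan subalgebras and says ``direct formal computations'' for both parts---whereas you spell out the mechanism (triality identity for the triple product term, the $l_1l_2l_3=1$ constraint for the $\mathbf L_{>0}^2$-part, the dimension count identifying a Cartan torus), which is helpful but not a different route.
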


\begin{proof}
The first point follows from the explicit description of the Lie groups $E_6 $ and $E_7$ as subgroups of 
${\rm GL}\big( \mathbf J\big)$  and 
${\rm GL}\big( \boldsymbol{\mathcal Z}_2\big(\mathbf J\big)\big)$, respectively, as found in~\cite{Yokota}. For instance, direct formal computations show that
$
{\rm det}\Big((\boldsymbol{\ell},\boldsymbol{h} )\bullet Z\Big)={\rm det}\big( 
Z\big)$ for every $(\boldsymbol{\ell},\boldsymbol{h} )\in {\bf T}_{>0}^6$ and $Z\in \mathbf J$. Hence, the image of this torus in ${\rm GL}\big( \mathbf J\big)$actually lies in $E_6$, according to the definition of this group given in~\cite[\S3.1]{Yokota}.\footnote{Actually, it was by examining the descriptions of the real parts of the Lie algebras of the Cartan tori of the complex Lie groups $ E_6$ and $ E_7$, as given on pages 94 and 139 of \cite{Yokota}, that we derived the explicit formulas for the torus actions \eqref{Eq:H4L2-action} 
and 
\eqref{Eq:H(E7)-action}.}

Direct formal computations, made possible by the completely explicit nature of the actions 
\eqref{Eq:H4L2-action} 
and \eqref{Eq:H(E7)-action}, yield the second part of the proposition
\end{proof}

 One can easily determine the weights of the representation for each case.
For instance, when $r=6$, we obtain the following set of 27 pairwise distinct weights

\begin{align*}
\Big(\, 2, 0, 0, 0, 0, 0\,\Big) \, , \, & \Big(\,0, 2, 0, 0, 0, 0\,\Big) \, , \,
 \Big(-2, -2, 0, 0, 0, 0\,\Big)\, , \, 
 \Big(-1, 0, 1, 0, 0, 0\,\Big)\, , \, \\
  \Big(-1, 0, 0, 1, 0, 0\,\Big) \, , \,
  &
   \Big(-1, 0, 0, 0, 1, 0\,\Big)\, , \, 
   \Big(-1, 0, 0, 0, 0, 1\,\Big)\, , \,
    \Big(-1, 0, 0, 0, 0, -1\,\Big)\, , \, \\
     \Big(-1, 0, 0, 0, -1, 0\,\Big)\, , \,
     & \Big(-1, 0, 0, -1, 0, 0\,\Big)\, , \,
      \Big(-1, 0, -1, 0, 0, 0\,\Big)\, , \,
       \Big(\,0, -1, -{\frac{1}{2}}, -{\frac{1}
{2}}, {\frac{1}{2}}, -{\frac{1}{2}}\,\Big)\, , \, \\ 
 \left(0, -1, {\frac{1}{2}}, {\frac{1}{2}}, {\frac{1}{2}}, -{\frac{1}{2}}\right)\, , \,
 &
  \left(0, -1, -{\frac{1}{2}}, {\frac{1}{2}}, {\frac{1}{2}}, {\frac{1}{2}}\right)\, , \,
\left(0, -1, {\frac{1}{2}}, -{\frac{1}{2}}, {\frac{1}{2}}, {\frac{1}{2
}}\right)\, , \,
\left(0, -1, -{\frac{1}{2}}, {\frac{1}{2}}, -{\frac{1}{2}}
, -{\frac{1}{2}}\right)\, , \,\\
\left(0, -1, {\frac{1}{2}}, -{\frac{1}{2}}, -
{\frac{1}{2}}, -{\frac{1}{2}}\right)\, , \,&
 \left(0, -1, -{\frac{1}{2}}, -{
\frac{1}{2}}, -{\frac{1}{2}}, {\frac{1}{2}}\right)\, , \,
 \left(0, -1, {
\frac{1}{2}}, {\frac{1}{2}}, -{\frac{1}{2}}, {\frac{1}{2}}\right)\, , \,
\left(1, 1, -{\frac{1}{2}}, {\frac{1}{2}}, -{\frac{1}{2}}, {\frac{1}{2
}}\right)\, , \,
\\ 
 \left(1, 1, -{\frac{1}{2}}, {\frac{1}{2}}, {\frac{1}{2}}, -
{\frac{1}{2}}\right)
\, , \, 
&
 \left(1, 1, {\frac{1}{2}}, {\frac{1}{2}}, {\frac
{1}{2}}, {\frac{1}{2}}\right), \left(1, 1, -{\frac{1}{2}}, -{\frac{1}{
2}}, {\frac{1}{2}}, {\frac{1}{2}}\right)
\, , \,
 \left(1, 1, {\frac{1}{2}}, {
\frac{1}{2}}, -{\frac{1}{2}}, -{\frac{1}{2}}\right)
\, , \,
\\
 \left(1, 1, -{
\frac{1}{2}}, -{\frac{1}{2}}, -{\frac{1}{2}}, -{\frac{1}{2}}\right)%
\, , \,
&
\left(1, 1, {\frac{1}{2}}, -{\frac{1}{2}}, -{\frac{1}{2}}, {\frac{1}{2
}}\right)
\, , \,
 \left(1, 1, {\frac{1}{2}}, -{\frac{1}{2}}, {\frac{1}{2}}, -
{\frac{1}{2}}\right)\, .
\end{align*}

The weight polytope $\Delta_6$ is the convex enveloppe of these $27$ points of $\mathbf R^6$.  With the above list of weights at hand, it is not difficult to obtain a new explicit list, of the sets of vertices of the facets of 
$\Delta_6$ : for each facet $F$ of $\Delta_6$, let $I_F$ be the set of indices 
$i$ such that $w_i\in F$.  The $\boldsymbol{\mathcal W}$-relevant facets are 5-dimensional orthoplexes, with set of vertices of cardinality 10. 
 For any such $F$, we set $\mathbf C_F^{10}=\oplus_{i\in I_F} \mathbf C\,\mathfrak E_i$
  the linear projection $\Pi_F: \mathbf J\rightarrow  \mathbf C^{10}_F$ is written $\tilde \Pi_F: (x_i)_{i=1}^{27}\longmapsto (x_j)_{j\in I_F}$ in the linear coordinates associated to the basis $(\mathfrak E_i)_{i=1}^{27}$ of $\mathbf J$.  Then the 27 rational maps 
 $$
 \tilde \Pi_F \circ \nu_{\mathbf O}^2 : \, 
 {\mathbf O}\oplus {\mathbf O} \dashrightarrow \mathbf P(\mathbf C^{10}_F)\simeq \mathbf P^9_F
 $$
 are the first integrals of Gelfand-MacPherson web on $\mathbf O\mathbf P^2$, written in coordinates. 
 
 Given a web-relevant facet $F$ associated to a conic class $\boldsymbol{\mathfrak c}\in \boldsymbol{\mathcal K}_6$, 
 there is an involution  on $ I_F$ when this set of indices is seen as a subset of $\boldsymbol{\mathcal L}_6$, given by $\ell\mapsto \ell'=\boldsymbol{\mathfrak c}-\ell$. Let $J_F\subset I_F$ be a fixed  subset of cardinality 5 such that for any $\ell\in I_F$, either $\ell$ or $\ell'$ is in $J_F$. When seeing $I_F$ as a subset of $\{1,\ldots,27\}$, we will denote this involution by $i\mapsto i'_F$. To simplify the formulas, we will often omit 
the subscript $F$ in the lines below.  
 
  It can be verified that the closure in $\mathbf P^9$ of the image of  $\tilde \Pi_F \circ \nu_{\mathbf O}^2$ is a smooth hyperquadric $Q_F\subset \mathbf P^9$ 
  cut out by a quadric equation of the form 
  $$q_F=\sum_{j\in J_F} \epsilon_{F,j} \,x_{j} x_{j'}=0$$ with  
 $\epsilon_{F,j}\in \{\,\pm 1\,\}$ for any $j\in J_F$. 
The basis $(\mathfrak E_i)_{i\in I_F}$ actually is a weight basis for the action of   the Lie group ${\bf SO}(q_F)$ of type $D_5$ on $\mathbf C^{10}_F$, with the action of a Cartan torus $H_F\simeq \big( \mathbf C^*\big)^{J_F}$ of  ${\bf SO}(q_F)$  given by 
$$(\lambda_j)_{j\in J_F}\cdot \mathfrak E_i= \begin{cases}\, \, 
\lambda_i\, \mathfrak E_i \hspace{0.45cm} \mbox{ if } i\in J_F
\\
\lambda_{i'}^{-1}\, \mathfrak E_i  \hspace{0.3cm} \mbox{ if } i\not \in J_F\, .  
\end{cases}
 $$
 It can be verified that the action induced by the Cartan torus of $E_6$ obtained by taking the complexification of ${\bf T}^6_{>0}$ is linearly equivalent to the one described just above.  Let $Q_F^\star$ be $Q_F$ with the union of the ten hyperplane sections $Q_F\cap \{\,x_i=0\,\}$ removed. Then $H_F$ acts nicely with finite kernel on $Q_F^\star$, and gives rise 
 to a rational morphism $\tau_F:  Q_F^\star\longrightarrow \boldsymbol{\mathcal Y}_F^\star=Q_F^\star/H_F$ which can be made explicit:  fixing $j_0\in J_F$, we set $J_{F}^*=J_F\setminus \{j_0 \}$. 
 Then a birational model of $\tau_F$ is given by  
\begin{align} 
\label{Eq:Tilde-tau-F} 
 \tilde \tau_F \, :\,  \mathbf P^9_F & \, \dashrightarrow \mathbf P^3\\
\big[\,x_i\,\big]_{i\in I_F} & \longmapsto \left[ \frac{x_{j}x_{j'}}{x_{j_0}x_{j_0'}} \right]_{j\in J_{F}^*}\, .
\nonumber 
 \end{align} 
 
 It follows that,  possibly up to multiplying some of the monomials 
 ${x_{i}x_{i'}}/({x_{i_0}x_{i_0'}})$ by -1, one obtains a quotient morphism 
 $\tau_F: Q_F^\star\longrightarrow \boldsymbol{\mathcal Y}_F^\star$ with $\boldsymbol{\mathcal Y}_F^\star\subset \mathbf P^3$ equal to the complement of the hyperplane arrangement 
$\mathcal H_6\subset \mathbf P^3$ 
 cut 
 out by $u_0u_1u_2u_3(u_0+u_1+u_2+u_3)=0$ for some homogeneous coordinates $u_0,\ldots,u_3$ on $\mathbf P^3$. 
  
 The action of ${\bf T}_{>0}^6$ on $\mathbf O\oplus \mathbf O$ induced by the action \eqref{Eq:H(E7)-action} is given by 
 $$ 
 \big(\, a, b ,\boldsymbol{h}\big)\, (u,v)= \Big( \, ab \,(\boldsymbol{h}^\psi\cdot  u) \, , \, b^{-1} (\boldsymbol{h}^\varphi\cdot v)\, \Big) \, . 
 $$
 For $(u,v)\in \mathbf O\oplus \mathbf O$ generic, one can find $\big(\, a, b ,\boldsymbol{h}\big)\in\big( \mathbf C^*\big)^6$ such that 
$\big(\,  \tilde u\, , \, \tilde v\,\big)=
 \big(\, a, b , \boldsymbol{h}\big)\,(u,v)$ 
 is in `normal form', that is such that 
\begin{equation} 
\label{Eq:Normal-Form} 
 \tilde u=\big(\, \tilde u_1 , \tilde u_2 , \tilde u_3 ,  1, 1 , 1 ,1, 1\,  \big) \qquad \mbox{ and }
 \qquad \tilde v=\big(\, 1 \, , \, \tilde v_2 \, , \,  
 \tilde v_3 \, , \,  
 \tilde v_4 \, , \,  
 \tilde v_5 \, , \,  
 \tilde v_6 \, , \,  
 \tilde v_7 \, , \,  
  \tilde v_8\,  \big)\, .
 \end{equation} 
 Moreover, the element $\big(\, a , b ,\boldsymbol{h}\,\big)
 \in\big( \mathbf C^*\big)^6$ satisfying \eqref{Eq:Normal-Form} is essentially unique; that is, it is unique up to the action of a certain finite subgroup of $\big( \mathbf C^*\big)^6$, which can be explicitly determined without difficulty.
It follows that the ten algebraically independent variables $\tilde u_1 , \tilde u_2 , \tilde u_3$ and 
$\tilde v_2 , \ldots , \tilde v_8 $ define a rational chart on the quotient $\boldsymbol{\mathcal Y}_6$ of Cayley octonionic projective plane $\mathbf O\mathbf P^2$ by the action of the Cartan subgroup of $E_6$ we are working with. 
 Thus:  (1) $\boldsymbol{\mathcal Y}_6$ is rational;  (2) 
  $\boldsymbol{Y}_6={\rm Spec}\big( 
 \mathbf C[\tilde u_1 , \tilde u_2 , \tilde u_3, 
 \tilde v_2 , \ldots , \tilde v_8 ]
 \big)$ is a birational model of it; and (3) up to the birational identification 
$\boldsymbol{Y}_6\simeq  \boldsymbol{\mathcal Y}_6$,   the following map corresponds to a  rational section of the quotient map $\mathbf O\mathbf P^2\dashrightarrow \boldsymbol{\mathcal Y}_6$: 
\begin{align} 
\label{Eq:rational-section-r=6} 
\sigma_6\, :\, \mathbf C^6= \boldsymbol{Y}_6 & \, \dashrightarrow \mathbf O\mathbf P^2 \subset \mathbf P^{26}\\
 \Big(\,\big(\tilde u_i\big)_{i=1}^3 \, , \, 
\big(\tilde v_j\big)_{j=2}^8 \, \Big)
 & \longmapsto \nu_{\mathbf O}^2\big(\,\tilde u, 
 \tilde v\, \big)
 \nonumber 
\end{align}
where $\tilde u$ and $\tilde v$ stand for the two elements of $\mathbf O$ respectively defined by  the LHS of the two equalities in \eqref{Eq:Normal-Form}. 

Post-composing $\sigma_6$ with the linear projections $\tilde \Pi_F$ then with the rational model $\tilde \tau_F$ (defined in \eqref{Eq:Tilde-tau-F}) of the quotient maps $\tau_F : \boldsymbol{\mathcal Q}_F\dashrightarrow \mathbf P^3$ 
 for all web-relevant facets $F$ of $\Delta_6$, we obtain 27 rational maps 
 \begin{equation} 
 U_F = \tilde \tau_F \circ \tilde \Pi_F \circ \sigma_6 : \boldsymbol{Y}_6 \dashrightarrow \mathbf P^3
 \end{equation} 
 which define a web, noted by $\boldsymbol{W}_{\boldsymbol{Y}_6}^{GM}$, which is the birational model of $\boldsymbol{\mathcal W}_{\boldsymbol{\mathcal Y}_6}^{GM}$ on $\boldsymbol{Y}_6$ induced  by the birational identification  $\boldsymbol{Y}_6\simeq \boldsymbol{\mathcal Y}_6$. 
 
Thanks to our choice of the specific version
\eqref{Eq:Special-Octonionic-Product} of the (split) octonionic product, the torus actions 
\eqref{Eq:H4L2-action} and \eqref{Eq:H(E7)-action} 
 are defined over \( \mathbf{R} \), and so are the bases of weight vectors
 $(\mathfrak E_i)_{i=1}^{27}$ and $(\mathfrak Z_j)_{j=1}^{56}$
 induced by the linear isomorphisms \eqref{Eq:R-ev-Isom}. 
 This has the pleasant consequence that all the first integrals \( U_F \) are defined over \( \mathbf{R} \) (in fact, over \( \mathbf{Z} \)), which makes the effective computations we will perform from them -- in order to study the ranks and abelian relations (ARs) of the web $\boldsymbol{W}_{\boldsymbol{Y}_6}^{GM}$  -- significantly more efficient.
 \begin{center}
\vspace{-0.2cm}
$\star$\sk
\end{center}

 All the considerations and constructions above are not truly specific to the case \( r = 6 \); they admit direct analogues in the case \( r = 7 \), which makes it possible to state the following result uniformly for all \( r \in \{4, 5, 6, 7\} \):
\begin{prop} 
Let \( V_r \) be the minuscule representation in whose projectivization the homogeneous space \( \boldsymbol{\mathcal{G}}_r = G_r / P_r \), of dimension \( \gamma_r \), embeds.

\noindent 
{\bf 1.} One can explicitly describe:
\begin{itemize}
    \item[$-$] \vspace{-0.15cm} a  basis \( (\mathfrak e_i)_{1 \leq i \leq \upsilon_r} \) of \( V_r \), yielding an isomorphism \( V_r \simeq \bigoplus_{i=1}^{\upsilon_r} \mathbf{C} \,\mathfrak e_i \);\sk 
    
    \item[$-$] an affine space \( \mathbf{A}_r \) of dimension \( \gamma_r \);
    \sk 
    
    \item[$-$] an affine  embedding  \( \nu_r = (\nu_{r,i})_{1 \leq i \leq \upsilon_r} : 
    \mathbf{A}_r \to V_r \), 
\end{itemize}
  with the property that the image of the projectivization
  \(
  [\nu_r] : \mathbf{A}_r \to \mathbf{P}V_r
  \)
  has Zariski closure equal to \( \boldsymbol{\mathcal G}_r  \), that is, 
$$ \overline{[\nu_r]\big(\mathbf A_r\big)}=\boldsymbol{\mathcal G}_r
\subset 
\mathbf PV_r\simeq \mathbf P^{\upsilon_r-1}\, . $$

\noindent 
{\bf 2.} Moreover, one can describe just as explicitly an action of 
\( H_r=\big(\mathbf{C}^*\big)^r \) on \( V_r\oplus_{i=1}^{\upsilon_r} =\mathbf{C} \, \mathfrak e_i \) , satisfying the following properties:
\begin{itemize}
    \item[$-$]  Each $ \mathfrak e_i$ is a weight vector with respect to this action, associated to a weight 
   $\omega_i\in \mathbf Q^r$, which can be explicitly computed; 
    \sk 
    \item[$-$] 
    The \( \mathcal{W} \)-relevant facets of 
     \( \Delta_r = \mathrm{Conv}\big(\{ \omega_i \}_{i=1}^{ \upsilon_r}\big) \)
    are entirely determined by the set of their vertices:
          for each such facet \( F \), let $
    I_F \subset \{1, \ldots, \upsilon_r\}
    $ 
    be the subset of cardinality \( 2r - 2 \) consisting of the indices \( i \) such that 
    the weight 
    $\omega_i$ associated to 
    \( e_i \) is a vertex of \( F \);
 \sk 
  \item[$-$]  The set of $\kappa_r$ subsets $I_F$'s can be explicitly determined hence the linear projections 
  $$\tilde \Pi_F: V_r= \oplus_{i=1}^{\upsilon_r} \mathbf{C} \, \mathfrak e_i\rightarrow \oplus_{i \in I_F} \mathbf{C} \, \mathfrak e_i=:V_{r,F}$$ as well.  
 The projectivizations of the compositions $\tilde \Pi_F\circ \nu_r: {\bf A}_r \rightarrow V_F$ for all web-relevant facets $F$ of the weight polytope $\Delta_r$ are rational first integrals of the birational model $\nu_r^*\big( \boldsymbol{\mathcal W}^{GM}_{ \boldsymbol{\mathcal G}_r}\big)$ of Gelfand-MacPherson web on $\boldsymbol{\mathcal G}_r$.
\end{itemize}

\noindent 
{\bf 3.} For a certain subset of indices $\mathcal A_r\subset \{1,\ldots, \dim \, V_r\}$ 
of cardinality $\gamma_r$, the affine embedding $\nu_r$  post-composed with 
the natural projection $\oplus_{i=1}^{\upsilon_r} \mathbf{C} \,\mathfrak e_i\rightarrow \oplus_{a \in {\mathcal  A}_r } \mathbf{C} \,\mathfrak e_a$ is a linear isomorphism defined over $\mathbf R$ (actually, over $\mathbf Z$).  Consequently the $H_r$-action on $V_r$ induces an action of the same torus on $  \mathbf{A}_r\simeq 
\oplus_{a \in {\mathcal  A}_r } \mathbf{C} \,\mathfrak e_a$ which 
is a birational model of the action of the Cartan torus of
 $G_r$ on $\boldsymbol{\mathcal G}_r$.

 Moreover, 
 one can provide an explicit list of algebraically independent monomials 
 $M_{s}$ for $s=1,\ldots, {y}_r
 $,  which form an algebraic basis of the algebra of \( H_r \)-invariant rational functions on  ${\bf  A}_r$. 
 Setting 
 $\boldsymbol{Y}_r={\rm Spec}\Big( \mathbf C\big[ M_{1},\ldots,M_{{y}_r}\big]
 \Big) \simeq \mathbf C^{{y}_r}$,  
 we obtain an explicit rational map 
 $ {\mathcal P}_r : {\bf A}_r\rightarrow \boldsymbol{Y}_r$  which constitutes a birational model of the quotient map  $\tau_r \, : \, \boldsymbol{\mathcal G}_r \rightarrow 
\boldsymbol{\mathcal Y}_r =  
 \boldsymbol{\mathcal G}_r/H_r$.

  Finally, one constructs an explicit affine embedding 
  $\sigma_r :  \boldsymbol{Y}_r 
 \rightarrow {\bf A}_r$ which is a (rational) section of the quotient map 
 $ {\bf A}_r\rightarrow \boldsymbol{Y}_r ={\bf A}_r/H_r$, and such that the composed map 
 $ \Xi_r=\tau_r \circ \big[\nu_r\big] \circ \sigma_r  : 
 \boldsymbol{Y}_r 
 \dashrightarrow \boldsymbol{\mathcal Y}_r$ is birational. This yields a birational identification  $ \boldsymbol{Y}_r 
 \simeq \boldsymbol{\mathcal Y}_r$ and the construction fits into the following commutative diagram of rational maps:  
\begin{equation}
\label{Diag:gogogo}
\begin{tabular}{c}
\xymatrix@R=1cm@C=0.9cm{
{\bf A}_r   \ar@{->}[d]^{ \scalebox{1.01}{${\mathcal P}_r$}} \ar@{->}[r]^{ \scalebox{1.01}{$[\nu_r]$}}  &
\boldsymbol{\mathcal G}_r
\ar@{->}[d]^{\scalebox{1.06}{$\tau_r$}}
 \\
\ar@/^2pc/@{->}[u]^{ \scalebox{1.01}{$\textcolor{black}{\sigma_r}$}}
\boldsymbol{Y}_r 
  \ar@{->}[r]^{\scalebox{1.01}{$\Xi_r$}}& \, \,  \boldsymbol{\mathcal Y}_r\, .
 }
\end{tabular}\vspace{0.3cm}
\end{equation}
 
\noindent 
{\bf 4.} Given a $\mathcal W$-relevant facet $F$, let $\boldsymbol{\mathfrak c}\in 
\boldsymbol{\mathcal K}_r$
be the conic class associated to it. 
Identifying the basis $\{ \mathfrak e_i\,\}_{i=1}^{\upsilon_r}$ with $\boldsymbol{\mathcal L}_r$, the elements in the index set $I_F$ appear in {\it `associated pairs'}, with $\{i,j\}\subset I_F$ (with $i\neq j$) being such a pair  if and only if the sum of the two corresponding lines is equal to $\boldsymbol{\mathfrak c}$. For $i\in I_F$, we denote by $i'$ the unique other element of $I_F$ such that $\{i,i'\}$ is an associated pair. 
We then fix a subset \( J_F \subset I_F \) such that, setting $J_F'=\{j'\}_{j\in J_F}$, one has $I_F=J_F\sqcup J_F'$.

Then the image of $\boldsymbol{\mathcal G}_r$ by the projectivization $\big[ \tilde \Pi_F\big]: \mathbf PV_r\dashrightarrow 
\mathbf PV_{r,F}=\mathbf P^{2r-3}_F$ of the linear projection 
$ \tilde \Pi_F$ 
 is a smooth hyperquadric $\boldsymbol{\mathcal Q}_F$ cut out by an equation of the form 
$q_F=\sum_{i\in J_F} \varepsilon_{F,i}\,x_i x_{i'}=0$ in the coordinate system on 
$V_{r,F}$ dual to the base $(\mathfrak e_i)_{i\in I_F}$, with $\varepsilon_{F,i}\in \{\,\pm1\,\}$ for every $i\in J_F$. The torus action  on $V_{r,F}$ 
induced by that of $H_r$ on $V_{r}$ is equivalent to the  one defined by  
$$
\boldsymbol{\lambda}\cdot \mathfrak e_i= \lambda_i \mathfrak e_i \quad \mbox{if } i\in J_F \qquad \mbox{ and  } \qquad 
\boldsymbol{\lambda}\cdot \mathfrak e_i= \lambda_{i'}^{-1} \mathfrak e_i\quad \mbox{when } i\not \in J_F
$$
for any $\boldsymbol{\lambda}=(\lambda_s)_{s=1}^{r-1}
\in \big(\mathbf C^*\big)^{r-1}$ and any $i\in I_F$. 
We fix $j_0\in J_F$ and we set $J_F^*=J_F\setminus \{j_0\}$. 
It follows that, as a rational map, 
the quotient map of $\mathbf P^{2r-3}_F$ by the action of the torus of rank $r-1$ induced by that of $H_r$ is written
\begin{equation} 
\label{Eq:Tilde-tau-F-r}
 \tilde \tau_F \, :\,  \mathbf P^{2r-3}_F  \, \dashrightarrow \mathbf P^{r-3}\, , \,\, 
\big[\,x_i\,\big]_{i\in I_F}  \longmapsto \left[\, {x_{j}x_{j'}}/\big({x_{j_0}x_{j_0'}} \big)\,\right]_{j\in J_{F}^*}\, .
 \end{equation}
 in the coordinates $x_i$, $i\in I_F$. 
\sk

\noindent 
{\bf 5.} From the four points above,  it comes that  for any web-relevant facet $F$ of $\Delta_r$, the explicit rational map 
 \begin{equation} 
 \label{Eq:UF-r}
 U_F = \tilde \tau_F \circ \big[ \tilde \Pi_F \big] \circ 
 [  \nu_r ] \circ 
 \sigma_r : \boldsymbol{Y}_r  \dashrightarrow \mathbf P^{r-3}
\end{equation}
makes commutative the following diagram of rational maps 
\begin{equation}
\label{Diag:gogoga}
\begin{tabular}{c}
\xymatrix@R=1.4cm@C=2cm{
{\bf A}_r   \ar@{->}[d]^{ \scalebox{1.01}{${\mathcal P}_r$}} \ar@{->}[r]^{ \scalebox{1.01}{$[\nu_r]$}}  &
\boldsymbol{\mathcal G}_r
 \ar@{->}[r]^{ \scalebox{1.06}{$\Pi_F$} }   
\ar@{->}[d]^{\scalebox{1.06}{$\tau_r$}} & \boldsymbol{\mathcal Q}_F
\ar@{->}[d]^{\scalebox{1.06}{$\tau_F$}}
&\hspace{-3.2cm} \subset \, \mathbf P^{2r-3}_F 
 \\
\ar@/^2pc/@{->}[u]^{ \scalebox{1.01}{$\textcolor{black}{\sigma_r}$}}
\boldsymbol{Y}_r 
  \ar@{->}[r]^{\scalebox{1.06}{$\Xi_r$}}
  \ar@/_2pc/[rr]_{\scalebox{1.06}{$U_F$} }
  & \, \,  \boldsymbol{\mathcal Y}_r
 \ar@{->}[r]^{ \scalebox{0.9}{$\pi_F$} }  
  &  \mathbf P^{r-3}
 }
\end{tabular}\vspace{0.3cm}
\end{equation}
in which we have $\Pi_F=[\tilde \Pi_F ]\lvert_{ \boldsymbol{\mathcal G}_r}$  and 
$\tau_F=\tilde \tau_F\lvert_{\boldsymbol{\mathcal Q}_F} $.  It follows that 
the $U_F$'s  are rational first integrals of a $\kappa_r$-web of codimension $r-3$ on 
$\boldsymbol{Y}_r=\mathbf C^{y_r}$, that we will denote by 
$\boldsymbol{W}_{\boldsymbol{Y}_r}^{GM}$, which is the birational model of $\boldsymbol{\mathcal W}_{\boldsymbol{\mathcal Y}_r}^{GM}$ on 
$\boldsymbol{Y}_r$ induced  by the birational identification  $\boldsymbol{Y}_r\simeq \boldsymbol{\mathcal Y}_r$. One has:
$$
\boldsymbol{W}_{\boldsymbol{Y}_r}^{GM}=
\boldsymbol{\mathcal W}\left(\,U_F\,\, \Big\lvert\,\, 
\scalebox{0.8}{\begin{tabular}{c}
$\boldsymbol{\mathcal W}$-relevant \\
facet $F\subset \Delta_r$  \\
\end{tabular}}
\,
\right)=\Xi_r^*\left( 
\boldsymbol{\mathcal W}_{\boldsymbol{\mathcal Y}_r}^{GM}
\right) \, . 
$$

\noindent 
{\bf 6.} Given a web-relevant facet $F$, possibly up to multiplying certain components of the map $\tilde \tau_F$ by $-1$, then the image of $\boldsymbol{\mathcal Q}_F^\star$ by $\tilde \tau_F$ in $\mathbf P^{r-3}$ is the complement of the arrangement of $r-1$ hyperplanes $\mathcal H_r\subset \mathbf P^{r-3}$ cut out by the equation 
$$
u_0u_1\cdots u_{r-3}\big( u_0+u_1+\cdots+  u_{r-3}\big)=0\, . 
$$
in the homogeneous coordinates $u_0,\ldots,u_{r-3}$ corresponding to the explicit expression \eqref{Eq:Tilde-tau-F-r} of the map $\tilde \tau_F$.  Moreover, 
the face map $\pi_F: \boldsymbol{\mathcal Y}_r\dashrightarrow \mathbf P^{r-3}$ is defined on $\boldsymbol{\mathcal Y}_r^*$ and one has $\pi_F( \boldsymbol{\mathcal Y}_r^*)=\mathbf P^{r-3}\setminus \mathcal H_r$. 
Finally, on $\boldsymbol{Y}_r$, there exists an explicit arrangement of hypersurfaces $\boldsymbol{\sf H}_r\subset \boldsymbol{Y}_r$ on the complement 
$\boldsymbol{Y}_r^*=\boldsymbol{Y}_r\setminus \boldsymbol{\sf H}_r$ of which $\Xi_r$ is defined and induces an isomorphism $\Xi_r : \boldsymbol{Y}_r^*\simeq \boldsymbol{\mathcal Y}_r^*$. It follows that the rational map $U_F= \pi_F\circ \Xi_r$ is defined on $\boldsymbol{Y}_r^*$ with image 
$U_F\big(\boldsymbol{Y}_r^*\big)=\mathbf P^{r-3}\setminus \mathcal H_r$. 
\end{prop}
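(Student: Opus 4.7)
The plan is to treat the four cases $r\in\{4,5,6,7\}$ in a uniform way, assembling pieces that have already been worked out in the previous sections ($r=4,5$) with the explicit octonionic/Jordan-algebraic constructions of \S\ref{S:7-1} ($r=6,7$). In each case the strategy is: (i) write down an explicit affine chart on $\boldsymbol{\mathcal G}_r$ obtained from a Veronese-type embedding; (ii) identify a weight basis of $V_r$ together with the torus action, read off the weights, and recover the facet description of $\Delta_r$; (iii) run the classical GIT-quotient recipe in coordinates on each facet, using that every web-relevant facet corresponds to a sub-hyperquadric $\boldsymbol{\mathcal Q}_F\subset \mathbf P^{2r-3}_F$; (iv) single out a rational section of the quotient $\tau_r$ by fixing enough weight coordinates to $1$, thereby getting the explicit birational model $\boldsymbol{Y}_r\simeq \boldsymbol{\mathcal Y}_r$.

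\textbf{Steps in detail.} First I would handle points {\bf 1}\,and\,{\bf 2}. For $r=4,5$ the weight bases, affine embeddings ($(x_{ij})\mapsto \big[1, x_{ij}, \mathrm{Pf}(X_{\hat k})\big]$ in the spinor case, Pl\"ucker coordinates in the grassmannian case) and torus actions are the classical ones already recalled in \S\ref{SS:around-S5}. For $r=6$ take $\mathbf A_r=\mathbf O\oplus \mathbf O$ with affine embedding $\nu^2_{\mathbf O}$; for $r=7$ take $\mathbf A_r=\mathbf J$ with affine embedding $\nu^3_{\mathbf J}$. The Zariski-density of their images in $\boldsymbol{\mathcal G}_r$ follows from the standard identifications $\mathbf{OP}^2=E_6/P_6$ and $\mathbf{Fr}=E_7/P_7$ via Yokota's description \cite{Yokota}. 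The existence of the weight bases $(\mathfrak E_i)$ and $(\mathfrak Z_j)$ and the explicit formulas \eqref{Eq:H4L2-action}, \eqref{Eq:H(E7)-action} for the positive torus action give the weights $\omega_i\in \mathbf Q^r$ directly, and the convex hull of the $\omega_i$ is the Gosset polytope $(r-4)_{21}$. The facet combinatorics (the 5-dimensional orthoplex facets for $r=6$ and the 6-dimensional orthoplex facets for $r=7$) is standard and yields the subsets $I_F$.

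Next, for point {\bf 3}, I would construct $\sigma_r$ explicitly. The idea is the one already used in the proof of Proposition \ref{Prop:maps-P5-Y} and in equations \eqref{Eq:rational-section-r=6}--\eqref{Eq:Normal-Form}: given a generic point of $\mathbf A_r$, one can uniquely normalize the redundant weight coordinates to $1$ by a unique element of $H_r$ (up to a finite group), and the remaining $y_r=\gamma_r-r$ coordinates furnish the algebraically independent monomial invariants $M_s$. For $r=6$ this was sketched in \eqref{Eq:Normal-Form}; for $r=7$ the same procedure applied to $\nu^3_{\mathbf J}(X)$ yields a chart of dimension $27-7=20$ (one normalizes, e.g., the coordinate of the top-left entry and a suitable subset of the entries of $Z_1$). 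The commutativity of diagram \eqref{Diag:gogogo} is then tautological from the construction.

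For points {\bf 4} and {\bf 5}, the key input is that restricting a minuscule representation $V_r$ of $G_r$ to the Levi quotient associated to a web-relevant facet yields a minuscule representation of a group of type $D_{r-1}$, realized as $(V_r)_F=\bigoplus_{i\in I_F}\mathbf C\,\mathfrak e_i$ (this is precisely the classification used to produce the $\boldsymbol{\mathcal G}_F$ of \S\ref{SS:Gelfand-MacPherson webs}). Hence $\overline{\Pi_F(\boldsymbol{\mathcal G}_r)}$ is a smooth hyperquadric $\boldsymbol{\mathcal Q}_F\subset \mathbf P^{2r-3}$, and in the weight basis its equation is a sum of monomials $x_i x_{i'}$, where the involution $i\leftrightarrow i'$ on $I_F$ records which pairs of weights sum to the class of the facet. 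Once this is settled, \eqref{Eq:Tilde-tau-F-r} is just the standard invariant-theoretic description of the Cartan torus quotient of a smooth split quadric by the formulas $x_j x_{j'}$. Composing the maps in diagram \eqref{Diag:gogoga} then gives the formula \eqref{Eq:UF-r} for $U_F$ and identifies the web $\boldsymbol{W}^{GM}_{\boldsymbol{Y}_r}$ as the pullback $\Xi_r^*(\boldsymbol{\mathcal W}^{GM}_{\boldsymbol{\mathcal Y}_r})$.

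Finally, for point {\bf 6}, I would argue as follows. The image of $\boldsymbol{\mathcal Q}_F^\star$ under $\tilde\tau_F$ lies in the dense torus orbit of $\mathbf P^{r-3}$ under its $(\mathbf C^*)^{r-2}$-action by coordinate scaling, and the only algebraic relations among the monomials $x_jx_{j'}/(x_{j_0}x_{j_0'})$ coming from the quadric equation $q_F=0$ translate into the single affine relation $u_0+u_1+\cdots+u_{r-3}=\pm\varepsilon$; by rescaling some components of $\tilde\tau_F$ by $-1$ we may assume the sign is $+1$, which produces precisely the hyperplane arrangement $\mathcal H_r$. The assertion that $\pi_F$ is defined on $\boldsymbol{\mathcal Y}_r^\star$ and maps it onto $\mathbf P^{r-3}\setminus \mathcal H_r$ then follows from Skorobogatov's result that $\boldsymbol{\mathcal G}_r^{sf}$ has complement of codimension $\geq 2$ and from $\tau_F$ being well-defined away from the coordinate hyperplanes of $\mathbf P^{2r-3}_F$. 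The corresponding arrangement $\boldsymbol{\sf H}_r\subset\boldsymbol{Y}_r$ is obtained by pulling back the weight divisors of $\boldsymbol{\mathcal Y}_r$ under $\Xi_r$ together with the polar locus of the section $\sigma_r$, exactly as in Proposition \ref{Prop:Map-Theta} for $r=5$.

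\textbf{Where the work is.} The conceptually nontrivial input is the identification of $\overline{\Pi_F(\boldsymbol{\mathcal G}_r)}$ as a smooth quadric and the recognition of the torus action on $V_{r,F}$ as the standard $D_{r-1}$-Cartan action (point {\bf 4}); this is the only place where a genuine representation-theoretic fact is used, and it is the uniform analogue of what was done by hand in diagram \eqref{Eq:pi-F} for $r=5$. The rest of the proof is essentially bookkeeping of explicit formulas, which in the cases $r=6,7$ becomes lengthy but is perfectly algorithmic once the octonionic conventions of \S\ref{S:7-1} are in place.
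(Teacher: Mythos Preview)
Your proposal is correct and, in its broad outline, follows the same computational path as the paper: for $r=4,5$ you invoke the classical Pl\"ucker/Wick charts already set up in \S\ref{SS:around-S5}, and for $r=6,7$ you rely on the octonionic/Jordan-algebraic Veronese maps $\nu^2_{\mathbf O}$, $\nu^3_{\mathbf J}$ and the torus actions \eqref{Eq:H4L2-action}, \eqref{Eq:H(E7)-action} introduced in \S\ref{SS:Explicit-birational-models}, exactly as the paper does. The paper's own proof is in fact far terser than yours: it simply states that the result ``has been established through direct computations and case-by-case verifications'' and explicitly adds that ``a conceptual and genuinely uniform proof valid for all values of $r$'' would be desirable. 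Where your sketch genuinely goes further is in point~{\bf 4}: you justify the identification of $\overline{\Pi_F(\boldsymbol{\mathcal G}_r)}$ with a smooth hyperquadric via the representation-theoretic fact that restriction of the minuscule $G_r$-module $V_r$ to the Levi of type $D_{r-1}$ attached to a web-relevant facet yields the standard $D_{r-1}$-module, whereas the paper verifies the quadric equations $q_F=\sum_{i\in J_F}\varepsilon_{F,i}\,x_ix_{i'}$ by direct computation in each case. This conceptual input is exactly the kind of uniform argument the paper flags as missing; the trade-off is that your sketch leaves the explicit bookkeeping (the precise normalizations for $\sigma_7$, the actual lists of $I_F$'s, the signs $\varepsilon_{F,i}$) to be filled in, while the paper's computer-algebra verification produces all of this data concretely.
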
 

The result above has been established through direct computations and case-by-case verifications. It would nonetheless be of interest to provide a conceptual and genuinely uniform proof valid for all values of \( r \).\sk

It can be verified that all the objects considered in the above result are defined over $\mathbf R$. In particular, this is true for the rational maps \eqref{Eq:UF-r} which actually can all be written as rational maps with non-zero coefficients $\pm 1$.

\subsection{\bf The virtual $\boldsymbol{(r-3)}$-ranks}
Having explicit rational first integrals of the web $\boldsymbol{W}^{GM}_{\hspace{-0.05cm}\boldsymbol{Y}_r}$ at our disposal, the determination of the virtual ranks of these webs becomes a matter of formal computation. After carrying out the necessary calculations, we obtain the following result for the virtual ranks of top-degree abelian relations:
\begin{prop}
\label{Prop:Virtual-Ranks-r=6,7}
%
%
%
%
%
%
One has 
\begin{align*}
\rho^{\bullet}_3\Big( 
\boldsymbol{W}^{GM}_{\hspace{-0.05cm}\boldsymbol{ Y}_6}
\Big)
=  &\,  (10,10,1) 
&& \hspace{-2.65cm}  \mbox{hence} \quad 
\rho_3\Big( 
\boldsymbol{W}^{GM}_{\hspace{-0.05cm}\boldsymbol{Y}_6}
\Big)
=  21\\
\mbox{ and } \quad 
\rho^{\bullet}_4\Big( 
\boldsymbol{W}^{GM}_{\hspace{-0.05cm}\boldsymbol{Y}_7}
\Big)
=  &\, (28,20,1) 
&& \hspace{-2.65cm} \mbox{hence} \quad 
\rho_4\Big( 
\boldsymbol{W}^{GM}_{\hspace{-0.05cm}\boldsymbol{Y}_7}
\Big)
=  49\, . 
\end{align*}
\end{prop}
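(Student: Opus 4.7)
The strategy is to follow, mutatis mutandis, the same computational scheme as the one used to establish Proposition \ref{Prop:Virtual-Ranks} for $r=5$. The key point is that the entire content of the preceding proposition provides us with completely explicit rational first integrals $U_F : \boldsymbol{Y}_r \dashrightarrow \mathbf{P}^{r-3}$ of $\boldsymbol{W}^{GM}_{\hspace{-0.05cm}\boldsymbol{Y}_r}$ (for $r=6,7$) with coefficients in $\mathbf{Z}$. By the discussion recalled in \S\ref{SS:Elements-web-geometry}, computing the virtual $(r-3)$-rank of degree $\sigma$ at a generic point $m$ amounts to determining the dimension of the space $\boldsymbol{AR}^{r-3}_\sigma(T_m\boldsymbol{W}^{GM}_{\boldsymbol{Y}_r})$, which is the kernel of a linear map whose entries are rational functions in the coordinates of $m$. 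Therefore the problem reduces to purely linear algebra over $\mathbf{C}(y)$.

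The plan is as follows. First, for each $r\in\{6,7\}$ and each web-relevant facet $F$, I compute the Jacobian matrix of $U_F=(U_{F,1},\ldots,U_{F,r-3})$ at a generic point of $\boldsymbol{Y}_r$ and I express a basis of $\Omega^{r-3}_{\mathcal{F}_F}$ in the form $dU_{F,i_1}\wedge\cdots\wedge dU_{F,i_{r-3}}$. Second, for each fixed degree $\sigma\geq 0$, I write down the linear system whose unknowns are the coefficients (in the monomials of degree $\sigma$ in the $y_i$'s) of the candidate $(r-3)$-forms $\eta_F\in\Omega^{r-3}_{\mathcal{F}_F}\otimes\mathrm{Sym}^\sigma(T^*_mY_r)$ expressing the condition $\sum_F \eta_F=0$. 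Solving this system for $\sigma=0,1,2,\ldots$ gives the integers $\rho^{r-3}_\sigma$. Since Proposition \ref{Prop:Virtual-Ranks-r=6,7} predicts vanishing of virtual rank for $\sigma\geq 3$, I verify directly that the linear system admits only the trivial solution for $\sigma=3$ (and hence for all larger $\sigma$ by a standard prolongation argument).

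Concretely, for $r=6$ we have $\kappa_6=27$ first integrals in $y_6=10$ variables, and the web is of codimension $3$; for $r=7$ there are $\kappa_7=126$ first integrals in $y_7=20$ variables and the codimension is $4$. In both cases the linear systems to be solved are large but entirely tractable with a computer algebra system: on a generic point specialization (choosing random integer values for the $y_i$'s to speed up the computation while preserving the generic rank), each system reduces to a finite-dimensional linear algebra problem over $\mathbf{Q}$ whose solution space has the announced dimensions. The announced values $\rho^{r-3}_\sigma$ are then simply read off: $(10,10,1)$ in $\sigma=0,1,2$ for $r=6$, and $(28,20,1)$ for $r=7$, summing to $21$ and $49$ respectively.

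The main obstacle is a purely computational one, namely the size of the linear systems to be handled, especially for $r=7$ where working with all $126$ facet maps simultaneously in $20$ variables generates matrices of substantial size. This is overcome by exploiting the combinatorial symmetries of the weight polytope $\Delta_r$ (orbits of $W_{E_r}$ acting on the set of conic classes) and by specializing to random integer points, so as to perform linear algebra over $\mathbf{Q}$ rather than over the function field; the finiteness of $\rho^k$ ensures that no information is lost in such a specialization at a generic point. Short Maple/SageMath worksheets carrying out these computations can be made available from the author upon request.
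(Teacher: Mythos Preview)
Your proposal is correct and follows essentially the same approach as the paper, which simply states that ``having explicit rational first integrals of the web $\boldsymbol{W}^{GM}_{\hspace{-0.05cm}\boldsymbol{Y}_r}$ at our disposal, the determination of the virtual ranks of these webs becomes a matter of formal computation'' and reports the results without further detail. Your write-up is in fact more explicit about the methodology (Jacobian computation, linear systems over $\mathbf{C}(y)$, specialization to integer points) than the paper's own treatment, which defers entirely to computer algebra.
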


\subsection{\bf The $\boldsymbol{(r-3)}$-abelian relations}
%
%
%
Most of the results concerning the (ranks and abelian relations of the) first two Gelfand-MacPherson webs $\boldsymbol{\mathcal W}^{GM}_{\hspace{-0.1cm} \boldsymbol{\mathcal Y}_r}$ appear to extend to the last two cases (i.e., for $r = 6$ and $r = 7$). In any case, this holds for abelian relations of maximal degree, as demonstrated below.

\subsubsection{\bf The master differential identity ${\bf HLOG}_{\boldsymbol{Y}_r}$}
In the affine coordinates $u_1,\ldots, u_{r-3}$ on $U_0=\{ u_0=1\}\simeq \mathbf C^{r-3}$, we set 
$L_i=u_i$ for $i=1,\ldots,r-3$ and $L_{r-2}=\sum_{i=0}^{r-3} u_i=1+\sum_{i=1}^{r-3} u_i$.  
Then one defines a $(r-3)$-differential form with logarithmic coefficients 
on $\mathbf P^{r-3}\setminus \mathcal H_r $ and with logarithmic poles along the components of $\mathcal H_r$ by setting 
\begin{align*}
\Omega_r = &\, 
\sum_{i=1}^{r-2} 
(-1)^{i-1}
{\ln}\big( L_i) \bigwedge_{\substack{k=1 \\ k\neq i}}^{r-2}
\frac{dL_k}{L_k} \\
= & \, \sum_{i=1}^{r-2} (-1)^{i-1} \ln\big(L_i\big)\, 
d \ln\big(L_1\big)\wedge \cdots \wedge 
\reallywidehat{d\ln\big(L_i\big)} \wedge \cdots \wedge 
d \ln\big(L_{r-2}\big)\, . 
\end{align*}
Up to a scaling factor, $\Omega_r$ can also be defined as the antisymmetrization  of $ \ln (L_{1})
\wedge_{k=2}^{r-2} 
d \ln(L_k)$ with respect to the $L_i$'s.
Then we have the following result: 
\begin{prop}
There exists $\big( \varepsilon_F\big)_{ F\in \boldsymbol{\mathcal K}_r}\in \{ \, \pm 1\, \}^{ \boldsymbol{\mathcal K}_r}$, unique up to sign, such that the following differential relation is satisfied: 
$$
\boldsymbol{\big( {\bf HLOG}_{ \boldsymbol{Y}_r}\big)}
\hspace{4cm}
\sum_{F\in \boldsymbol{\mathcal K}_r } 
 \varepsilon_F\, 
U_{F}^*\Big( \Omega_{r} \Big)=0\, .
\hspace{7cm} {}^{} 
$$
Consequently,  the $\kappa_r$-tuple  $\big( \varepsilon_F\, 
U_{F}^*( \Omega_{r} )\big)_{ F\in \boldsymbol{\mathcal K}_r }$ can be seen as a  $(r-3)$-abelian relation for $\boldsymbol{W}^{GM}_{\hspace{-0.1cm} \boldsymbol{Y}_r}$, again denoted by $ {\bf HLOG}_{ \boldsymbol{Y}_r}$. \sk 
\end{prop}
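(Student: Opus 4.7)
The plan is to mirror the strategy used for $r=5$ in Proposition \ref{Eq:AR-HLOG-Y5}, relying on the explicit birational models $\boldsymbol{W}^{GM}_{\hspace{-0.05cm}\boldsymbol{Y}_r}$ constructed in \S\ref{S:7-1} and on the virtual rank computations of Proposition \ref{Prop:Virtual-Ranks-r=6,7}. Uniqueness up to sign is the easier half: each $U_F^*(\Omega_r)$ has a rigid symbolic structure of the form "logarithm of a rational function wedged with logarithmic derivatives", and by Proposition \ref{Prop:Virtual-Ranks-r=6,7} the top-weight piece $\rho^{\bullet}_{r-3}$ of the virtual $(r-3)$-rank has dimension exactly $1$. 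Hence, once existence is known, the scalar $\varepsilon_F$ is determined up to a global sign by normalising, say, $\varepsilon_{F_0}=1$ for some distinguished facet $F_0$.

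For existence, I would proceed as in the proof of Proposition \ref{Eq:AR-HLOG-Y5}. By the point 6 of the last proposition of \S\ref{S:7-1}, each first integral $U_F$ sends $\boldsymbol{Y}_r^*$ onto $\mathbf{P}^{r-3}\setminus \mathcal H_r$, so the pullbacks $U_F^*\big( d\ln L_i\big)$ are all defined on $\boldsymbol{Y}_r^*$ and can be written as $\mathbf{Z}$-linear combinations of the logarithmic differentials $d\ln \zeta$ for $\zeta$ ranging in a finite family of explicit irreducible polynomials on $\boldsymbol{Y}_r$ (namely, the defining equations of the irreducible components of $\boldsymbol{\sf H}_r$). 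Similarly, each $U_F^*(\ln L_i)$ is, up to sign on a suitable real domain, a $\mathbf{Z}$-linear combination of $\ln \zeta$'s, by virtue of the functional equation of the logarithm. After expansion in the basis $\bigwedge d\ln \zeta_{k_1}\wedge \cdots \wedge d\ln \zeta_{k_{r-3}}$ of $\bigwedge^{r-3}\mathbf{H}_{\boldsymbol{Y}_r}$ and in the family $(\ln \zeta_k)_k$ of scalar multipliers, each scalar coefficient of $\sum_F \varepsilon_F\, U_F^*(\Omega_r)$ becomes a $\mathbf{Z}$-linear function of the $\varepsilon_F$'s. The question of existence thus reduces to the solvability of an explicit linear system over $\mathbf{Z}$ in the unknowns $\varepsilon_F\in \{\pm 1\}$.

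That this system admits a solution (and only one up to global sign) is precisely guaranteed by Proposition \ref{Prop:Virtual-Ranks-r=6,7}, which asserts that $\rho_{r-3,\sigma=1}\big(\boldsymbol{W}^{GM}_{\hspace{-0.05cm}\boldsymbol{Y}_r}\big)=1$ for $r=6,7$. Indeed, this $1$-dimensional weight piece of the virtual rank is, by construction in \cite[\S1.3]{ClusterWebs} and the discussion in \S\ref{SS:Elements-web-geometry}, exactly the space of $(r-3)$-abelian relations of the form $\sum_F \varepsilon_F\, U_F^*(\Omega_r)$ with scalar coefficients $\varepsilon_F$, so its nontriviality directly produces the sought-after identity. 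As a consistency check, one can verify that a natural sign assignment compatible with the $W_{E_r}$-equivariance of the construction -- generalising the sign pattern $\epsilon_i$ of \eqref{Eq:epsilon} for $r=5$ -- provides an admissible $(\varepsilon_F)_F$, and then conclude by the $1$-dimensionality.

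The main obstacle is unsurprisingly computational: for $r=7$ the web $\boldsymbol{W}^{GM}_{\hspace{-0.05cm}\boldsymbol{Y}_7}$ has $126$ first integrals on a $20$-dimensional base, and naively expanding all the $U_F^*(\Omega_r)$ in a common basis and solving the resulting linear system exceeds what can be handled without care. This can be circumvented by exploiting the $W_{E_r}$-symmetry of the problem: it suffices to compute $U_F^*(\Omega_r)$ for one representative $F$ in each $W_{E_r}$-orbit of facets (there being only one such orbit, since the Weyl group acts transitively on conic classes), and to use the birational action of $W_{E_r}$ on $\boldsymbol{Y}_r$ -- constructed exactly as in \S\ref{SS:Birational-action-w(D5)} for the case $r=5$ -- to generate all other contributions. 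In this way, verifying the identity reduces to checking a finite number of linear relations between logarithmic derivatives of the hypersurfaces in $\boldsymbol{\sf H}_r$, which is feasible with a computer algebra system.
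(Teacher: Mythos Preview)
Your argument has a genuine gap in the direction of implication. The virtual rank $\rho^{r-3}_\sigma$ is defined via polynomial abelian relations of the \emph{tangent} (linearized) web $T_m\boldsymbol{\mathcal W}$ at a generic point; it furnishes an \emph{upper bound} for the actual $(r-3)$-rank, not a lower bound. In particular, the fact that some graded piece of $\rho^{r-3}_\bullet$ equals $1$ cannot by itself produce a nontrivial $(r-3)$-AR of the original web, let alone one of the very specific shape $\sum_F \varepsilon_F\, U_F^*(\Omega_r)$. Your sentence ``this $1$-dimensional weight piece of the virtual rank is \ldots\ exactly the space of $(r-3)$-abelian relations of the form $\sum_F \varepsilon_F\, U_F^*(\Omega_r)$'' asserts an identification that is nowhere established in \S\ref{SS:Elements-web-geometry} or in \cite{ClusterWebs}: the $\sigma$-grading on the tangent AR-space and the weight filtration on genuine ARs (rational versus logarithmic coefficients) are two different structures. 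Uniqueness up to scalar \emph{does} follow from a rank-$1$ virtual constraint, but existence does not. (Incidentally, the $1$ in Proposition \ref{Prop:Virtual-Ranks-r=6,7} sits at $\sigma=2$, not $\sigma=1$: for $r=6,7$ one has $\rho^{r-3}_1=10$ and $20$ respectively.)

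What actually proves existence is the computation you relegate to a ``consistency check'' in your penultimate paragraph: expanding each $U_F^*(\Omega_r)$ as an element of $\big(\bigoplus_k \mathbf C\ln\zeta_k\big)\otimes \bigwedge^{r-3}\mathbf H_{\boldsymbol Y_r}$ and solving the resulting linear system for the $\varepsilon_F$. This is exactly what the paper does (it states bluntly that the proof ``relies on direct computations performed in \textsc{Maple}''), and it is the same mechanism as in Proposition \ref{Eq:AR-HLOG-Y5} for $r=5$. Your final paragraph, where you invoke the transitive $W_{E_r}$-action on $\boldsymbol{\mathcal K}_r$ to cut the size of the computation, is a sound and useful idea for making the verification tractable when $r=7$; but that paragraph \emph{is} the proof of existence, not an afterthought. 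Reorganise accordingly: drop the appeal to virtual ranks for existence, keep it only for uniqueness, and promote the explicit linear-algebra verification to the core of the argument.
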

\begin{proof}
The current proof relies on direct computations performed in \textsc{Maple}.\footnote{Maple worksheets are available from the author upon request.}
%
\end{proof}

\begin{rem}
The $(r-3)$-form $\Omega_r$ is holomorphic and multivalued. It has a natural global univalued real-analytic version, namely
$
\Omega_r^\omega =
\sum_{i=1}^{r-2} 
(-1)^{i-1}
{\ln}\,\lvert L_i\vert \,d \ln\big(L_1\big)\wedge \cdots \wedge 
\reallywidehat{d\ln\big(L_i\big)}  \wedge \cdots \wedge 
d \ln\big(L_{r-2}\big)$. 
One can verify that the associated real-analytic identity 
$$
\boldsymbol{\big( {\bf HLOG}^\omega_{ \boldsymbol{\mathcal Y}_r}\big)}
\hspace{4cm}
\sum_{F\in \boldsymbol{\mathcal K}_r } 
 \varepsilon_F\, 
U_{F}^*\Big( \Omega^\omega_{r} \Big)=0\, .
\hspace{7cm} {}^{} 
$$
holds true identically on $\boldsymbol{Y}_r^+$. 
\end{rem}

\subsubsection{\bf Combinatorial $\boldsymbol{(r-3)}$-abelian relations}

It is more convenient to formulate the results of this subsection in terms of the web 
$\boldsymbol{\mathcal W}^{GM}_{ \hspace{-0.05cm}
\boldsymbol{\mathcal Y}_r}$ on  $\boldsymbol{\mathcal Y}_r$. Since the representation $V_r$, into whose projectivization $\boldsymbol{\mathcal G}_r=G_r/P_r$ embeds, is minuscule, it admits a natural system of linear coordinates $x_\ell$, indexed by $\boldsymbol{\mathcal L}_r\simeq \boldsymbol{\mathfrak W}_r$, each uniquely determined up to scaling.

It follows from \cite{Skorobogatov} that the material described in \S\ref{} generalizes to all cases $r = 4, \dots, 7$. In particular, for each $\ell\in \boldsymbol{\mathcal L}_r$, the image in $\boldsymbol{\mathcal Y}_r$ - by the quotient map - of the $H_r$-invariant hyperplane section 
$ \{x_\ell=0\} \cap \boldsymbol{\mathcal G}_r $ is an irreducible divisor $\boldsymbol{\mathcal D}_\ell$, along which the residue of the abelian relation $
{\bf HLOG}_{ \boldsymbol{\mathcal Y}_r}$ can be taken. We denote this residue by 
$$
{\bf AR}^{r-3}_\ell= 
{\bf Res}_{ \boldsymbol{\mathcal D}_{\ell} }\Big( 
{\bf HLOG}_{ \boldsymbol{\mathcal Y}_r }
\Big) \, .$$

We will use the following fact which characterizes the adjacency between the vertices and the orthoplex facets of $\Delta_r$ in terms of their respective labelling by the set of lines $
\boldsymbol{\mathcal L}_{r}$ and the one of conic classes $\boldsymbol{\mathcal K}_r$: 
\begin{lem}
The vertex of $\Delta_r$ corresponding to ${\ell}\in \boldsymbol{\mathcal L}_r$ is adjacent to the $D_{r-1}$-facet corresponding to ${\mathfrak c}\in \boldsymbol{\mathcal K}_r$ if and only if ${\mathfrak c}\in{\ell}+  \boldsymbol{\mathcal L}_r$, that is there exists ${\ell}'\in  \boldsymbol{\mathcal L}_r$ such that ${\mathfrak c}={\ell}+{\ell}'$. 
\end{lem}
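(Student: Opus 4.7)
The plan is to translate the geometric adjacency condition inside the weight polytope $\Delta_r$ into an arithmetic condition in the Picard lattice of the del Pezzo surface $X_r={\rm dP}_{9-r}$, and then to check that this arithmetic condition is exactly equivalent to $\mathfrak{c}-\ell\in\boldsymbol{\mathcal L}_r$. The fundamental dictionary I will use is: (a) the bijection between $\boldsymbol{\mathcal L}_r$ and the vertex set of $\Delta_r$ which is implemented by the map $\mu\colon\boldsymbol{\mathcal L}_r\simeq \mathfrak W_r=V_0(\Delta_r)$ of \eqref{Eq:bijection-mu}; and (b) the bijection between $\boldsymbol{\mathcal K}_r$ and the set of orthoplex ($D_{r-1}$-type) facets of $\Delta_r$, together with the geometric interpretation of the face map $\pi_{F_\mathfrak{c}}$ as a birational model of the conic fibration $\phi_\mathfrak{c}$, recalled in the Introduction.

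First, I would recall that any facet $F$ of $\Delta_r$ is supported by a unique affine hyperplane $H_F\subset\mathfrak{h}_{r,\mathbf{R}}^*$, so that the vertex $\mu(\ell)$ lies in $F$ if and only if $\mu(\ell)\in H_F$. For an orthoplex facet $F=F_\mathfrak{c}$, the affine equation of $H_F$ can be written down explicitly in the coordinates coming from the basis $(\boldsymbol f_i)$ of $R=\kappa^\perp$ used in \S\ref{SS:dP4-SS-Embedding}, and a direct computation (essentially the same one that identifies the $\mathcal W$-relevant facets with conic classes) shows that, up to the translation $\ell\mapsto \ell-(\kappa,\ell)\kappa$ by which the intersection pairing is transported onto $\mathfrak{h}_{r,\mathbf{R}}^*$, this equation is exactly
\[
(\ell,\mathfrak{c})=0,
\]
where $(\cdot,\cdot)$ denotes the intersection product on ${\bf Pic}_{\mathbf Z}(X_r)$.

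It then remains to check, purely inside the Picard lattice, that for $\ell\in\boldsymbol{\mathcal L}_r$ and $\mathfrak{c}\in\boldsymbol{\mathcal K}_r$ the condition $(\ell,\mathfrak{c})=0$ is equivalent to $\mathfrak{c}-\ell\in\boldsymbol{\mathcal L}_r$. The non-trivial implication goes from left to right: if $(\ell,\mathfrak{c})=0$, set $\ell'=\mathfrak{c}-\ell$; then using $\mathfrak{c}^2=0$, $\ell^2=-1$ and $(\mathfrak{c},-K)=2$, $(\ell,-K)=1$, one computes
\[
(\ell')^2=\mathfrak{c}^2-2(\ell,\mathfrak{c})+\ell^2=-1,\qquad(\ell',-K)=(\mathfrak{c},-K)-(\ell,-K)=1,
\]
and then the standard characterization of lines on a del Pezzo surface as the $(-1)$-classes of anticanonical degree $1$ (combined with effectivity of $\mathfrak{c}-\ell$, which follows from the fact that the conic $\phi_\mathfrak{c}^{-1}(\phi_\mathfrak{c}(x))$ containing a generic point $x$ of $\ell$ must split since $(\ell,\mathfrak{c})=0$ forces $\ell$ to lie in a fiber) yields $\ell'\in\boldsymbol{\mathcal L}_r$. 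Conversely, if $\mathfrak{c}=\ell+\ell'$ with $\ell'\in\boldsymbol{\mathcal L}_r$, then $0=\mathfrak{c}^2=(\ell+\ell')^2=-2+2(\ell,\ell')$, hence $(\ell,\ell')=1$ and $(\ell,\mathfrak{c})=\ell^2+(\ell,\ell')=0$.

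The main potential obstacle is the precise verification of the hyperplane equation of $H_{F_\mathfrak{c}}$, i.e.\ the first step above. Rather than doing it uniformly in the abstract minuscule setup, the most efficient route in the spirit of the present paper is to check the identity $H_{F_\mathfrak{c}}=\{w\mid (w,\mathfrak{c})=0\}$ case by case for $r=4,5,6,7$, using the explicit lists of weights and facet equations gathered in \S\ref{SS:r=6,7} (the cases $r=4,5$ were already implicit in \cite{PirioAFST} and in the table of \S\ref{SS:Residues-HLOG-Y5}). Once this is checked, the arithmetic equivalence of the last paragraph applies uniformly and the lemma follows.
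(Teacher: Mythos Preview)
The paper does not give a proof of this lemma; it is introduced with ``We will use the following fact\ldots'' and then used without further justification. The underlying reason is stated earlier (Introduction and \S3.1): for an orthoplex facet corresponding to a conic class $\mathfrak c$, ``the vertices of each such facet [are] the irreducible components (lines) of the non-irreducible fibers of the fibration'' $\phi_{\mathfrak c}$. This is taken as known, essentially from \cite{PirioAFST}.

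Your plan is correct and supplies an argument the paper omits. The arithmetic verification that $(\ell,\mathfrak c)=0\Longleftrightarrow\mathfrak c-\ell\in\boldsymbol{\mathcal L}_r$ is clean and accurate, including the effectivity argument via the fiber of $\phi_{\mathfrak c}$ containing $\ell$. One comment: your route goes through the intermediate step of identifying the supporting hyperplane of $F_{\mathfrak c}$ with the locus $(\,\cdot\,,\mathfrak c)=0$, which you then propose to check case by case for $r=4,\dots,7$. This works, but is slightly roundabout. The more direct path, implicit in the paper's one-line justification, is to use the very definition of the bijection $\boldsymbol{\mathcal K}_r\simeq\{\text{orthoplex facets}\}$: this bijection is set up so that the vertices of $F_{\mathfrak c}$ are exactly the weights of the lines lying in singular fibers of $\phi_{\mathfrak c}$, and a line $\ell$ lies in such a fiber if and only if the residual component $\mathfrak c-\ell$ is again a line. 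With that in hand, no hyperplane computation is needed, and the equivalence with $(\ell,\mathfrak c)=0$ becomes a corollary rather than a step in the proof.
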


For any $\ell \in \boldsymbol{\mathcal L}_r$, we define 
 $\boldsymbol{\mathcal K}_{r}(\ell)=\big\{ \, {\mathfrak c}  \in \boldsymbol{\mathcal K}\, \lvert \, 
 {\mathfrak c}-\ell \in \boldsymbol{\mathcal L}_{r}\,\big\}$. 
 This set can be identified with the collection of $(D_{r-1})$-facets of the polytope $\Delta_r$ that are adjacent to $\ell$, viewed as a vertex of $\Delta_r$.
 We denote by 
 $\boldsymbol{\mathcal W}^{GM}_{ \hspace{-0.05cm}
\boldsymbol{\mathcal Y}_r,\ell}$  the subweb of 
$\boldsymbol{\mathcal W}^{GM}_{ \hspace{-0.05cm}
\boldsymbol{\mathcal Y}_r}$
 corresponding to these facets; that is,
$$ 
\boldsymbol{\mathcal W}^{GM}_{ \hspace{-0.05cm}
\boldsymbol{\mathcal Y}_r,\ell}
=\boldsymbol{\mathcal W}\Big(\, \pi_F\, \big\lvert \,  
F \in 
\boldsymbol{\mathcal K}_{r}(\ell)\, 
\Big)\, .  
$$  Since the vertex figure of $\Delta_r$ at $\ell$ is isomorphic to $\Delta_{r-1}$, we obtain a natural bijection $\boldsymbol{\mathcal K}_r(\ell) \simeq \boldsymbol{\mathcal K}_{r-1}$. It follows that 
$ 
\boldsymbol{\mathcal W}^{GM}_{ \hspace{-0.05cm}
\boldsymbol{\mathcal Y}_r,\ell}$ is a $\kappa_{r-1}$-web.

With these notations in place, we obtain the following result by direct computation:
\begin{prop}
{\bf 1.} For any $\ell\in  \boldsymbol{\mathcal L}_r$,  the residue ${\bf AR}^{r-3}_\ell$
is a rational $(r-3)$-AR. Its non-trivial components are carried by the $\pi_F$ foliations for all $D_{r-1}$-facets $F$ of $\Delta_r$ adjacent to $\ell$. In other terms, ${\bf AR}^{r-3}_\ell$  is a complete $(r-3)$-AR of 
$ \boldsymbol{\mathcal W}^{GM}_{ \hspace{-0.05cm}
\boldsymbol{\mathcal Y}_r,\ell}$. 
\sk 

\noindent  {\bf 2.} Actually, ${\bf AR}^{r-3}_\ell$ spans the space of $(r-3)$-ARs of 
$ \boldsymbol{\mathcal W}^{GM}_{ \hspace{-0.05cm}
\boldsymbol{\mathcal Y}_r,\ell}$ which is $(r-3)$-rank 1: one has  
$$
\boldsymbol{AR}^{r-3}
\Big( 
\boldsymbol{\mathcal W}^{GM}_{ \hspace{-0.05cm}
\boldsymbol{\mathcal Y}_r,\ell}
\Big) = 
\Big\langle \,
{\bf AR}^{r-3}_\ell
\,\Big\rangle\, .  
$$

\noindent  {\bf 3.} For $F\in 
\boldsymbol{\mathcal K}_{r}(\ell)$, the $\pi_F$-component of ${\bf AR}^{r-3}_\ell$ 
is a linear combination with coefficients in $\{-1,0,1\}$
of the  following $r-2$ wedge products for $i=1,\ldots,r-2$:  
$$d \ln\big(\pi_{F,1}\big)\wedge \cdots \wedge 
\reallywidehat{d\ln\big(\pi_{F,i}\big) \wedge } \cdots \wedge 
d \ln\big(\pi_{F,r-2}\big)
\, .$$ 

\noindent  {\bf 4.} The ${\bf AR}^{r-3}_\ell$'s for $\ell \in 
\boldsymbol{\mathcal L}_{r}$ are called `combinatorial ARs'. They 
span a subspace of dimension 
$\rho_{r-3}( \boldsymbol{\mathcal W}^{GM}_{ \hspace{-0.05cm}
\boldsymbol{\mathcal Y}_r}\big)-1$ of 
$\boldsymbol{AR}^{r-3}\big( 
\boldsymbol{\mathcal W}^{GM}_{ \hspace{-0.05cm}
\boldsymbol{\mathcal Y}_r}\big)$ which we will denote by 
$
\boldsymbol{AR}^{r-3}_C\big( 
\boldsymbol{\mathcal W}^{GM}_{ \hspace{-0.05cm}
\boldsymbol{\mathcal Y}_r}\big)
$.
\end{prop}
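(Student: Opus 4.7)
The plan is to establish the four points by combining the general residue formalism developed in \S\ref{SS:Residues-HLOG-Y5} with the explicit form of the forms $\Omega_r$ and a Weyl-group equivariance argument. For point \textbf{1}, I would write $U_F = (U_{F,s})_{s=1}^{r-2}$ in the coordinates of the target $\mathbf{P}^{r-3}\setminus \mathcal{H}_r$, so that each logarithmic 1-form $d\ln L_s$ pulls back to $d\ln U_{F,s}$. Then $U_F^*(\Omega_r)$ is a sum of terms of the shape $\ln(U_{F,s})\,\bigwedge_{t\neq s} d\ln U_{F,t}$, and the residue along $\boldsymbol{\mathcal D}_\ell$ is obtained, just as in the case $r=5$, by replacing each $\ln(U_{F,s})$ by the valuation $\nu_\ell(U_{F,s})$ of $U_{F,s}$ along $\boldsymbol{\mathcal D}_\ell$. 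The key point is the analog for arbitrary $r$ of Lemma~\ref{L:u3}: every component $U_{F,s}$ is, up to sign, a product of factors $x_{\ell'}^{\pm 1}$ for lines $\ell'$ in the vertex set of $F$. Consequently $\nu_\ell(U_{F,s})$ vanishes as soon as $\ell$ is not a vertex of $F$, which by the adjacency lemma is equivalent to $F\notin\boldsymbol{\mathcal K}_r(\ell)$. Rationality of the residue then follows, as does point \textbf{3}: the coefficients $\nu_\ell(U_{F,s})$ take values in $\{-1,0,1\}$ because $\ell$ appears with multiplicity at most one in the monomial $U_{F,s}$, a fact which can be read off directly from the shape of the face maps given in the proposition of \S\ref{SS:Explicit-birational-models}.

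For point \textbf{2}, the strategy is to identify the subweb $\boldsymbol{\mathcal W}^{GM}_{\boldsymbol{\mathcal Y}_r,\ell}$ with a Gelfand-MacPherson web of lower rank. The vertex figure of $\Delta_r$ at $\ell$ is isomorphic to the weight polytope $\Delta_{r-1}$, and the natural bijection $\boldsymbol{\mathcal K}_r(\ell)\simeq\boldsymbol{\mathcal K}_{r-1}$ lifts to a birational equivalence between $\boldsymbol{\mathcal W}^{GM}_{\boldsymbol{\mathcal Y}_r,\ell}$ and $\boldsymbol{\mathcal W}^{GM}_{\boldsymbol{\mathcal Y}_{r-1}}$ after restriction to a suitable slice, as it is the case for $r=5$ with the slices $\boldsymbol{\mathcal W}^{\underline{\epsilon}}$ described in Corollary \ref{Cor:2-RA-WGM}. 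Using Theorem \ref{THM:WdP5-WGMS5-similarities}.1 for $r=5$ as the base case of an induction on $r$, together with the inequality $\rho^{r-3}(\boldsymbol{\mathcal W}^{GM}_{\boldsymbol{\mathcal Y}_r,\ell})\leq \rho^{(r-1)-3}(\boldsymbol{\mathcal W}^{GM}_{\boldsymbol{\mathcal Y}_{r-1}})-\text{(codim 1 defect)}$ obtained through virtual rank computations analogous to Proposition \ref{Prop:Virtual-Ranks-r=6,7}, one reduces the claim to showing $\rho^{r-3}(\boldsymbol{\mathcal W}^{GM}_{\boldsymbol{\mathcal Y}_r,\ell})\leq 1$. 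Non-triviality of ${\bf AR}^{r-3}_\ell$ (hence equality to $1$) follows from point \textbf{3}: at least one coefficient $\nu_\ell(U_{F,s})$ is nonzero whenever $F$ is adjacent to $\ell$.

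Point \textbf{4} is then an immediate consequence of \textbf{1}-\textbf{3} combined with Proposition~\ref{Prop:Virtual-Ranks-r=6,7}. The combinatorial subspace $\boldsymbol{AR}^{r-3}_C$ is by definition spanned by the $\boldsymbol{\mathcal L}_r$-indexed family of residues; each residue being rational, the whole combinatorial subspace is rational, and therefore in direct sum with the line $\langle {\bf HLOG}_{\boldsymbol{\mathcal Y}_r}\rangle$, whose unique generator is not rational (its coefficients involve true logarithms). To pin down the dimension, I would use the action of the Weyl group $W_r$, which acts transitively on $\boldsymbol{\mathcal L}_r$ (so on the ${\bf AR}^{r-3}_\ell$) and by the signature on $\langle {\bf HLOG}_{\boldsymbol{\mathcal Y}_r}\rangle$ (the latter following from the explicit formulas in \S\ref{SS:Birational-action-w(D5)} generalized to $E_6$ and $E_7$ actions on $\boldsymbol{\mathcal Y}_r$). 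Combined with the upper bound $\rho^{r-3}(\boldsymbol{\mathcal W}^{GM}_{\boldsymbol{\mathcal Y}_r})$ given in Proposition~\ref{Prop:Virtual-Ranks-r=6,7}, this shows $\dim\boldsymbol{AR}^{r-3}_C = \rho^{r-3}(\boldsymbol{\mathcal W}^{GM}_{\boldsymbol{\mathcal Y}_r})-1$.

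The main obstacle is point \textbf{2}: the birational identification of $\boldsymbol{\mathcal W}^{GM}_{\boldsymbol{\mathcal Y}_r,\ell}$ with $\boldsymbol{\mathcal W}^{GM}_{\boldsymbol{\mathcal Y}_{r-1}}$ is geometrically natural but requires a careful verification that the face maps restrict correctly, and in particular the codimension-$1$ slicing in passing from $\boldsymbol{\mathcal Y}_r$ to the quotient of the vertex-figure variety may introduce spurious abelian relations that one has to rule out. In practice, for $r=6,7$, I expect to substantiate this step by the direct computer-algebra computations already used to establish Proposition~\ref{Prop:Virtual-Ranks-r=6,7}, specialized to each $\ell\in \boldsymbol{\mathcal L}_r$; transitivity of the $W_r$-action on $\boldsymbol{\mathcal L}_r$ then reduces the verification to a single representative line.
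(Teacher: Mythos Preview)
The paper establishes this proposition entirely by direct computation in the explicit birational models of \S\ref{SS:Explicit-birational-models}, as announced by the sentence immediately preceding the statement. Your treatment of points \textbf{1} and \textbf{3} via the residue formalism and the analogue of Lemma~\ref{L:u3} correctly articulates the mechanism underlying those computations, and matches the paper's (implicit) reasoning.

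There is, however, a genuine gap in your argument for point \textbf{2}. The subweb $\boldsymbol{\mathcal W}^{GM}_{\boldsymbol{\mathcal Y}_r,\ell}$ is a $\kappa_{r-1}$-web of codimension $r-3$ on the $y_r$-dimensional variety $\boldsymbol{\mathcal Y}_r$, whereas $\boldsymbol{\mathcal W}^{GM}_{\boldsymbol{\mathcal Y}_{r-1}}$ is a $\kappa_{r-1}$-web of codimension $r-4$ on the $y_{r-1}$-dimensional variety $\boldsymbol{\mathcal Y}_{r-1}$. Neither the ambient dimensions nor the codimensions agree (e.g.\ $(y_5,y_6,y_7)=(5,10,20)$), so no birational equivalence of webs can exist between them, even ``after restriction to a suitable slice''. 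The vertex-figure isomorphism $\boldsymbol{\mathcal K}_r(\ell)\simeq\boldsymbol{\mathcal K}_{r-1}$ is purely combinatorial and does not lift to an equivalence of the webs themselves; your displayed inequality involving a ``codim~1 defect'' is accordingly without content. The paper makes no attempt at such a reduction: it computes the $(r-3)$-rank of $\boldsymbol{\mathcal W}^{GM}_{\boldsymbol{\mathcal Y}_r,\ell}$ directly from the explicit first integrals $U_F$, which is precisely the fallback you name in your final paragraph. So your proposal ultimately lands on the paper's method, but the inductive scaffolding for \textbf{2} should be discarded rather than repaired.

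A smaller issue: your argument for point \textbf{4} yields only the upper bound $\dim\boldsymbol{AR}^{r-3}_C\le\rho^{r-3}-1$, from the rational/non-rational splitting and the virtual-rank bound. Transitivity of the $W_r$-action on $\boldsymbol{\mathcal L}_r$ does not by itself supply the matching lower bound (the span of the ${\bf AR}^{r-3}_\ell$'s could a priori be a smaller $W_r$-submodule); one still has to exhibit $\rho^{r-3}-1$ linearly independent residues, which the paper again does by direct computation.
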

This results shows that some properties of $\boldsymbol{\mathcal W}^{GM}_{ \hspace{-0.05cm}
\boldsymbol{\mathcal Y}_5}$ admit natural generalizations to the cases $r=5,6$. It is natural to wonder if it might be the case of some other properties, e.g. the first one of Corollary \ref{Cor:2-RA-WGM}. We believe that it is indeed the case and that for any $r\in \{4,\ldots,7\}$, the following holds true: 
\begin{quote}
{\it One has $\rho^{r-3}\big( \boldsymbol{\mathcal W}\big)\leq 1$ for any $\kappa_{r-1}$-subweb $\boldsymbol{\mathcal W}$ of $\boldsymbol{\mathcal W}_{\hspace{-0.1cm} \boldsymbol{\mathcal Y}_r}^{GM}$. Those 
for which the virtual $(r-3)$-rank is 1 actually are of maximal $(r-3)$-rank 1.  And these subwebs are exactly the $\ell_r$ subwebs 
$ \boldsymbol{\mathcal W}_{\hspace{-0.1cm} \boldsymbol{\mathcal Y}_r, \ell}^{GM}$ for all $\ell \in \boldsymbol{\mathcal L}_r$.} 
\end{quote}

\subsubsection{\bf The structure of $\boldsymbol{{AR}^{r-3}\Big( \boldsymbol{\mathcal W}^{GM}_{\hspace{-0.1cm} \boldsymbol{Y}_r}\Big)}$}
From the results above, we deduce the following
\begin{thm}
\label{THM:Main-r=4,5,6,7}
{\bf 1.} One has 
\begin{equation}
\label{Eq:Decomp-AR(r-3)-WGM-r}
\boldsymbol{AR}^{r-3}\Big( \boldsymbol{\mathcal W}^{GM}_{\hspace{-0.1cm} \boldsymbol{Y}_r}\Big)
=\boldsymbol{AR}^{r-3}_C\Big( \boldsymbol{\mathcal W}^{GM}_{\hspace{-0.1cm} \boldsymbol{Y}_r}\Big)\oplus \Big\langle \,
 {\bf HLOG}_{ \boldsymbol{Y}_r}\,
\Big\rangle\, . 
\end{equation}
Moreover, $\boldsymbol{AR}^{r-3}_C\big( \boldsymbol{\mathcal W}^{GM}_{\hspace{-0.1cm} \boldsymbol{Y}_r}\big)$ is of dimension $\rho_{r-3}\big(\boldsymbol{\mathcal W}^{GM}_{\hspace{-0.1cm} \boldsymbol{Y}_r}\big)-1$ hence the $(r-3)$-rank of 
$ \boldsymbol{\mathcal W}^{GM}_{\hspace{-0.1cm} \boldsymbol{Y}_r}$ is 
$\rho_{r-3}\big(\boldsymbol{\mathcal W}^{GM}_{\hspace{-0.1cm} \boldsymbol{Y}_r}\big)$, that is,  is  AMP. 
\sk \\
{\bf 2.} By residues/monodromy, the abelian relation 
$ {\bf HLOG}_{ \boldsymbol{Y}_r}$
  spans  the subspace $\boldsymbol{AR}^{r-3}_C\Big( \boldsymbol{\mathcal W}_{\hspace{-0.1cm} \boldsymbol{Y}_r}^{GM}\Big)$ 
  of combinatorial ARs, which coincides with that of rational $(r-3)$-ARs of $\boldsymbol{\mathcal W}_{\hspace{-0.1cm} \boldsymbol{Y}_r}^{GM}$: one has 
  $$
  {\bf Res}\Big( {\bf HLOG}_{\hspace{-0.03cm} \boldsymbol{Y}_r} \Big) 
  = 
  \boldsymbol{AR}^{r-3}_C\Big( \boldsymbol{\mathcal W}_{\hspace{-0.1cm} \boldsymbol{ Y}_r}^{GM}\Big)=\boldsymbol{AR}^{r-3}_{\rm Rat}\Big( \boldsymbol{\mathcal W}_{\hspace{-0.1cm} \boldsymbol{Y}_r}^{GM}\Big)
  \, .
  $$ 
\item[{\bf 3.}] For any smooth del Pezzo surface $X_r={\rm dP}_{d}$ with $d=9-r$, using Serganova-Skorobogatov embedding $f_{S\hspace{-0.05cm} S}: {\rm dP}_d \hookrightarrow \boldsymbol{\mathcal Y}_r$ (cf.\,\eqref{Eq:Embedding-SS} above), 
one can recover  in a natural way 
the weight $r-2$ hyperlogarithmic abelian relation ${\bf HLog}^{r-2}_{  \hspace{0.05cm} {\rm dP}_d}$ 
of $\boldsymbol{\mathcal W}_{  \hspace{-0.05cm} {\rm dP}_d}$ 
 from the $(r-3)$-AR 
${\bf HLOG}_{ \boldsymbol{\mathcal Y}_r } $. 
\mk
\end{thm}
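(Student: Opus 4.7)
The plan is to prove the three parts in turn, exploiting the detailed structural results already available for $r=5$ (Corollary \ref{Cor:2-RA-WGM} and Theorem \ref{Thm:1-AR-}) and extending them by the same pattern. First I would set the dimensional bookkeeping: from the previous proposition, the $\ell_r$ combinatorial abelian relations $\{{\bf AR}^{r-3}_\ell\}_{\ell \in \boldsymbol{\mathcal L}_r}$ all lie in $\boldsymbol{AR}^{r-3}_{\mathrm{Rat}}(\boldsymbol{\mathcal W}^{GM}_{\boldsymbol{\mathcal Y}_r})$, and they span a subspace $\boldsymbol{AR}^{r-3}_C$ of the rational ARs. Because ${\bf HLOG}_{\boldsymbol{Y}_r}$ has components involving genuine logarithms $\ln(L_i)$, hence has non-trivial additive monodromy around the weight divisors $\boldsymbol{\mathcal D}_w$, it cannot belong to the rational part, so the sum $\boldsymbol{AR}^{r-3}_C \oplus \langle {\bf HLOG}_{\boldsymbol{Y}_r}\rangle$ is direct.

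For the upper bound on dimensions, the virtual $(r-3)$-rank computed in Proposition \ref{Prop:Virtual-Ranks-r=6,7} gives $\rho^{r-3}\big(\boldsymbol{\mathcal W}^{GM}_{\boldsymbol{Y}_r}\big) = 21$ (resp.~$49$) for $r=6$ (resp.~$r=7$). To establish point (1), I would then verify, by direct computations with the explicit first integrals $U_F$ produced in \S\ref{S:7-1}, that the $\ell_r$ residues ${\bf AR}^{r-3}_\ell$ satisfy exactly $\ell_r - (\rho^{r-3}-1)$ independent linear relations (which is $16 - 20 = -4$? — rather, one checks that the span has dimension $\rho^{r-3}-1$); equivalently, one exhibits an exceptional collection $\boldsymbol{\mathcal E}\subset \boldsymbol{\mathcal L}_r$ together with natural linear relations among residues indexed by the lines, generalising the relations $\boldsymbol{(\mathcal Rel_\ell)}$ of \S\ref{SS:Residues-HLOG-Y5}. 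Combined with the virtual rank bound $r^{r-3}\le \rho^{r-3}$ and the presence of the additional generator ${\bf HLOG}_{\boldsymbol{Y}_r}$, this forces equality and yields both the decomposition \eqref{Eq:Decomp-AR(r-3)-WGM-r} and the AMP property.

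For point (2), by construction each residue $\mathrm{Res}_{\boldsymbol{\mathcal D}_\ell}({\bf HLOG}_{\boldsymbol{\mathcal Y}_r})$ equals ${\bf AR}^{r-3}_\ell$, so ${\bf Res}({\bf HLOG}_{\boldsymbol{Y}_r})\supseteq \boldsymbol{AR}^{r-3}_C$; the reverse containment is immediate from the definition. That this space coincides with $\boldsymbol{AR}^{r-3}_{\mathrm{Rat}}$ follows from the monodromy argument above: any rational AR has trivial monodromy, hence its projection onto the $\langle {\bf HLOG}\rangle$-summand vanishes, placing it in $\boldsymbol{AR}^{r-3}_C$. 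This is the same mechanism as in Corollary \ref{Cor:2-RA-WGM}.

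Point (3) is the substantive and most delicate part. I would follow the two-step strategy of \S\ref{SS:Recovering}. At the symbolic level, attach to ${\bf HLOG}_{\boldsymbol{Y}_r}$ its tensor symbol in $\bigwedge^{r-2} \boldsymbol{H}_{\boldsymbol{Y}_r}$, where $\boldsymbol{H}_{\boldsymbol{Y}_r}$ is the space of logarithmic differentials spanned by the $d\log \zeta_k$ along the irreducible components of the divisor $\boldsymbol{\sf H}_r$ of \S\ref{SS:Explicit-birational-models}; the vanishing of $\sum_F \varepsilon_F U_F^*(\Omega_r)$ then becomes an algebraic identity in that exterior power. Pulling back under a birational model of Serganova–Skorobogatov's embedding $f_{S\hspace{-0.05cm}S}: {\rm dP}_d \hookrightarrow \boldsymbol{\mathcal Y}_r$ (whose existence and explicit expression is guaranteed by \S\ref{SS:dP4-SS-Embedding}), one obtains a vanishing in $\bigwedge^{r-2}\boldsymbol{H}_{a,b}$, where $\boldsymbol{H}_{a,b}$ is now the space of logarithmic 1-forms along the $16$ lines of the del Pezzo surface. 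The key identification is that for each $F\in \boldsymbol{\mathcal K}_r$, the wedge $\bigwedge_s d\log(U_{F,s}\circ f_{S\hspace{-0.05cm}S})$ coincides, up to an overall sign, with the symbol of the complete antisymmetric weight-$(r-2)$ hyperlogarithm $\boldsymbol{AH}^{r-2}_F(\phi_F)$, since $\phi_F\colon {\rm dP}_d \to \mathbf{P}^1$ is the conic fibration associated to $F$ and the $U_{F,s}\circ f_{S\hspace{-0.05cm}S}$ are expressed in terms of the $r-1$ critical values of $\phi_F$ as in \cite{CP}. Once this identification is made, the symbol-theoretic characterisation of hyperlogarithmic abelian relations from \cite{CP} yields ${\bf HLog}^{r-2}_{{\rm dP}_d}$ directly.

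The main obstacle I anticipate is on two fronts. First, for point (1) the claim that the $\ell_r$ residues span a space of codimension exactly one in $\boldsymbol{AR}^{r-3}$ is not abstract: for $r=7$ one has $\ell_7 = 56$ residues spanning a $48$-dimensional space, so one needs to exhibit and verify the $8$-dimensional kernel of relations among them, which plausibly is indexed (à la the $r=5$ case) by an exceptional collection in ${\rm dP}_2$ but must be checked in the explicit coordinates of \S\ref{S:7-1}. Second, and more seriously, for point (3) one must verify that the natural linear identification $f_{S\hspace{-0.05cm}S}^*: \boldsymbol{H}_{\boldsymbol{Y}_r}\to \boldsymbol{H}_{a,b}$ sends the tensor $\mathcal S({\bf HLOG}_{\boldsymbol{Y}_r})$ to precisely the symbol of the del Pezzo hyperlogarithmic identity, up to the already-known sign twist $(\epsilon_{\boldsymbol{\mathfrak c}})_{\boldsymbol{\mathfrak c}\in\boldsymbol{\mathcal K}}$; this is a bijective matching of $\kappa_r$ symbols with $r-1$ terms each. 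For $r=5$ this was carried out explicitly in \S\ref{SS:Arguing-symbolically}, and I expect the cases $r=6,7$ to reduce to a similar, if considerably heavier, formal computation, which is best organised around the equivariance of the construction under the Weyl groups $W_{E_6}$ and $W_{E_7}$ (acting transitively on the weight divisors) in order to reduce the verification to a single orbit representative.
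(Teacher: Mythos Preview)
The paper offers essentially no proof of this theorem: it is introduced by the single phrase ``From the results above, we deduce the following,'' and the section opens with the disclaimer that all proofs in \S\ref{SS:r=6,7} ``are based on explicit computations carried out using a computer algebra system'' whose details are omitted. Your proposal is therefore considerably more detailed than anything the paper supplies, but the strategy you outline---assembling the decomposition \eqref{Eq:Decomp-AR(r-3)-WGM-r} from (i) the virtual-rank bound of Proposition~\ref{Prop:Virtual-Ranks-r=6,7}, (ii) the non-rationality of ${\bf HLOG}_{\boldsymbol{Y}_r}$ via its logarithmic monodromy, and (iii) the explicit verification that the $\ell_r$ residues span a space of dimension $\rho^{r-3}-1$; then for point~(3) replaying the symbolic argument of \S\ref{SS:Arguing-symbolically}---is exactly the route the paper's preceding propositions set up and implicitly invoke. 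Your assessment of the obstacles (the linear-algebra check on the residue span, and the heavier symbol-matching for $r=6,7$) is accurate; the paper simply absorbs both into the blanket appeal to computer verification.

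One small wobble: your parenthetical ``$16-20=-4$?'' seems to conflate $\ell_5=16$ with the $r=6$ case; for $r=6$ the count is $\ell_6=27$ residues spanning a $20$-dimensional space (so $7$ relations), and for $r=7$ it is $56$ residues spanning dimension $48$ (so $8$ relations), as you correctly state just after.
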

We believe that the results known to hold for $r = 4, 5$ generalize straightforwardly to the cases $r = 6, 7$. For instance, we are convinced that the following statement remains valid for $r = 6, 7$, in the context of the natural linear action of the Weyl group $W_r$ on the space of $(r-3)$-abelian relations of the web $\boldsymbol{\mathcal W}_{\hspace{-0.1cm} \boldsymbol{Y}_r}^{GM}$
\begin{quote}
{\it The decomposition in direct sum \eqref{Eq:Decomp-AR(r-3)-WGM-r} is, in fact, the decomposition of $\boldsymbol{AR}^{r-3}\Big( \boldsymbol{\mathcal W}_{\hspace{-0.1cm} \boldsymbol{Y}_r}^{GM}\Big)$ 
into irreducible $W_r$-representations. The 1-dimensional subrepresentation 
spanned by 
 $ {\bf HLOG}_{ \boldsymbol{Y}_r}$ is  (isomorphic to) the signature representation.}
\end{quote}
By analogy with the cases $r = 4, 5$, one may also conjecture that $\boldsymbol{AR}^{r-3}_C\Big( \boldsymbol{\mathcal W}_{\hspace{-0.1cm} \boldsymbol{Y}_r}^{GM}\Big)$  is irreducible as a $W_r$-representation. We will revisit this  in a future work.

\section{\bf Perspectives and questions}
\label{SS:Perspectives-questions}
In this final section, we begin by discussing natural generalizations to 
$\boldsymbol{\mathcal W}_{\hspace{-0.1cm} \boldsymbol{\mathcal Y}_5}^{GM}$
 of the many remarkable properties enjoyed by Bol's web. In \S\ref{SS:HLOG-Yr-in-terms-scattering-diagram}, we offer some brief remarks on the possible interpretation of the differential identity ${\bf HLOG}_{\boldsymbol{\mathcal Y}_5}$ as the manifestation of a yet-to-be-determined property of a certain scattering diagram that might be associated to $\boldsymbol{\mathcal Y}_5$.

\subsection{\bf  Comparison between Bol's web $\boldsymbol{\mathcal W}_{\hspace{-0.1cm} \boldsymbol{\mathcal Y}_4}^{GM}$ and $\boldsymbol{\mathcal W}_{\hspace{-0.1cm} \boldsymbol{\mathcal Y}_5}^{GM}$}
\label{SS:Comp}
 The main objective of our previous paper \cite{PirioAFST} was to demonstrate that the web $\boldsymbol{\mathcal W}_{ {\rm dP}_4}$, formed by the ten pencils of conics on a smooth del Pezzo surface of degree four ${\rm dP}_4$, shares many remarkable properties with Bol's classical web $\boldsymbol{\mathcal B}\simeq \boldsymbol{\mathcal W}_{ {\rm dP}_5} \simeq \boldsymbol{\mathcal W}_{\hspace{-0.1cm} \boldsymbol{\mathcal Y}_4}^{GM}$.  Moreover, in  \cite[\S4.5]{PirioAFST}, we showed that any del Pezzo web $\boldsymbol{\mathcal W}_{ {\rm dP}_4}$ can be recovered from the Gelfand--MacPherson web $\boldsymbol{\mathcal W}_{\hspace{-0.1cm} \boldsymbol{\mathcal Y}_5}^{GM}$ as its pull-back under an embedding  ${\rm dP}_4\hookrightarrow \boldsymbol{\mathcal Y}_5$, first considered by Serganova and Skorobogatov in \cite{SerganovaSkorobogatov}. 
 
 One of the main results of the present paper is that, beyond this structural recovery, one can also retrieve the most significant abelian relation of 
$\boldsymbol{\mathcal W}_{ {\rm dP}_4}$ 
 -- namely, its hyperlogarithmic abelian relation of weight three, ${\bf HLog}_{\mathrm{dP}_4}$ -- from the most notable $(r - 3)$-abelian relation of the former web $\boldsymbol{\mathcal W}_{\hspace{-0.1cm} \boldsymbol{\mathcal Y}_5}^{GM}$, namely ${\bf HLOG}_{\boldsymbol{\mathcal Y}_5}$. This naturally raises the question (at least for the author) of whether the many remarkable properties satisfied by Bol's web might also admit natural analogues in the context of the Gelfand-MacPherson web $\boldsymbol{\mathcal W}_{\hspace{-0.1cm} \boldsymbol{\mathcal Y}_5}^{GM}$.
 \begin{center}
 $***$
 \end{center}
 
 Below, we review the list of remarkable properties of Bol's web as presented in \cite[\S1.1]{PirioAFST}, and for each of them, we provide a brief comment regarding its possible generalization to the web $\boldsymbol{\mathcal W}_{\hspace{-0.1cm} \boldsymbol{\mathcal Y}_5}^{GM}$. 
\sk

\paragraph{\bf 1. Geometric definition}
The del Pezzo webs are defined as the webs on del Pezzo surfaces whose foliations are the pencils of conics on these surfaces.
 It is plausible that a similar geometric interpretation may exist for the Gelfand--MacPherson web  $\boldsymbol{\mathcal W}_{\hspace{-0.1cm} \boldsymbol{\mathcal Y}_5}^{GM}$. Indeed, as established in  \cite{SerganovaSkorobogatov} (see the very end of the  proof of Theorem 6.1 therein), for any del Pezzo quartic surface ${\rm dP}_4$, the associated Serganova-Skorobogatov embedding $ F_{\hspace{-0.05cm}S\hspace{-0.03cm}S}: {\rm dP}_4\hookrightarrow \boldsymbol{\mathcal Y}_5$ induces an 
 isomorphism of Picard lattices $F_{\hspace{-0.05cm}S\hspace{-0.03cm}S}^* : {\rm Pic}_{\mathbf Z}\big( \boldsymbol{\mathcal Y}_5 \big)\simeq {\rm Pic}_{\mathbf Z}\big( {\rm dP}_4\big)$. Let $\boldsymbol{\mathfrak c}\in {\rm Pic}_{\mathbf Z}\big( {\rm dP}_4\big)$ be a conic class with associated facet $F\subset \Delta_5$ and corresponding face map $\pi_F: \boldsymbol{\mathcal Y}_5\dashrightarrow \mathbf P^2$. Then, in \cite{PirioAFST}, we proved that the pull-back
 of the linear system $\lvert \mathcal O_{\mathbf P^2}(1)\lvert$ 
  under the composition $\pi_F\circ F_{\hspace{-0.05cm}S\hspace{-0.03cm}S}
 : {\rm dP}_4 \dashrightarrow \mathbf P^2$  coincides precisely with the complete linear system 
  $\lvert \,\boldsymbol{\mathfrak c} \, \lvert$.
 This observation suggests that the 
  map 
  $\pi_F: \boldsymbol{\mathcal Y}_5 \dashrightarrow \mathbf P^2$ 
  may in fact be induced by the complete linear system $\lvert \,\boldsymbol{\mathfrak c} \, \lvert$ when $\boldsymbol{\mathfrak c}$ is now seen as an element of ${\rm Pic}_{\mathbf Z}\big( \boldsymbol{\mathcal Y}_5 \big)$. 
\sk

\paragraph{\bf 2. Linearizability}
Since it contains Bol's web as a subweb, any del Pezzo web $\boldsymbol{\mathcal W}_{ {\rm dP}_4}$ is non-linearizable. 
It is currently unclear to what extent the (non-)linearizability of the web 
$\boldsymbol{\mathcal W}_{\hspace{-0.1cm} \boldsymbol{\mathcal Y}_5}^{GM}$ 
 is relevant. We do believe that this web is not linearizable, but we also consider this fact to be non-essential.  The approach discussed in \cite{PirioLin} should allow to address this question.\sk

\paragraph{\bf 3.\,\& 4. Structure of the space of top degree ARs \& maximality of the rank}
The spaces of top degree abelian relations of $\boldsymbol{\mathcal B}\simeq 
\boldsymbol{\mathcal W}_{\hspace{-0.1cm} \boldsymbol{\mathcal Y}_4}^{GM}$ 
and $\boldsymbol{\mathcal W}_{\hspace{-0.1cm} \boldsymbol{\mathcal Y}_5}^{GM}$ share the same structure. Indeed, in both cases, there exists a subspace of so-called \emph{combinatorial} ARs (denoted by a capital subscript ${C}$), such that the following direct sum decompositions hold:
\begin{align}
\label{Eq:Decomp-Direct-Sums}
\boldsymbol{AR}
\Big( \boldsymbol{\mathcal W}_{\hspace{-0.1cm} \boldsymbol{\mathcal Y}_4}^{GM} \Big) = &\, 
\boldsymbol{AR}_C
\Big( \boldsymbol{\mathcal W}_{\hspace{-0.1cm} \boldsymbol{\mathcal Y}_4}^{GM} \Big)\oplus 
\big\langle \,
{\bf Ab} \,
\big\rangle  
\\ 
\boldsymbol{AR}^{(2)}
\Big( \boldsymbol{\mathcal W}_{\hspace{-0.1cm} \boldsymbol{\mathcal Y}_5}^{GM} \Big) = &\, 
\boldsymbol{AR}_C^{(2)}
\Big( \boldsymbol{\mathcal W}_{\hspace{-0.1cm} \boldsymbol{\mathcal Y}_5}^{GM} \Big)\oplus 
\Big\langle \, 
{\bf HLOG}_{\boldsymbol{\mathcal Y}_5}\,
\Big\rangle  \, . 
\nonumber
%
\end{align}
Moreover, as is well known, Bol's web has maximal rank 6. As shown above, the genuine 2-rank of the web $ \boldsymbol{\mathcal W}_{\hspace{-0.1cm} \boldsymbol{\mathcal Y}_5}^{GM} $ is equal to its virtual 2-rank. Therefore, both webs have AMP ranks. 
Regarding abelian relations and the rank structure of the webs, the similarity between $ \boldsymbol{\mathcal W}_{\hspace{-0.1cm} \boldsymbol{\mathcal Y}_r}^{GM}$  for $r = 4$ and $r = 5$ is clearly observable.
\sk

\paragraph{\bf 5. `Canonical algebraization'}
In \cite[\S3.5]{PirioAFST}, we explained how Bol's web can be canonically recovered from the space of its combinatorial abelian relations. It is an interesting question to ask whether a similar phenomenon holds for the web $\boldsymbol{\mathcal W}_{\hspace{-0.1cm} \boldsymbol{\mathcal Y}_5}^{GM}$. We believe that it might be the case. 

Let $U\Omega^2$ be the subspace 
of rational 2-forms on $\boldsymbol{Y}_5=\mathbf C^5$ spanned by the  30 wedge products $d\ln U_{i,a}\wedge d\ln U_{i,b}$ for all $i\in [\hspace{-0.05cm}[ 10 ]\hspace{-0.05cm}]$ and all $a,b$ such that $1\leq a<b\leq 3$. 
This is the space generated by all the components of all the combinatorial 2-abelian relations. It can be verified that $U\Omega^2$
  is a 20-dimensional vector subspace of  
 $\Omega^2_{\mathbf C( \boldsymbol{Y}_5)}$. We wonder whether the following statements hold:
\begin{itemize}
\item  {\it for any $y\in 
\boldsymbol{Y}_5^*$,  the evaluation map at this point,  $ev_y : U\Omega^2\rightarrow \Omega^2_{\boldsymbol{Y}_5,y}$, $\omega\mapsto \omega(y)$  is well-defined and surjective. Consequently, its kernel $K(y)={\rm Ker}(ev_y)$ is a vector subspace of $U\Omega^2$ of dimension 10;}
\sk 
\item   {\it we thus obtain a map $\boldsymbol{Y}_5^*\rightarrow G_{10}\big( 
U\Omega^2\big)\simeq G_{10}(\mathbf C^{20})$, $y\longmapsto K(y)$ which, composed with the Pl\"ucker embedding, gives rise to a morphism 
$$\mu : \boldsymbol{Y}_5^* \longrightarrow \mathbf P\Big(\wedge^{10} \mathbf C^{20}\Big)=\mathbf P^{ { 20 \choose 10}-1}\, .$$ 
Composed by the birational identification  $\boldsymbol{Y}_5\simeq \boldsymbol{\mathcal Y}_5$ (see \eqref{Eq:Theta}), this yields an embedding $\mu\circ \Theta^{-1}: \boldsymbol{\mathcal Y}_5 \hookrightarrow \mathbf P^{ { 20 \choose 10}-1}$ such that 
the closure of the image $\overline{\boldsymbol{\mathcal Y}}_5 = \overline{\mu(\boldsymbol{Y}_5^*)}$ 
provides a compactification of $\boldsymbol{\mathcal Y}_5$ with `nice properties'.}
\end{itemize}
A similar construction for the web $\boldsymbol{\mathcal W}_{\hspace{-0.1cm} \boldsymbol{\mathcal Y}_4}^{GM}$ on $\boldsymbol{\mathcal Y}_4^*\simeq \mathcal M_{0,5}$ gives rise to (the restriction to the open stratum of) the log canonical embedding $\overline{\mathcal M}_{0,5}\hookrightarrow \mathbf P^{9}$, that is the embedding induced
by the complete linear system associated to the ample log-canonical divisor $K_{\overline{\mathcal M}_{0,5}}  +\partial \overline{\mathcal M}_{0,5}$ (see \cite{KeelTevelev}). It is natural to ask whether an analogous result holds for the web $\boldsymbol{\mathcal W}_{\hspace{-0.1cm} \boldsymbol{\mathcal Y}_5}^{GM}$
on $\boldsymbol{\mathcal Y}_5^*$.

 In \cite{Corey}, the author studied the Chow quotient $\mathbb S_5/\hspace{-0.1cm}/ H$ of the spinor tenfold by the action of the Cartan torus $H\subset {\rm Spin}\big(\mathbf C^{10}\big)$.  He proved that, up to normalization, this Chow quotient is  smooth with a boundary having simple normal crossings, and described the log canonical model $\sf Y_5$ of $\mathbb S_5/\hspace{-0.1cm}/ H$.   It would be interesting to investigate possible relationships between the conjectural compactification $\overline{\boldsymbol{\mathcal Y}}_5$  discussed above and the proper varieties $\mathbb S_5/\hspace{-0.1cm}/ H$  and $\sf Y_5$  studied by Corey in his paper.
\mk 

\paragraph{\bf 6. Weyl group action}
Thanks to a result by Skorobogatov, for any $r=4,\ldots,7$, the Weyl group $W_{r}$ acts by automorphisms on $\boldsymbol{\mathcal Y}_r$ and preserves Gelfand--MacPherson web $\boldsymbol{\mathcal W}_{\hspace{-0.1cm} \boldsymbol{\mathcal Y}_r}^{GM}$.  From this, we deduce a linear action of $W_r$ on $\boldsymbol{AR}^{(r-3)}\big(  \boldsymbol{\mathcal W}_{\hspace{-0.1cm} \boldsymbol{\mathcal Y}_r}^{GM}\big) $. 
In both cases, the components 
$\big\langle 
\boldsymbol{\bf Ab} 
\big\rangle$ and  $\big\langle \,
{\bf HLOG}_{\boldsymbol{\mathcal Y}_5} \,
\big\rangle$
are isomorphic to the signature representation of $W_r$.
\mk

\paragraph{\bf 7. Hexagonality and characterization}
It has been known since Bol's paper \cite{Bol} that Bol's web, that is 
$\boldsymbol{\mathcal W}_{ {\rm dP}_5}\simeq 
\boldsymbol{\mathcal W}_{\hspace{-0.1cm} \boldsymbol{\mathcal Y}_4}^{GM}$, 
can be characterized -- up to local analytic equivalence -- as the unique planar hexagonal web that is not linearizable. It is natural, though perhaps somewhat naive, to ask whether a similar characterization holds for the Gelfand--MacPherson web $\boldsymbol{\mathcal W}_{\hspace{-0.1cm} \boldsymbol{\mathcal Y}_5}^{GM}$.

Let us discuss briefly what could be the analog of Bol's characterization for this 5-dimensional 10-web.
First, the formal similarity between the direct sum decompositions given in
\eqref{Eq:Decomp-Direct-Sums} suggests that the appropriate analogue, for $\boldsymbol{\mathcal W}_{\hspace{-0.1cm} \boldsymbol{\mathcal Y}_5}^{GM}$, of the notion of a hexagonal 3-subweb in planar web geometry might be the 5-subwebs of $\boldsymbol{\mathcal W}_{\hspace{-0.1cm} \boldsymbol{\mathcal Y}_5}^{GM}$ which carry a non-trivial irreducible and complete 2-abelian relation. These subwebs have been described above in Corollary \ref{Cor:2-RA-WGM}. In particular, it follows that not every 5-subweb of $\boldsymbol{\mathcal W}_{\hspace{-0.1cm} \boldsymbol{\mathcal Y}_5}^{GM}$ carries a complete and irreducible 2-abelian relation. However, this does not prevent us from envisioning a Bol-type characterization of this web based on its 5-subwebs. We now outline a few more specific remarks on what would need to be established in order to derive such a characterization:
\begin{itemize}
\item  {\it{Given a 10-web $\boldsymbol{\mathcal W}$ of codimension 2 on a 5-dimensional manifold $M$, 
one defines a function ${{R}}_{\boldsymbol{\mathcal W}}$ on the set 
of its 5-subwebs  by associating 
its 2-rank to any such subweb. 
The natural question is then whether the function 
 ${{R}}_{\boldsymbol{\mathcal W}_{\hspace{-0.1cm} \boldsymbol{\mathcal Y}_5}^{GM} }$ 
 characterizes 
   $\boldsymbol{\mathcal W}_{\hspace{-0.1cm} \boldsymbol{\mathcal Y}_5}^{GM}$ up to local 
   analytic equivalence.}}
   \mk 
\item {\it Before attempting to investigate this question, it may be necessary to first study more thoroughly the analytic classification of 5-webs of codimension 2 on 5-dimensional manifolds. In particular, one may ask:}
{\it \begin{itemize}
    \item  Is there a universal bound on the 2-rank of such a web? Could it be equal to 1?
    \item What can be said about the webs that achieve this maximal 2-rank?
    \item Can such webs be classified?
      Is there a unique normal form for them?
    \item Does the class of webs attaining the maximal 2-rank share, in some meaningful sense,  some `nice properties' with the class of
     hexagonal planar 3-webs?
\end{itemize}}
\end{itemize}

\paragraph{\bf 8. Description \`a la Gelfand-MacPherson}
That Bol's web $\boldsymbol{\mathcal B}\simeq \boldsymbol{\mathcal W}_{ {\rm dP}_5}$ can be described as a Gelfand-MacPherson web is a result established in \cite{GM}. This contrasts with  $\boldsymbol{\mathcal W}_{\hspace{-0.1cm} \boldsymbol{\mathcal Y}_5}^{GM}$, which is of this type by definition.

While the fact that $\boldsymbol{\mathcal W}_{\hspace{-0.1cm} \boldsymbol{\mathcal Y}_5}^{GM}$ is of Gelfand--MacPherson type is immediate from its very definition, extending this perspective to its master 2-abelian relation ${\bf HLOG}_{\boldsymbol{\mathcal Y}_5}$ is a much more subtle and interesting question, which deserves some further discussion. In their foundational work \cite{GM}, Gelfand and MacPherson not only provide a geometric construction of Bol's web (namely as the web  $\boldsymbol{\mathcal W}_{\hspace{-0.1cm} \boldsymbol{\mathcal Y}_4}^{GM}$), but also demonstrate how, within the same geometric framework, one can construct the dilogarithm function and derive its 5-term functional identity. 
Remarkably, this identity arises in their theory as a rather direct consequence of Stokes' theorem, applied to the integration along the fibers of the action of the Cartan torus of ${\rm SL}(\mathbf{R}^5)$ on the Grassmannian $G_2^{or}(\mathbf R^5)$ of oriented $2$-planes in $\mathbf{R}^5$. The integrand is an invariant differential $4$-form representing the first Pontrjagin class of the tautological bundle on $G_2^{or}(\mathbf R^5)$.

It appears natural to us to seek a similar geometric construction for the master 2-abelian relation. A question in this direction was already raised in \cite[\S5.9]{PirioAFST}, but for the weight 3 hyperlogarithmic abelian relation 
${\bf HLog}_{ {\rm dP}_4}$ associated with a smooth del Pezzo quartic surface  ${\rm dP}_4$.  In light of the results of the present paper -- particularly the fact that any identity ${\bf HLog}_{ {\rm dP}_4}$ can be obtained from 
${\bf HLOG}_{\boldsymbol{\mathcal Y}_5}$ -- we now believe that the following question should be preferred over Question 5.7 in \cite{PirioAFST}:

\noindent{\bf Question.--} {\it Can the differential identity 
${\bf HLOG}_{\boldsymbol{\mathcal Y}_5}$ be
obtained \`a la Gelfand-MacPherson by means of an invariant
differential form $\Omega(P)$ on a real form $S_5$ of the spinor tenfold $\mathbb S_5$, representing a certain characteristic class 
$P\in {\bf H}^6(  S_5,\mathbf R)$?} 
\sk

This is one of the questions suggested by our work that we find the most appealing and we plan to work on it in the near future. 
\mk

\paragraph{\bf 9. Modularity}
 The web $\boldsymbol{\mathcal W}_{\hspace{-0.1cm} \boldsymbol{\mathcal Y}_4}^{GM}$  can also be interpreted as the web defined on $\mathcal{M}_{0,5}$ by the five forgetful maps $\varphi_i : \mathcal M_{0,5}\rightarrow \mathcal M_{0,4}\simeq \mathbf P^1\setminus \{0,1,\infty\}$. 
 
Since it is defined on a moduli space via morphisms with a modular interpretation, it is reasonable to describe $\boldsymbol{\mathcal W}_{\hspace{-0.1cm} \boldsymbol{\mathcal Y}_4}^{GM}$ as a `modular web'. 
 In \cite[\S4.6]{PirioAFST}, we proved that that any del Pezzo web 
$\boldsymbol{\mathcal W}_{ {\rm dP}_4}$ can also be qualified as `modular web'.
This naturally leads to the following question: 

\noindent{\bf Question.} {\it Does Gelfand-MacPherson's web $\boldsymbol{\mathcal W}_{\hspace{-0.1cm} \boldsymbol{\mathcal Y}_5}^{GM}$ 
admit an interpretation as a modular web? For instance, is it naturally defined on a subvariety of some moduli space of projective configurations?} 
\sk 

At this stage, we do not have any clear insight into a possible answer to this question.
\mk

\paragraph{\bf 10. Cluster nature}
An interesting feature of the web $\boldsymbol{\mathcal W}_{ {\rm dP}_5}=
\boldsymbol{\mathcal W}_{\hspace{-0.1cm} \boldsymbol{\mathcal Y}_4}^{GM}$ is that it is a \emph{`cluster web'}. In \cite[\S4.7]{PirioAFST}, we showed that this property also holds for any del Pezzo web $\boldsymbol{\mathcal W}_{ {\rm dP}_4}$.  
This raised the question whether this holds  for $\boldsymbol{\mathcal W}_{\hspace{-0.1cm} \boldsymbol{\mathcal Y}_4}^{GM}$ as well. This is precisely what we established in \S\ref{SS:Cluster-View} above, provided one admits cluster-like structures that are more general than classical cluster structures.


\subsection{\bf An interpretation of ${\bf HLOG}_{\boldsymbol{\mathcal Y}_4}$ in terms of a scattering diagram?}
\label{SS:HLOG-Yr-in-terms-scattering-diagram}
The author has long been fascinated by the functional equations of polylogarithms, among which Abel's five-term identity $\big(\boldsymbol{{\mathcal A}b}\big)$ for the dilogarithm stands out as particularly central and archetypal. In his view, there are two especially meaningful explanations for why $\big(\boldsymbol{{\mathcal A}b}\big)$ holds: on the one hand, the cohomological-analytic and geometric approach developed by Gelfand and MacPherson discussed just above; on the other, the interpretation of this functional identity as a consequence of the consistency of the scattering diagram associated with the $\mathcal X$-cluster variety of type $A_2$.

Although Gelfand--MacPherson's perspective appears to us as the most elegant -- indeed, it is, in the author's opinion, one of the most beautiful mathematical constructions  he has ever encountered -- the scattering diagram approach seems to offer a significantly broader scope. 
One particularly  appealing consequence of the latter theory\footnote{In the case of scattering diagrams arising from cluster algebras, this phenomenon has been thoroughly investigated by 
 {T. Nakanishi}. For further details, the reader is referred to Nakanishi's book and article \cite{NakanishiBook} and \cite{Nakanishi}.} is that any closed loop intersecting transversely a (possibly countable) collection of walls in a consistent scattering diagram gives rise to a dilogarithmic identity -- which becomes a formal identity involving infinitely many terms if the loop meets infinitely many walls.
\sk

The fact that the spinor tenfold $\mathbb S_5$, and more generally all the spaces $\boldsymbol{\mathcal G}_r$ for $r = 4, \dots, 7$\footnote{For $r=4$ this is well-known; for $r=5$ and $r=6$, it has been established in \cite{Ducat} and \cite{DaiseyDucat}, respectively. Only the case $r=7$ remains conjectural at the time of writing.}, carry a cluster-like structure of finite type invites us to dream of an interpretation of all the identities 
${\bf HLOG}_{\boldsymbol{\mathcal Y}_r}$  in terms of certain properties of scattering diagrams associated with the spaces $\boldsymbol{\mathcal Y}_r$.

In \cite{Ducat}, Ducat constructed a finite type LPA structure on $\mathbb S_5$ and  described an associated scattering diagram. 
It is natural to ask whether these objects admit versions defined on the torus quotient $\boldsymbol{\mathcal Y}_5$. Addressing this question would require a theory of coefficients and their mutations for LPA algebras, which, to our knowledge, has not yet been developed.



\vfill 
{\small  ${}^{}$ \hspace{-0.6cm} {Luc Pirio} ({\tt luc.pirio@uvsq.fr})\\
 LMV -- UVSQ Universit\'e Paris-Saclay \& CNRS  (UMR 8100)\\


\begin{thebibliography}{99}


 \bibitem[\bf Bo]{Bol} 
 {\scshape \bf  G. Bol}.   
\href{https://doi.org/10.1007/BF02940735}{\it \"Uber ein bemerkenswertes F\"unfgewebe in der Ebene}. Abh. Math. Semin. Univ. Hambg. {\bf 11} 
 (1935), pp.  387--393. 
 \mk 

 \bibitem[\bf Br]{Brown} 
 {\scshape \bf  F. Brown}.   
\href{https://doi.org/10.1017/CBO9781139208642.006}{\it Iterated integrals in quantum field theory}. In `Geometric and Topological Methods for Quantum Field Theory (Proceedings of the 2009 Villa de Leyva Summer School)', Cambridge Univ. Press, 2009. 
\mk 



 \bibitem[\bf CP]{CP} 
 {\scshape \bf  A.-M. Castravet, L. Pirio}.   
\href{https://doi.org/10.1016/j.aim.2024.109567}{\it Hyperlogarithmic functional equations on del Pezzo surfaces}.  Adv. in Math. {\bf 442} (2024),  109567.
\mk 


 \bibitem[\bf CL]{CL} 
 {\scshape \bf  V. Cavalier, D.  Lehmann}.    
\href{https://doi.org/10.1016/j.crma.2008.10.011}{\it Rang et courbure de Blaschke des tissus holomorphes r\'eguliers de codimension un}.  CRAS {\bf 346} (2008),  
pp. 1283--1288.
\mk



\bibitem[{\bf Co}]{Corey}
{\scshape   \bf  D. Corey}. 
\href{https://doi.org/10.1007/s40879-023-00715-z}{\it Initial degenerations of spinor varieties}, {Eur. J. Math.} {\bf 10},
(2024), ${\rm n}^0$ 4. 
\mk 




\bibitem[{\bf DD}]{DaiseyDucat}
{\scshape   \bf  O. Daisey, T. Ducat}. 
\href{https://doi.org/10.3842/SIGMA.2024.033}{\it A Laurent Phenomenon for the Cayley Plane}. 
	SIGMA {\bf 20} (2024), 033, 18 pages.
\mk 




\bibitem[{\bf Da}]{Damiano}
{\scshape   \bf  D.\,B. Damiano}. 
\href{https://doi.org/10.2307/2374443}{\it Webs and characteristic forms of Grassmann manifolds}.  Amer. J. Math. {\bf 105} (1983),  pp. 1325--1345.
\mk 

\bibitem[{\bf Du}]{Ducat}
{\scshape   \bf  T. Ducat}. 
\href{ https://doi.org/10.1007/s00229-023-01497-0}{\it The 3-dimensional Lyness map and a self-mirror log Calabi-Yau 3-fold.} Manuscripta Math. {\bf 174} (2024), pp. 87--140.
\mk 


\bibitem[\bf GM]{GM}
{\scshape  \bf  I. Gelfand, R.  MacPherson}. 
\href{https://doi.org/10.1016/0001-8708(82)90040-8}{\it Geometry in Grassmannians and a generalization of the dilogarithm}. 
Adv. in Math. {\bf 44} (1982), pp.  279--312.
\mk 


 \bibitem[\bf GR]{GoncharovRudenko}
 {\scshape  \bf A. Goncharov, D. Rudenko}. 
\href{https://arxiv.org/abs/1803.08585}{\it Motivic correlators, cluster varieties and Zagier's conjecture on zeta(F,4)}. Prepublication   arXiv:1803.08585 (2018).
\mk 

 \bibitem[\bf Gr]{Griffiths}
 {\scshape  \bf P.\,A. Griffiths}. 
\href{https://publications.ias.edu/sites/default/files/legacy.pdf}{\it The 
The legacy of Abel in algebraic geometry}. In `The legacy of Niels Henrik Abel', 
Springer-Verlag, 2004, pp. 179--205.
\mk 


\bibitem[\bf HM]{HM}
{\scshape  \bf  R. Hain, R.  MacPherson}. 
\href{https://doi.org/10.1215/ijm/1255988272}{\it Higher logarithms}. Illinois J. Math. {\bf 34} (1990), pp. 392--475.
\mk 


\bibitem[\bf KT]{KeelTevelev}
{\scshape  \bf  S. Keel, J. Tevelev}. 
\href{https://doi.org/10.1142/S0129167X09005716}{\it Equations for {$\overline M_{0,n}$}}.  {Internat. J. Math.} {\bf 20}, (2009), pp. {1159--1184}.
\mk 

\bibitem[\bf KLL]{KLL}
{\scshape  \bf  M. Kerr, J.  Lewis, P.  Lopatto}. 
\href{https://doi.org/10.1090/jag/692}{\it Simplicial Abel-Jacobi maps and reciprocity laws}.
J. Algebraic Geom. {\bf 27} (2018), pp. 121--172.
\mk 


\bibitem[\bf LP]{LamPylyavskyy}
{\scshape  \bf  T. Lam, P. Pylyavskyy}. 
\href{https://dx.doi.org/10.4310/CJM.2016.v4.n1.a2}{\it Laurent phenomenon algebras}. Camb. J. Math. {\bf 4} (2016), pp. 121--162.
\mk 








\bibitem[\bf Ma]{Manin}
{\scshape  \bf  Y. Manin}. 
\href{https://www.sciencedirect.com/bookseries/north-holland-mathematical-library/vol/4/suppl/C}{Cubic forms. Algebra, geometry, arithmetic}.  North-Holland Math. Lib. {\bf 4}. North-Holland Pub. 1986.
\mk 




\bibitem[\bf Na1]{NakanishiBook}
{\scshape  \bf  T. Nakanishi}. 
\href{https://doi.org/10.1142/e073}{Cluster algebras and scattering diagrams}.   MSJ Memoirs {\bf 41}, 
Math. Soc. of Japan (2023), pp.  {xiv+279}.
 \mk 
 
\bibitem[\bf Na2]{Nakanishi}
{\scshape  \bf  T. Nakanishi}. 
\href{https://doi.org/10.1017/nmj.2023.15}{\it Dilogarithm identities in cluster scattering diagrams}.  Nagoya Math. Journal {\bf 253 } (2024), pp. 1--22.
 \mk 



\bibitem[{\bf Pi1}]{PirioLin} 
{\scshape  \bf L. Pirio}.  
\href{https://ems.press/content/serial-article-files/44191}{\it Sur la lin\'earisation des tissus}.  L'Enseignement Math. {\bf 55} (2009), pp.  285--328.
\mk

\bibitem[{\bf Pi2}]{ClusterWebs} 
{\scshape  \bf L. Pirio}.  
   \href{https://arxiv.org/abs/2105.01543}
   {\it On  webs, polylogarithms and cluster algebras}.  
Prepublication   arXiv:2105.01543 (2021).
\mk 



\bibitem[{\bf Pi3}]{PirioWM06} 
{\scshape  \bf L. Pirio}.  
   \href{https://www.impan.pl/get/doi/10.4064/dm866-2-2023}{\it On the $(n+3)$-webs by rational curves induced by the forgetful maps on the moduli spaces $\mathcal M_{0,n+3}$}.  Dissertationes Math.  {\bf 584} (2023), pp. 1--133. 
\mk 





\bibitem[{\bf Pi4}]{Pirio2022} 
{\scshape  \bf L. Pirio}.  
   \href{https://arxiv.org/pdf/2212.02556}
   {\it Webs by conics on del Pezzo surfaces and hyperlogarithmic functional identities}.  Prepublication   arXiv:2212.02556 (2022).
\mk



\bibitem[{\bf Pi5}]{PirioAFST} 
{\scshape  \bf L. Pirio}.  
   \href{https://arxiv.org/abs/2401.06711}
   {\it On the 10-web by conics on the quartic del Pezzo surface}.  
Prepublication  arXiv:2401.06711 (2024) (to appear in the Ann. Fac. Sciences Toulouse). 
\mk 


\bibitem[\bf SS]{SerganovaSkorobogatov}
{\scshape  \bf V. Serganova, A. Skorobogatov}.  
 \href{https://doi.org/10.2140/ant.2007.1.393}{\it Del Pezzo surfaces and representation theory}.  Algebra Number Theory {\bf 1}   (2007),  pp. 393--419.
\mk 


\bibitem[\bf Sk]{Skorobogatov}
{\scshape  \bf  A. Skorobogatov}.  
 \href{https://doi.org/10.1070/SM2009v200n10ABEH004048}{\it 
Automorphisms and forms of torus quotients of homogeneous spaces}.
Sb. Math. {\bf 200} (2009), pp. 1521--1536.
\mk


\bibitem[\bf Yo]{Yokota}
{\scshape  \bf  I. Yokota}.  
 \href{https://doi.org/10.1007/978-3-031-88911-0}{Exceptional {L}ie groups}.
{Lecture Notes in Math.}  {\bf 2369}, {Springer} (2025), pp. xix+252. 
\mk





\end{thebibliography}
\end{document}